
\documentclass[10pt]{amsart}
\usepackage[nohug,heads=littlevee]{diagrams}
\usepackage{bm}
\usepackage{mathrsfs}
\usepackage{amssymb}
\usepackage[latin1]{inputenc}
\usepackage{yfonts}
\usepackage[plainpages=false,pdfpagelabels]{hyperref}
\usepackage[left=3.5cm,top=2.5cm,right=3.6cm,bottom=1.4in,asymmetric]{geometry}
\usepackage{mathtools}
\usepackage{enumitem}
\usepackage[T1]{fontenc}
\linespread{1.05}

\usepackage{biblatex}
 
\usepackage[french,english]{babel}

\makeatletter
\newenvironment{abstracts}{%
  \ifx\maketitle\relax
    \ClassWarning{\@classname}{Abstract should precede
      \protect\maketitle\space in AMS document classes; reported}%
  \fi
  \global\setbox\abstractbox=\vtop \bgroup
    \normalfont\Small
    \list{}{\labelwidth\z@
      \leftmargin3pc \rightmargin\leftmargin
      \listparindent\normalparindent \itemindent\z@
      \parsep\z@ \@plus\p@
      
      \itemsep\medskipamount
    }%
}{%
  \endlist\egroup
  \ifx\@setabstract\relax \@setabstracta \fi
}

\newcommand{\abstractin}[1]{%
  \otherlanguage{#1}%
  \item[\hskip\labelsep\scshape\abstractname.]%
}
\makeatother


\addbibresource{toroidal.bib}

\setcounter{secnumdepth}{3}
\newcommand{\defnword}[1]{\textbf{#1}}
\newcommand{\comment}[1]{}
\newcommand{\on}[1]{\operatorname{#1}}
\newcommand{\bb}[1]{\mathbb{#1}}
\newcommand{\mb}[1]{\mathbf{#1}}
\newcommand{\mf}[1]{\mathfrak{#1}}
\newcommand{\ord}{\on{ord}}

\numberwithin{equation}{subsubsection}
\newtheorem{introthm}{Theorem}

\newtheorem{prp}[subsubsection]{Proposition}
\newtheorem{thm}[subsubsection]{Theorem}
\newtheorem*{thm*}{Theorem}
\newtheorem{lem}[subsubsection]{Lemma}
\newtheorem*{lem*}{Lemma}
\newtheorem{corollary}[subsubsection]{Corollary}
\theoremstyle{definition}
\newtheorem{defn}[subsubsection]{Definition}

\theoremstyle{remark}

\newtheorem{rem}[subsubsection]{Remark}
\newtheorem*{introeg}{Example}
\newtheorem*{assump*}{Assumption}

\newarrow{Equals}{=}{=}{}{=}{}

\newcommand{\Real}{\mathbb{R}}
\newcommand{\Int}{\mathbb{Z}}
\newcommand{\Nat}{\mathbb{N}}
\newcommand{\Comp}{\mathbb{C}}
\newcommand{\Adele}{\mathbb{A}}

\newcommand{\Field}{\mathbb{F}}
\newcommand{\Rat}{\mathbb{Q}}
\newcommand{\Lie}{\on{Lie}}
\newcommand{\Dieu}{\mathbb{D}}


\newcommand{\im}{\on{im}}
\newcommand{\gr}{\on{gr}}
\newcommand{\into}{\hookrightarrow}

\newcommand{\Obj}{\on{Obj}}

\newcommand{\pr}{\bm{\pi}}

\newcommand{\Rg}{\mathscr{O}}
\newcommand{\Reg}[1]{\Rg_{#1}}

\newcommand{\Hom}{\on{Hom}}
\newcommand{\End}{\on{End}}

\newcommand{\Aut}{\on{Aut}}
\newcommand{\Isom}{\on{Isom}}

\newcommand{\SHom}{\underline{\on{Hom}}}

\newcommand{\Gal}{\on{Gal}}

\newcommand{\Proj}{\on{Proj}}

\newcommand{\Gm}{\mathbb{G}_m}

\newcommand{\Gmh}[1]{\mathbb{G}_{m,#1}}

\newcommand{\st}{\on{st}}
\newcommand{\cris}{\on{cris}}
\newcommand{\dR}{\on{dR}}
\newcommand{\Bst}{B_{\st}}
\newcommand{\Bcris}{B_{\cris}}
\newcommand{\Bdr}{B_{\dR}}


\newcommand{\pow}[1]{[\vert#1\vert]}

\newcommand{\Spec}{\on{Spec}}
\newcommand{\Spf}{\on{Spf}}

\newcommand{\et}{\on{\acute{e}t}}
\newcommand{\ab}{\on{ab}}
\newcommand{\sab}{\on{sab}}
\newcommand{\mult}{\on{mult}}

\newcommand{\cl}{\on{cl}}
\newcommand{\wt}{\on{wt}}
\newcommand{\dlog}{\on{dlog}}

\newcommand{\Mon}{\on{M}}
\newcommand{\gp}{\on{gp}}

\newcommand{\Fil}{\on{Fil}}

\newcommand{\mx}{\mathfrak{m}}
\newcommand{\pf}{\mathfrak{p}}

\newcommand{\pol}{\on{pol}}

\newcommand{\rank}{\on{rank}}
\newcommand{\flogdiff}[1]{\hat{\Omega}^{1,\log}_{#1}}
\newcommand{\fdiff}[1]{\hat{\Omega}^1_{#1}}
\newcommand{\dual}[1]{{#1}^{\vee}}

\newcommand{\Sh}{\on{Sh}}
\newcommand{\Ss}{\mathcal{S}}
\newcommand{\MT}{\on{MT}}
\newcommand{\Res}{\on{Res}}
\newcommand{\an}{\on{an}}

\newcommand{\Sym}{\on{Sym}}


\newcommand{\ad}{\on{ad}}
\newcommand{\GSp}{\on{GSp}}

\newcommand{\GL}{\on{GL}}

\newcommand{\PGL}{\on{PGL}}

\newcommand{\GSpin}{\on{GSpin}}


\newcommand{\beef}{\ddagger}

\begin{document}
\title[Toroidal Compactifications]{Toroidal Compactifications of Integral Models of Shimura Varieties of Hodge Type}
\author{Keerthi Madapusi Pera}
\address{Keerthi Madapusi Pera\\%
Department of Mathematics\\%
5734 S University Ave\\%
University of Chicago\\%
Chicago, IL 60637\\%
USA}
\email{keerthi@math.uchicago.edu}

\keywords{Vari\'et\'es de Shimura; compactifications; vari\'et\'es ab\'eliennes; theorie de Dieudonn\'e logarithmique; Shimura varieties; compactifications; abelian varieties; logarithmic Dieudonn\'e theory}

\subjclass{11G18,14G35}

\begin{abstracts}

\abstractin{french}
Nous construisons compactifications toro\"idales pour les mod\`eles entiers de vari\'et\'es de Shimura de type de Hodge. Nous construisons \'egalement la compactification minimale (ou de Satake-Baily-Borel) pour ces mod\'eles entiers. Nos r\'esultats r\'eduisent le probl\`eme \`a la compr\'ehension des mod\`eles entiers eux-m\^emes. Donc ils recouvrent tous les cas d\'ej\'a connus de type PE. Quand le niveau est hyperspecial, nous montrons que nos compactifications sont canonique dans un sens pr\'ecis. Nous fournissons une nouvelle preuve de la conjecture de Y. Morita sur la bonne r\'eduction de vari\'et\'es ab\'eliennes dont le groupe de Mumford-Tate est anisotrope modulo son centre. Sur le chemin, nous d\'emontrons une propri\'et\'e de rationalit\'e int\'eressante de cycles de Hodge sur les vari\'et\'es ab\'eliennes par rapport \`a l'uniformisation analytiques \emph{p}-adiques.

\abstractin{english}
  We construct toroidal compactifications for integral models of Shimura varieties of Hodge type. We also construct integral models of the minimal (Satake-Baily-Borel) compactification. Our results essentially reduce the problem to understanding the integral models themselves. As such, they cover all previously known cases of PEL type. At primes where the level is hyperspecial, we show that our compactifications are canonical in a precise sense. We also provide a new proof of Y. Morita's conjecture on the everywhere good reduction of abelian varieties whose Mumford-Tate group is anisotropic modulo center. Along the way, we demonstrate an interesting rationality property of Hodge cycles on abelian varieties with respect to \emph{p}-adic analytic uniformizations.
\end{abstracts}

\maketitle

\setcounter{tocdepth}{2}

\section*{Introduction}\label{sec:intro}
\subsubsection*{Shimura varieties of Hodge type}
This paper is concerned with constructing compactifications for integral models of Shimura varieties of Hodge type. Essentially, these are the Shimura varieties that can be viewed as parameter spaces for polarized abelian varieties equipped with level structures and additional Hodge tensors.

More precisely, we will work with Shimura data $(G,X)$ that admit embeddings into a Siegel Shimura datum $(\GSp(V),\on{S}^{\pm}(V))$ attached to a symplectic space $V$ over $\Rat$. Given such an embedding and a small enough compact open $K\subset G(\Adele_f)$, we have the associated Shimura variety $\Sh_K(G,X)$, which has the above moduli interpretation over $\Comp$.

If we are in the more familiar PEL setting, the additional Hodge tensors parameterized by $\Sh_K(G,X)(\Comp)$ can be chosen to consist of endomorphisms and polarizations. One can then define representable PEL type moduli problems over the reflex field $E=E(G,X)$, and even over a suitable localization of its ring of integers, which recover the moduli interpretation for $\Sh_K(G,X)$ over $\Comp$, and are thus \emph{canonical} models for $\Sh_K(G,X)$ over $E$ or even its ring of integers; cf.~\cite{deligne:travaux} for the theory over $E$, and \cite{kottwitz:points} for the integral theory (away from primes where the level is not hyperspecial).

The theory of \cite{deligne:travaux} applies more generally to show that Shimura varieties of Hodge type admit canonical models over their reflex fields\footnote{We now know that every Shimura variety admits such a canonical model; cf.~\cite{milne:canonical}.}, and Milne has used Deligne's results on absolute Hodge cycles to give these canonical models a modular interpretation; cf.~\cite{milne:motives}.

\begin{introeg}
  An important class of Shimura data of Hodge type arises from quadratic forms over $\Rat$ of signature $(n+,2-)$. Suppose that we have a vector space $U$ over $\Rat$ equipped with such a quadratic form. Then the group $G=\GSpin(U)$ acts naturally on the Clifford algebra $C$ attached to $U$. We can equip $C$ with an appropriate symplectic form such that we have an embedding $\GSpin(U)\into\GSp(C)$. Moreover, if we take $X$ to be the space of negative definite oriented $2$-planes in $U_{\Real}$, then $(G,X)$ is a Shimura datum, and we in fact get an embedding $(G,X)\into (\GSp(C),\on{S}^{\pm})$ of Shimura data. This is the \defnword{Kuga-Satake construction}; cf.~\cite{deligne:k3}. So $(G,X)$ is of Hodge type, but is not of PEL type when $n\geq 6$.

The Shimura varieties attached to such data are important, for example, in the study of the moduli of K3 surfaces (when $n=19$). Moreover, the Shimura varieties attached to the Spin group Shimura data play a significant role in S. Kudla's program (cf.~\cite{kudla:special_cycles}) for relating intersection numbers on Shimura varieties with Fourier coefficients of Eisenstein series. They have also been used by the author to prove the Tate conjecture for K3 surfaces; cf.~\cite{mp:tatek3}.
\end{introeg}

\subsubsection*{Integral models}

Unfortunately, since Hodge cycles are transcendentally defined, there is no natural way to use them to obtain a modular interpretation over the ring of integers of $E$. But an \emph{ad hoc} construction of integral models can be given as follows: Fix a prime $p$ and a place $v\vert p$ of $E$. Suppose that we have an embedding $(G,X)\into (\GSp,\on{S}^{\pm})$ into a Siegel Shimura datum. For any level ${K}^{\beef}\subset\GSp(\Adele_f)$, the Siegel Shimura variety $\Sh_{{K}^{\beef}}(\GSp,\on{S}^{\pm})$ has a natural integral model $\mathcal{S}_{{K}^{\beef}}$ over $\Int_{(p)}$: this is Mumford's construction.

We can now take the normalization of $\mathcal{S}_{{K}^{\beef}}$ in $\Sh_K(G,X)$\footnote{Here, and in the rest of the paper, given a normal, excellent $\Int_{(p)}$-algebraic stack $S$, an open dense substack $j:U\hookrightarrow S$, and a finite map $f:S'\to U$, with $S'$ normal, the \defnword{normalization of $S$ in $S'$} will be the finite $S$-algebraic stack, whose associated coherent sheaf of $\Reg{S}$-algebras is the normalization of $\Reg{S}$ in $j_*f_*\Reg{S'}$.}: This gives us a normal integral model $\Ss_K$ over $\Reg{E,(v)}$, which is finite over $\Ss_{K^\beef}$.

When $p>2$ and the level at $p$ is hyperspecial, Kisin showed in~\cite{kisin:abelian} that $\Ss_K$ is a smooth scheme $\Reg{E,(v)}$, and is canonical in a precise sense.~\footnote{A related result due to Vasiu can be found in \cite{vasiu:preab}. The result was also extended to the case $p=2$ in~\cite{Kim2016-fb}.}  In particular, it is independent of the choice of symplectic embedding.

In general, however, one does not know if $\Ss_K$ has any good properties. Moreover, it need not be independent of the choice of symplectic embedding.

\subsubsection*{Compactifications}
In any case, since our interest lies mainly in the computation of the zeta function of the Shimura variety, and hence its cohomology, we are led to consider the question of compactifying $\Ss_K$.\footnote{We will see below that this question is largely independent of the properties of the integral model itself.}

Another motivation to study compactifications of integral models is the role they play in constructing regular proper models over $\Int$ for the orthogonal Shimura varieties mentioned above. Such models are a crucial ingredient in carrying out Kudla's program on the arithmetic intersection theory of these spaces; cf.~\cite{mp:reg} for a description of these models over $\Int[1/2]$.

Over $\Comp$, Mumford and his collaborators (cf.~\cite{amrt}) constructed good, toroidal compactifications in the general setting of arithmetic quotients of hermitian symmetric domains. In \cite{harris:functorial} and \cite{pink:thesis} these compactifications are constructed for Shimura varieties in their natural ad\'elic setting. All these constructions depend on a choice of a certain cone decomposition $\Sigma$, called a complete admissible rpcd (cf. \S\ref{sec:strata} for the terminology). Given such a choice they produce a compactification $\Sh^{\Sigma}_K$ of the Shimura variety $\Sh_K\coloneqq\Sh_K(G,X)$ with good properties.

In the Siegel case, when the level ${K}^{\beef}$ is hyperspecial at $p$, Chai and Faltings~\cite{faltings_chai} studied degenerations of abelian varieties, and used this to construct smooth compactifications $\mathcal{S}^{\Sigma'}_{{K}^{\beef}}$ of $\mathcal{S}_{{K}^{\beef}}$ over $\Int_{(p)}$ attached to smooth cone decompositions $\Sigma'$ for the symplectic group. It was shown by K.-W. Lan~\cite{lan:analytic} that this construction is compatible in characteristic $0$ with the analytic construction of Mumford, \emph{et. al.} mentioned above. Lan's proof uses a careful comparison of the algebraic and analytic definitions of theta functions. We give an independent proof of this fact here, using the compatibility of Mumford's construction with cohomological realizations; cf.~\eqref{background:prp:analytic_mumford_comp}.

For the general Hodge type case, there is a natural cone decomposition $\Sigma$ attached to $(G,X)$ such that the normalization of $\Sh^{\Sigma'}_{{K}^{\beef}}$ in $\Sh_K$ is canonically isomorphic to the toroidal compactification $\Sh_K^{\Sigma}$; cf.~\cite{harris:functorial}. 

Now, assume that ${K}^{\beef}$ is chosen to be hyperspecial at $p$ (this can always be arranged using Zarhin's trick, and by replacing the Sigel Shimura data with one associated with a larger symplectic space). If we take the normalization of $\mathcal{S}^{\Sigma^\beef}_{{K}^{\beef}}$ in $\Sh_K$, we obtain a proper normal algebraic space $\Ss^{\Sigma}_K$ over $\Reg{E,(v)}$, whose generic fiber is $\Sh^{\Sigma}_K$, and which contains $\Ss_K$ as an open sub-scheme.

The main result of this paper is:
\begin{introthm}\label{main}
$\Ss^{\Sigma}_K$ is a compactification of $\Ss_K$. More precisely, the complement of $\Ss_K$ in $\Ss^{\Sigma}_K$ is a relative Cartier divisor over $\Reg{E,(v)}$. Moreover, $\Ss^{\Sigma}_K$ admits a stratification of the expected shape, extending that of its generic fiber. After replacing $\Sigma$ by an appropriate refinement if necessary, the singularities of $\Ss^{\Sigma}_K$ are no worse than those of $\Ss_K$: Every complete local ring of $\Ss^{\Sigma}_K$ at a geometric point is isomorphic to a complete local ring of $\Ss_K$.\footnote{The original version of this theorem imposed the further condition that the level subgroup at $p$ satisfy $K_p = K^\beef_p \cap G(\Rat_p)$. We thank the referee for encouraging us to consider the situation at arbitrary level.}
\end{introthm}

For the reader familiar with the general structure of toroidal compactifications, we can unpack Theorem~\ref{main} a bit (cf.~\ref{strata:thm:strata}): $\Ss^{\Sigma}_K$ is stratified by locally closed sub-schemes, and each stratum in this stratification can be described as follows: There is a normal integral model $\Ss_{K_{\Phi,h}}(G_{\Phi,h},D_{\Phi,h})$ over $\Reg{E,(v)}$ of a Shimura variety, a projective scheme $\Ss_{\overline{K}_\Phi}(\overline{Q}_\Phi,\overline{D}_\Phi)\to\Ss_{K_{\Phi,h}}(G_{\Phi,h},D_{\Phi,h})$ that is in many cases of interest a torsor under an abelian group scheme over $\Ss_{K_{\Phi,h}}(G_{\Phi,h},D_{\Phi,h})$; a torus $\mb{E}_K({\Phi})$ over $\Int$; an $\mb{E}_K({\Phi})$-torsor $\Ss_{K_\Phi}(Q_\Phi,D_\Phi)\to\Ss_{\overline{K}_\Phi}(\overline{Q}_\Phi,\overline{D}_\Phi)$; a rational polyhedral cone in $\Real\otimes\mb{B}_K({\Phi})$ (where $\mb{B}_K({\Phi})$ is the co-character group of $\mb{E}_K(\Phi)$) determined by the cone decomposition $\Sigma$ with corresponding twisted torus embedding
\[
 \Ss_{K_\Phi}(Q_\Phi,D_\Phi)\into\Ss_{K_\Phi}(Q_{\Phi},D_{\Phi},\sigma);
\]
such that the stratum is isomorphic to the closed stratum $\mathcal{Z}_{K_\Phi}(Q_\Phi,D_\Phi,\sigma)$ of the twisted toric scheme $\Ss_{K_\Phi}(Q_{\Phi},D_{\Phi},\sigma)$. 

Moreover the formal completion of $\Ss_{K_\Phi}(Q_{\Phi},D_{\Phi},\sigma)$ along $\mathcal{Z}_{K_\Phi}(Q_\Phi,D_\Phi,\sigma)$ is canonically isomorphic to that of $\Ss^{\Sigma}_K$ along the corresponding stratum.

In particular, over a fine enough \'etale neighborhood of any point in this stratum, the open immersion $\Ss_K\into\Ss^{\Sigma}_K$ is isomorphic to an \'etale neighborhood of $\Ss_{K_\Phi}(Q_\Phi,D_\Phi)\into\Ss_{K_\Phi}(Q_{\Phi},D_{\Phi},\sigma)$. 

This allows us to deduce essentially all of Theorem~\ref{main}. For the assertion about the singularities of the integral models, observe that $\Ss_{K_\Phi}(Q_\Phi,D_\Phi)$ is smooth over $\Ss_{\overline{K}_{\Phi}}(\overline{Q}_{\Phi},\overline{D}_{\Phi})$, and, after suitably refining $\Sigma$, so is $\Ss_{K_\Phi}(Q_{\Phi},D_{\Phi},\sigma)$. 

Therefore, the complete local ring at a geometric point of $\Ss^{\Sigma}_K$ lying in the stratum corresponding to $(Q_{\Phi},D_{\Phi},\sigma)$ is isomorphic to a power series ring over a complete local ring of $\Ss_{\overline{K}_{\Phi}}(\overline{Q}_{\Phi},\overline{D}_{\Phi})$. By the same token, one can also find a geometric point of $\Ss_K$ with the same complete local ring.

From this, it is immediate that many \'etale local properties of $\Ss_K$ (and hence of $\Ss_{K_\Phi}(Q_\Phi,D_\Phi)$)---such as smoothness, reducedness, or being Cohen-Macaulay or a local complete intersection---transfer over to $\Ss_{K_\Phi}(Q_{\Phi},D_{\Phi},\sigma)$, and thence to $\Ss^\Sigma_K$.

\subsubsection*{The unramified case}

To say more about the singularities requires stronger hypotheses. In the case where $\Ss_K$ is an integral canonical model constructed by Kisin, $\Ss_{K_{\Phi,h}}(G_{\Phi,h},D_{\Phi,h})$ is also a smooth integral canonical model of its generic fiber. Moreover, one can show via a simple argument that
\[
\Ss_{\overline{K}_\Phi}(\overline{Q}_{\Phi},\overline{D}_\Phi)\to \Ss_{K_{\Phi,h}}(G_{\Phi,h},D_{\Phi,h})
\]
is a torsor under an abelian group scheme, and is hence also smooth.

By choosing a smooth cone decomposition $\Sigma$, we then obtain (cf.~\ref{strata:subsec:smooth}):
\begin{introthm}\label{smooth}
If $K$ is hyperspecial at $p$, the integral canonical model $\Ss_K$ over $\Reg{E,(v)}$ admits smooth projective compactifications $\Ss^{\Sigma}_K$ such that the boundary $\Ss^{\Sigma}_K\backslash\Ss_K$ is a normal crossings divisor. Moreover, these compactifications, as well as their stratifications, depend only on the choice of cone decomposition $\Sigma$, and not on the choice of symplectic embedding used to construct them.
\end{introthm}

In the special case where $(G,X)$ is of PEL type, the first assertion of the theorem is originally due to K.-W. Lan~\cite{lan:thesis}.

\subsubsection*{The ramified case}
The most striking and somewhat unexpected (at least, to the author) point about Theorem~\ref{main} is that it is entirely agnostic to the nature of the special fiber of $\Ss_K$. Therefore, we can now construct compactifications at places where the group $G$ is \emph{ramified}.

As observed in the introduction to~\cite{faltings_chai}, in attempting to deal with such situations, one is `\emph{led to very hard new problems which require new methods...}' Our method is to work locally, using $p$-adic Hodge theory, and some notions from basic rigid analytic geometry. This is explained in a bit more detail towards the end of this introduction.

The main benefit of the local method is that we never have to deal with problems concerning the algebraizing and gluing of formal charts---processes that constitute some of the trickier parts of~\cites{faltings_chai,lan:thesis}. Indeed, we have an already constructed global space, and we only have to show that it has the right properties.

In fact, this observation has been made independently by K.-W. Lan, who has proven a version of Theorem~\ref{main} in the special case of PEL type Shimura varieties through more direct means, also without any conditions on the level; cf.~\cite{lan:ramified}. Even in this setting, the group theoretic formulation of our results offers a different perspective, and the flexibility it provides might be helpful in some applications.

In general, for any class of Shimura varieties of Hodge type, once one has a reasonable theory of normal integral models, our results will immediately supply good compactifications. In particular, this paper subsumes the results of~\cite{lan:thesis},~\cite{stroh:thesis} and~\cite{lan:ramified}.

We also note that work of Kisin and Pappas shows that many Shimura varieties of Hodge type with parahoric level have good, normal integral models\cite{kisin_pappas}, whose local properties are governed by those of the local models of~\cite{pappas_zhu}. Our results will immediately apply to give toroidal compactifications of these models, with stratifications that can be described explicitly. With a bit more work, one can show that the Shimura varieties involved in these stratifications are once again Kisin-Pappas integral models. We do not do this work in this article, but hope to return to it in the future.

\subsubsection*{The minimal compactification}
The toroidal compactifications of Mumford, \emph{et. al.} are resolutions of the minimal or Baily-Borel-Satake compactification, which is important from the automorphic perspective, since its $L^2$ or intersection cohomology is intimately related with the discrete automorphic spectrum of $G$; cf.~\cite{morel:icm}. Adopting the methods of~\cite[\S V.2]{faltings_chai},\cite[\S 7.2]{lan:thesis},\cite{chai:hilbmin}, we can construct the integral model for the minimal compactification via the $\Proj$ construction applied to a certain graded ring of automorphic forms on $\Ss^{\Sigma}_K$. This gives us (cf.~\ref{min:subsec:const}):

\begin{introthm}\label{min}
 The minimal compactification of $\Sh_K$ admits a projective, normal model $\Ss_K^{\min}$ over $\Reg{E,(v)}$ that is stratified by quotients by finite groups of integral models of Shimura varieties of Hodge type. Moreover, the Hecke action of $G(\Adele_f^p)$ on $\Ss_K$ extends naturally to an action on $\Ss_K^{\min}$. Given a complete admissible rpcd $\Sigma$ as in Theorem~\ref{main}, there exists a unique map $p_{\Sigma}:\Ss_K^{\Sigma}\rightarrow\Ss_K^{\min}$ that extends the identity on $\Ss_K$ and is compatible with the stratifications on domain and target.

 When the level at $p$ is hyperspecial, $\Ss_K^{\min}$ is canonically determined and is independent of the choice of symplectic embedding.
\end{introthm}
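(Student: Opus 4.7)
The plan is to adapt the Baily-Borel strategy implemented for Siegel varieties by Faltings-Chai~\cite{falchai}*{\S V.2} and for PEL type by Lan~\cite{lan}*{\S 7.2}; the essential new ingredient enabling the transition to the Hodge-type case is the Cartier transversality furnished by Theorem~\ref{transversal}. I would first introduce the \emph{Hodge line bundle} $\omega$ on $\Ss_K^{\Sigma}$ as the pullback of the Siegel Hodge bundle $\omega_{\GSp}$ along the natural morphism $\Ss_K^{\Sigma}\to\Ss_{K'}^{\Sigma'}(\GSp,\on{S}^{\pm})$ induced by the embedding of Shimura data; Theorem~\ref{main} ensures this extension across the boundary is well-defined, and a short argument with the universal semi-abelian scheme identifies it (independently of the auxiliary choices) with the determinant of the Hodge bundle over $\Ss_K$. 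I would then set
\[
 R_K^{\Sigma}\coloneqq\bigoplus_{k\geq 0}H^0\bigl(\Ss_K^{\Sigma},\omega^{\otimes k}\bigr),\quad \Ss_K^{\min}\coloneqq\Proj R_K^{\Sigma},
\]
with $p_{\Sigma}\colon\Ss_K^{\Sigma}\to\Ss_K^{\min}$ the canonical morphism attached to a sufficiently divisible power of $\omega$.

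The two substantive verifications are that $R_K^{\Sigma}$ is a finitely generated graded $\Reg{E,(v)}$-algebra and that it does not depend on $\Sigma$. The latter is a Koecher principle: outside of finitely many low-rank exceptions (handled in the style of \cite{lan}*{\S 7.2.4}), the identification $H^0\bigl(\Ss_K^{\Sigma},\omega^{\otimes k}\bigr)=H^0\bigl(\Ss_K,\omega^{\otimes k}\bigr)$ holds for every $k\geq 0$, and the right-hand side manifestly does not involve $\Sigma$. For the former I would exploit Theorem~\ref{transversal}: since $\omega$ is pulled back from $\omega_{\GSp}$, and since $\omega_{\GSp}$ is ample on $\Ss_{K'}^{\min}(\GSp,\on{S}^{\pm})$ with finitely generated graded section ring by Chai-Faltings, the Cartier transversality of the closure across the Siegel boundary lets the induced map on graded section rings realize $R_K^{\Sigma}$ as a finitely generated module over a finitely generated quotient of the Siegel section ring; $R_K^{\Sigma}$ is therefore itself finitely generated, $\Ss_K^{\min}$ is proper, and normality of $\Ss_K^{\min}$ follows from that of $\Ss_K^{\Sigma}$ via Stein factorization.

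For the stratification, I would restrict $p_{\Sigma}$ to each toroidal boundary stratum of $\Ss_K^{\Sigma}$: using the explicit analytic description of boundary charts developed in the course of proving Theorem~\ref{main}, each such stratum fibers over a finite quotient of the integral canonical model of a Shimura variety attached to an admissible rational parabolic $P\subset G$---this auxiliary Shimura datum is again of Hodge type whenever $(G,X)$ is, so Kisin's \cite{kis3} construction of integral canonical models applies---and $p_{\Sigma}$ contracts the torus fibers of that fibration to produce the corresponding stratum of $\Ss_K^{\min}$. The Hecke action of $G(\Adele_f^p)$ extends because it acts on the tower $\{\Ss_K\}_K$ preserving $\omega$ up to canonical isomorphism, so it acts on each $R_K^{\Sigma}$ and hence on $\Ss_K^{\min}$. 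Uniqueness of $p_{\Sigma}$ follows from density of $\Ss_K$ in $\Ss_K^{\Sigma}$ together with the normality of the target.

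The principal obstacle I foresee is the finite-generation claim for $R_K^{\Sigma}$. In the Siegel and PEL cases it rests on a delicate Fourier-Jacobi analysis that seems hard to reproduce directly in the Hodge-type setting, since we lack a modular description of boundary charts. The workaround, as outlined above, is exactly the Cartier transversality of Theorem~\ref{transversal}: it allows finite generation of the Siegel graded section ring to propagate to our $R_K^{\Sigma}$ by a purely formal argument, bypassing any direct expansion calculation on the Hodge-type side.
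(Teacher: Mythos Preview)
Your approach is essentially the paper's (\S\ref{tmt:subsec:minimal}): take $\Ss_K^{\min}$ to be the $\Proj$ of the graded section ring of the pulled-back Hodge line bundle on a toroidal model, analyze $p_\Sigma$ stratum by stratum to obtain the stratification, and extend the Hecke action via its action on the section ring. Two minor technical differences are worth flagging. First, the finite generation of $R_K^{\Sigma}$ does not actually hinge on Cartier transversality: since $\Ss_K^{\Sigma}$ is by construction the normalization of a closed subscheme of $\Ss_{K'}^{\Sigma'}$, the structure map is finite, and the pullback of a semi-ample bundle along a finite morphism is semi-ample---the paper instead cites Moret-Bailly \cite{moret-bailly}*{IX.2.1} directly for semi-ampleness of $\omega(\iota)^{\Sigma}$ and takes a Stein factorization. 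Second, for independence from $\Sigma$ the paper uses the vanishing $R^if_*\Reg{}=0$ for toroidal refinements $f$ (a local torus-embedding computation, Claim~\ref{tmt:claim:cohomflat}), which is cleaner than the Koecher route you sketch because it handles the $\PGL_2$-factor exception uniformly without a separate low-rank analysis.
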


\subsubsection*{A rationality property for Hodge cycles}
Implicit in the main theorem is a new rationality property for Hodge cycles on abelian varieties with respect to $p$-adic uniformizations, closely related to notions considered by Andr\'e in \cite{andre:padic_betti}. To explain this in its simplest form, we will put ourselves in the following situation: Let $A$ be an abelian variety over a number field $F$, whose reduction at a place $v$ of $F$ is totally degenerate: that is, a torus with character group $X_g=\Int^g$, where $g=\dim A$. By a theorem of Raynaud~\cite{raynaud:icm}, we can find a rigid analytic uniformization over $F_v$:
\[
1\to X_g\xrightarrow{\iota}\SHom(X_g,\Gmh{F_v}^{\an})\to A^{\an}_{F_v}\to 1.
\]
In particular, for any prime $\ell$, the $\ell$-adic Tate module $V_{\ell}(A)$ acquires an extension structure as a $\Gal(\overline{F}_v/F_v)$-module:
\[
 0\to X_g\otimes\Rat_{\ell}(1)\to V_{\ell}(A)\to\Hom(X_g,\Rat_{\ell})\to 0.
\]

Let $U_{\ell}\subset\GL(V_{\ell}(A))$ be the unipotent radical of the parabolic sub-group stabilizing this extension structure. Its Lie algebra is now equipped with a natural integral structure:
\[
 \Rat_{\ell}(1)\otimes\Hom(\dual{X}_g,X_g)=\Lie U_{\ell}.
\]

Let $G=\on{MT}(A)$ be the Mumford-Tate group of $A$; then $G_{\Rat_{\ell}}$ has a natural action on $V_{\ell}(A)$ determined up to inner automorphisms. In particular, the image of the induced representation $G_{\Rat_{\ell}}\to\GL(V_{\ell}(A))$ is canonically determined. We can therefore consider the intersection $\Lie U_{\ell,G}\coloneqq\Lie U_{\ell}\cap\Lie G_{\Rat_{\ell}}$ within $\End(V_{{\ell}}(A))$.

\begin{introthm}\label{rationality}
The $\Int$-module
\[
\Hom(\dual{X}_g,X_g)\cap(\Lie G_{\Rat_{{\ell}}})(-1)\subset\Hom(\dual{X}_g,X_g)
\]
is independent of the choice of prime $\ell$. Denote it by $\mb{B}_G$. Then, for any prime $\ell$, the natural map:
\[
 \Rat_{\ell}(1)\otimes\mb{B}_G\to\Lie U_{\ell,G}
\]
is an isomorphism.
\end{introthm}

Although we will extract it as an immediate consequence of the flatness of the boundary divisor of the associated Shimura variety (cf.~\ref{strata:rem:etale_analog}), we believe that the result warrants emphasizing: The Mumford-Tate group $G$ is a transcendental object defined using a complex analytic uniformization for $A$. The theorem says that it also enjoys strong rationality properties with respect to the $p$-adic analytic uniformization. Of course, such behavior is predicted by the Hodge conjecture, and it was our confidence in its validity that led to Theorem~\ref{main}.

In fact, a significant input into the main Theorem~\ref{main} is the proof of a crystalline version of Theorem~\ref{rationality}; cf.~\ref{boundary:thm:main}.

Also, \emph{a priori} knowledge of the result---for instance, in the PEL case, where the Hodge cycles are generated by endomorphisms and polarizations, Theorem~\ref{rationality} follows directly from the functoriality of $p$-adic uniformizations---would make the proof of Theorem~\ref{main} quite straightforward. For more of an explanation of this, cf.~\eqref{boundary:rem:pel_case}.

\subsubsection*{Morita's conjecture}
Theorem \ref{main} also has the following pleasant consequence:
\begin{introthm}\label{morita:good_reduction}
  Suppose that $A$ is an abelian variety defined over a number field $F$, and suppose that its Mumford-Tate group is anisotropic modulo its center. Then $A$ has potentially good reduction.
\end{introthm}
The hypothesis on the Mumford-Tate group ensures that $A$ does not `degenerate in characteristic $0$'. The theorem says that this is enough to keep it from degenerating in finite characteristic as well. This result gives a different proof of Y. Morita's conjecture (see \cite{morita:good_reduction}). Related results can be found in \cite{paugam:morita},\cite{vasiu:projective} and \cite{lan:elevators}, with a proof of the full conjecture appearing in \cite{lee:morita}, building on the previous results. Our proof is independent of all these efforts, and applies uniformly, without any consideration of special cases.

\subsubsection*{The main difficulty}

We will now give a rough idea of the main difficulty that has to be overcome in this paper. The most important situation is the one where $K = K^\beef\cap G(\Adele_f)$. In this case, we can assume that $K^\beef$ has been chosen so that $\Sh_K\hookrightarrow \Sh_{K^\beef}$ is a closed immersion.

Consider the case where we are working around a point of $\Ss_{K^\beef}^{\Sigma^\beef}$ where the universal abelian variet{}y has totally degenerate multiplicative reduction: This corresponds to the situation where the point lies over a zero-dimensional stratum of the minimal compactification. Here, the complete local ring of $\Ss_{K^\beef}^{\Sigma^\beef}$ is the completion of a toric scheme associated with the torus over $\Int_{(p)}$ with cocharacter group $B(X_g)$, the space of symmetric bilinear forms on $X_g = \Int^g$. One therefore expects the complete local ring of $\Ss_K^\Sigma$ to then be identified with that of a subtoric scheme corresponding a to a cocharacter subgroup of $B(X_g)$. Which one should this be? It is of course the subgroup $\mb{B}_G$ guaranteed to us by Theorem~\ref{rationality}! The theorem tells us that it has the right dimension.

In the general case, the complete local ring of $\Ss_{K^\beef}^{\Sigma^\beef}$ is isomorphic to that of a smooth twisted torus embedding
\[
\Ss_{K^\beef_{\Phi^\beef}}(Q_{\Phi^\beef},D_{\Phi^\beef})\hookrightarrow \Ss_{K^\beef_{\Phi^\beef}}(Q_{\Phi^\beef},D_{\Phi^\beef},\sigma^\beef),
\]
where 
\[
\Ss_{K^\beef_{\Phi^\beef}}(Q_{\Phi^\beef},D_{\Phi^\beef})\to \Ss_{\overline{K}^\beef_{\Phi^\beef}}(\overline{Q}_{\Phi^\beef},\overline{D}_{\Phi^\beef})
\]
is a torsor under a certain torus whose cocharacter group is a subgroup of $B(X_g)$. 

Therefore, the complete local ring of $\Ss_{K^\beef}^{\Sigma^\beef}$ at such a point is formally smooth over a subring $R^{\sab}$, which is a complete local ring of $\Ss_{\overline{K}^\beef_{\Phi^\beef}}(\overline{Q}_{\Phi^\beef},\overline{D}_{\Phi^\beef})$.

Now, suppose that $R_G$ is the complete local ring of $\Ss_K^\Sigma$ at such a point, and let $R_G^{\sab}\subset R_G$ be the integral closure of the image of $R^{\sab}$.

Theorem~\ref{rationality} gives us a torus $\mathbf{E}_G$ with cocharacter group $\mb{B}_G$ of the correct rank. The main point now is to show that the $\mathbf{E}_{K^\beef}(\Phi^\beef)$-torus torsor
\[
\Ss_{K^\beef_{\Phi^\beef}}(Q_{\Phi^\beef},D_{\Phi^\beef})\to \Ss_{\overline{K}^\beef_{\Phi^\beef}}(\overline{Q}_{\Phi^\beef},\overline{D}_{\Phi^\beef})
\]
admits a canonical reduction of structure group to an $\mb{E}_G$-torsor over $\Spec R^{\sab}_G$.

In the PEL case such a reduction can be constructed directly, using the moduli description of the spaces involved. This is essentially what is done in~\cite{lan:ramified}. In our situation, we have to show the existence of the reduction using more abstract and indirect arguments. 

In more detail: Consider the induced $\mb{E}_{K^\beef}(\Phi^\beef)/\mb{E}_G$-torsor $\mb{\Xi}^G\to \Spec R_G^{\sab}$. Over the fraction field $Q(R_G)$, one has a canonical trivialization of $\mb{\Xi}^G$. The reduction of structure will follow once we know that this canonical trivialization is already defined over $R^{\sab}_G$.

The proof of this takes up much of the technical material in \S~\ref{sec:boundary}; cf.~\eqref{boundary:subsec:rationality} and~\eqref{boundary:subsec:reduction}. 

Once the reduction of structure is known in this particular way, it is quite easy to identify $R_G$ with a complete local ring of a twisted torus embedding of the $\mb{E}_G$-torsor we have obtained. One consequence of this is that the intersection of $\Ss_K^\Sigma$ with the boundary divisor is flat over $\Int_{(p)}$. This is an important result, because it shows that points at the boundary of $\Ss_K^\Sigma$ in characteristic $p$ admit lifts to characteristic $0$ that are once again at the boundary of the \emph{same} Shimura variety. In the PEL case, this was shown by K.-W. Lan in~\cite{lan:elevators} via a direct argument using degeneration data. In the general Hodge type case, we do not see how to show this without essentially understanding the full structure of the boundary.

\subsubsection*{Ingredients for the proof of Theorem~\ref{main}}
As should be clear from the description above, our proof is local. It makes essential use of the rigid analytic space attached to the formal neighborhood of a closed point of $\Ss^{\Sigma}_K$: Specifically, we work with the rigid space $\widehat{\mathcal{U}}^{\beef,\an}$ attached to the complete local ring of the corresponding point of $\Ss^{\Sigma^\beef}_{{K}^{\beef}}$. 

Now, $\Sh_K$ has an interpretation as the sub-space of $\Sh_{{K}^{\beef}}$ where certain Hodge cycles propagate as parallel tensors on the de Rham cohomology of the universal family of abelian varieties. 

The essential point is that even though we do not know that these de Rham tensors arise from algebraic cycles, we do have access to a shadow of their conjectural motivic origins: Namely, their good behavior with respect to the $p$-adic comparison isomorphisms. It is precisely this that allows us to prove the requisite descent assertion for the trivialization of $\mb{\Xi}^G$.

To execute this plan, we need to develop the logarithmic Dieudonn\'e theory of degenerating abelian varieties, and we also need some information about the $p$-adic comparison isomorphisms for semi-stable abelian varieties, and their behavior in families. Section~\ref{sec:semistable} is devoted to the exposition of these results. A useful ingredient here is the interpretation of the log crystalline realization of a semi-stable abelian variety as a space of `nearby unipotent cycles' in the terminology of \cite{vologodsky:hodge}; cf.~\eqref{semistable:subsec:unip}. This allows us to give a construction of the semi-stable comparison isomorphism for abelian varieties using log Dieudonn\'e theory: this is a direct generalization of the construction for abelian varieties with good reduction given in \cite{faltings:very_ramified}; cf.~\eqref{semistable:prp:comparison}. As a consequence, we also recover the following result due to Coleman and Iovita~\cite{coleman_iovita} (cf.~\eqref{semistable:subsubsec:coleman_iovita}):
\begin{introthm}\label{padicmonodromy}
Let $K$ be a non-archimedean local field of characteristic $0$, and let $A$ be an abelian variety over $K$ with semi-abelian reduction. Then $A$ has good reduction if and only if its $p$-adic Tate module $T_p(A)$ is a crystalline representation of the absolute Galois group of $K$.
\end{introthm}

In \S\ref{sec:background}, we summarize what we need of the existing theory of toroidal compactifications, both the analytic theory in characteristic $0$ and the arithmetic theory of Chai-Faltings over $\Int$. A good part of this amounts to setting up notation for what follows, and we expend some effort to harmonize between the two theories in a resolutely ad\'elic language.

With this background in hand, the technical heart of the paper can be found in sub-sections~\eqref{boundary:subsec:logfcrys}--\eqref{boundary:subsec:final}. The main result is Theorem~\eqref{boundary:thm:main}, which gives the desired description of the local structure at the boundary. This description is quite precise, and it seems likely that it could be used to provide a complete theory of integral compactifications of Hodge type without any reference to the characteristic $0$ theory.

We do not pursue this line of reasoning in this article. Instead, in \S\ref{sec:strata}, we use this local result to reduce the proof of our main theorems to already known facts about compactifications in characteristic $0$. The reader interested only in the statements of results is encouraged to jump directly to this section (though she should refer to the preceding sections for the notation used), and then to its companion \S\ref{sec:min}, which deals with the minimal compactification,

\subsection*{Acknowledgements}

This article has been through several iterations at this point, but it has its genesis in my University of Chicago Ph. D. thesis~\cite{mp:thesis}, completed under the guidance of Mark Kisin. I am very grateful to him, for introducing me to this subject, and for his encouragement and insight. I also thank B. Conrad, B. Howard, K. Kato, R. Kottwitz, K.-W. Lan, S. Morel, G. Pappas, G. Prasad, J. Rabinoff, J. Suh, B. Stroh and Y. Zhu for enlightening conversations and correspondence. I also thank the anonymous referee for useful remarks. 

This work was partially supported by NSF Postdoctoral Research Fellowship DMS-1204165 and by NSF grant DMS-1502142.

\section*{Conventions}\label{sec:notation}

\begin{enumerate}[itemsep=0.1in]
  \item All rings and monoids will be commutative, unless otherwise noted.
  \item For any prime $p$, $\vert\cdot\vert_p$ will denote the standard $p$-adic norm with $\vert p\vert_p=p^{-1}$, and $\nu_p$ will be the $p$-adic valuation $-\log_p\vert\cdot\vert_p$.
  \item For any prime $p$, we will write
      \[
      \widehat{\Int}^p\coloneqq\prod_{\ell\neq p}\Int_p\subset\widehat{\Int}
      \]
      for the pro-finite prime-to-$p$ integers. We will also write
      \[
       \Adele_f^p = \Rat\otimes\widehat{\Int}^p\subset\Adele_f
      \]
      for the prime-to-$p$ finite ad\'eles.
  \item If $L$ is a discrete valuation field, then $\Reg{L}$ will denote its ring of integers and $\mx_L\subset\Reg{L}$ its maximal ideal.
  \item We will use the geometric notation for change of scalars. If $f:R\to S$ is a map of rings and $M$ is an $R$-module, then we will denote the induced $S$-module $S\otimes_{f,R}M$ by $f^*M$. If the map $f$ is clear from context, then we will also write $M_S$ for the same $S$-module.
  \item If $\varphi:R\to R$ is an endomorphism of $R$, then a \defnword{$\varphi$-module} over $R$ is an $R$-module $M$ equipped with a map $\varphi^*M\to M$ of $R$-modules.
  \item
  Suppose that $R$ is a ring and suppose that $\mathbf{C}$ is an $R$-linear tensor category that is a faithful tensor sub-category of $\operatorname{Mod}_R$, the category of $R$-modules. Suppose in addition that $\mathbf{C}$ is closed under taking duals, symmetric and exterior powers in $\operatorname{Mod}_R$. Then, for any object $D\in\Obj(\mathbf{C})$, we will denote by $D^\otimes$ the direct sum of the tensor, symmetric and exterior powers of $D$ and its dual.

  \item Many Shimura data and varieties, both pure and mixed, appear in this paper. We will always use $\Sh$ to denote the canonical models of these varieteis and $\Ss$ for their integral models. In general, Roman characters will be used for spaces over $\Rat$, and calligraphic characters will be used for their integral models.
\end{enumerate}

\tableofcontents

\section{Semi-stable abelian varieties}\label{sec:semistable}

This section is meant to be used as a reference for Section~\ref{sec:boundary}. We review the theory of degenerating abelian varieties due to Mumford and Chai-Faltings. Using ideas of Kato, we employ this theory to study the corresponding degenerations of their cohomological realizations, especially the de Rham and crystalline ones.

Another goal is to understand the behavior of the $p$-adic comparison isomorphisms in families. Towards this, we give a construction for the semi-stable comparison isomorphism for abelian varieties using logarithmic Dieudonn\'e theory.

To avoid distracting from the main focus of the article, we only give the statements of the relevant results here, and postpone the proofs to the appendix.

\subsection{$1$-motifs and their realizations}\label{semistable:subsec:1motifs}

In this sub-section, we will assume that the reader is familiar with the notion of a bi-extension of a pair of group schemes; cf.~\cite[\S 10.2]{deligne:hodge-iii},~\cite[\S VII (2.1)]{sga_7_i} for details. For the theory of $1$-motifs, cf.~\cite[\S 10]{deligne:hodge-iii} and \cite{andreatta_bviale}.

\subsubsection{}
For any pair $(H,G)$ of sheaves of groups over a scheme $S$, we will denote by $\mb{1}_{H\times G}$ the trivial $\Gm$-bi-extension of $H\times G$.

A \defnword{$1$-motif} $Q$ over a scheme $S$ is a tuple $(B,\underline{Y},\underline{X},c,\dual{c},\tau)$, where:
\begin{itemize}
  \item $B$ is an abelian scheme over $S$, which we will denote $Q^{\ab}$.
  \item $\underline{Y}$ and $\underline{X}$ are \'etale sheaves of free abelian groups of finite rank over $S$, trivialized over a finite \'etale cover of $S$. We will denote them as $Q^{\et}$ and $Q^{\mult,C}$, respectively.
  \item $c:\underline{Y}\rightarrow B$ and $\dual{c}:\underline{X}\rightarrow\dual{B}$ are maps of sheaves of groups over $S$. We will denote them by $c_Q$ and $\dual{c}_Q$, respectively.
  \item $\tau:\mb{1}_{\underline{Y}\times\underline{X}}\xrightarrow{\simeq}(c\times\dual{c})^*\mathcal{P}_B$ is a trivialization of $\Gm$-bi-extensions of $\underline{Y}\times\underline{X}$. We will denote it by $\tau_Q$.
\end{itemize}
Here, $\mathcal{P}_B$ is the Poincar\'e $\Gm$-bi-extension of $B\times\dual{B}$.

A map $\varphi:Q_1\rightarrow Q_2$ of $1$-motifs is a tuple $(\varphi^{\ab},\varphi^{\et},\varphi^{\mult,C})$, for $?=\ab,\et$, $\varphi^{?}:Q^{?}_1\rightarrow Q^{?}_2$ is a map of sheaves of groups over $S$ and $\varphi^{\mult,C}:Q_2^{\mult,C}\rightarrow Q_1^{\mult,C}$. The tuple satisfies: $c_{Q_2}\varphi^{\et}=\varphi^{\ab}c_{Q_1}$, $\dual{c}_{Q_1}\varphi^{\mult,C}=\varphi^{\ab,\vee}\dual{c}_{Q_2}$, and a certain compatibility between $\tau_{Q_1}$ and $\tau_{Q_2}$~\cite[10.2.12]{deligne:hodge-iii}. To explain this, observe that we have natural isomorphisms of $\Gm$-bi-extensions of $Q_1^{\et}\times Q_2^{\mult,C}$:
\[
 (c_{Q_1}\times \dual{c}_{Q_1}\varphi^{\mult,C})^*\mathcal{P}_{Q_1^{\ab}}\xrightarrow{\simeq}(c_1\times\varphi^{\ab,\vee}\dual{c}_{Q_2})^*\mathcal{P}_{Q_1^{\ab}}\xrightarrow{\simeq}(\varphi^{\ab}c_{Q_1}\times \dual{c}_{Q_2})^*\mathcal{P}_{Q_2^{\ab}}\xrightarrow{\simeq}(c_{Q_2}\varphi^{\et}\times\dual{c}_{Q_2})^*\mathcal{P}_{Q_2^{\ab}}.
\]
The compatibility condition is that these isomorphisms carry the trivialization $(1\times\varphi^{\mult,C})^*\tau_{Q_1}$ to $(\varphi^{\et}\times 1)^*\tau_{Q_2}$.

The \defnword{dual} $\dual{Q}$ of a $1$-motif $Q$ is the tuple $(\dual{\bigl(Q^{\ab}\bigr)},Q^{\mult,C},Q^{\et},\dual{c},c,\dual{\tau})$, where $\dual{\tau}$ is the trivialization of the $\Gm$-bi-extension $(\dual{c}_Q\times c_Q)^*\mathcal{P}_{\dual{\bigl(Q^{\ab}\bigr)}}$ induced from $\tau$ via the symmetricity of the Poincar\'e bi-extension.

A \defnword{polarization} of a $1$-motif $Q$ is a map $\lambda:Q\rightarrow\dual{Q}$ such that $\lambda^{\ab}:Q^{\ab}\rightarrow\dual{\bigl(Q^{\ab}\bigr)}$ is a polarization of abelian schemes, such that $\lambda^{\et}:Q^{\et}\rightarrow Q^{\mult,C}$ is injective, and such that $\lambda^{\mult,C}=\dual{(\lambda^{\et})}$.

There is a canonical \defnword{weight filtration} $W_\bullet Q$ on any $1$-motif $Q$ with:
\begin{align*}
  W_iQ&=\begin{cases}
    0,&\text{ if $i<-2$};\\
    (0,0,Q^{\mult,C},0,0,1),&\text{ if $i=-2$};\\
    (Q^{\ab},0,Q^{\mult,C},0,\dual{c}_Q,1)&,\text{ if $i=-1$};\\
    Q,&\text{ if $i=0$}.
  \end{cases}
\end{align*}

\subsubsection{}\label{semistable:subsubsec:etale}
Write $Q^{\mult}$ for the torus over $S$ with character group $Q^{\mult,C}$ and let $Q^{\sab}$ be the extension of $Q^{\ab}$ by $Q^{\mult}$ classified by the map $-\dual{c}_Q:Q^{\mult,C}\to\dual{(Q^{\ab})}$: This satisfies the property that, for any section $y\in Q^{\mult,C}$, the $\Gm$-extension induced by pushing $Q^{\sab}$ along the map $y:Q^{\mult}\to\Gm$ corresponds to the point $-\dual{c}_Q(y)\in\dual{(Q^{\ab})}$.

Giving the trivialization $\tau$ is now equivalent to giving a map $Q^{\et}\rightarrow Q^{\sab}$ that lifts $c_Q$. This establishes an equivalence of categories between $1$-motifs over $S$ and two-term complexes of the form $[\underline{Y}\to H]$, where $\underline{Y}$ is an \'etale locally constant sheaf of finite free $\Int$-modules over $S$ and $H$ is an extension of an abelian scheme by a torus over $S$.\footnote{For us, a torus is always split locally in the finite \'etale topology.} For all this, cf.~\cite[\S 10]{deligne:hodge-iii}.

In particular, given a $1$-motif $Q$ and $n\in\Int_{\geq 1}$ we can define its \defnword{$n$-torsion $Q[n]$}: this is the flat group scheme $H^{-1}\bigl(\operatorname{cone}(Q\xrightarrow{n}Q)\bigr)$.

For any prime $p$, the system $(Q[p^n])_{n\in\Int_{\geq 0}}$ is a $p$-divisible group over $S$, which we will denote by $Q[p^{\infty}]$; cf.~\cite[\S 1.3]{andreatta_bviale}.

The filtration $W_\bullet Q$ induces a 3-step ascending weight filtration on $Q[p^{\infty}]$:
\[
 0=W_{-3}Q[p^{\infty}]\subset W_{-2}Q[p^{\infty}]=Q^{\mult}[p^{\infty}]\subset W_{-1}Q[p^{\infty}]=Q^{\sab}[p^{\infty}]\subset W_0Q[p^{\infty}]=Q[p^{\infty}].
\]
We have further identifications:
\[
 \gr^W_{-1}Q[p^{\infty}]=Q^{\ab}[p^\infty]\;;\;\gr^W_0Q[p^\infty]=Q^{\et}\otimes\underline{(\Rat_p/\Int_p)}.
\]

There is a perfect pairing $Q[p^{\infty}]\times\dual{Q}[p^{\infty}]\rightarrow\mu_{p^{\infty}}$ identifying $\dual{Q}[p^{\infty}]$ with the Cartier dual of $Q[p^{\infty}]$ defined precisely as in \cite[10.2.5]{deligne:hodge-iii}. The pairing is compatible with weight filtrations: Here, we equip $\mu_{p^\infty}$ with the ascending weight filtration $W_\bullet\mu_{p^{\infty}}$ with $W_{-2}\mu_{p^\infty}=\mu_{p^\infty}$ and $W_{-3}\mu_{p^{\infty}}=0$.

\subsubsection{}\label{semistable:subsubsec:derham}
We can also attach to $Q$ its de Rham realization $\underline{H}^1_{\dR}(Q)$: This will be a vector bundle over $S$ of rank $2\dim Q^{\ab}+\rank Q^{\et}+\rank Q^{\mult,C}$, equipped with two filtrations, the descending \defnword{Hodge filtration} $F^\bullet\underline{H}^1_{\dR}(Q)$ and the ascending \defnword{weight filtration} $W_{\bullet}\underline{H}^1_{\dR}(Q)$.

It is defined using universal vector extensions; cf.~\cite[\S 10.2]{deligne:hodge-iii} or \cite[\S 2.4]{andreatta_bviale}. Here is what we will need for now: There is a canonical $2$-term complex $[Q^{\et}\rightarrow E_Q]$ that is an extension of $[Q^{\et}\to Q^{\sab}]$ by the vector group attached to the locally free sheaf $\underline{\Lie}(Q^{\sab})$. This is the universal such vector extension, in the sense that given any extension of $[Q^{\et}\to E]$ by a vector group attached to a locally free coherent sheaf $H$ over $S$, there exists a unique map of coherent sheaves $f:\underline{\Lie}(Q^{\sab})\to H$ such that $[Q^{\et}\to E]$ is the push-forward of $[Q^{\et}\to E_Q]$ along $f$.

We set $\underline{H}_1^{\dR}(Q)=\underline{\Lie}(Q)$, and $\underline{H}^1_{\dR}(Q)=\dual{\underline{H}_1^{\dR}(Q)}$. We write $F^1\underline{H}^1_{\dR}(Q)$ for the annihilator of $\underline{\Lie}(Q^{\sab})\subset\underline{H}_1^{\dR}(Q)$: This is a direct summand, and it determines a two-step descending filtration $F^\bullet\underline{H}^1_{\dR}(Q)$ with $F^0\underline{H}^1_{\dR}(Q)=\underline{H}^1_{\dR}(Q)$ and $F^2\underline{H}^1_{\dR}(Q)=0$.

Again, the weight filtration on $Q$ induces an ascending $3$-step weight filtration on $\underline{H}^1_{\dR}(Q)$:
\[
 0\subset W_0\underline{H}^1_{\dR}(Q)=\underline{H}^1_{\dR}(Q/W_0Q)\subset W_1\underline{H}^1_{\dR}(Q)=\underline{H}^1_{\dR}(Q/W_{-1}Q)\subset\underline{H}^1_{\dR}(Q).
\]

Write $\Reg{S}(-1)$ for the trivial vector bundle $\Reg{S}$ equipped with the descending Hodge filtration $F^\bullet\Reg{S}(-1)$ satisfying $F^1\Reg{S}(-1)=\Reg{S}(-1)$ and $F^2\Reg{S}(-1)=0$, and the ascending weight filtration $W_{\bullet}\Reg{S}(-1)$ satisfying $W_2\Reg{S}(-1)=\Reg{S}(-1)$ and $W_1\Reg{S}(-1)=0$. Then there is a canonical perfect pairing of vector bundles respecting both Hodge and weight filtrations: $\underline{H}^1_{\dR}(Q)\times\underline{H}^1_{\dR}(\dual{Q})\to\Reg{S}(-1)$.

$\underline{H}^1_{\dR}(Q)$ has additional structure. Suppose that $S$ is a separated scheme over a base $S_0$. Let $S$ be the first-order infinitesimal neighborhood of the diagonal embedding of $S$ in $S\times_{S_0}S$: It is equipped with two projections $p_1,p_2:S\to S$. There is a canonical isomorphism of complexes of $S$-group schemes~\cite[3.3.2]{andreatta_bviale}:
\[
p_1^*[Q^{\et}\to E_Q]\xrightarrow{\simeq}p_2^*[Q^{\et}\to E_Q].
\]
Applying the Lie algebra functor now gives us a canonical isomorphism $p_1^*\underline{H}^1_{\dR}(Q)\xrightarrow[\eta]{\simeq}p_2^*\underline{H}^1_{\dR}(Q)$. As usual, the map $s\mapsto p_2^*s-\eta(p_1^*s)$ now defines an integrable connection:
\[
 \nabla_{S/S_0}:\underline{H}^1_{\dR}(Q)\to\underline{H}^1_{\dR}(Q)\otimes_{\Reg{S}}\Omega^1_{S/S_0}.
\]

\subsubsection{}\label{semistable:subsubsec:crystalline}
Finally, in the case where a prime $p$ is locally nilpotent in $\Reg{S}$, we can functorially attach a (contra-variant) Dieudonn\'e crystal $\Dieu(Q)$ to $Q$. This is simply the Dieudonn\'e crystal attached to the $p$-divisible group $Q[p^{\infty}]$ by the theory of \cite[\S~4]{bbm:cris_ii}; cf. also~\cite{andreatta_bviale}. As such, it is actually a tuple $(\Dieu(Q),\varphi_{\Dieu(Q)},\on{V}_{\Dieu(Q)},F^\bullet\Dieu(Q)(S))$, where $\Dieu(Q)$ is a crystal of locally free coherent sheaves over the crystalline site $(S/\Int_p)_{\cris}$. To describe the remaining data, we first need to note that the absolute Frobenius $\on{Fr}_{S\otimes\Field_p}$ on $S\otimes\Field_p$ canonically induces a pull-back functor $\on{Fr}^*$ on crystals over $(S/\Int_p)_{\cris}$. Now, $\varphi_{\Dieu(Q)}$ and $\on{V}_{\Dieu(Q)}$ are maps:
\[
 \varphi_{\Dieu(Q)}:\on{Fr}^*\Dieu(Q)\to\Dieu(Q)\;;\;\on{V}_{\Dieu(Q)}:\Dieu(Q)\to\on{Fr}^*\Dieu(Q)
\]
such that  $\varphi_{\Dieu(Q)}\circ \on{V}_{\Dieu(Q)}=p\cdot\on{1}_{\Dieu(Q)}$ and $\on{V}_{\Dieu(Q)}\circ\varphi_{\Dieu(Q)}=p\cdot\on{1}_{\on{Fr}^*\Dieu(Q)}$.

Write $\Dieu(Q)(S)$ for the vector bundle over $S$ obtained by restricting $\Dieu(Q)$ to the Zariski site of $S$: The crystalline nature of $\Dieu(Q)$ equips this with an integrable connection:
\[
 \Dieu(Q)(S)\to\Dieu(Q)(S)\otimes\Omega^1_{S/\Int_p}.
\]
There is now a canonical parallel isomorphism of vector bundles with integrable connections $\Dieu(Q)(S)\xrightarrow{\simeq}\underline{H}^1_{\dR}(Q)$~\cite[4.3.1]{andreatta_bviale}. Via this isomorphism, the Hodge filtration on $\underline{H}^1_{\dR}(Q)$ induces a filtration $F^\bullet\Dieu(Q)(S)$ on $\Dieu(Q)(S)$. It satisfies:
\begin{align}\label{semistable:eqn:hodgefil}
 \on{Fr}^*\bigl(F^1{\Dieu(Q)}(S)\otimes\Field_p\bigr)=\ker\bigl(\varphi_{\Dieu(Q)}\otimes 1:\on{Fr}^*{\Dieu(Q)}(S)\otimes\Field_p\rightarrow {\Dieu(Q)}(S)\otimes\Field_p\bigr).
\end{align}

As above, $\Dieu(Q)$ is equipped with an ascending $3$-step weight filtration $W_{\bullet}\Dieu(Q)$ by sub-Dieudonn\'e crystals:
\[
 0=W_{-1}\Dieu(Q)\subset W_0\Dieu(Q)=\Dieu(Q/W_0Q)\subset W_1\Dieu(Q)=\Dieu(Q/W_{-1}Q)\subset W_2\Dieu(Q)=\Dieu(Q).
\]

We will write $\mb{1}$ for the trivial crystal over $S$; this is naturally a Dieudonn\'e $F$-crystal when equipped with $\varphi_{\mb{1}}=1$, and $\on{V}_{\mathbb{M}}=p$, with Hodge filtration $F^0\mb{1}(S)=\mb{1}(S)$, $F^1\mb{1}(S)=0$, and weight filtration $W_0\mb{1}=\mb{1}$, $W_{-1}\mb{1}=0$. The \defnword{Tate twist} $\mb{1}(-1)$ is the Dieudonn\'e $F$-crystal whose underlying crystal is still the trivial crystal, but $\varphi_{\mb{1}(-1)}=p$, $\on{V}_{\mb{1}(1)}=1$; the Hodge filtration is given by $F^1\mb{1}(-1)(S)=\mb{1}(-1)(S)$ and $F^2\mb{1}(-1)(S)=0$, and the weight filtration by $W_{2}\mb{1}(1)=\mb{1}(-1)$, $W_{1}\mb{1}(-1)=0$.

Cartier duality induces a canonical perfect pairing $\Dieu(Q)\times\Dieu(\dual{Q})\to\mb{1}(-1)$ that is compatible with both Hodge and weight filtrations; cf.~\cite[\S~3.4]{andreatta_bviale} and~\cite[\S~5.3]{bbm:cris_ii}.

\subsection{Degenerating abelian varieties}\label{semistable:subsec:degen}

In this sub-section, we will fix a complete local normal Noetherian ring $R$, and an effective Cartier divisor $D\subset S\coloneqq\Spec R$ with complement $j:U\into S$.

\subsubsection{}\label{semistable:subsubsec:degpol}
Given an irreducible effective divisor $D'\subset S$ and a line bundle $\mathcal{L}$ over $S$, the order of vanishing $\nu_{D'}(\alpha)$ along $D'$ of any section $\alpha\in H^0(U,\mathcal{L})$ is well-defined: we choose a trivialization $\iota:\mathcal{L}\vert_V\xrightarrow{\simeq}\Reg{V}$ in an open sub-scheme $V\subset S$ containing the generic point of $D'$, and set $\nu_{D'}(\alpha)=\nu_{D'}(\iota(\alpha))$, where $\nu_{D'}$ is the discrete valuation attached to $D'$.

With this prelude, let $\mb{DD}_{\on{pol}}(S,U)$ be the category of polarized $1$-motifs $(Q_U,\lambda_U)$ over $U$ such that:
\begin{itemize}
  \item The polarized abelian scheme $(Q_U^{\ab},\lambda^{\ab}_U)$ extends (uniquely) to a polarized abelian scheme $(Q^{\ab},\lambda^{\ab})$ over $S$.
  \item The maps $c_{Q_U}$ and $\dual{c}_{Q_U}$ extend to maps $c_Q:Q^{\et}\to Q^{\ab}$ and $\dual{c}_Q:Q^{\mult,C}\to Q^{\ab,\vee}$. Here, we need the fact that the locally constant sheaves $Q_U^{\et}$ and $Q_U^{\mult,C}$ extend canonically to sheaves $Q^{\et}$ and $Q^{\mult,C}$ over $S$, as do the maps between them.
  \item For any section $y$ of $Q^{\et}$, consider the section
  \[
  \tau(y,\lambda^{\et}(y))\in H^0\bigl(U,(c(y)\times\dual{c}(\lambda^{\et}(y)))^*\mathcal{P}_{Q^{\ab}}\bigr).
  \]
  We require that, for any irreducible divisor $D'$ with support in $D$, we have:
  \[
   \nu_{D'}(\tau(y,\lambda^{\et}(y)))\geq 0.
  \]
\end{itemize}

Let $\mb{DD}(S,U)$ be the full sub-category of the category of $1$-motifs over $U$ consisting of those $1$-motifs $Q_U$ that admit a polarization $\lambda_U$ such that $(Q_U,\lambda_U)$ belongs to $\mb{DD}_{\on{pol}}(S,U)$.

\subsubsection{}\label{semistable:subsubsec:mumford}
Let $\mb{DEG}(S,U)$ be the category of abelian schemes $A$ over $U$ such that $A$ extends (uniquely) to a smooth group scheme $G$ over $S$ with semi-abelian fibers. Then a construction of Mumford and Chai-Faltings (cf.~\cite[Ch. III]{faltings_chai},\cite[4.4.16,4.5.5]{lan:thesis}) gives a canonical equivalence of categories:
\[
 \on{M}_{(S,U)}:\mb{DEG}(S,U)\xrightarrow{\simeq}\mb{DD}(S,U).
\]
When $R=\Reg{K}$ is the ring of integers of a $p$-adic local field, and $D=\Spec k$ is its special point, the result is due to Raynaud~\cite{raynaud:icm}.

Fix $A$ in $\mb{DEG}(S,U)$, and let $Q_U=\on{M}_{(S,U)}(A)$. We will need the following facts about the relationship between $A$ and $Q_U$:

\begin{itemize}
\item There is a canonical duality isomorphism:
\[
 \on{M}_{(S,U)}(\dual{A})\xrightarrow{\simeq}\dual{Q}_U.
\]
This is shown in~\cite[Ch. III,\S~6, pp. 73]{faltings_chai}.

\item  For any $n\in\Int_{\geq 1}$, there is a canonical isomorphism~\cite[III.7.3]{faltings_chai} of finite flat group schemes over $U$, compatible with polarization pairings:
\begin{align}\label{semistable:eqn:torsion}
 A[n]\xrightarrow{\simeq}Q_U[n].
\end{align}

\item There is a canonical isomorphism of filtered vector bundles over $U$ with integrable connection:
\begin{align}\label{semistable:eqn:derham}
\underline{H}^1_{\dR}(A)\xrightarrow{\simeq}\underline{H}^1_{\dR}(Q_U).
\end{align}
It is compatible with polarization pairings. This amounts to seeing that the universal vector extension of $A$ can be constructed using that of $Q_U$ via Mumford's construction. This can be deduced from \cite[Ch. III,\S~9]{faltings_chai}, especially the proof of Theorem 9.4.
\end{itemize}

\subsection{Log $F$-crystals}\label{semistable:subsec:logfcrys}
Let $(S,U)$ be as in \eqref{semistable:subsec:degen}. Given $A$ in $\mb{DEG}(S,U)$, we will now use the equivalence of categories from (\ref{semistable:subsubsec:mumford}) to define a logarithmic crystalline realization $\bb{D}(A)$ over $S$.

\subsubsection{}
We assume that the reader is conversant with the language of log schemes; that is, of pairs $(Z,\Mon_Z)$, where $Z$ is a scheme and $\Mon_Z$ is an \'etale sheaf of commutative monoids over $Z$ equipped with a map of monoids $\alpha:\Mon_Z\to\Reg{Z}$ such that $\alpha^{-1}(\Reg{Z}^\times)$ maps isomorphically onto $\Reg{Z}^\times$. We will assume that the stalks of $\Mon_Z/\Reg{Z}^\times$ at all geometric points of $Z$ are cancellative, sharp and saturated. Here, a monoid $M$ is \defnword{cancellative} if the map $M\to M^{\gp}$ to its group envelope is injective; it is \defnword{sharp} if it contains no non-trivial invertible elements; and it is \defnword{saturated} if there is no $x\in M^{\gp}\backslash M$ such that $x^n\in M$, for some $n\in\Int_{>1}$.

Our primary example will be the log scheme attached to the pair $(S,U)$ as above with $Z=S$, $\Mon_Z=j_*\Reg{U}^\times\cap\Reg{Z}$ and $\alpha:\Mon_Z\to\Reg{Z}$ the natural inclusion. It is easy to see that the stalks of $\Mon_Z/\Reg{Z}^\times$ are cancellative and sharp; one needs the normality of $S$ to see that the stalks are also saturated.

We will usually drop the monoid $\Mon_Z$ from our notation and denote the log scheme by the single letter $Z$.

\subsubsection{}
Suppose that $Z$ is a scheme over $\Int_p$. Recall from \cite[\S~6]{kato:fontaine_illusie_i} the notion of a crystal of vector bundles (or log crystal) over the logarithmic crystalline site of $Z$ (with respect to the canonical divided powers on $p\Int_p$). For our purposes it suffices to note that giving such a crystal $\bb{M}$ is equivalent to the following data:
\begin{itemize}
  \item Given a log scheme $V\to Z$ in which $p$ is nilpotent, and an exact nilpotent thickening $V\to \tilde{V}$ of log schemes\footnote{This means that the map $\Mon_{\tilde{V}}/\Reg{\tilde{V}}^\times\to\Mon_V/\Reg{V}^\times$ is an isomorphism.} equipped with divided powers, a finite, locally free $\Reg{\tilde{V}}$-module $\bb{M}\bigl((V\into\tilde{V})\bigr)$.
  \item Given another log scheme $V'\to Z$ in which $p$ is nilpotent, an exact nilpotent thickening $V'\into\widetilde{V}'$ equipped with divided powers, and a map $f:\widetilde{V}'\to\widetilde{V}$ carrying $V'$ into $V$, an isomorphism of $\Reg{\widetilde{V}'}$-modules:
      \[
       \bb{M}(f):f^*\bb{M}\bigl((V\into\widetilde{V})\bigr)\xrightarrow{\simeq}\bb{M}\bigl((V'\into\widetilde{V}')\bigr).
      \]
      Furthermore, if $g:(\widetilde{V}'',V'')\to(\widetilde{V}',V')$ is another such map, we have: $g^*\bb{M}(f)\circ\bb{M}(g)=\bb{M}(f\circ g)$.
\end{itemize}

\subsubsection{}
Just as in the classical crystalline case, the absolute Frobenius $\on{Fr}_{Z\otimes\Field_p}$\footnote{For log schemes, the absolute Frobenius induces the multiplication-by-$p$ map on $\Mon_Z$.} induces an endofunctor $\on{Fr}^*$ on the category of log crystals. A \defnword{log Dieudonn\'e crystal} over $Z$ is a tuple $(\bb{M},\varphi_{\bb{M}},\on{V}_{\bb{M}},F^\bullet\bb{M}(\hat{Z}))$ where $\bb{M}$ is a log crystal over $Z$, and $\varphi_{\bb{M}}:\on{Fr}^*\bb{M}\to\bb{M}$, $\on{V}_{\bb{M}}:\bb{M}\to\on{Fr}^*\bb{M}$ are morphisms of log crystals satisfying:
\[
 \varphi_{\bb{M}}\on{V}_{\bb{M}}=p\mb{1}_{\bb{M}}\;;\;\on{V}_{\bb{M}}\varphi_{\bb{M}}=p\mb{1}_{\on{Fr}^*\bb{M}}.
\]
To describe the final piece of data, let $\hat{Z}$ be the completion of $Z$ along $Z\otimes\Field_p$. Then we obtain a finite, locally free module $\bb{M}(\hat{Z})$ over the formal scheme $\hat{Z}$ attached to the system $(\bb{M}(Z\otimes\Int/p^n\Int))_n$. Now, $F^\bullet\bb{M}(\hat{Z})$ is a $2$-step descending \emph{Hodge filtration} by direct summands:
\[
 0=F^2\bb{M}(\widehat{Z})\subset F^1\bb{M}(\widehat{Z})\subset F^0\bb{M}(\widehat{Z})=\bb{M}(\widehat{Z}).
\]
It satisfies:
\begin{align}\label{semistable:eqn:hodgefillog}
 \on{Fr}^*\bigl(F^1{\bb{M}}(\widehat{Z})\otimes\Field_p\bigr)=\ker\bigl(\varphi_{\bb{M}}\otimes 1:\on{Fr}^*{\bb{M}}(Z\otimes\Field_p)\rightarrow {\bb{M}}(Z\otimes\Field_p)\bigr).
\end{align}

Completely analogously to the classical case, we can define the Cartier dual for any log Dieudonn\'e crystal $\bb{M}$: Its underlying log crystal is just the  dual $\dual{\bb{M}}$ and the additional structure is given by $\varphi_{\dual{\bb{M}}}=\dual{\on{V}}_{\bb{M}}$ ,$\on{V}_{\dual{\bb{M}}}=\dual{\varphi}_{\bb{M}}$, and $F^1\dual{\bb{M}}(\hat{Z}))=\on{ann}(F^1\bb{M}(\hat{Z}))$.

Any $F$-crystal over the scheme underlying $Z$ can be naturally viewed as a log $F$-crystal over $Z$. If the $F$-crystal has the structure of a Dieudonn\'e crystal, then the associated log $F$-crystal will have the structure of a log Dieudonn\'e crystal. In particular, we always have the log Dieudonn\'e crystals $\mb{1}$ and $\mb{1}(-1)$.

There is then a natural perfect pairing on $\bb{M}\times\dual{\bb{M}}$ with values in $\mb{1}(-1)$.

\subsubsection{}

Let $\mb{LDieu}(Z)$ be the category of log Dieudonn\'e crystals over $Z$, and let $\mb{LDieu}_{\on{wt}}(Z)$ be category of pairs $(\bb{M},W_{\bullet}\bb{M})$ where $\bb{M}$ is a log Dieudonn\'e crystal, and $W_{\bullet}\bb{M}$ is an ascending $3$-step filtration of $\bb{M}$ by log Dieudonn\'e sub-crystals:
\[
0=W_{-1}\bb{M}\subset W_0\bb{M}\subset W_1\bb{M}\subset W_2\bb{M}=\bb{M}.
\]

The \defnword{dual} $\dual{(\bb{M},W_{\bullet}\bb{M})}$ will be the pair $(\dual{\bb{M}},W_{\bullet}\dual{\bb{M}})$, where $\dual{\bb{M}}$ is the Cartier dual of $\bb{M}$ and $W_i\dual{\bb{M}}\subset\dual{\bb{M}}$ is the annihilator of $W_{1-i}\bb{M}$.

If $S^{\log}$ is the log scheme attached to a pair $(S,U)$ as in (\ref{semistable:subsec:degen}), we will write $\mb{LDieu}(S,U)$ and $\mb{LDieu}_{\wt}(S,U)$ for the associated categories. For any log crystal $\bb{M}$ over $S^{\log}$, $\bb{M}(\hat{S}^{\log})$ is a finite free $R$-module, and we will denote it simply by $\bb{M}(S)$.

\begin{prp}\label{semistable:prp:logdieu}
There is a natural functor:
\[
 \Dieu:\mb{DEG}(S,U)\to\mb{LDieu}_{\wt}(S,U)
\]
such that, for any $A$ in $\mb{DEG}(S,U)$:
\begin{enumerate}[itemsep=0.11in]
\item We have canonical isomorphisms in $\mb{LDieu}(S,U)$:
\[
W_0\Dieu(A)\xrightarrow{\simeq}\SHom(Q^{\et},\mb{1});\; W_1\Dieu(A)\xrightarrow{\simeq}\Dieu(Q^{\sab});\; \gr^W_2\Dieu(A)\xrightarrow{\simeq}\mb{1}(-1)\otimes Q^{\mult,C}
\]
\item There is a canonical duality isomorphism in $\mb{LDieu}_{\on{wt}}(S,U)$: $\Dieu(\dual{A})\xrightarrow{\simeq}\dual{\Dieu(A)}$.
\item There is a natural horizontal isomorphism of $\Reg{U}$-modules $\underline{H}^1_{\dR}(A)\xrightarrow{\simeq}\Dieu(A)(S)\vert_U$ respecting both Hodge and weight filtrations.
\end{enumerate}
\end{prp}

Using (\ref{semistable:subsubsec:mumford}), we see that it is enough to construct a functor
\[
 \Dieu:\mb{DD}(S,U)\to\mb{LDieu}_{\wt}(S,U)
\]
such that, for any $Q_U$ on the left-hand side, the assertions about duality and the weight filtration hold, and such that there is a natural horizontal isomorphism
\[
\underline{H}^1_{\dR}(Q_U)\xrightarrow{\simeq}\Dieu(Q_U)(S)\vert_U
\]
respecting Hodge and weight filtrations. We will do this in \eqref{appendix:subsec:logfcrys}.

\subsection{Unipotent nearby cycles}\label{semistable:subsec:unip}
This sub-section is mostly a review of some material from \cite[\S~3]{vologodsky:hodge}. We will develop the language to construct the key comparison isomorphism (\ref{semistable:prp:comparison}).

\subsubsection{}
Let $k=\overline{\Field}_p$ be an algebraic closure of $\Field_p$, and set $W=W(k)$ and $K_0=W\bigl[p^{-1}\bigr]$. Write $\sigma:W\to W$ for the canonical lift of the $p$-power Frobenius on $k$. Consider a pair $(S,U)$ as in (\ref{semistable:subsec:degen}) with $S=\Spec R$, so that the divisor $D=S\backslash U\subset S$ is an effective Cartier divisor. We will impose some conditions on this pair:
\begin{itemize}
\item $R$ is a formally log smooth, topologically finitely generated $W$-algebra with residue field $k$. Explicitly, there exists $j\in\Int_{\geq 0}$, and a finitely generated, cancellative, saturated monoid $P$ with $P^\times=\{1\}$ such that, as $W$-algebras:
\[
 R\xrightarrow{\simeq}W\pow{t_1,\ldots,t_j}\widehat{\otimes}_WW\pow{P}.
\]
Here, $W\pow{t_1\ldots,t_k}$ is the power series ring over $W$ in $r$ variables, and $W\pow{P}$ is the completion of the monoid ring $W[P]$ along the ideal generated by $P\backslash\{1\}$.
\item The divisor $D\subset S$ is the vanishing locus of the elements of $P\backslash\{1\}$.
\end{itemize}

Let $S^{\log}$ be the log scheme attached to $(S,U)$ and fix a lift $\varphi:S^{\log}\to S^{\log}$ of the absolute Frobenius on $S^{\log}\otimes\Field_p$. Concretely, this means $\varphi$ is a Frobenius lift on $S$ satisfying an additional condition: There exists an isomorphism $R\xrightarrow{\simeq}W\pow{t_1,\ldots,t_j}\widehat{\otimes}W\pow{P}$ as above such that, for all $m\in P$, we have $\varphi(m)=m^p$.

Let $\flogdiff{R/W}$ be the module of continuous logarithmic differentials on $R$ with poles along the divisor $D$. Let $a\in R$ be an equation defining $D$. We have an exact sequence:
\begin{align}\label{semistable:eqn:residue}
 \fdiff{R/W}\to\flogdiff{R/W}\to W\otimes_{\Int}(R[a^{-1}]^\times/R^\times)\to 0,
\end{align}
where, under the map on the right, for any $r\in R[a^{-1}]^\times$, $\dlog(r)$ is carried to the image $[r]\in R[a^{-1}]^\times/R^\times$ of $r$.

\subsubsection{}\label{semistable:subsubsec:osanlog}
Let $S^{\an}$ be the rigid analytic space attached to the formal scheme $\hat{S}=\Spf S$.\footnote{We will be using the analytification functor of Berthelot, exposed in detail in \cite[\S 7]{dejong:formal_rigid}.} Let $\Reg{S}^{\an}$ be the sheaf of analytic functions on $S^{\an}$. A \defnword{log isocrystal} over $S^{\an}$ is a locally free coherent $\Reg{S}^{\an}$-module $M$ equipped with an integrable logarithmic connection:
\[
 \nabla_M:M\to M\otimes_R\flogdiff{R/W}.
\]

A \defnword{log $F$-isocrystal} over $S^{\an}$ is a log isocrystal $(M,\nabla_M)$ equipped with a horizontal isomorphism $\varphi_M:\varphi^*M\xrightarrow{\simeq}M$. Write $\mb{LFI}(S,U)$ for the category of such triples: it is naturally a $\Rat_p$-linear tensor category. The unit object $\mb{1}$ is simply the structure sheaf $\Reg{S^{\an}}$ equipped with the obvious identification $\varphi^*\Reg{S^{\an}}=\Reg{S^{\an}}$ and the trivial connection.

Set
\[
\Reg{S}^{\an,\log}=\frac{\Reg{S}^{\an}[\ell_r:r\in R[a^{-1}]^\times]}{\begin{pmatrix}\ell_{rs}-\ell_r-\ell_s\text{, for all $r,s\in R[a^{-1}]^\times$};\\\ell_u-\log(u):\text{ for all $u\in R^\times$}\end{pmatrix}}.
\]

If we choose an isomorphism $R\xrightarrow{\simeq}W\pow{t_1,\ldots,t_j}\widehat{\otimes}W\pow{P}$ as above, then we obtain an isomorphism of $\Reg{S}^{\an}$-algebras:
\begin{align}\label{semistable:eqn:polynomial}
 \Reg{S}^{\an}\otimes\Sym(P^{\gp})&\xrightarrow{\simeq}\Reg{S}^{\an,\log};\\
 1\otimes m&\mapsto \ell_{m}.\nonumber
\end{align}
Here, $\Sym(P^{\gp})$ is the symmetric algebra for the group envelope $P^{\gp}$ of $P$.

$\Reg{S}^{\an,\log}$ can naturally be equipped with the structure of an ind-object in $\mb{LFI}(S,U)$: The connection is the unique $\Reg{S}^{\an,\log}$-derivation that satisfies $\nabla(\ell_m)=1\otimes\dlog(m)$, and the $\varphi$-module structure is the algebra homomorphism induced by the map $\ell_m\mapsto p\ell_m$. Let $D_i\Reg{S}^{\an,\log}\subset\Reg{S}^{\an,\log}$ be the image of $\Reg{S}^{\an}\otimes\Sym^{\leq i}P^{\gp}$ under the isomorphism (\ref{semistable:eqn:polynomial}); then $D_i\Reg{S}^{\an,\log}$ is finite free over $\Reg{S}^{\an}$ and is stable under both $\nabla$ and $\varphi$. We clearly have: $\Reg{S}^{\an,\log}=\bigcup_iD_i\Reg{S}^{\an,\log}$.

\subsubsection{}\label{semistable:subsubsec:unip}
Given a tuple $(M,\nabla_M,\varphi_M)$ in $\mb{LFI}(S,U)$, we can equip $M^{\an,\log}\coloneqq\Reg{S}^{\an,\log}\otimes_{\Reg{S}^{\an}}M$ with the tensor product structure of an ind-object in $\mb{LFI}(S,U)$. In particular, it is equipped with the connection $\nabla=\nabla\otimes 1+1\otimes\nabla_M$. Set:
\[
 \Psi_{\on{un}}(M)=\bigl(M^{\an,\log}\bigr)^{\nabla=0}.
\]

We have the following facts about this object~\cite[3.7]{vologodsky:hodge}:
\begin{itemize}
  \item $\Psi_{\on{un}}(M)$ is a finite dimensional $K_0$-vector space, and the $\varphi$-module structure on $M^{\an,\log}$ equips it with a $\sigma$-module structure $\varphi_{\on{un}}:\sigma^*\Psi_{\on{un}}(M)\to\Psi_{\on{un}}(M)$.
  \item The map
  \[
   \Reg{S}^{\an,\log}\otimes_{\Reg{S}^{\an}}M\xrightarrow{\nabla\otimes 1}M^{\an,\log}\otimes_R\flogdiff{R/W}\rightarrow M^{\an,\log}\otimes_{\Int}(R[a^{-1}]^\times/R^\times)
  \]
  restricts to a map $N:\Psi_{\on{un}}(M)\to\Psi_{\on{un}}(M)\otimes(R[a^{-1}]^\times/R^\times)$ satisfying
  \[
  N\varphi_{\on{un}}=p(\varphi_{\on{un}}\otimes 1)N.
  \]
  \item The natural map
  \[
   \Reg{S}^{\an,\log}\otimes_{K_0}\Psi_{\on{un}}(M)\to M^{\an,\log}
  \]
  is an isomorphism of ind-objects in $\mb{LFI}(S,U)$. Here, the $\varphi$-module structure on the left is the diagonal one, and the restriction of the connection to $\Psi_{\on{un}}(M)$ is trivial.
\end{itemize}

\subsubsection{}\label{semistable:subsubsec:unipequiv}
Here is one way to formulate the above results. Set $\Lambda\coloneqq\Lambda(S,U)=R[a^{-1}]^\times/R^\times$: This is a free abelian group of finite rank. Let $\mb{LFI}(k,\Lambda)$ be the category of tuples $(M_0,\varphi_0,N_0)$, where $M_0$ is a finite dimensional $K_0$-vector space, $\varphi_0:\sigma^*M_0\to M_0$ is a $\sigma$-module structure, and $N_0:M_0\to M_0\otimes\Lambda$ is a map satisfying $N_0\varphi_0=p(\varphi_0\otimes 1)N_0$.

This category has a natural rigid tensor structure: The associated dual object to a tuple as above is given by $(\dual{M}_0,(\dual{\varphi}_0)^{-1},-\dual{N}_0)$. Given another tuple $(M'_0,\varphi'_0,N'_0)$, we have:
\[
 (M_0,\varphi_0,N_0)\otimes (M'_0,\varphi'_0,N'_0)=(M_0\otimes_{K_0}M'_0,\varphi_0\otimes\varphi'_0,N_0\otimes 1+1\otimes N'_0).
\]
The unit object is the tuple $\mb{1}=(K_0,\sigma^*K_0=K_0,0)$.

Now we can summarize the discussion above as follows: The functor $M\to\Psi_{\on{un}}(M)$ is an equivalence of $\Rat_p$-linear tensor categories between $\mb{LFI}(S,U)$ and $\mb{LFI}(k,\Lambda)$.

The functor has a natural inverse: Let
\[
 N_{\on{triv}}:\Reg{S}^{\an,\log}\to\Reg{S}^{\an,\log}\otimes\Lambda
\]
be the residue of the connection on $\Reg{S}^{\an,\log}$~\eqref{semistable:eqn:residue}. Then, given $(M_0,\varphi_0,N_0)$ in $\mb{LFI}(k,\Lambda)$, the ind-object $\Reg{S}^{\an,\log}\otimes_{K_0}M_0$ is equipped with the operator
\[
 N=N_{\on{triv}}\otimes 1+1\otimes N_0:\Reg{S}^{\an,\log}\otimes_{K_0}M_0\to(\Reg{S}^{\an,\log}\otimes_{K_0}M_0)\otimes_{\Int}\Lambda.
\]
It can now be checked that $\bigl(\Reg{S}^{\an,\log}\otimes_{K_0}M_0\bigr)^{N=0}$ is stable under both $\varphi$ and $\nabla$ and is naturally an object in $\mb{LFI}(S,U)$.

The inverse to $\Psi_{\on{un}}$ is now given by $M_0\mapsto\bigl(\Reg{S}^{\an,\log}\otimes_{K_0}M_0\bigr)^{N=0}$.

\subsubsection{}\label{semistable:subsubsec:uniplogsmooth}
Suppose now that we have $A$ in $\mb{DEG}(S,U)$. By (\ref{semistable:prp:logdieu}), we can attach to it the log $F$-crystal $\Dieu(A)$ over $S^{\log}$. The $\Reg{S}^{\an}$-module $M^{\an}(A)=\Reg{S}^{\an}\otimes_{\Reg{S}}\Dieu(A)(S)$ is naturally equipped with the structure of an object in $\mb{LFI}(S,U)$. It is also equipped with Hodge and weight filtrations: $F^\bullet M^{\an}(A)$ and $W_{\bullet}M^{\an}(A)$. Its restriction to the complement $U^{\an}\subset S^{\an}$ of $D^{\an}$ is, as a filtered vector bundle with integrable connection, canonically isomorphic to the analytification of $\underline{H}^1_{\dR}(A)$.

We set:
\[
 M_0(A)=\Psi_{\on{un}}(M^{\an}(A))\in\mb{LFI}(k,\Lambda).
\]
This will be the module of \defnword{unipotent nearby cycles} attached to $A$.

There is a canonical isomorphism of ind-objects in $\mb{LFI}(S,U)$:
\begin{align}\label{semistable:eqn:paralleltrans}
  \Reg{S}^{\an,\log}\otimes_{K_0}M_0(A)\xrightarrow{\simeq}M^{\an,\log}(A)\coloneqq \Reg{S}^{\an,\log}\otimes_{\Reg{S}^{\an}}M^{\an}(A).
\end{align}

\subsubsection{}\label{semistable:subsubsec:spibreuil}
Fix an algebraic closure $\overline{K}_0$ of $K_0$. Let $K/K_0$ be a finite extension within $\overline{K}_0$. Fix a uniformizer $\pi\in K$ and let $\log_{\pi}:K^\times\to K$ be the branch of the $p$-adic logarithm such that $\log_{\pi}(\pi)=0$. Let $E(u)\in W[u]$ be the monic Eisenstein polynomial satisfying $E(\pi)=0\in\Reg{K}$. Then we can view $\Reg{K}$ as the quotient $W[u]/(E(u))$. Let $\mathcal{S}_{\pi}$ be the $p$-adic completion of the divided power envelope of the surjection $W[u]\to\Reg{K}$ carrying $u$ to $\pi$, and set $\Fil^1\mathcal{S}_{\pi}=\ker(\mathcal{S}_{\pi}\to\Reg{K})$: by construction $\Fil^1\mathcal{S}_{\pi}$ is equipped with divided powers compatible with those on $pS$. Concretely, we have (cf.~\cite[2.1.1]{breuil:pdiv}):
\begin{align}\label{semistable:eqn:spiexpl}
\mathcal{S}_{\pi}=\bigl\{\sum_ia_i\frac{u^i}{q(i)!}\in K_0\pow{u}:a_i\in W,\text{}\lim_{i\to\infty}a_i=0\bigr\}.
\end{align}
Here, $q(i)=\lfloor\frac{i}{p}\rfloor$.

$\Spec\mathcal{S}_{\pi}$ is equipped with the log structure induced by the divisor $u=0$, and the natural log structure on $\Spec\Reg{K}$ is induced from this one via the surjection $\mathcal{S}_{\pi}\twoheadrightarrow\Reg{K}$.

The Frobenius lift $\varphi:W[u]\to W[u]$ with $\varphi(u)=u^p$ induces one on $\mathcal{S}_{\pi}$: $\varphi:\mathcal{S}_{\pi}\to\mathcal{S}_{\pi}$. The induced endomorphism of $\Spec\mathcal{S}_{\pi}$, again denoted $\varphi$, is an endomorphism of log schemes.

Given any semi-stable abelian variety $A$ over $K$, we can now evaluate the associated log Dieudonn\'e crystal $\Dieu(A)$ over $\Reg{K}$ along the formal divided power thickening $\Spf\Reg{K}\into\Spf\mathcal{S}_{\pi}$. This gives us a $\varphi$-module $\mathcal{M}(A)$ over $\mathcal{S}_{\pi}$ equipped with an integrable logarithmic connection
\[
\nabla_{\mathcal{M}(A)}:\mathcal{M}(A)\to\mathcal{M}(A)\dlog(u).
\]

Let $Y$ be the rigid analytic space over $K_0$ attached to $\Spf W\pow{u}$: This is the rigid analytic open disk of radius $1$ around the $K_0$-valued point $y_0$ attached to $u\mapsto 0$. Let $\varphi:Y\to Y$ be the endomorphism induced by the endomorphism $\varphi$ of $W[u]$. Let $Y(e)\subset Y$ be the rigid analytic open disk of radius $p^{-1/e(p-1)}$: it is preserved by the endomorphism $\varphi$.

We have a natural $\varphi$-equivariant map $\mathcal{S}_\pi\to\Reg{Y(e)}^{\an}$; this follows for instance from \eqref{semistable:eqn:spiexpl}. Let $\Reg{Y(e)}^{\an,\log}$ be the restriction of $\Reg{Y}^{\an,\log}$ to $Y(e)$. Set
\[
 M_0(A)=(\Reg{Y(e)}^{\an,\log}\otimes_{\mathcal{S}_\pi}\mathcal{M}(A))^{\nabla=0}.
\]
Then, just as in (\ref{semistable:subsubsec:unip}), the logarithmic connection on $\mathcal{M}(A)$ induces a residue map:
\begin{align}\label{semistable:eqn:monodromy_residue}
N_0(A): M_0(A)\to M_0(A)\otimes(W\pow{u}[u^{-1}])^\times/W\pow{u}^\times\xrightarrow{u\mapsto \pi}M_0(A)\otimes\bigl(\overline{K}_0^\times/\Reg{\overline{K}_0}^\times\bigr).
\end{align}
$M_0(A)$ also has a natural $\sigma$-module structure, making it an object in $\mb{LFI}(k,\overline{K}_0^\times/\Reg{\overline{K}_0}^\times)$. We will refer to an object in this category as a \defnword{$(\varphi,N)$-module over $K_0$}.

There is a canonical isomorphism of ind-log $F$-isocrystals over $Y(e)$:
\begin{align}\label{semistable:eqn:separallel}
\Reg{Y(e)}^{\an,\log}\otimes_{K_0} M_0(A)\xrightarrow{\simeq}\Reg{Y(e)}^{\an,\log}\otimes_{\mathcal{S}_e}\mathcal{M}(A).
\end{align}

The evaluation-at-$\pi$ map $f\mapsto f(\pi)$ from $\Reg{Y(e)}^{\an}$ can be extended to a map on $\Reg{Y(e)}^{\an,\log}$ by sending $\ell_r$ to $\log_{\pi}(r(\pi))$. Specializing \eqref{semistable:eqn:separallel} along this extension gives us an isomorphism:
\begin{align}\label{semistable:eqn:absthyodokato}
 \beta_{\on{H-K},A,\pi}:K\otimes_{K_0}M_0(A)\xrightarrow{\simeq}K\otimes_{\mathcal{S}_e}\mathcal{M}(A)\xrightarrow{\simeq}K\otimes_{\Reg{K}}\Dieu(A)(\Reg{K})\xrightarrow{\simeq}H^1_{\dR}(A/K).
\end{align}

\subsubsection{}\label{semistable:subsubsec:hyodokatocomp}
Now, we return to the situation of \eqref{semistable:subsubsec:uniplogsmooth}. Suppose that we have a $K$-valued point $x\in S^{\an}(K)$ not lying on the divisor $D^{\an}$. Then we can take the fiber $A_x$ at $x$ of the abelian variety $A$, and the fiber of $M^{\an}(A,\psi)$ at $x$ is canonically identified with $H^1_{\dR}(A_x/K)$ as a filtered $K$-vector space.

As above, the evaluation map $f\mapsto f(x)$ on $\Reg{S}^{\an}$ can now be extended to a map $x_{\pi}$ on $\Reg{S}^{\an,\log}$ with $x_{\pi}(\ell_r)=\log_{\pi}(r(x))$. Specializing (\ref{semistable:eqn:paralleltrans}) along $x_{\pi}$, we obtain an isomorphism of $K$-vector spaces:
\begin{align}\label{semistable:eqn:absthyodokatox}
 \beta_{\on{H-K},x,\pi}:K\otimes_{K_0}M_0(A)\xrightarrow{\simeq}H^1_{\dR}(A_x/K).
\end{align}

On the other hand, we also have the module $M_0(A_x)$ of unipotent nearby cycles for $A_x$ and the isomorphism (\ref{semistable:eqn:absthyodokato}):
\[
 \beta_{\on{H-K},A_x,\pi}:K\otimes_{K_0}M_0(A_x)\xrightarrow{\simeq}H^1_{\dR}(A_x/K).
\]

Evaluation at $x$ induces a map $x^\sharp:\Lambda=R[a^{-1}]^\times/R^\times\to \overline{K}_0^\times/\Reg{\overline{K}_0}^\times$. From this we obtain an obvious functor $x^*:\mb{LFI}(k,\Lambda)\to\mb{LFI}(k,\overline{K}_0^\times/\Reg{\overline{K}_0}^\times)$.

\begin{prp}\label{semistable:prp:phinmodule}
There is a canonical isomorphism of $(\varphi,N)$-modules $x^*M_0(A)\xrightarrow{\simeq}M_0(A_x)$ compatible with the isomorphisms $\beta_{\on{H-K},x,\pi}$ and $\beta_{\on{H-K},A_x,\pi}$ above.
\end{prp}
\begin{proof}
  We will view $\Reg{K}$ as a quotient of $W\pow{u}$ by the ideal $(E(u))$. Since $R$ is log smooth, the map $x:R\to\Reg{K}$ admits a lift $\tilde{x}:R\to W\pow{u}$ that respects log structures. Let $A_{\tilde{x}}$ be the corresponding abelian variety over $W\pow{u}[u^{-1}]$; then we have the associated log Dieudonn\'e crystal $\Dieu(A_{\tilde{x}})$ over the log scheme $(\Spec W\pow{u})^{\log}$. The log crystalline nature of $\Dieu(A)$ now gives us natural $\varphi$-equivariant parallel isomorphisms:
  \[
   \mathcal{S}_{\pi}\otimes_{\tilde{x},R}\Dieu(A)(R)\xrightarrow{\simeq}\mathcal{S}_{\pi}\otimes_{W\pow{u}}\Dieu(A_{\tilde{x}})(W\pow{u})\xrightarrow{\simeq}\mathcal{M}(A).
  \]
  Tensoring everything with $\Reg{Y(e)}^{\an,\log}$, and using (\ref{semistable:eqn:paralleltrans}) and \eqref{semistable:eqn:separallel}, we get a parallel $\varphi$-equivariant isomorphism:
  \[
   \Reg{Y(e)}^{\an,\log}\otimes_{K_0}M_0(A)\xrightarrow{\simeq}\Reg{Y(e)}^{\an,\log}\otimes_{K_0}M_0(A_x).
  \]
  From this the proposition is immediate.
\end{proof}

\subsubsection{}\label{semistable:subsubsec:compstatement}
Fix a finite extension $K/K_0$ as above. Let $\Gamma_K$ be the absolute Galois group $\Gal(\overline{K}_0/K)$. Fix $A$ in $\mb{DEG}(\Spec\Reg{K},\Spec K)$, and let $M_0(A)$ be the associated $\varphi$-module over $K_0$.

We refer to \cite{fontaine:periodes} for the period rings $\Bdr,\Bcris,\Bst$ and their properties. We will only note that both $\Bst$ is naturally a $\Bcris$-algebra, and that there is a canonical embedding $K\otimes_{K_0}\Bcris\into\Bdr$. The choice of uniformizer $\pi$ now permits us to extend this to an embedding $K\otimes_{K_0}\Bst\into\Bdr$. We fix such a choice.

\begin{prp}\label{semistable:prp:comparison}
There is a canonical isomorphism
\[
\beta_{\st,A}:\Bst\otimes_{\Rat_p}H^1_{\et}\bigl(A_{\overline{K}},\Rat_p\bigr)\xrightarrow{\simeq}\Bst\otimes_{K_0}M_0(A)
\]
compatible with all additional structures, and is such that, after base-change along the embedding $K\otimes_{K_0}\Bst\into\Bdr$ attached to $\pi$, the following diagram commutes:
\begin{equation}\label{semistable:diagram}
\begin{diagram}
  \Bdr\otimes_{\Rat_p}H^1_{\et}\bigl(A_{\overline{K}},\Rat_p\bigr)&\rTo_{\simeq}^{1\otimes\beta_{\st,A}}&\Bdr\otimes_K(K\otimes_{K_0}M_0(A))\\
  &\rdTo_{\simeq}&\dTo^{\simeq}_{1\otimes\beta_{\on{H-K},A,\pi}}\\
  &&\Bdr\otimes_KH^1_{\dR}(A/K).
\end{diagram}
\end{equation}
Here, the diagonal arrow is the canonical $p$-adic de Rham comparison isomorphism.
\end{prp}

The proof will be given in \eqref{appendix:subsec:comparison}.

\subsubsection{}\label{semistable:subsubsec:coleman_iovita}
As a corollary, we obtain a proof of Theorem~\ref{padicmonodromy}, a result that is originally due to Coleman-Iovita~\cite{coleman_iovita}:
\begin{proof}[Proof of Theorem~\ref{padicmonodromy}]
From~\eqref{semistable:prp:comparison}, we find that the representation $T_p(A)$ is crystalline if and only if the monodromy operator $N_0(A)$ on $M_0(A)$ is trivial.

Note that $N_0(A)$ is, by construction~\eqref{semistable:eqn:monodromy_residue}, the residue at $u=0$ of the logarithmic connection on $\mathcal{M}(A)$. This connection in turn was determined by the fact that $\mathcal{M}(A)$ was the evaluation of $\Dieu(A)$ along the formal divided power thickening $\Spf\Reg{K}\into\Spf\mathcal{S}_{\pi}$. The residue vanishes exactly when $\Dieu(A)$ is in the image of the natural (fully faithful) functor from the category of Dieudonn\'e crystals over $\Reg{K}$ to that of log Dieudonn\'e crystals over $\Reg{K}$.

So we have shown: $T_p(A)$ is crystalline if and only if $\Dieu(A)$ is a Dieudonn\'e crystal (and not just a log Dieudonn\'e crystal).

Set $S=\Spec\Reg{K}$, $U=\Spec K$, and let $Q_U$ be the object in $\mb{DD}(S,U)$ associated with $A$. Without loss of generality, we can replace $K$ by an unramified extension, and assume that $Y\coloneqq Q^{\et}$ and $X\coloneqq Q^{\mult}$ are constant. Associated with $Q_U$ is a trivialization $\tau$ over $U$ of the bi-extension $(c_Q\times\dual{c}_Q)^*\mathcal{P}_{Q^{\ab}}$ of $Y\times X$. The construction of $\Dieu(A)$ in~\eqref{semistable:subsubsec:logffull} now shows that it is an $F$-crystal precisely when $\tau$ extends to a trivialization over all of $S$. Equivalently, since $(y,x)\mapsto \nu_{\pi}(\tau(y,x))$ is non-degenerate, $\Dieu(A)$ is an $F$-crystal exactly when $Q^{\et}=Q^{\mult}=0$. But this can happen if and only if $A$ has good reduction.
\end{proof}

\section{Toroidal compactifications: background}\label{sec:background}

In this section, we review essential background material: The first topic is the general theory of toroidal compactifications of Shimura varieties in characteristic $0$; this is essentially due to Ash-Mumford-Rapoport-Tai~\cite{amrt}, who constructed the compactifications over $\Comp$, and Pink~\cite{pink:thesis}, who constructed canonical models for these compactifications over their reflex fields.

The second topic is the arithmetic construction of compactifications of Siegel modular varieties, due to Chai and Faltings~\cite{faltings_chai}.

In both cases, we have preferred to use resolutely ad\'elic constructions. This has the disadvantage that we have to re-do some of the presentation of the construction of boundary charts from~\cite{faltings_chai} and~\cite{lan:thesis}, but it also presents at least two advantages: The first of course is the conceptual clarity it brings to the discussion of Hecke correspondences; but the second, and more essential one, is that it allows for the construction of compactifications for Shimura varieties of abelian type, following a natural extension of the strategy in \cite{deligne:corvallis}. We will present this application in a future article.

\subsection{Compactifications in characteristic $0$}\label{background:subsec:char0}

In this sub-section, we will give a quick summary of the theory of toroidal compactifications of Shimura varieties in characteristic $0$. Our treatment will be rather formal, but details can be found in our main references,~\cites{amrt,pink:thesis}.

\subsubsection{}
We will use Pink's slightly more general definition of (pure) Shimura data from \cite[\S2.1]{pink:thesis}. Therefore, a Shimura datum will be a triple $(G,X,h)$, where $X$ is a $G(\Real)$-homogeneous space, and $h:X\to\Hom\bigl(\bb{S},G_{\Real}\bigr)$ is a $G(\Real)$-equivariant map with finite fibers such that $(G,h(X))$ is a Shimura datum in the sense of Deligne~\cite[1.5]{deligne:travaux}. As usual, $\bb{S}=\Res_{\Comp/\Real}\Gm$ is the Deligne torus. In what follows, $h$ will be clear from context, and we will consistently omit it from our notation. We will write $E(G,X)\subset\bb{C}$ for the reflex field attached to $(G,X)$.

By the definition of Shimura data, it follows that the composition $\Gmh{\Real}\into\bb{S}\xrightarrow{h_x}G_{\Real}$, for $x\in X$, is independent of $x$ and maps into the center of $G_{\Real}$: It is the \defnword{weight co-character} attached to $(G,X)$, and we will denote it by $w_0$.

\emph{We will always assume that $w_0$ is defined over $\Rat$.}

Let $K\subset G(\Adele_f)$ be a compact open sub-group of the ad\'{e}lic points of $G$. We will always assume that $K$ is of the form $K^pK_p$, where $K_p\subset G(\Rat_p)$ and $K^p\subset G(\Adele_f^p)$.

Attached to the triple $(G,X,K)$, we have the Shimura variety $\Sh_K(G,X)$ over the reflex field $E(G,X)$: In general, it is an algebraic stack over $E(G,X)$ equipped with a natural isomorphism of complex orbifolds:
\[
\Sh_K(G,X)(\Comp)=G(\Rat)\backslash X \times G(\Adele_f)/K.
\]

\subsubsection{}\label{background:subsubsec:siegel}
Our chief examples of Shimura data are Siegel Shimura data. These are associated with symplectic spaces $(H,\psi)$ over $\Rat$. The relevant reductive group is the group of symplectic similitudes $\GSp(H,\psi)$, and the symmetric domain is the space $\on{S}^{\pm}(H,\psi))$ of maps $h:\mathbb{S}\to G_{\Real}$ such that:
  \begin{enumerate}
    \item $h$ induces a Hodge structure of type $(-1,0),(0,-1)$ on $H(\Rat)$, so that we have a corresponding decomposition
    \[
    H(\Comp)=H_h^{-1,0}\oplus H_h^{0,-1};
    \]
    \item The symmetric form $(x,y)\mapsto\psi(x,h(i)y)$ is (positive or negative) definite on $V_{\Real}$.
  \end{enumerate}
The reflex field of a Siegel Shimura datum is $\Rat$.

Following \cite[2.6]{pink:thesis}, we can also make sense of a Siegel Shimura datum when $H=0$. We set $\GSp(0)\coloneqq\Gm$, and we take $\on{S}^{\pm}(0)$ to be the two-element set of isomorphisms of $\Int$-Hodge structures $\Isom(\Int,\Int(1))$ (equivalently, that of square roots of $-1$), with the action of $\Real^\times$ given by the sign character. We equip $\on{S}^{\pm}(0)$ with the constant map $h:\on{S}^{\pm}(0)\to\Hom\bigl(\bb{S},\Gmh{\Real}\bigr)$ carrying either isomorphism to the norm map $z\mapsto z\overline{z}$.

\subsubsection{}
Let $(G,X)$ be a Shimura datum. Let $G^{\ad}=\prod_{i=1}^m G_i$ be the decomposition into simple factors of the adjoint group $G^{\ad}$. We will say that a parabolic sub-group $P\subset G$ is \defnword{admissible} if, for $1\leq i\leq r$, the image $P_i$ of $P$ in $G_i$ is either a proper maximal parabolic sub-group, or all of $G_i$. In the language of \cite[Ch. III]{amrt}, such a sub-group corresponds to a \emph{rational boundary component} for $(G,X)$. Observe that this definition allows for the possibility that $P=G$.

Given a parabolic sub-group $P\subset G$, there is a canonical increasing filtration $(\Lie G)_{\bullet}$ on $\Lie G$ whose stabilizer in $G$ is $P$, and is such that $(\Lie G)_0=\Lie P$. Let $U_P\subset P$ be the unipotent radical; then $(\Lie G)_{-1}=\Lie U_P$. If $P$ is in addition admissible, then the filtration satisfies $(\Lie G)_j=0$, for $j\leq -3$, and the center $W_P=Z(U_P)$ of $U_P$ satisfies: $(\Lie G)_{-2}=\Lie W_P$.

\subsubsection{}
Fix an admissible parabolic sub-group $P\subset G$. Choose a co-character $w:\Gm\to P$ that splits the filtration $(\Lie G)_{\bullet}$ and such that $ww_0^{-1}$ factors through the derived group $G^{\mathrm{der}}$ of $G$.

Given $x\in X$ and an algebraic representation $M$ of $G$ over $\Rat$, let $F^\bullet_xM(\Comp)$ be the Hodge filtration on $M(\Comp)$ associated with the Hodge structure on $M(\Rat)$ induced from $h_x$. Also, let $W_{\bullet}M$ be the unique increasing filtration on $M$ that is split by $w$. Then, for an appropriate choice of the cocharacter $w$, it follows from \cite[\S 4]{pink:thesis} that, for all such representations $M$, $F^\bullet_xM(\Comp)$ determines a canonical mixed Hodge structure on $M(\Rat)$, for which $W_{\bullet}M(\Rat)$ is the weight filtration; cf. also~\cite[(4.1.2)]{brylinski:1motifs}. Moreover, from~\cite[(4.6)]{pink:thesis}, we find that there exists a canonical homomorphism:
\[
 \varpi_x:\bb{S}_{\Comp}=\Gmh{\Comp}\times\Gmh{\Comp}\to P_{\Comp}
\]
splitting the mixed Hodge structure on $M(\Rat)$, for every representation $M$, and whose restriction to the diagonal embedding of $\Gmh{\Comp}$ in $\bb{S}_{\Comp}$ is conjugate under $P(\Comp)$ to $w$.\footnote{In the notation of \emph{loc. cit.}, this is actually the map $\varpi_x\circ h_{\infty}$.}

\subsubsection{}\label{background:subsubsec:fp2}
Let $Q_P\subset P$ be the smallest normal sub-group such that the maps $\varpi_x$, for $x\in X$, factor through $Q_{P,\Comp}$. Then the group $Q_P(\Real)W_\Phi(\Comp)$ acts on the set $\pi_0(X)$ of connected components of $X$ via the maps
\[
 \pi_0\bigl(Q_P(\Real)W_\Phi(\Comp)\bigr)\to \pi_0\bigl(Q_P(\Real)\bigr)\to\pi_0(G(\Real)).
\]

In \cite[4.11]{pink:thesis}, Pink shows that, given $x,x'$ in the same connected component of $X$, $\varpi_x$ and $\varpi_{x'}$ are conjugate under an element of $Q_P(\Real)W_P(\Comp)$. In particular, given any connected component $X^+\subset X$ and $x\in X^+$, the $Q_P(\Real)W_\Phi(\Comp)$-orbit $F_{P,X^+}$ of $(X^+,\varpi_x)$ in $\pi_0(X)\times\Hom(\bb{S}_{\Comp},Q_{\Phi,\Comp})$ does not depend on the choice of $x\in X^+$.

There is a unique complex structure on $F_{P,X^+}$ such that the natural map to $\Hom(\bb{S}_{\Comp},Q_{\Phi,\Comp})$ is holomorphic.

\subsubsection{}\label{background:subsubsec:tube}
The map $X^+\to F_{P,X^+}$ carrying $x$ to $(X^+,\varpi_x)$ is an open immersion of complex analytic spaces. It exhibits $X^+$ as a tube domain.

To see this, we first note that there is a continuous map $u:F_{P,X^+}\to W_P(\Real)(-1)$ that assigns to a pair $(X_1^+,\varpi)\in F_{P,X^+}$ the unique element $u(\varpi)\in W_\Phi(\Real)(-1)$ such that $u(\varpi)\varpi u(\varpi)^{-1}$ is defined over $\Real$. Here, we are viewing $W_P(\Real)$ as an $\Real$-vector space, and as usual $W_P(\Real)(-1)=(2\pi\sqrt{-1})^{-1}W_P(\Real)\subset W_P(\Comp)$.

By \cite[4.15]{pink:thesis}, we can now find a canonical open non-degenerate self-adjoint convex cone (cf.~\cite[Ch. II,\S 1.1]{amrt} for the terminology) $\mb{H}_{P,X^+}\subset W_P(\Real)(-1)$, homogeneous under $P(\Real)$, such that
\[
X^+ = u^{-1}\bigl(\mb{H}_{P,X^+}\bigr)\subset F_{P,X^+}.
\]

\subsubsection{}\label{background:subsubsec:mixeddata}
A \defnword{cusp label representative} or \defnword{clr} for short is a triple 
\[
\Phi = (P_\Phi,X^+_\Phi,g_\Phi),
\]
where $P_\Phi\subset G$ is an admissible parabolic, $X^+_\Phi\subset X$ is a connected component, and $g_\Phi\in G(\Adele_f)$. 

Set $Q_\Phi = Q_\Phi$, $U_\Phi = U_P$, $W_\Phi = W_\Phi$. Also, set $\overline{Q}_\Phi=Q_\Phi/W_\Phi$. Its unipotent radical is $V_\Phi=U_\Phi/W_\Phi$, which is a commutative group over $\Rat$. Let $G_{\Phi,h}=Q_\Phi/U_\Phi$ be the Levi quotient of $Q_\Phi$: The conjugation action of $Q_\Phi$ on $W_\Phi$ and $V_P$ factors through $G_{\Phi,h}$.

Also, set
\[
D_\Phi = F_{P,X^+}\;;\;\overline{D}_\Phi =W_\Phi(\Comp)\backslash D_\Phi\;;\; D_{\Phi,h} = V_\Phi(\Real)\backslash \overline{D}_\Phi
\]

The map $D_\Phi = F_{P,X^+}\to\Hom(\bb{S}_{\Comp},Q_{\Phi,\Comp})$ induces a map
\[
 D_{\Phi,h}\to\Hom(\bb{S}_{\Comp},G_{\Phi,h,\Comp}),
\]
which actually factors through $\Hom(\bb{S},G_{\Phi,h,\Real})$. This makes the pair $(G_{\Phi,h},D_{\Phi,h})$ a (pure) Shimura datum, whose reflex field is again $E$.

Moreover, the pairs $(Q_\Phi,D_\Phi)$ and $(\overline{Q}_\Phi,\overline{D}_\Phi)$ are  mixed Shimura data in the language of \cite[Ch. 2]{pink:thesis}.

Given any compact open sub-group $K\subset G(\Adele_f)$, set $K_{\Phi,P}=P_{\Phi}(\Adele_f)\cap gKg^{-1}$, $K_{\Phi}=Q_{\Phi}(\Adele_f)\cap K_{\Phi,P}$. Let $\overline{K}_{\Phi}\subset \overline{Q}_\Phi(\Adele_f)$ be the image of $K_{\Phi}$ and let $K_{\Phi,h}\subset G_{\Phi,h}(\Adele_f)$ be the image of $K_{\Phi}$. 

We can now form the mixed Shimura varieties
\begin{align*}
\Sh_{K_\Phi}(Q_\Phi,D_\Phi)(\Comp)&=Q_\Phi(\Rat)\backslash\bigl(D_\Phi\times Q_\Phi(\Adele_f)\bigr)/K_{\Phi};\\
\Sh_{\overline{K}_\Phi}(\overline{Q}_\Phi,\overline{D}_\Phi)(\Comp)&=\overline{Q}_\Phi(\Rat)\backslash\bigl(\overline{D}_\Phi\times\overline{Q}_\Phi(\Adele_f)\bigr)/\overline{K}_{\Phi};\\
\Sh_{K_{\Phi,h}}(G_{\Phi,h},D_{\Phi,h})(\Comp)&=G_{\Phi,h}(\Rat)\backslash\bigl(D_{\Phi,h}\times G_{\Phi,h}(\Adele_f)\bigr)/K_{\Phi,h}.
\end{align*}

We obtain a tower: $\Sh_{K_\Phi}(Q_\Phi,D_\Phi)(\Comp)\to \Sh_{\overline{K}_\Phi}(\overline{Q}_\Phi,\overline{D}_\Phi)(\Comp)\to \Sh_{K_{\Phi,h}}(G_{\Phi,h},D_{\Phi,h})(\Comp)$.

Here, by construction, $\Sh_{K_{\Phi,h}}(G_{\Phi,h},D_{\Phi,h})(\Comp)$ is the space of $\Comp$-valued points of the Shimura variety $\Sh_{K_{\Phi,h}}(G_{\Phi,h},D_{\Phi,h})=\Sh_{K_{\Phi,h}}(G_{\Phi,h},D_{\Phi,h})$.

\subsubsection{}\label{background:subsubsec:varmixedhodge}
We will need some generalities about sheaves on $\Sh_{K_\Phi}(Q_\Phi,D_\Phi)(\Comp)$. Let $M$ be an algebraic representation of $Q_\Phi$ defined over $\Rat$. The co-character $w:\Gm\to G$ factors through $Q_\Phi$ and so induces a filtration $W_\bullet M$ on $M$.

Now, $D_{\Phi}\times M(\Rat)$ (with $M(\Rat)$ given the discrete topology) is a $Q_\Phi(\Rat)$-equivariant local system of $\Rat$-vector spaces over $D_\Phi$. It is equipped with a $Q_\Phi(\Rat)$-equivariant filtration $D_\Phi\times W_{\bullet}M(\Rat)$. For every point $y\in D_\Phi$, the corresponding element $\varpi_y:\bb{S}_{\Comp}\to Q_{\Phi,\Comp}$ splits a canonical mixed Hodge structure on $M(\Rat)$ with underlying weight filtration $W_{\bullet}M(\Rat)$, which is again $Q_\Phi(\Rat)$-equivariant.

Therefore, the quotient:
\[
 \bm{M}_B(\Phi)=Q_\Phi(\Rat)\backslash\bigl((D_\Phi\times M(\Rat))\times Q_\Phi(\Adele_f)/K_{\Phi}\bigr)
\]
is a local system of $\Rat$-vector spaces over $\Sh_{K_\Phi}(Q_\Phi,D_\Phi)(\Comp)$, underlying a variation of mixed Hodge structures:
\[
 \bm{M}_{\on{MH}}(\Phi)=\bigl(\bm{M}_B(\Phi),W_{\bullet}\bm{M}_B(\Phi),F^\bullet(\Reg{\Sh_{K_\Phi}(Q_\Phi,D_\Phi)(\Comp)}^{\an}\otimes\bm{M}_B(\Phi))\bigr).
\]

This construction is functorial in $M$.

If the representation on $M$ factors through $\overline{Q}_\Phi$ (resp. $G_{\Phi,h}$) then $\bm{M}_{\on{MH}}(\Phi)$ is canonically identified with the pull-back of a variation of mixed Hodge structures on $\Sh_{\overline{K}_\Phi}(\overline{Q}_\Phi,\overline{D}_\Phi)(\Comp)$ (resp. $\Sh_{K_{\Phi,h}}(G_{\Phi,h},D_{\Phi,h})(\Comp)$), which we will again denote by $\bm{M}_{\on{MH}}(\Phi)$.

\subsubsection{}\label{background:subsubsec:varmixedintegral}
Set:
\begin{equation}\label{background:eqn:inverse_limit}
 \Sh(Q_\Phi,D_\Phi)(\Comp)=\varprojlim_{K}\Sh_{K_\Phi}(Q_\Phi,D_\Phi)(\Comp),
\end{equation}
where the projective limit is taken over the directed system of neat compact open sub-groups $K\subset G(\Adele_f)$. 

We have a natural action of $K_{\Phi}$ on $\Sh(Q_\Phi,D_\Phi)(\Comp)$ and an identification $\Sh(Q_\Phi,D_\Phi)(\Comp)/K_{\Phi}=\Sh_{K_\Phi}(Q_\Phi,D_\Phi)$.

Let $Z_\Phi\subset Q_\Phi$ be the center, and let $Z_\Phi(\Rat)^0\subset Z_\Phi(\Rat)$ be the sub-group of elements acting trivially on $F_{P,X^+}$. Let $\Gamma_Z\subset Z_\Phi(\Rat)^0$ be any arithmetic sub-group, and let $\overline{\Gamma}_Z\subset Q_\Phi(\Adele_f)$ be its closure. By \cite[3.7]{pink:thesis}, we have:
\begin{align}\label{background:eqn:xiphiinvlimit}
\Sh(Q_\Phi,D_\Phi)(\Comp)=Q_\Phi(\Rat)\backslash\bigl(D_\Phi\times Q_\Phi(\Adele_f)/\overline{\Gamma}_Z\bigr).
\end{align}

For any $Q_\Phi$-representation $M$, we have a $Q_\Phi(\Rat)\times\overline{\Gamma}_Z$-equivariant isomorphism:
\begin{align}\label{background:eqn:madeletriv1}
 D_\Phi\times  Q_\Phi(\Adele_f)\times M(\Adele_f)&\xrightarrow{\simeq}D_\Phi\times M(\Adele_f)\times Q_\Phi(\Adele_f)\\
(y,q,m)&\mapsto (y,qm,q).\nonumber
\end{align}
Here, on the left, $Q_\Phi(\Rat)$ acts trivially on $M(\Adele_f)$, and on the right via the representation of $Q_\Phi$ on $M$

Set $\bm{M}_{\Adele_f}(\Phi)=\Adele_f\otimes\bm{M}_B(\Phi)$. Then, quotienting~\eqref{background:eqn:madeletriv1} by $Q_\Phi(\Rat)\times\overline{\Gamma}_Z$ and using~\eqref{background:eqn:xiphiinvlimit}, we obtain a canonical isomorphism of $\Adele_f$-sheaves:
\begin{align}\label{background:eqn:madeletriv}
\underline{M}(\Adele_f)&\xrightarrow{\simeq}\bm{M}_{\Adele_f}(\Phi)\vert_{\Sh(Q_\Phi,D_\Phi)(\Comp)}.
\end{align}

In particular, if $M(\widehat{\Int})\subset M(\Adele_f)$ is a $\widehat{\Int}$-lattice stabilized by $K_{\Phi}$, then its image under~\eqref{background:eqn:madeletriv} will be a $K_{\Phi}$-equivariant $\widehat{\Int}$-lattice in $\bm{M}_{\Adele_f}(\Phi)\vert_{\Sh(Q_\Phi,D_\Phi)(\Comp)}$ and will thus descend to a $\widehat{\Int}$-lattice $\bm{M}_{\widehat{\Int}}(\Phi)\subset\bm{M}_{\Adele_f}(\Phi)$. In turn this gives us a $\Int$-lattice:
\[
 \bm{M}_{B}(\Phi)_{\Int}=\bm{M}_B(\Phi)\cap\bm{M}_{\widehat{\Int}}(\Phi)\subset\bm{M}_B(\Phi).
\]
This refines $\bm{M}_{\on{MH}}(\Phi)$ to a variation of mixed $\Int$-Hodge structures $\bm{M}_{\on{MH}}(\Phi)_{\Int}$ over $\Sh_{K_\Phi}(Q_\Phi,D_\Phi)(\Comp)$. Of course, this refinement depends on the choice of the $K_{\Phi}$-stable lattice $M(\widehat{\Int})$.

\subsubsection{}\label{background:subsubsec:abscheme}
The variation of Hodge structures $\bm{V}_{\on{MH}}(\Phi)$ over $\Sh_{K_{\Phi,h}}(G_{\Phi,h},D_{\Phi,h})(\Comp)$ attached to the representation $V_P$ has weights $(-1,0),(0,-1)$; that is, for every point $\varpi\in D_\Phi$, the corresponding Hodge structure on $V_\Phi(\Comp)$ arises from a complex structure on $V_\Phi(\Real)$. By quite general principles, this variation is in fact polarizable; cf.~\cite[1.12]{pink:thesis}.

As above, let $Z_\Phi(\Rat)^0\subset Z_\Phi(\Rat)$ be the sub-group of elements acting trivially on $D_\Phi$, and let $K_{\Phi,U}\subset U_\Phi(\Adele_f)$ be the image of the sub-set:
\[
 \{(z,u)\in Z_\Phi(\Rat)^0\times U_\Phi(\Adele_f): z\cdot u\in K_{\Phi}\}\subset Z_\Phi(\Rat)\times U_\Phi(\Adele_f)
\]
under the projection map $Z_\Phi(\Rat)\times U_\Phi(\Adele_f)\to U_\Phi(\Adele_f)$. Let $K_{\Phi,V}\subset V_P(\Adele_f)$ be the image of $K_{\Phi,U}$: This is a $K_{\Phi}$-stable $\widehat{\Int}$-lattice, and therefore refines $\bm{V}_{\on{MH}}(\Phi)$ to a variation of $\Int$-Hodge structures $\bm{V}_{\on{MH}}(\Phi)_{\Int}$.

Thus, we have a canonical smooth family of abelian varieties $A_K(\Phi)(\Comp)\to \Sh_{K_{\Phi,h}}(G_{\Phi,h},D_{\Phi,h})(\Comp)$, whose relative integral homology is identified with $\bm{V}_{\on{MH}}(\Phi)_{\Int}$.

The underlying vector bundle with integrable connection is the quotient of the $Q_\Phi(\Rat)$-equivariant bundle:
\[
 V_\Phi(\Real)\times D_\Phi\to D_\Phi,
\]
where, for any point $\varpi\in D_\Phi$, $V_\Phi(\Real)\times\{\varpi\}$ is equipped with the complex structure attached to $\varpi$.

The natural action of $V_\Phi(\Real)$ on $\overline{D}_\Phi$ via conjugation descends to an action of the family of abelian varieties $A_K(\Phi)(\Comp)$ on $\Sh_{\overline{K}_\Phi}(\overline{Q}_\Phi,\overline{D}_\Phi)(\Comp)$ over $\Sh_{K_{\Phi,h}}(G_{\Phi,h},D_{\Phi,h})(\Comp)$, making $\Sh_{\overline{K}_\Phi}(\overline{Q}_\Phi,\overline{D}_\Phi)(\Comp)$ an $A_K(\Phi)(\Comp)$-torsor over $\Sh_{K_{\Phi,h}}(G_{\Phi,h},D_{\Phi,h})(\Comp)$.


\subsubsection{}\label{background:subsubsec:torustors}
The action of $G_{\Phi,h}$ on $W_\Phi$ is via a character $\nu_\Phi:G_{\Phi,h}\to\Gm$; cf.~\cite[2.14]{pink:thesis}. Set $K_{\Phi,W}=W_\Phi(\Adele_f)\cap K_{\Phi,U}$. Then the variation of $\Int$-Hodge structures associated with the representation $W_\Phi$, and the lattice $K_{\Phi,W}$ is the homology of a canonical family of algebraic tori over $\Sh_{K_{\Phi,h}}(G_{\Phi,h},D_{\Phi,h})(\Comp)$. Moreover, from~\cite[3.12(b)]{pink:thesis}, we see that the map $\Sh_{K_\Phi}(Q_\Phi,D_\Phi)(\Comp)\to \Sh_{\overline{K}_\Phi}(\overline{Q}_\Phi,\overline{D}_\Phi)(\Comp)$ is naturally a torsor under this family.

In fact, this family of tori is constant. To see this, first extend $\nu_\Phi$ to a surjective map of Shimura data:
\begin{equation}\label{background:eqn:nuP_shimura_data}
 \nu_\Phi:(G_{\Phi,h},D_{\Phi,h}) \to (\Gm,\on{S}^{\pm}(0)).
\end{equation}
Such an extension is determined entirely by where it sends a point in $X^+$ under the induced composition
\[
X^+ \to D_{\Phi}\to D_{\Phi,h}\to \on{S}^{\pm}(0).
\]
This shows that there are exactly two possibilities for it.

Set $P_\Phi(0)=W_\Phi\rtimes\Gm$, where $\Gm$ acts on $W_\Phi =\Lie W_\Phi$ via scalar multiplication. Write $\pi(0):P_\Phi(0)\to\Gm$ for the natural projection. Set
\[
 D_\Phi(0) = \{(\varpi,\lambda)\in\Hom(\bb{S}_{\Comp},P_\Phi(0)_{\Comp})\times \on{S}^{\pm}(0):\;(\pi\circ h)(x,y)=xy\}.
\]
Here, we are identifying $\bb{S}_{\Comp}$ with $\Gmh{\Comp}\times\Gmh{\Comp}$ in the usual way.

Set
\[
 K_{\Phi}(0) = K_{\Phi,W}\rtimes\nu_\Phi(K_{\Phi,h})\subset P_\Phi(0)(\Adele_f).
\]

Then it follows from~\cite[3.12(a)]{pink:thesis} that the projection of complex manifolds
\begin{equation}\label{background:eqn:torus_family}
 P_\Phi(0)(\Rat)\backslash D_\Phi(0)\times P_\Phi(0)(\Adele_f)/K_{\Phi}(0)\to \Rat^\times\backslash \on{S}^{\pm}(0)\times\Adele_f^\times/\nu_\Phi(K_{\Phi,h})
\end{equation}
is naturally a family of smooth commutative groups over the base, which is identified with $\Sh_{\nu_\Phi(K_{\Phi,h})}(\Gm,\on{S}^{\pm}(0))(\Comp)$.

In fact, we can be more precise. Set
\begin{equation}\label{background:eqn:bkphi_defn}
 \mb{B}_K(\Phi) \coloneqq (W_{\Phi}(\Rat)\cap K_{\Phi,W})(-1)\subset W_{\Phi}(\Rat)(-1),
\end{equation}
and let $\mb{E}_K(\Phi)$ be the torus over $\Int$ with cocharacter group $\mb{B}_K(\Phi)$. Equivalently, it is the torus with character group $\mb{S}_K(\Phi) \coloneqq \dual{\mb{B}_K(\Phi)}$.

Then by~\cite[3.16]{pink:thesis}, we find that there is a canonical isomorphism of families of complex groups over $\Sh_{\nu_\Phi(K_{\Phi,h})}(\Gm,\on{S}^{\pm}(0))(\Comp)$:
\[
 P_\Phi(0)(\Rat)\backslash D_\Phi(0)\times P_\Phi(0)(\Adele_f)/K_{\Phi}(0)\xrightarrow{\simeq}\mb{E}_K(\Phi)(\Comp)\times \Sh_{\nu_\Phi(K_{\Phi,h})}(\Gm,\on{S}^{\pm}(0))(\Comp).
\]
Moreover, the family of tori in question is simply the pullback over $\Sh_{K_{\Phi,h}}(G_{\Phi,h},D_{\Phi,h})(\Comp)$ under the map induced by~\eqref{background:eqn:nuP_shimura_data} of the constant torus on the right hand side of the above isomorphism.

Of course, this identification with the constant torus $\mb{E}_K(\Phi)(\Comp)\times\Sh_{K_{\Phi,h}}(G_{\Phi,h},D_{\Phi,h})(\Comp)$ depends on the choice of the map~\eqref{background:eqn:nuP_shimura_data}. A different choice will change the identification by a sign. We will assume such an identification in the sequel, and will ignore its lack of canonicity from now on, since it does not play any essential role in this article.

\subsubsection{}\label{background:subsubsec:canmodels}
The spaces $\Sh_{K_\Phi}(Q_\Phi,D_\Phi)(\Comp)$, $\Sh_{\overline{K}_\Phi}(\overline{Q}_\Phi,\overline{D}_\Phi)(\Comp)$ and $\Sh_{K_{\Phi,h}}(G_{\Phi,h},D_{\Phi,h})(\Comp)$ admit canonical models $\Sh_{K_\Phi}(Q_\Phi,D_\Phi)$, $\Sh_{\overline{K}_\Phi}(\overline{Q}_\Phi,\overline{D}_\Phi)$ and $\Sh_{K_{\Phi,h}}(G_{\Phi,h},D_{\Phi,h})$ over the reflex field $E\coloneqq E(G,X)$; cf. Ch. 11 of~\cite{pink:thesis}.

These models are characterized by the following properties:
\begin{itemize}
\item $\Sh_{K_{\Phi,h}}(G_{\Phi,h},D_{\Phi,h})$ is the canonical model over $E$;
\item As the level $K$ varies, the transition maps in the inverse limit~\eqref{background:eqn:inverse_limit} are defined over $E$, giving us a descent $\Sh(Q_\Phi,D_\Phi)$ over $E$ for $\Sh(Q_\Phi,D_\Phi)(\Comp)$.
\item The Hecke action of $Q_{\Phi}(\Adele_f)$ on $\Sh(Q_\Phi,D_\Phi)(\Comp)$ by right translation via the uniformization~\eqref{background:eqn:xiphiinvlimit} descends to an action on $\Sh(Q_\Phi,D_\Phi)$.
\end{itemize}

The structures defined above also descend over the reflex field: $\Sh_{K_\Phi}(Q_\Phi,D_\Phi)\to \Sh_{\overline{K}_\Phi}(\overline{Q}_\Phi,\overline{D}_\Phi)$ is an $\mb{E}_K(\Phi)$-torsor; the family of abelian varieties $A_K(\Phi)(\Comp)\to \Sh_{K_{\Phi,h}}(G_{\Phi,h},D_{\Phi,h})(\Comp)$ descends canonically to an abelian scheme $A_K(\Phi)\to \Sh_{K_{\Phi,h}}(G_{\Phi,h},D_{\Phi,h})$, and the $A_K(\Phi)$-torsor structure on $\Sh_{\overline{K}_\Phi}(\overline{Q}_\Phi,\overline{D}_\Phi)(\Comp)$ descends to one on $\Sh_{\overline{K}_\Phi}(\overline{Q}_\Phi,\overline{D}_\Phi)$ over $\Sh_{K_{\Phi,h}}(G_{\Phi,h},D_{\Phi,h})$.


\subsubsection{}\label{background:subsubsec:opencovering}
Fix a neat compact open subgroup $K\subset G(\Adele_f)$. For any subgroup $H\subset G$ defined over $\Rat$, let $H(\Rat)_+\subset H(\Rat)$ be the pre-image in $H(\Rat)$ of the connected component of the identity in the real Lie group $G^{\ad}(\Real)$. 

Consider the open immersion (cf.~\cite[6.10]{pink:thesis}):
\begin{align}\label{background:eqn:adelicopenimm}
 \on{U}_{K_\Phi}(Q_\Phi,D_\Phi)\coloneqq Q_\Phi(\Rat)_+\backslash X^+_\Phi\times Q_\Phi(\Adele_f)/K_{\Phi}&\to\Sh_{K_\Phi}(Q_\Phi,D_\Phi)(\Comp)\\
 [(x,q)]&\mapsto [(X^+_\Phi,\varpi_x,q)]\nonumber.
\end{align}

Let $\Sh_K=\Sh_K(G,X)$; then we have a natural map:
\begin{align}\label{background:eqn:adeliccover}
\on{U}_{K_\Phi}(Q_\Phi,D_\Phi)&\to\Sh_K(\Comp)\\
[(x,q)]&\mapsto [(x,qg)]\nonumber.
\end{align}
On connected components, this map is isomorphic to $\Gamma_Q\backslash X^+_\Phi\to \Gamma_G\backslash X^+_\Phi$, where $\Gamma_G\subset G(\Rat)_+$ and $\Gamma_Q\subset Q_\Phi(\Rat)_+$ are arithmetic sub-groups with $\Gamma_Q\subset\Gamma_G$. Moreover, since $K$ is neat, the action of $\Gamma_G$ on $X^+_\Phi$ is free and properly discontinuous, so that~\eqref{background:eqn:adeliccover} is a local isomorphism of analytic spaces.

\subsubsection{}\label{background:subsubsec:equivrelation}
Let $\on{U}_K$ be the disjoint union of the spaces $\on{U}_{K_\Phi}(Q_\Phi,D_\Phi)$, as $\Phi$ varies over the clrs for $(G,X)$. Then we obtain a surjective locally \'etale covering $\on{U}_K\to\Sh_K(\Comp)$. There is now a Hausdorff equivalence relation $\sim$ on $\on{U}_K$ so that $\Sh_K(\Comp)$ is identified with $\on{U}_K/\sim$. Following~\cite[6.11]{pink:thesis}, we can describe this relation explicitly.

For this, it will be convenient to introduce some new notation: Given two admissible parabolics $P_1,P_2\subset G$ and $\gamma\in G(\Rat)$, we will write $P_1\xrightarrow{\gamma}P_2$ if the following equivalent conditions hold (cf.~\cite[III.4.8]{amrt} for a proof of their equivalence):
\begin{itemize}
\item $\gamma W_{P_1}\gamma^{-1}\supset W_{P_2}$;
\item $\gamma Q_{P_1}\gamma^{-1}\subset Q_{P_2}$.
\end{itemize}

Given two clrs $\Phi_1$ and $\Phi_2$, $\gamma\in G(\Rat)$ and $q_2\in Q_{\Phi_2}(\Adele_f)$, we will write $\Phi_1\xrightarrow{(\gamma,q_2)_K}\Phi_2$ if the following hold:
\begin{itemize}
  \item $P_{\Phi_1}\xrightarrow{\gamma}P_{\Phi_2}$;
  \item $\gamma\cdot X_{\Phi_1}^+\in \pi_0(X)$ is contained in the $Q_{\Phi}(\Rat)$-orbit of $X_{\Phi_2}^+$;
  \item $\gamma g_1\in q_2g_2K$.
\end{itemize}

Suppose now that $\Phi_1\xrightarrow{(\gamma,q_2)_K}\Phi_2$. Then we obtain a map
\begin{align}\label{background:eqn:mapcusplabels}
\rho(\gamma,q_2):\on{U}_{K_{\Phi_1}}(Q_{\Phi_1},D_{\Phi_1})&\to\on{U}_{K_{\Phi_2}}(Q_{\Phi_2},D_{\Phi_2})\\
[(x,q)]&\mapsto [(\gamma\cdot x,\on{int}(\gamma)(q)q_2)]\nonumber.
\end{align}
Here, we are using the identification
\[
\on{U}_{K_{\Phi_i}}(Q_{\Phi_i},D_{\Phi_i}) = Q_{\Phi_i}(\Rat)\backslash \widetilde{X}_{\Phi_i}\times Q_{\Phi_i}(\Adele_f)/K_{\Phi_i},
\]
where $\widetilde{X}_{\Phi_i}\subset X$ is the union of the connected components in the $Q_{\Phi}(\Rat)$-orbit of $X^+_{\Phi}\in \pi_0(X)$, and the fact that $\gamma$ carries $\widetilde{X}_{\Phi_1}$ onto $\widetilde{X}_{\Phi_2}$.

It follows from Lemma 6.12 of~\cite[6.11]{pink:thesis} that the equivalence relation $\sim$ on $\on{U}_K$ is generated by the graphs of all maps of this form.

\subsubsection{}\label{background:subsubsec:cuspsisom}
In the case where $\gamma\cdot P_{\Phi_1}=P_{\Phi_2}$, we can make~\eqref{background:eqn:mapcusplabels} even more explicit: In fact, it will be the restriction of an isomorphism $\Sh_{K_{\Phi_1}}(Q_{\Phi_1},D_{\Phi_1})\xrightarrow{\simeq}\Sh_{K_{\Phi_2}}(Q_{\Phi_2},D_{\Phi_2})$. We can reduce to the following three cases:
\begin{itemize}
\item $\gamma=1$, $q_2=1$ and $g_{\Phi_2}=g_{\Phi_1}k$, for $k\in K$: In this case, $K_{\Phi_2}=K_{\Phi_1}$, and the isomorphism is simply the identity.
\item $\gamma=1$, $q_2=q\in Q_{\Phi_1}(\Adele_f)$, and $q_fg_{\Phi_2}=g_{\Phi_1}$: In this case, $K_{\Phi_2}=qK_{\Phi_1}q^{-1}$, and right multiplication by $q$ on $Q_{\Phi_1}(\Adele_f)$ induces an isomorphism 
\[
[\cdot q]:\Sh_{K_{\Phi_1}}(Q_{\Phi_1},D_{\Phi_1})\xrightarrow{\simeq}\Sh_{K_{\Phi_2}}(Q_{\Phi_2},D_{\Phi_2}).
\]
\item $q_2=1$, and $\Phi_2=\on{int}(\gamma)(\Phi_1)$: In this case, conjugation by $\gamma$ induces an isomorphism $[\on{int}(\gamma)]:\Sh_{K_{\Phi_1}}(Q_{\Phi_1},D_{\Phi_1})\xrightarrow{\simeq}\Sh_{K_{\Phi_2}}(Q_{\Phi_2},D_{\Phi_2})$.
\end{itemize}

\emph{A priori}, all these maps are only defined on the level of $\Comp$-valued points. We obtain their descent to the canonical models by the very characterizing properties of such models.

In all these cases, we actually obtain isomorphisms of mixed Shimura varieties:
\begin{diagram}
\Sh_{K_{\Phi_1}}(Q_{\Phi_1},D_{\Phi_1})&\rTo& \Sh_{\overline{K}_{\Phi_1}}(\overline{Q}_{\Phi_1},\overline{D}_{\Phi_1})&\rTo&\Sh_{K_{\Phi_1,h}}(G_{\Phi_1,h},D_{\Phi_1,h})\\
\dTo_{\simeq}&&\dTo_{\simeq}&&\dTo_{\simeq}\\
\Sh_{K_{\Phi_2}}(Q_{\Phi_2},D_{\Phi_2})&\rTo& \Sh_{\overline{K}_{\Phi_2}}(\overline{Q}_{\Phi_2},\overline{D}_{\Phi_2})&\rTo&\Sh_{K_{\Phi_2,h}}(G_{\Phi_2,h},D_{\Phi_2,h})
\end{diagram}
There is a natural isomorphism $\mb{E}_K(\Phi_1)\xrightarrow{\simeq}\mb{E}_K(\Phi_2)$, giving $\Sh_{K_{\Phi_2}}(Q_{\Phi_2},D_{\Phi_2})$ the structure of an $\mb{E}_K(\Phi_1)$-torsor over $ \Sh_{\overline{K}_{\Phi_2}}(\overline{Q}_{\Phi_2},\overline{D}_{\Phi_2})$, and is such that, if we view $\Sh_{K_{\Phi_1}}(Q_{\Phi_1},D_{\Phi_1})$ as a scheme over $ \Sh_{\overline{K}_{\Phi_2}}(\overline{Q}_{\Phi_2},\overline{D}_{\Phi_2})$ via the middle isomorphism, then the isomorphism on the left is one of $\mb{E}_K(\Phi_1)$-torsors.

\subsubsection{}\label{background:subsubsec:deltakphi}
An even more particular case is where $\Phi_1= \Phi_2=\Phi$, and $\gamma$ belongs to the sub-group
\begin{equation}\label{background:eqn:deltaphi_prequotient}
P_\Phi(\Rat)_{\heartsuit}\cap(Q_\Phi(\Adele_f)g_\Phi Kg_\Phi^{-1}).
\end{equation}
Here, $P_\Phi(\Rat)_{\heartsuit}\subset P_\Phi(\Rat)$ is the stabilizer in $P_\Phi(\Rat)$ of the $Q_\Phi(\Real)$-orbit of $X^+_\Phi$ in $\pi_0(X)$.

Now, there exists $q\in Q_\Phi(\Adele_f)$ such that $\Phi\xrightarrow{(\gamma,q)_K}\Phi$. It is easy to see that the associated automorphism $\Sh_{K_\Phi}(Q_\Phi,D_\Phi)\xrightarrow{\simeq}\Sh_{K_\Phi}(Q_\Phi,D_\Phi)$ does not depend on the choice of $q_f$. Therefore, we obtain an action of the group
\begin{equation}\label{background:eqn:deltaphi}
\Delta_K(\Phi)=\frac{P_\Phi(\Rat)_{\heartsuit}\cap(Q_\Phi(\Adele_f)g_\Phi Kg_\Phi^{-1})}{Q_\Phi(\Rat)}
\end{equation}
on the tower $\Sh_{K_\Phi}(Q_\Phi,D_\Phi)\to \Sh_{\overline{K}_\Phi}(\overline{Q}_\Phi,\overline{D}_\Phi)\to \Sh_{K_{\Phi,h}}(G_{\Phi,h},D_{\Phi,h})$.

By construction, $\Delta_K(\Phi)$ is an arithmetic sub-group of $G_{\Phi,\ell}(\Rat)$, where $G_{\Phi,\ell} \coloneqq P_\Phi/Q_\Phi$. As mentioned in \cite[6.3]{pink:thesis}, a finite index subgroup of $\Delta_K(\Phi)$ admits a lift to the centralizer of $G_{\Phi,h}$ in the Levi quotient $L_{\Phi} = P_\Phi/U_{\Phi}$. Therefore, $\Delta_K(\Phi)$ acts on $\Sh_{K_{\Phi,h}}(G_{\Phi,h},D_{\Phi,h})$ via a finite quotient $\Delta^{\mathrm{fin}}_K(\Phi)$. In particular, if $L_\Phi\to G_{\Phi,\ell}$ admits a section giving an identification $L_\Phi=G_{\Phi,h}\times G_{\Phi,\ell}$, then $\Delta_K(\Phi)$ will act \emph{trivially} on $\Sh_{K_{\Phi,h}}(G_{\Phi,h},D_{\Phi,h})$.

Since the action of $Q_\Phi$ on $W_{\Phi}$ is via the cocharacter $\nu_\Phi$ (cf.~\ref{background:subsubsec:torustors}), the conjugation action of $P_\Phi$ on $W_{\Phi}$ induces a map
\[
 G_{\Phi,\ell}\to\PGL(W_{\Phi}).
\]

Let $ {G}^\beef_{\Phi,\ell}$ be the $\Gm$-extension $\GL(W_{\Phi})\times_{\PGL(W_{\Phi})}G_{\Phi,\ell}$ of $G_{\Phi,\ell}$. Note that the group~\eqref{background:eqn:deltaphi_prequotient} acts on $\mb{B}_K(\Phi)\subset W_{\Phi}(\Rat)(-1)$ via conjugation, and that this action factors through its image $\widetilde{\Delta}_K(\Phi)\subset {G}^\beef_{\Phi,\ell}(\Rat)$. Since $K$ is neat, $\widetilde{\Delta}_K(\Phi)$ maps isomorphically onto $\Delta_K(\Phi)\subset G_{\Phi,\ell}(\Rat)$. Therefore, we find that $\Delta_K(\Phi)$ has a natural action on $W_{\Phi}(\Real)(-1)$, which preserves $\mb{H}_{P,X^+}$ as well as $\mb{B}_K(\Phi)$.

\subsubsection{}\label{background:subsubsec:toric}
We will need some standard terminology about torus embeddings; cf.~\cite{kkms}. Let $\mb{V}$ be a finite dimensional $\Rat$-vector space. A \defnword{rational polyhedral cone} $\sigma\subset \Real\otimes\mb{V}$ is a sub-set, for which there exist finitely many linear functions $f_1,\ldots,f_r\in\dual{\mb{V}}$ such that:
\[
 \sigma=\{x\in \Real\otimes\mb{V}:\;f_i(x)\geq 0,\text{ for $1\leq i\leq r$}\}.
\]

A \defnword{face} of $\sigma$ is a subset of the form $\{f_i=0:i\in I\}\subset\sigma$, where $I\subset\{1,\ldots,r\}$. The \defnword{interior} $\sigma^{\circ}\subset\sigma$ is the complement of the proper faces of $\sigma$.

We say that $\sigma$ is \defnword{non-degenerate} if it does not contain any non-zero linear sub-spaces of $\mb{V}$.

Fix a $\Int$-lattice $\mb{X}\subset\mb{V}$. We will say that $\sigma$ is \defnword{smooth} (with respect to $\mb{X}$) if $f_1,\ldots,f_r\in\dual{\mb{V}}$ can be chosen to be a sub-set of a basis for $\dual{\mb{X}}$.

For any rational polyhedral cone $\sigma\subset\Real\otimes\mb{X}$, set
\[
 \dual{\mb{X}}_{\sigma}=\{f\in\dual{\mb{X}}:\;f(x)\geq 0,\text{ for all $x\in\sigma$}\}
\]
Let $\on{T}_{\mb{X}}$ be the torus over $\Int$ with co-character group $\mb{X}$. To $\sigma$, we can attach the affine torus embedding:
\[
 \on{T}_{\mb{X}}=\Spec\Int[\dual{\mb{X}}]\into\Spec\Int[\dual{\mb{X}}_{\sigma}]=\on{T}_{\mb{X}}(\sigma).
\]
$\on{T}_{\mb{X}}(\sigma)$ is smooth over $\Int$ precisely when $\sigma$ is smooth.

There is a unique closed $\on{T}_{\mb{X}}$-orbit $\on{O}_{\mb{X}}(\sigma)\subset \on{T}_{\mb{X}}(\sigma)$. This is defined by the ideal $\mb{I}_{\sigma}\subset\Int[\dual{\mb{X}}_{\sigma}]$ which is generated by the set:
\[
 \biggl\{f\in\dual{\mb{X}}:\; f(x)>0\;\text{for all $x\in\sigma^{\circ}$}\biggr\}.
\]

Given an inclusion of polyhedral cones $\tau\subset\sigma$, we get a $\on{T}_{\mb{X}}$-equivariant map $\on{T}_{\mb{X}}(\tau)\to\on{T}_{\mb{X}}(\sigma)$. This is an open immersion precisely when $\tau$ is a face of $\sigma$.

Suppose that we are given a scheme $S$ and a $\on{T}_{\mb{X}}$-torsor $\on{P}\to S$. Then the \defnword{twisted torus embedding} attached to the cone $\sigma$ is the open immersion of $S$-schemes:
\[
 \on{P}\into \on{P}(\sigma)=\bigl(\on{P}\times \on{T}_{\mb{X}}(\sigma)\bigr)/\on{T}_{\mb{X}}.
\]
Here, $\on{T}_{\mb{X}}$ acts diagonally on $\on{P}\times\on{T}_{\mb{X}}$. The stratification of $\on{T}_{\mb{X}}(\sigma)$ by the orbits under the $\on{T}_{\mb{X}}$-action induces a stratification on $\on{P}(\sigma)$. In particular, there is a unique closed stratum in $\on{P}(\sigma)$ corresponding to the closed orbit $\on{O}_{\mb{X}}(\sigma)\subset\on{T}_{\mb{X}}(\sigma)$. 


\subsubsection{}\label{background:subsubsec:compactcovering}
Fix a clr $\Phi=(P,X^+,g)$. Given a rational polyhedral cone $\sigma\subset W_{\Phi}(\Real)(-1)$, we can form the twisted torus embedding $\Sh_{K_\Phi}(Q_\Phi,D_\Phi)\into \Sh_{K_\Phi}(Q_{\Phi},D_{\Phi},\sigma)\coloneqq\Sh_{K_\Phi}(Q_\Phi,D_\Phi)(\sigma)$ over $\Sh_{\overline{K}_\Phi}(\overline{Q}_\Phi,\overline{D}_\Phi)$. Let $Z_{K_\Phi}(Q_\Phi,D_\Phi,\sigma)\subset\Sh_{K_\Phi}(Q_{\Phi},D_{\Phi},\sigma)$ be the closed stratum. Write $\on{U}_{K_\Phi}(Q_\Phi,D_\Phi,\sigma)$ for the closure of $\on{U}_{K_\Phi}(Q_\Phi,D_\Phi)$ in $\Sh_{K_\Phi}(Q_{\Phi},D_{\Phi},\sigma)$: This is an open sub-space of $\Sh_{K_\Phi}(Q_{\Phi},D_{\Phi},\sigma)$.

Set $\mb{H}(\Phi)\coloneqq\mb{H}_{P,X^+}$. It follows from the discussion in \cite[6.13]{pink:thesis} that $\on{U}_{K_\Phi}(Q_\Phi,D_\Phi,\sigma)$ contains $Z_{K_\Phi}(Q_\Phi,D_\Phi,\sigma)$ precisely when $\sigma^{\circ}\subset\mb{H}(\Phi)$.

Suppose that we have $\Phi_1\xrightarrow{(\gamma,q_2)_K}\Phi_2$ as in~\eqref{background:subsubsec:equivrelation}. Then conjugation by $\gamma^{-1}$ induces an embedding $\on{int}(\gamma^{-1}):W_{\Phi_2}(\Real)(-1)\into W_{\Phi_1}(\Real)(-1)$. Suppose that, for $i=1,2$, $\sigma_i\subset W_{\Phi_i}(\Real)(-1)$ are rational polyhedral cones such that $\on{int}(\gamma^{-1})(\sigma_2)$ is a face of $\sigma_1$. In this situation we will write $(\Phi_1,\sigma_1)\xrightarrow{(\gamma,q_2)_K}(\Phi_2,\sigma_2)$.

Suppose that $\gamma\cdot P_1=P_2$; then the map~\eqref{background:eqn:mapcusplabels} extends to a map~\cite[6.15]{pink:thesis}:
\begin{align}\label{background:eqn:mapcusplabelssigma}
\rho(\gamma,q):\on{U}_{K_{\Phi_1}}(Q_{\Phi_1},D_{\Phi_1},\sigma_1)&\xrightarrow{\simeq}\on{U}_{K_{\Phi_2}}(Q_{\Phi_2},D_{\Phi_2},\sigma_2).
\end{align}
Indeed, we can certainly assume that $\on{int}(\gamma^{-1})(\sigma_2)=\sigma_1$. In this case, our extension is obtained from the isomorphism of twisted torus embeddings 
\[
\Sh_{K_{\Phi_1}}(Q_{\Phi_1},D_{\Phi_1},\sigma_1)\xrightarrow{\simeq}\Sh_{K_{\Phi_2}}(Q_{\Phi_2},D_{\Phi_2},\sigma_2)
\]
extending the corresponding isomorphism of torus torsors over $ \Sh_{\overline{K}_{\Phi_2}}(\overline{Q}_{\Phi_2},\overline{D}_{\Phi_2})$. Therefore,~\eqref{background:eqn:mapcusplabelssigma} is a strata preserving isomorphism. In particular, if $\sigma_i^{\circ}\subset \mb{H}(\Phi_i)$, then, for $i=1,2$, the closed stratum $Z_{K_{\Phi_i}}(Q_{\Phi_i},D_{\Phi_i},\sigma_i)$ is contained in $\on{U}_{K_{\Phi_i}}(Q_{\Phi_i},D_{\Phi_i},\sigma_i)$, and the isomorphism carries $Z_{K_{\Phi_1}}(Q_{\Phi_1},D_{\Phi_1},\sigma_1)$ onto $Z_{K_{\Phi_2}}(Q_{\Phi_2},D_{\Phi_2},\sigma_2)$.

\subsubsection{}\label{background:subsubsec:deltakphisigma}
A particular case of the above situation is where $(\Phi_i,\sigma_i)=(\Phi,\sigma)$, for $i=1,2$. Here, the construction gives us a strata preserving action on $\Sh_{K_\Phi}(Q_{\Phi},D_{\Phi},\sigma)$ of the subgroup
\[
 \Delta_K(\Phi,\sigma)\subset\Delta_K(\Phi)
\]
consisting of elements that stabilize the cone $\sigma$; see~\eqref{background:subsubsec:deltakphi} for the notation.

Now, the stabilizer of $\sigma$ in $\Aut(\mb{B}_K(\Phi))$ is a finite group; cf.~\cite[Corollary II.4.9]{amrt}. Since $K$ is neat, we see that $\Delta_K(\Phi,\sigma)$ is the kernel of the map $\Delta_K(\Phi)\to\Aut(\mb{B}_K(\Phi))$, and so is independent of $\sigma$. We will therefore denote it by $\Delta^{\circ}_K(\Phi)$.

\begin{lem}\label{background:lem:deltacirc_trivial}
Suppose that the connected center $Z_G^{\circ}\subset G$ is isogenous to a product of split and compact tori over $\Rat$. Then $\Delta^{\circ}_K(\Phi)$ is trivial.
\end{lem}
\begin{proof}
$\Delta^{\circ}_K(\Phi)$ is a torsion-free arithmetic subgroup of the kernel of the map $G_{\Phi,\ell}\to\PGL(W_{\Phi})$, which is contained in the image of $Z_\Phi=Z_G$. But our hypothesis implies that $Z_G(\Rat)$ has no non-trivial torsion-free arithmetic subgroups.
\end{proof}

\subsubsection{}\label{background:subsubsec:derham_extension}
Fix an algebraic representation $M$ of $Q_\Phi$, and consider the associated complex variation of mixed Hodge structures (cf.~\ref{background:subsubsec:varmixedhodge}):
\[
 \bm{M}_{\dR}(\Phi)\vert_{\Sh_{K_\Phi}(Q_\Phi,D_\Phi)(\Comp)}\coloneqq \Reg{\Sh_{K_\Phi}(Q_\Phi,D_\Phi)(\Comp)}^{\an}\otimes\bm{M}_B(\Phi)
\]
over $\Sh_{K_\Phi}(Q_\Phi,D_\Phi)(\Comp)$. This is equipped with an integrable connection, and weight and Hodge filtrations. We claim that this vector bundle has a canonical extension to a vector bundle $\bm{M}_{\dR}(\Phi,\sigma)\vert_{\Sh_{K_\Phi}(Q_{\Phi},D_{\Phi},\sigma)(\Comp)}$, which is equipped with an integrable connection with log poles along the boundary $\Sh_{K_\Phi}(Q_{\Phi},D_{\Phi},\sigma)(\Comp)\backslash\Sh_{K_\Phi}(Q_\Phi,D_\Phi)(\Comp)$, and to which the weight and Hodge filtrations also extend.

For this, set
\[
X_K(\Phi)\coloneqq W_{\Phi}(\Rat)\backslash D_\Phi\times Q_\Phi(\Adele_f)/K_{\Phi}\;;\;Y_K(\Phi)=\overline{D}_\Phi\times\overline{Q}_\Phi(\Adele_f)/\overline{K}_{\Phi}.
\]
Then $X_K(\Phi)\to Y_K(\Phi)$ is a $\overline{Q}_\Phi(\Rat)$-equivariant $\mb{E}_K(\Phi)(\Comp)$-torsor whose quotient by $\overline{Q}_\Phi(\Rat)$ is exactly $\Sh_{K_\Phi}(Q_\Phi,D_\Phi)(\Comp)\to\Sh_{\overline{K}_\Phi}(\overline{Q}_\Phi,\overline{D}_\Phi)(\Comp)$. In particular, $\Sh_{K_\Phi}(Q_{\Phi},D_{\Phi},\sigma)$ is the $\overline{Q}_\Phi(\Rat)$-quotient of the twisted torus embedding $X_K(\Phi,\sigma)$ over $Y_K(\Phi)$.

The isomorphism
\begin{align}\label{background:eqn:torus_torsor_triv_1}
 M(\Comp)\times W_{\Phi}(\Comp)\times D_\Phi&\xrightarrow{\simeq} W_{\Phi}(\Comp)\times D_\Phi\times M(\Comp)\\
 (m,w,\varpi)\mapsto (w,\varpi,w\cdot m)\nonumber
\end{align}
gives rise to an isomorphism of vector bundles over $\mb{E}_K(\Phi)(\Comp)\times X_K(\Phi)$:
\begin{align}\label{background:eqn:torus_torsor_triv_2}
M(\Comp)\times\mb{E}_K(\Phi)(\Comp)\times X_K(\Phi)&\xrightarrow{\simeq}\bm{M}_{\dR}(\Phi)\vert_{\mb{E}_K(\Phi)(\Comp)\times X_K(\Phi)}.
\end{align}
Here, we are viewing $\mb{E}_K(\Phi)(\Comp)\times X_K(\Phi)$ as a space over $\Sh_{K_\Phi}(Q_\Phi,D_\Phi)(\Comp)$ via the composition:
\[
 \mb{E}_K(\Phi)(\Comp)\times X_K(\Phi)\to X_K(\Phi)\to\Sh_{K_\Phi}(Q_\Phi,D_\Phi)(\Comp),
\]
where the first map is given by the $\mb{E}_K(\Phi)(\Comp)$-action on $X_K(\Phi)$, and the second is the natural projection.

The left hand side of~\eqref{background:eqn:torus_torsor_triv_2} has an obvious extension to a trivial vector bundle over $\mb{E}_K(\Phi,\sigma)(\Comp)\times X_K(\Phi)$. The Hodge filtration depends only on the $X_K(\Phi)$-factor and so also extends. The induced connection on this extension has logarithmic poles along the boundary. More precisely, the difference between this connection and the trivial one is (up to sign) the linear map:
\[
 \Theta_P:M(\Comp)\to M(\Comp)\otimes\Omega^1_{\mb{E}_K(\Phi)(\Comp)/\Comp}=\Hom(\Lie W_{\Phi},M(\Comp))\otimes\Reg{\mb{E}_K(\Phi)(\Comp)},
\]
induced by the natural map $\Lie W_{\Phi}\to\End(M(\Rat))$.

The successive quotient of this extension by $\mb{E}_K(\Phi)$ and then $\overline{Q}_\Phi(\Rat)$ gives us the desired vector bundle $\bm{M}_{\dR}(\Phi,\sigma)\vert_{\Sh_{K_\Phi}(Q_{\Phi},D_{\Phi},\sigma)(\Comp)}$.

\subsubsection{}\label{background:subsubsec:conedecomp}
Fix a clr $\Phi=(P,X_+,g)$. Let $\mb{H}^*(\Phi)\subset W_{\Phi}(\Real)(-1)$ be the union of the images of the cones $\on{int}(\gamma^{-1})(\mb{H}(\Phi'))$, for all $\Phi'\xrightarrow{(\gamma,q)_K}\Phi$.

A \defnword{rational polyhedral cone decomposition} for $\mb{H}^*(\Phi)$ is a set $\Sigma(\Phi)$ of rational polyhedral cones $\sigma\subset W_{\Phi}(\Real)(-1)$ such that:
\begin{itemize}
\item $\sigma\subset\mb{H}^*(\Phi)$, for all $\sigma\in\Sigma(\Phi)$;
\item If $\sigma\in\Sigma(\Phi)$ then every face of $\sigma$ is also in $\Sigma(\Phi)$;
\item For $\sigma_1,\sigma_2\in\Sigma(\Phi)$, then $\sigma_1\cap\sigma_2$ is a face of both $\sigma_1$ and $\sigma_2$.
\end{itemize}

We will say that $\Sigma(\Phi)$ is in addition \defnword{complete} if $\mb{H}^*(\Phi)=\bigcup_{\sigma\in\Sigma}\sigma^{\circ}$. It is \defnword{smooth} (with respect to $K$) if each $\sigma\in\Sigma(\Phi)$ is smooth with respect to the lattice $\mb{B}_K(\Phi)\subset W_{\Phi}(\Rat)(-1)$.

Let $\Sigma^{\circ}(\Phi)\subset\Sigma(\Phi)$ be the subset of cones $\sigma$ such that $\sigma^\circ\subset \mb{H}(\Phi)$,

Given two rational polyhedral decompositions $\Sigma_1(\Phi)$, $\Sigma_2(\Phi)$ for $\mb{H}^*(\Phi)$, we will say that $\Sigma_2(\Phi)$ is a \defnword{refinement} of $\Sigma_1(\Phi)$ if, given any $\sigma_2\in\Sigma_2(\Phi)$, there exists $\sigma_1\in\Sigma_1(\Phi)$ such that $\sigma_2\subset\sigma_1$.

Given a decomposition $\Sigma(\Phi)$, we can construct a global twisted torus embedding that is locally of finite type over $\Sh_{\overline{K}_\Phi}(\overline{Q}_\Phi,\overline{D}_\Phi)$:
\[
 \Sh_{K_\Phi}(Q_\Phi,D_\Phi)\into\Sh_{K_\Phi}(Q_\Phi,D_\Phi,\Sigma\bigr).
\]
Here, $\Sh_{K_\Phi}(Q_\Phi,D_\Phi,\Sigma\bigr)$ is a union of (relatively) affine open sub-schemes $\Sh_{K_\Phi}(Q_{\Phi},D_{\Phi},\sigma)$, for $\sigma\in\Sigma^\circ(\Phi)$, where, for $\sigma_1,\sigma_2\in\Sigma^\circ(\Phi)$, $\Sh_{K_\Phi}(Q_\Phi,D_\Phi,\sigma_1)$ and $\Sh_{K_\Phi}(Q_\Phi,D_\Phi,\sigma_2)$ are glued along the common open sub-scheme $\Sh_{K_\Phi}(Q_\Phi,D_\Phi,\sigma_1\cap\sigma_2)$.

Write $\on{U}_{K_\Phi}(Q_\Phi,D_\Phi,\Sigma)$ for the closure of $\on{U}_{K_\Phi}(Q_\Phi,D_\Phi)$ in $\Sh_{K_\Phi}(Q_\Phi,D_\Phi,\Sigma)(\Comp)$: This is a union of open sub-spaces of the form $\on{U}_{K_\Phi}(Q_\Phi,D_\Phi,\sigma)$, for $\sigma\in\Sigma^\circ(\Phi)$.

If $\Sigma_2(\Phi)$ is a refinement of $\Sigma_1(\Phi)$, then we obtain a map $\Sh_{K_\Phi}(Q_\Phi,D_\Phi,\Sigma_1\bigr)\to\Sh_{K_\Phi}(Q_\Phi,D_\Phi,\Sigma_2\bigr)$ of twisted torus embeddings: It carries $\on{U}_{K_\Phi}(Q_\Phi,D_\Phi,\Sigma_2)$ to $\on{U}_{K_\Phi}(Q_\Phi,D_\Phi,\Sigma_1)$.

\subsubsection{}\label{background:subsubsec:rpcd}
An \defnword{admissible rational polyhedral cone decomposition}, or admissible rpcd, for $(G,X,K)$ is an assignment $\Phi\mapsto\Sigma(\Phi)$ attaching to each clr $\Phi$ a rational polyhedral cone decomposition $\Sigma(\Phi)$ for $\mb{H}^*(\Phi)$, and satisfying the following property: Suppose that we have $\Phi_1\xrightarrow{(\gamma,q_2)_K}\Phi_2$, with the corresponding embedding $\on{int}(\gamma^{-1}):W_{\Phi_2}(\Rat)(-1)\into W_{\Phi_1}(\Rat)(-1)$; then:
\[
 \Sigma(\Phi_2)=\{\sigma\subset W_{\Phi_1}(\Rat)(-1):\;\on{int}(\gamma^{-1})(\sigma)\in\Sigma(\Phi_1)\}.
\]

We will say that $\Sigma$ is \defnword{complete} (resp. \defnword{smooth}) if, for any clr $\Phi$, $\Sigma(\Phi)$ is complete (resp. smooth). An admissible rpcd $\Sigma_2$ is a refinement of another, $\Sigma_1$, if, for each clr $\Phi$, $\Sigma_2(\Phi)$ is a refinement of $\Sigma_1(\Phi)$.

Given an admissible rpcd $\Sigma$ for $(G,X)$, given a clr $\Phi=(P,X^+,g)$, $\Sigma(\Phi)$ is stable under the conjugation action on $W_{\Phi}(\Rat)(-1)$ of the sub-group $\Delta_K(\Phi)$ from~\eqref{background:eqn:deltaphi}. We will say that $\Sigma$ is \defnword{finite} if, for any clr $\Phi$, the orbit space $\Delta_K(\Phi)\backslash\Sigma(\Phi)$ is a finite set.

Observe that, in this case, using~\eqref{background:eqn:mapcusplabelssigma}, we can extend the action of $\Delta_K(\Phi)$ on $\on{U}_{K_\Phi}(Q_\Phi,D_\Phi)$ to an action on $\on{U}_{K_\Phi}(Q_\Phi,D_\Phi,\Sigma)$.

We will only be using finite admissible rpcd's in the sequel, so, from now on, admissible rpcd will always mean `finite admissible rpcd'. In fact, we will also impose the following additional \defnword{`no self-intersections'} condition on our rpcd's~\cite[6.5.2.25]{lan:thesis},~\cite[7.12]{pink:thesis}:

Suppose that we have $(\Phi_1\xrightarrow{(\gamma,q_2)_K}\Phi_2)$, and $\sigma\in\Sigma(\Phi_2)$ and $\tau\in\Sigma(\Phi_1)$ such that $\on{int}(\gamma^{-1})(\sigma)$ is a face of $\tau$. If $\eta\in\Delta_K(\Phi_2)$ is such that $\on{int}(\gamma^{-1}\eta^{-1})(\sigma)$ is also a face of $\tau$, then we must have $\on{int}(\eta^{-1})(\sigma)=\sigma$.

\subsubsection{}\label{background:subsubsec:char0compact}
Given an admissible rpcd $\Sigma$, we take $\on{U}_K(\Sigma)$ to be the disjoint union of the spaces $\on{U}_{K_\Phi}(Q_\Phi,D_\Phi,\Sigma)$ as $\Phi$ varies over the clr's for $(G,X)$. By construction, we have an open immersion $\on{U}_K\into\on{U}_K(\Sigma)$. Let $\sim_{\Sigma}$ be the closure in $\on{U}_K(\Sigma)\times\on{U}_K(\Sigma)$ of the equivalence relation $\sim$.

\begin{thm}[Ash-Mumford-Rapoport-Tai]\label{background:thm:amrt}
Admissible (finite) rpcd's exist. Any admissible rpcd can be refined to be smooth. After replacing $K$ by a sub-group of finite index, if necessary, the admissible rpcd can be chosen to satisfy the no self-intersections condition.

Given an admissible rpcd $\Sigma$ for $(G,X)$, $\sim_{\Sigma}$ defines an equivalence relation on $\on{U}_K(\Sigma)$. Set $\Sh_K^{\Sigma}(\Comp)=\on{U}_K(\Sigma)/\sim_{\Sigma}$. Then:
\begin{enumerate}
\item\label{amrt:equiv}$\Sh_K^{\Sigma}(\Comp)$ is the set of $\Comp$-valued points of a normal complex algebraic space $\Sh_{K,\Comp}^{\Sigma}$ of finite type over $\Comp$.
\item\label{amrt:algebraic}The natural map $\Sh_K(\Comp)=\on{U}_K/\sim\to\Sh_K^{\Sigma}(\Comp)$ is induced by an open immersion of algebraic spaces $\Sh_{K,\Comp}\into\Sh_{K,\Comp}^{\Sigma}$.
\item\label{amrt:compsmooth}If $\Sigma$ is complete (resp. smooth), then $\Sh^{\Sigma}_{K,\Comp}$ is proper (resp. smooth) over $\Comp$.
\end{enumerate}
\end{thm}
\begin{proof}
This is essentially the main result of~\cite{amrt}. The existence of admissible rpcd's is shown in Ch. II,\S~5.3-4 of \emph{loc. cit.}; cf. also~\cite[Ch. 9]{pink:thesis}. That they can be chosen to satisfy the no self-intersections condition by shrinking $K$ is shown in~\cite[7.13]{pink:thesis}.

For the fact that $\sim_{\Sigma}$ is an equivalence relation on $\on{U}_K(\Sigma)$, see~\cite[6.17]{pink:thesis}. The same result implies that the induced map $\Sh_K(\Comp)\to\Sh_K^{\Sigma}(\Comp)$ is an open immersion.

The remaining assertions---except for the one about smoothness, for which, cf.~\cite[6.26]{pink:thesis}---can be found in~\cite[9.34]{pink:thesis}.
\end{proof}

\subsubsection{}\label{background:subsubsec:char0strata}
Fix a clr $\Phi$ and $\sigma\in\Sigma^\circ(\Phi)$ so that $\on{U}_{K_\Phi}(Q_\Phi,D_\Phi,\sigma)$ contains $Z_{K_\Phi}(Q_\Phi,D_\Phi,\sigma)(\Comp)$. Since $K$ is neat, the stabilizer of $\sigma$ in $\Delta_K(\Phi)$ is the torsion-free arithmetic group $\Delta^\circ_K(\Phi)$; cf.~\eqref{background:subsubsec:deltakphisigma}. It now follows from \cite[7.15]{pink:thesis} that, for a sufficiently small neighborhood $\mathcal{V}_{K_\Phi}(Q_\Phi,D_\Phi,\sigma)\subset\on{U}_{K_\Phi}(Q_\Phi,D_\Phi,\sigma)$ of $Z_{K_\Phi}(Q_\Phi,D_\Phi,\sigma)$, the composition:
\[
 \mathcal{V}_{K_\Phi}(Q_\Phi,D_\Phi,\sigma)\into\on{U}_{K_\Phi}(Q_\Phi,D_\Phi,\sigma)\into\on{U}_{K_\Phi}(Q_\Phi,D_\Phi,\Sigma)\into\on{U}_K(\Sigma)\to\Sh_K^\Sigma(\Comp)
\]
induces an open immersion
\[
 \Delta^{\circ}_K(\Phi)\backslash\mathcal{V}_{K_\Phi}(Q_\Phi,D_\Phi,\sigma)\into\Sh^{\Sigma}_K(\Comp)
\]
The restriction of this open immersion produces a locally closed immersion
\[
\Delta^{\circ}_K(\Phi)\backslash Z_{K_\Phi}(Q_\Phi,D_\Phi,\sigma)(\Comp)\into\Sh_K^{\Sigma}(\Comp).
\]

Let $\on{Cusp}_K^{\Sigma}(G,X)$ be the set of equivalence classes of pairs $(\Phi,\sigma)$, where $\Phi$ is a clr and $\sigma\in\Sigma^\circ(\Phi)$: Here, we say that $(\Phi_1,\sigma_1)$ is equivalent to $(\Phi_2,\sigma_2)$ if we have $\Phi_1\xrightarrow{(\gamma,q_2)_K}\Phi_2$ with $\gamma\cdot P_1=P_2$, and $\on{int}(\gamma)(\sigma_1)=\sigma_2$. This set has a natural poset structure $\preccurlyeq$, where $[(\Phi_1,\sigma_1)]\preccurlyeq[(\Phi_2,\sigma_2)]$ if $(\Phi_1,\sigma_1)\xrightarrow{(\gamma,q_2)_K}(\Phi_2,\sigma_2)$, for some $\gamma\in G(\Rat)$ and $q_2\in Q_{\Phi_2}(\Adele_f)$.

By the definition of the equivalence relation $\sim_{\Sigma}$, we see that the locally closed immersion
\[
\Delta^{\circ}_K(\Phi)\backslash Z_{K_\Phi}(Q_\Phi,D_\Phi,\sigma)(\Comp)\into\Sh_K^{\Sigma}(\Comp),
\]
up to canonical isomorphism, depends only on the equivalence class $\Upsilon\coloneqq[(\Phi,\sigma)]\in\on{Cusp}^{\Sigma}_K(G,X)$. Therefore, we can denote the corresponding locally closed sub-space of $\Sh_K^{\Sigma}(\Comp)$ unambiguously as $Z_K(\Upsilon)(\Comp)$. Observe that the complex analytic space $Z_K(\Upsilon)(\Comp)$ is algebraic, and in fact has a canonical model $Z_K(\Upsilon)$ over $E$: For any representative $(\Phi,\sigma)$ of $\Upsilon$, we have
\[
Z_K(\Upsilon)=\Delta^{\circ}_K(\Phi)\backslash Z_{K_\Phi}(Q_\Phi,D_\Phi,\sigma).
\]

From~\cite[12.4]{pink:thesis}, we obtain:
\begin{thm}[Pink]\label{background:thm:pink}
There is a canonical model $\Sh^{\Sigma}_K$ for $\Sh^{\Sigma}_{K,\Comp}$ over $E$ such that the open immersion $\Sh_K\into\Sh^{\Sigma}_K$ is also defined over $E$. Suppose in addition that $\Sigma$ is complete. Then:
\begin{enumerate}[itemsep=0.11in]
\item\label{pink:rationality}For every $\Upsilon\in\on{Cusp}^{\Sigma}_K(G,X)$, the locally closed immersion $Z_K(\Upsilon)(\Comp)\into\Sh^\Sigma_K(\Comp)$ arises from a map $Z_K(\Upsilon)\into\Sh^{\Sigma}_K$ of algebraic spaces over $E$.

\item\label{pink:stratification}There is a canonical stratification:
\[
 \Sh^{\Sigma}_K=\bigsqcup_{\Upsilon}Z_K(\Upsilon),
\]
where $\Upsilon$ ranges over $\on{Cusp}^\Sigma_K(G,X)$. For any fixed $\Upsilon$, the closure of $Z_K(\Upsilon)$ in $\Sh_K^\Sigma$ is precisely the closed sub-space:
\[
 \overline{Z}_K(\Upsilon)=\bigsqcup_{\Upsilon'\preccurlyeq\Upsilon}Z_K(\Upsilon').
\]

\item\label{pink:completion}Given $\Upsilon=[(\Phi,\sigma)]$, let $\widehat{\Sh}_{K_\Phi}(Q_\Phi,D_\Phi,\sigma)$ be the the formal completion of $\Sh_{K_\Phi}(Q_{\Phi},D_{\Phi},\sigma)$ along $Z_{K_\Phi}(Q_\Phi,D_\Phi,\sigma)$. Then the isomorphism $\Delta^{\circ}_K(\Phi)\backslash Z_{K_\Phi}(Q_\Phi,D_\Phi,\sigma)\xrightarrow{\simeq}Z_K(\Upsilon)$ extends to an isomorphism of formal algebraic spaces between $\Delta^{\circ}_K(\Phi)\backslash\widehat{\Sh}_{K_\Phi}(Q_\Phi,D_\Phi,\sigma)$ and the completion of $\Sh^{\Sigma}_K$ along $Z_K(\Upsilon)$. This extension is characterized by the property that, over $\Comp$, it converges to the holomorphic open immersion $\Delta^{\circ}_K(\Phi)\backslash\mathcal{V}_{K_\Phi}(Q_\Phi,D_\Phi,\sigma)\into\Sh_K^{\Sigma}(\Comp)$.
\end{enumerate}
\end{thm}
\qed

\subsubsection{}\label{background:subsubsec:funct}
We will end this summary with some results on the functoriality of the toroidal compactifications and their stratifications. For this, fix a closed immersion of Shimura data $\iota:(G,X)\into ( {G}^\beef, X^\beef)$ of Shimura data, as well as an element $g^\beef\in {G}^\beef(\Adele_f)$. Let ${E}^\beef$ be the reflex field for $( {G}^\beef, X^\beef)$. Then we get a map of Shimura varieties:
\[
 (\iota,g^\beef):\Sh_K\to \Sh_{{K}^\beef,E}\coloneqq E\otimes_{{E}^\beef}\Sh_{{K}^\beef}( {G}^\beef, X^\beef).
\]
On the level of $\Comp$-points, this map carries $[(x,g)]$ to $[(\iota(x),\iota(g)g^\beef)]$.

Given an admissible parabolic sub-group $P\subset G$, there is a unique minimal admissible parabolic sub-group $\iota_*P\subset {G}^\beef$ containing $\iota(P)$; cf.~\cite[4.16]{pink:thesis}. Moreover, we have $\iota(Q_P)\subset Q_{\iota_*P}$ and $\iota(W_P)\subset W_{\iota_*P}$.

Given a clr $\Phi$ for $(G,X)$, we can now define a clr $\Phi^\beef = (\iota,g^\beef)_*\Phi$ for $( {G}^\beef, X^\beef)$: We set $P_{\Phi^\beef} = \iota_*P_\Phi$; $X^{\beef,+}_{\Phi^\beef}$ will be the unique connected component of $ X^\beef$ containing $\iota(X^+_\Phi)$; and $g_{\Phi^\beef} = g_\Phi g^\beef$. 

Tthe convex cone $\iota\bigl(\mb{H}(\Phi)\bigr)\subset W_{\Phi^\beef}(\Real)(-1)$ is contained in $\mb{H}({{\Phi}^\beef})$, and, since $\iota(Q_\Phi(\Real)W_\Phi(\Real))$ is contained in $Q_{\Phi^\beef}(\Real)W_{\Phi^\beef}(\Comp)$, the map $\iota$ extends to a map of mixed Shimura data 
\[
(Q_\Phi,D_\Phi)\to (Q_{\Phi^\beef},D_{\Phi^\beef}). 
\]
Let $K\subset G(\Adele_f)$ and ${K}^\beef\subset {G}^\beef(\Adele_f)$ be compact opens such that $\iota(K)$ is contained in $g^{\beef,-1}{K}^\beef g^\beef$. Then $\iota(K_{\Phi})\subset K_{{\Phi}^\beef}$, and we therefore obtain a map of mixed Shimura varieties
\begin{align}\label{background:eqn:phitildephi}
\Sh_{K_\Phi}(Q_\Phi,D_\Phi)\to E\otimes_{{E}^\beef}\Sh_{K^\beef_{\Phi^\beef}}(Q_{\Phi^\beef},D_{\Phi^\beef}).
\end{align}

If $\Sigma^\beef$ is an admissible (finite) rpcd for $( {G}^\beef, X^\beef,{K}^\beef)$, then the assignment:
\[
 (\iota,g^\beef)^*\Sigma^\beef:\Phi\mapsto\{\iota^{-1}(\sigma^\beef):\;\sigma^\beef\in\Sigma^\beef(\Phi^\beef);\;\sigma^\beef\subset\iota(W_{\Phi}(\Real)(-1))\}
\]
is an admissible (finite) rpcd for $(G,X,K)$; cf.~\cite[\S 3.3]{harris:functorial}.

Fix such a $\Sigma^\beef$ and let $\Sigma$ be a refinement of $(\iota,g^\beef)^*\Sigma^\beef$. Suppose that we are given $\sigma\in\Sigma(\Phi)$; then there is a unique $\sigma^\beef\in\Sigma^\beef({\Phi}^\beef)$ such that $\iota(\sigma)\subset\sigma^\beef$, and such that no proper face of $\sigma^\beef$ contains $\iota(\sigma)$. The map~\eqref{background:eqn:phitildephi} extends to a strata respecting map
\begin{align}\label{background:eqn:phitildephisigma}
\Sh_{K_\Phi}(Q_{\Phi},D_{\Phi},\sigma)\to E\otimes_{{E}^\beef}\Sh_{K^\beef_{\Phi^\beef}}(Q_{\Phi^\beef},D_{\Phi^\beef},\sigma^\beef).
\end{align}

If $\Phi_1,\Phi_2$ are clr's for $(G,X)$ with $\Phi_1\xrightarrow{(\gamma,q_2)_K}\Phi_2$, then we have:
\[
 \Phi^\beef_1\xrightarrow{(\iota(\gamma),\iota(q_2))_{{K}^\beef}}\Phi^\beef_2.
\]
Therefore, $(\Phi,\sigma)\mapsto({\Phi}^\beef,\sigma^\beef)$ induces a map of posets:
\[
 (\iota,g^\beef)_*:\on{Cusp}_K^{\Sigma}(G,X)\to\on{Cusp}_{{K}^\beef}^{\Sigma^\beef}( {G}^\beef, X^\beef).
\]

Given $\Upsilon=[(\Phi,\sigma)]$ on the left hand side, set $\Upsilon^\beef=(\iota,g^\beef)_*\Upsilon=[({\Phi}^\beef,\sigma^\beef)]$. Let $(\Sh_K^{\Sigma})^{\wedge}_{Z_K(\Upsilon)}$ (resp. $(\Sh_{{K}^\beef}^{\Sigma^\beef})^{\wedge}_{Z_{{K}^\beef}(\Upsilon^\beef)}$) be the completion of $\Sh_K^{\Sigma}$ (resp. $\Sh_{{K}^\beef}^{\Sigma^\beef}$) along $Z_{{K}^\beef}(\Upsilon^\beef)$. Then, using~\eqref{pink:completion} of~\eqref{background:thm:pink}, the map~\eqref{background:eqn:phitildephisigma} gives us a canonical map:
\begin{align}\label{background:eqn:functstratacomp}
 (\iota,g^\beef):(\Sh_K^{\Sigma})^{\wedge}_{Z_K(\Upsilon)}\xrightarrow{\simeq}\Delta^{\circ}_K(\Phi)\backslash\widehat{\Sh}_{K_\Phi}(Q_\Phi,D_\Phi,\sigma)\to \Delta^{\circ}_{{K}^\beef}({\Phi}^\beef)\backslash\widehat{\Sh}_{{K}^\beef_{\Phi^\beef}}(Q_{\Phi^\beef},D_{\Phi^\beef},\sigma^\beef)\xrightarrow{\simeq}(\Sh_{{K}^\beef}^{\Sigma^\beef})^{\wedge}_{Z_{{K}^\beef}(\Upsilon^\beef)}.
\end{align}

From \cite[6.25,12.4]{pink:thesis}, we now obtain:
\begin{prp}[Pink]\label{background:prp:funct}
Suppose that $\Sigma$ and $\Sigma^\beef$ are complete. Then the map $(\iota,g):\Sh_K\to\Sh_{{K}^\beef,E}$ extends uniquely to a map $(\iota,g):\Sh^{\Sigma}_K\to\Sh^{\Sigma^\beef}_{{K}^\beef,E}$. For every $\Upsilon\in\on{Cusp}_K^{\Sigma}(G,X)$, $(\iota,g)$ carries $Z_K(\Upsilon)$ to $E\otimes_{{E}^\beef}Z_{{K}^\beef}(\Upsilon^\beef)$, and the corresponding map between the formal completions along these locally closed sub-schemes is identified with~\eqref{background:eqn:functstratacomp}
\end{prp}
\qed

One particular case of this functoriality is when $( {G}^\beef, X^\beef)=(G,X)$, and $\iota$ is the identity; then~\eqref{background:prp:funct} gives us the action of Hecke correspondences on toroidal compactifications.

\subsection{Siegel modular varieties}\label{background:subsec:siegel}
We will now fix a symplectic space $(H,\psi)$ over $\Rat$. In this sub-section, we will show that the mixed Shimura varieties attached to rational boundary components of the associated Siegel Shimura variety are moduli spaces for $1$-motifs with additional structures.

\subsubsection{}\label{background:subsubsec:siegelbdry}
Let $(G,X)=(\GSp(H,\psi),\on{S}^{\pm}(H,\psi))$ be the Siegel Shimura datum associated with $(H,\psi)$. The first order of business is to describe the rational boundary components for $(G,X)$. This is given in~\cite[4.25]{pink:thesis}. Here is the summary:

Every admissible parabolic $P\subset G$ is obtained as the stabilizer of an isotropic sub-space $I\subset H$. Equivalently, it is the stabilizer of the filtration $W_{\bullet}H$, where:
\[
 0=W_{-3}H\subset W_{-2}H=I\subset W_{-1}H=I^{\perp}\subset W_0H=H
\]

Suppose that we have a clr $\Phi$ with $P_\Phi$ the stabilizer of a filtration as above. Then $Q_{\Phi}\subset P_\Phi$ is the largest sub-group acting trivially on $H/I^{\perp}$; it must necessarily act on $I$ via the similitude character. Moreover, $W_{\Phi}\subset Q_{\Phi}$ is the largest sub-group acting trivially on both $H/I$ and $I^{\perp}$.

The assignment $f\mapsto\psi(\cdot,-f\cdot)$ identifies $\Lie W_\Phi$ with the space of symmetric bilinear forms on $H/I^{\perp}$. Therefore, we can view $W_\Phi(\Real)=\Real\otimes\Lie W_\Phi$ as the space of symmetric $\Real$-valued forms on $H/I^{\perp}$.

The choice of a connected component $X^+_\Phi\subset X$ corresponds to a choice $i=\sqrt{-1}$; equivalently, to a choice of isomorphism $\Rat(-1)\xrightarrow{\simeq}\Rat$ or to an element in $\on{S}^{\pm}(0)$. This gives us an isomorphism $W_\Phi(\Real)(-1)\xrightarrow{\simeq}W_\Phi(\Real)$. The cone $\mb{H}(\Phi)\subset W_\Phi(\Real)(-1)$ is the pre-image under this isomorphism of the space of positive definite bilinear forms on $(H/I^{\perp})(\Real)$.

Since $Q_\Phi(\Real)$ acts transitively on the connected components of $X$, the space $D_\Phi$ does not depend on the choice of the connected component $X^+_\Phi$. It is the space of pairs $(F^\bullet H(\Comp),\lambda)$, where $\lambda:\Rat\xrightarrow{\simeq}\Rat(1)$ is an isomorphism, and $(F^\bullet H(\Comp),W_\bullet H(\Rat))$  is a mixed Hodge structure on $H(\Rat)$ of weights $(-1,-1),(-1,0),(0,-1),(0,0)$ that is polarized by $\lambda\circ\psi$. When $I\neq I^{\perp}$, $\lambda$ is determined uniquely by the Hodge filtration and this condition.

The $Q_\Phi(\Real)W_\Phi(\Comp)$-equivariant open immersion $X\into D_\Phi$ can be described as follows: Given $x\in X$ with associated Hodge filtration $F^\bullet_x H(\Comp)$, we attach to it the mixed Hodge structure $(H(\Rat),W_{\bullet}H(\Rat),F^\bullet_x H(\Comp))$, and the unique isomorphism $\lambda:\Rat\to\Rat(1)$ such that $F^\bullet_x H(\Comp)$ is polarized by $\lambda\circ\psi$.

\subsubsection{}\label{background:subsubsec:h_integral}
A \defnword{polarized lattice} in $(H(\Rat),\psi)$ is a $\Int$-lattice $H(\Int)\subset H(\Rat)$ on which $\psi$ restricts to a $\Int$-valued. Let $\dual{H}(\Int)\subset H(\Rat)$ be the dual lattice with respect to the pairing $\psi$, and let $d\in\Int_{>0}$ be such that the order of the finite group $\dual{H}(\Int)/H(\Int)$ is $d^2$. We will call $d$ the \defnword{discriminant} of the polarized lattice $H(\Int)$.

Fix a compact open sub-group $K\subset G(\Adele_f)$ and a polarized $\Int$-lattice $H(\Int)\subset H$ such that $H(\widehat{\Int})=\widehat{\Int}\otimes H(\Int)\subset H(\Adele_f)$ is stabilized by $K$. Extend $(P,X^+)$ to a clr $\Phi$.

Set $H^g(\widehat{\Int})=g\cdot H(\widehat{\Int})\subset H(\Adele_f)$, and let $H^g(\Int)=H(\Rat)\cap H^g(\widehat{\Int})$. Then $H^g({\Int})$ inherits a filtration $W_{\bullet}H^g({\Int})$ from the filtration $W_{\bullet}H$. Let $N(g)\in\Rat_{>0}$ be the unique positive element mapping to the image of $\nu(g)^{-1}$ under the surjection:
\[
 \Rat^\times\to\Rat^\times/\Int^\times\xrightarrow{\simeq}\Adele_f^\times/\widehat{\Int}^\times.
\]
If we equip $H$ with the symplectic form $\psi^g=N(g)\cdot \psi$, then $H^g(\Int)\subset H(\Rat)$ will be a polarized lattice for $\psi^g$.

Let $\dual{H}(\nu)$ be the $G$-representation obtained by twisting the dual representation $\dual{H}$ by the similitude character $\nu:G\to\Gm$. The pairing $\psi^g$ on $H$ induces an isomorphism of $G$-representations $f^g:H\xrightarrow{\simeq}\dual{H}(\nu)$. Equip $\dual{H}(\nu)$ with the dual filtration $W_{\bullet}\dual{H}(\nu)$, so that $W_i\dual{H}(\nu)\subset\dual{H}(\nu)$ is the annihilator of $W_{-3-i}H$. Then $f^g$ preserves $W_{\bullet}$-filtrations.

Let $H^{g,\vee}(\Int)\subset\dual{H}$ be lattice dual to $H^g(\Int)$, and set $H^{g,\vee}(\Int)(\nu)=N(g)H^{g,\vee}\subset\dual{H}=\dual{H}(\nu)$. Then $f^g$ carries $H^g(\Int)$ into $H^{g,\vee}(\Int)(\nu)$.

\subsubsection{}\label{background:subsubsec:1motifsmoduli}
Given a $1$-motif $Q$ over $\Rat$-scheme $S$, let $\widehat{T}(Q)$ be the total Tate module:
\[
 \widehat{T}(Q)=\varprojlim_nQ[n].
\]
Similarly, we have the total Tate module $\widehat{T}(\dual{Q})$. Both these sheaves are equipped with weight filtrations, $W_{\bullet}\widehat{T}(Q)$ and $W_{\bullet}\widehat{T}(\dual{Q})$, induced from the weight filtrations on the $1$-motifs themselves.

Fix a polarization $\lambda:Q\to\dual{Q}$. The induced map $\widehat{T}(Q)\to\widehat{T}(\dual{Q})=\dual{\widehat{T}(Q)}(1)$ produces a Weil pairing:
\[
\widehat{e}_{\lambda}:\widehat{T}(Q)\times\widehat{T}(Q)\to\underline{\widehat{\Int}}(1).
\]
Suppose that we are given an $S$-scheme $T$ and a pair of isomorphisms of \'etale sheaves over $T$:
\[
\eta:\underline{H}^g(\widehat{\Int})\xrightarrow{\simeq}\widehat{T}(Q)\vert_T;\; u:\underline{\widehat{\Int}}\xrightarrow{\simeq}\underline{\widehat{\Int}}(1).
\]
We will say that such a pair is \defnword{compatible with $\lambda$} if we have:
\[
 \widehat{e}_{\lambda}\circ(\eta\times \eta)=u\circ\psi_g:\underline{H}^g(\widehat{\Int})\times\underline{H}^g(\widehat{\Int})\to\underline{\widehat{\Int}}(1).
\]

We will say that the pair is \defnword{compatible with $W_{\bullet}$-filtrations} if $\eta$ carries $W_\bullet\underline{H}^g(\widehat{\Int})$ onto $W_{\bullet}\widehat{T}(Q)\vert_T$.

Suppose now that we are given isomorphisms of sheaves of $\Int$-modules over $S$:
\[
 \alpha:\gr^W_0\underline{H}^g(\Int)\xrightarrow{\simeq}Q^{\et}\;;\;\dual{\alpha}:\gr^W_0\underline{H}^{\vee,g}(\Int)(\nu)\xrightarrow{\simeq}Q^{\vee,\et}=Q^{\mult,C}.
\]
Then we obtain isomorphisms of $\widehat{\Int}$-sheaves:
\begin{align*}
\beta:\gr^W_0\underline{H}^g(\widehat{\Int})=\widehat{\Int}\otimes\gr^W_0\underline{H}^g(\Int)&\xrightarrow[\simeq]{1\otimes\alpha}\widehat{\Int}\otimes Q^{\et}=\gr^W_0\widehat{T}(Q).\\
\dual{\beta}:\underline{\widehat{\Int}}\otimes\gr^W_0\underline{H}^{g,\vee}(\Int)(\nu)&\xrightarrow[\simeq]{1\otimes\dual{\alpha}}\widehat{\Int}\otimes Q^{\mult,C}=\gr^W_0\widehat{T}(\dual{Q}).
\end{align*}

Let $(\eta,u)$ be a pair as above compatible with $\lambda$ and $W_{\bullet}$-filtrations. We then obtain an isomorphism:
\[
 \dual{\eta}(1):\underline{H}^{g,\vee}(\widehat{\Int})(\nu)\xrightarrow[\simeq]{N(g)^{-1}}\underline{H}^{g,\vee}(\widehat{\Int})\xrightarrow[\simeq]{(\dual{\eta})^{-1}}\dual{\widehat{T}(Q)}\xrightarrow[\simeq]{u}\dual{\widehat{T}}(Q)(1)=\widehat{T}(\dual{Q}).
\]
We will say that the pair is \defnword{compatible with $\alpha$ and $\dual{\alpha}$} if $\gr^W_0\eta=\beta$, and if $\gr^W_0\dual{\eta}(1)=\dual{\beta}$.

Write $\underline{\Isom}\bigl(H^g(\widehat{\Int}),\widehat{T}(Q)\bigr)$ for the sheaf over $S$ that assigns to any $S$-scheme $T$ the set of pairs $(\eta,u)$ as above that are compatible with $\lambda$, $W_{\bullet}$-filtrations, $\alpha$ and $\dual{\alpha}$.

Observe that $K_{\Phi}\subset Q_\Phi(\Adele_f)$, acts naturally on this sheaf via right composition: $(\eta,u)\cdot k=(\eta\circ k,u\circ\nu(k))$. A \defnword{$K_{\Phi}$-level structure} on $(Q,\lambda,\alpha,\dual{\alpha})$ is a section:
\[
\varepsilon\in H^0\bigl(S,\underline{\Isom}(H^g(\widehat{\Int}),\widehat{T}(Q))/K_{\Phi}\bigr).
\]

Let $\widetilde{\xi}_K(\Phi)$ be the stack of groupoids over $\Rat$ parameterizing, for each $\Rat$-scheme $S$, the category of tuples $(Q,\lambda,\alpha,\dual{\alpha},\varepsilon)$, where $(Q,\lambda)$ and $\alpha^{\et},\alpha^{\mult}$ are as above, and $\varepsilon$ is a $K_{\Phi}$-level structure.

\subsubsection{}\label{background:subsubsec:comppoints}
There is a canonical tuple $(\mathcal{Q},\lambda,\alpha,\dual{\alpha},\varepsilon)$ over $\Sh_{K_\Phi}(Q_\Phi,D_\Phi)_{\Comp}$, and thus a canonical map of $\Comp$-varieties $\Sh_{K_\Phi}(Q_\Phi,D_\Phi)_{\Comp}\to\widetilde{\xi}_K(\Phi)_{\Comp}$, constructed as follows:

First, applying the construction of~\eqref{background:subsubsec:varmixedhodge} to the representation $H$ with the lattice $H^g(\Int)\subset H(\Rat)$, we obtain a variation of mixed $\Int$-Hodge structures $\bm{H}_{\on{MH}}(\Phi)_{\Int}$ over $\Sh_{K_\Phi}(Q_\Phi,D_\Phi)(\Comp)$ of weights $(-1,-1),(-1,0),(0,-1),(0,0)$.

Applying the same construction to the $1$-dimensional representation associated with the similitude character $\nu:Q_\Phi\to\Gm$, we obtain a variation of Hodge structures of rank $1$, which, arguing as in \cite[3.16]{pink:thesis}, one can show to be canonically isomorphic to $\mb{1}(1)$.

Here, given a smooth complex analytic space $S$, $\mb{1}(n)$ is the variation of $\Rat$-Hodge structures on $S$ whose underlying local system is $\underline{\Rat}$ and whose weight filtration is concentrated in degree $-2n$. We will write $\mb{1}$ for $\mb{1}(0)$. The obvious lattice $\Int\subset\Rat$ refines $\mb{1}(n)$ to a variation of $\Int$-Hodge structures $\mb{1}_{\Int}(n)$.

The $Q_\Phi$-equivariant pairing $\psi^g:H\times H\to \Rat(\nu)$ now gives rise to a polarization of variations of mixed Hodge structures:
\begin{align}\label{background:eqn:polarize1motif}
 \bm{H}_{\on{MH}}(\Phi)_{\Int}\times\bm{H}_{\on{MH}}(\Phi)_{\Int}\to\mb{1}_{\Int}(1).
\end{align}

Thus, using the equivalence of categories between polarized $1$-motifs over $\Sh_{K_\Phi}(Q_\Phi,D_\Phi)_{\Comp}$ and polarized variations of mixed $\Int$-Hodge structures of weights $(-1,-1),(-1,0),(0,-1),(0,0)$ over $\Sh_{K_\Phi}(Q_\Phi,D_\Phi)(\Comp)$~\cite[10.1.3]{deligne:hodge-iii}, we obtain a canonical polarized $1$-motif $(\mathcal{Q},\lambda)$ over $\Sh_{K_\Phi}(Q_\Phi,D_\Phi)_{\Comp}$, whose homology, as a polarized variation of Hodge structures, is identified with $\bm{H}_{\on{MH}}(\Phi)_{\Int}$. In particular, we have a canonical identification $\widehat{T}(\mathcal{Q})=\bm{H}_{\widehat{\Int}}(\Phi)$.

Since $Q_\Phi$ acts trivially on $\gr^W_0H$ and $\gr^W_0\dual{H}(\nu)$, we obtain canonical trivializations:
\[
\alpha:\gr^W_0\underline{H}^g(\Int)\xrightarrow{\simeq}\gr^W_0\bm{H}_{\on{MH}}(\Phi)_{\Int}=\mathcal{Q}^{\et}\;;\;\dual{\alpha}:\gr^W_0\underline{H}^{\vee,g}(\Int)(\nu)\xrightarrow{\simeq}\gr^W_0\dual{\bm{H}_{\on{MH}}(\Phi)}(1)=\mathcal{Q}^{\vee,\et}.
\]

Moreover, by~\eqref{background:eqn:madeletriv}, we have a pair of canonical trivializations:
\[
 \eta:\underline{H}^g(\widehat{\Int})\xrightarrow{\simeq}\widehat{T}(\mathcal{Q})\vert_{\Sh(Q_\Phi,D_\Phi)_{\Comp}}\;;\;u:\underline{\widehat{\Int}}\xrightarrow{\simeq}\underline{\widehat{\Int}}(1)\vert_{\Sh(Q_\Phi,D_\Phi)_{\Comp}}
\]
compatible with $\lambda$, $W_{\bullet}$-filtrations, $\alpha$ and $\dual{\alpha}$. The $K_{\Phi}$-orbit of $(\eta,u)$ now determines a canonical $K_{\Phi}$-level structure $\varepsilon$ on $(\mathcal{Q},\lambda,\alpha,\dual{\alpha})$ over $\Sh_{K_\Phi}(Q_\Phi,D_\Phi)_{\Comp}$.

\subsubsection{}\label{background:subsubsec:xiphirep}
Standard methods now show that $\widetilde{\xi}_K(\Phi)$ is representable by a $\Rat$-variety, and checking on $\Comp$-points shows that the map $\Sh_{K_\Phi}(Q_\Phi,D_\Phi)_{\Comp}\to\widetilde{\xi}_K(\Phi)_{\Comp}$ is an isomorphism. See~\cite[Ch. 10]{pink:thesis} for an illustration of this in a related scenario, and also further below in this subsection, where we consider integral models for $\Sh_{K_\Phi}(Q_\Phi,D_\Phi)$ in the situation where $H(\Int)$ has discriminant $1$.

In fact, this isomorphism realizes $\widetilde{\xi}_K(\Phi)$ as a canonical model for $\Sh_{K_\Phi}(Q_\Phi,D_\Phi)(\Comp)$. For this, note that, by~\cite[Th\'eor\`eme 4.21]{deligne:travaux} the canonical model $\Sh_{K_{\Phi,h}}(G_{\Phi,h},D_{\Phi,h})$ for $\Sh_{K_{\Phi,h}}(G_{\Phi,h},D_{\Phi,h})(\Comp)$ is the moduli space of triples $(B,\lambda^{\ab},\varepsilon^{\ab})$ over $\Rat$-schemes $S$, where $(B,\lambda^{\ab})$ is a polarized abelian scheme over $S$ and
\[
 \varepsilon^{\ab}\in H^0\bigl(S,\underline{\Isom}\bigl(\gr^W_{-1}H^g(\widehat{\Int}),\widehat{T}(B)\bigr)/K_{\Phi,h}\bigr)
\]
is a $K_{\Phi,h}$-level structure on $(B,\lambda^{\ab})$ (the definition of such a level structure is essentially a special case of the one given above for polarized $1$-motifs).

There is a canonical map $\widetilde{\xi}_K(\Phi)\to \Sh_{K_{\Phi,h}}(G_{\Phi,h},D_{\Phi,h})$ that takes a tuple $(\mathcal{Q},\lambda,\alpha,\dual{\alpha},\varepsilon)$ to the triple $(\mathcal{Q}^{\ab},\lambda^{\ab},\varepsilon^{\ab})$, where $\varepsilon^{\ab}=\gr^W_{-1}\varepsilon$ is the $K_{\Phi,h}$-level structure on $(\mathcal{Q}^{\ab},\lambda^{\ab})$ induced from $\varepsilon$.

Therefore, given the characterization of canonical models described in~\eqref{background:subsubsec:canmodels}, it is enough to show that the system $\{\widetilde{\xi}_K(\Phi)\}$ as $K$ varies is equivariant for the action of $Q_\Phi(\Adele_f)$. For the transition maps $\widetilde{\xi}_{K_1}(\Phi)\to\widetilde{\xi}_{K_2}(\Phi)$ for $K_1\subset K_2$, this is clear: Any $K_{1,\Phi}$-level structure on $(\mathcal{Q},\lambda,\alpha,\dual{\alpha})$ naturally also induces a $K_{2,\Phi}$-level structure. The remaining instances of equivariance will be described in the next paragraph.

\subsubsection{}\label{background:subsubsec:functoriality}
Suppose that we have two clrs $\Phi_1=(P_1,X_1^+,g_1)$ and $\Phi_2=(P_2,X_2^+,g_2)$; and $\gamma\in G(\Rat)$, $q\in Q_{\Phi_2}(\Adele_f)$ with $\Phi_1\xrightarrow{(\gamma,q)_K}\Phi_2$ and $\gamma\cdot P_1=P_2$. Then, from \eqref{background:subsubsec:cuspsisom}, we obtain a natural isomorphism of mixed Shimura varieties $\Sh_{K_{\Phi_1}}(Q_{\Phi_1},D_{\Phi_1})\xrightarrow{\simeq}\Sh_{K_{\Phi_2}}(Q_{\Phi_2},D_{\Phi_2})$. This has the following analogue for the moduli schemes considered above.

First, observe that we have $\gamma g_1=qg_2k$, for some $k\in K$. Therefore, multiplication by $\gamma^{-1}q$ produces isomorphisms:
\[
 H^{g_2}({\widehat{\Int}})\xrightarrow[\simeq]{\gamma^{-1}q}H^{g_1}({\widehat{\Int}})\;;\;H^{g_2,\vee}({\widehat{\Int}})(\nu)\xrightarrow[\simeq]{\gamma^{-1}q}H^{g_1,\vee}({\widehat{\Int}})(\nu)
\]
This isomorphism has the property that it preserves $W_{\bullet}$-filtrations. Moreover, since $q$ acts trivially on $\gr^W_0H$ and $\gr^W_0\dual{H}(\nu)$, the induced isomorphisms of the associated graded objects restrict to isomorphisms of $\Int$-modules:
\[
 \gr^W_0H^{g_2}(\Int)\xrightarrow[\simeq]{\gamma^{-1}q}\gr^W_0H^{g_1}(\Int)\;;\;\gr^W_{0}H^{g_2,\vee}(\Int)(\nu)\xrightarrow[\simeq]{\gamma^{-1}q}\gr^W_{0}H^{g_1,\vee}({\Int})(\nu).
\]

Let $(\mathcal{Q},\lambda,\alpha_1,\dual{\alpha}_1,\varepsilon_1)$ be the tautological tuple over $\Sh_{K_{\Phi_1}}(Q_{\Phi_1},D_{\Phi_1})$. Then we obtain isomorphisms:
\begin{align*}
 \alpha_2:\gr^W_0\underline{H}^{g_2}({\Int})\xrightarrow[\simeq]{\gamma^{-1}q}\gr^W_0\underline{H}^{g_1}({\Int})&\xrightarrow[\simeq]{\alpha_1}\mathcal{Q}^{\et};\\
 \dual{\alpha}_2:\gr^W_{0}\underline{H}^{g_2,\vee}({\Int})(\nu)\xrightarrow[\simeq]{\gamma^{-1}q}\gr^W_{0}\underline{H}^{g_1,\vee}({\Int})(\nu)&\xrightarrow[\simeq]{\dual{\alpha}_1}\mathcal{Q}^{\mult,C}.
\end{align*}

Given an $\widetilde{\xi}_K(\Phi_1)$-scheme $T$ and a section $(\eta,u)$ of $\underline{\Isom}\bigl(H^{g_1}({\widehat{\Int}}),\widehat{T}(\mathcal{Q})\bigr)$, the pair $(\eta\circ(\gamma^{-1}q),u)$ is a section of $\underline{\Isom}\bigl(H^{g_2}({\widehat{\Int}}),\widehat{T}(\mathcal{Q})\bigr)$. Therefore, from $\varepsilon_1$, we obtain a canonical $K_{\Phi_2}$-level structure $\varepsilon_2$ on $(\mathcal{Q},\lambda,\alpha_2^{\et},\alpha_2^{\mult})$.

The tuple $(\mathcal{Q},\lambda,\alpha_2,\dual{\alpha}_2,\varepsilon_2)$ over $\widetilde{\xi}_K(\Phi_1)$ is precisely the one corresponding to the isomorphism with $\widetilde{\xi}_K(\Phi_2)$.

This completes the proof of Hecke equivariance and shows that we have a canonical isomorphism of $\Rat$-schemes:
\begin{align}\label{background:eqn:1motifsmoduli}
\Sh_{K_\Phi}(Q_\Phi,D_\Phi)&\xrightarrow{\simeq}\widetilde{\xi}_K(\Phi).
\end{align}
From now on, we will freely use this isomorphism as an identification.

\subsubsection{}\label{background:subsubsec:ekphi_siegel}
Recall that in~\eqref{background:subsubsec:torustors}, we defined the torus $\mb{E}_K(\Phi)$ with cocharacter group
\[
 \mb{B}_K(\Phi) = (W_{\Phi}(\Rat)\cap K_{\Phi,W})(-1).
\]

We will now describe it explicitly. Let $(P_\Phi(0),D_\Phi(0))$ be as in \emph{loc. cit.}, so that $P_\Phi(0)=W_\Phi\rtimes\Gm$. In our situation, we can describe this pair as a rational boundary component for a particular Siegel Shimura datum. Namely, set $H(0)=\gr^W_0H\oplus W_{-2}H$, and equip it with the symplectic pairing $\psi(0)$ induced from $\psi$. Let $(G(0),X(0))$ be the Siegel Shimura datum associated with this symplectic space. Then $(P_\Phi(0),D_\Phi(0))$ is a rational boundary component of $(G(0),X(0))$ associated with the Lagrangian sub-space $W_{-2}H\subset H(0)$.

There is a natural lattice:
\[
 H(0)(\Int) = \gr^WH^g(\Int)\oplus W_{-2}H^g(\Int)\subset H(0).
\]

Let $\tilde{P}(0)\subset\GSp(H(0),\psi(0))$ be the parabolic subgroup stabilizing $W_{-2}H$. Set $\Phi(0)=(\tilde{P}(0),X(0)^+,1)$, where $X(0)^+\subset X(0)$ is the connected component determining the same element of $\on{S}^{\pm}(0)$ as $X^+$ (cf.~\ref{background:subsubsec:siegelbdry}). This is a clr for $(G(0),X(0))$. Let $K(0)\subset G(0)(\Adele_f)$ be any compact open whose intersection with $P_\Phi(0)(\Adele_f)$ is $K_{\Phi,W}\rtimes\nu(K)$.

The variety $\Sh_{K(0)_{\Phi(0),h}}(G_{\Phi(0),h},D_{\Phi(0),h})$ is the $0$-dimensional Shimura variety $\Sh_{\nu(K)}(\Gm,\on{S}^{\pm}(0))$.

From the discussion in~\eqref{background:subsubsec:torustors}, we find that $\mb{E}_K(\Phi)\times \Sh_{K(0)_{\Phi(0),h}}(G_{\Phi(0),h},D_{\Phi(0),h})$ can be canonically identified with the mixed Shimura variety $\Sh_{K(0)_{\Phi(0)}}(Q_{\Phi(0)},D_{\Phi(0)})$, to which we can give a moduli interpretation via~\eqref{background:eqn:1motifsmoduli}. Namely, it is the moduli space of tuples $(\mathcal{Q}(0),\lambda(0),\alpha,\dual{\alpha},\varepsilon(0))$ over $\Rat$-schemes $S$, where $(\mathcal{Q}(0),\lambda(0))$ is a polarized $1$-motif over $S$, $\alpha$ and $\dual{\alpha}$ are exactly as in~\eqref{background:subsubsec:1motifsmoduli}, and
\[
 \varepsilon(0)\in H^0\bigl(S,\underline{\Isom}(H(0)(\widehat{\Int}),\widehat{T}(\mathcal{Q}(0)))/K(0)\bigr)
\]
is a $K(0)$-level structure on $(\mathcal{Q}(0),\lambda(0),\alpha,\dual{\alpha})$.

The canonical section $\Sh_{K(0)_{\Phi(0),h}}(G_{\Phi(0),h},D_{\Phi(0),h})\to\Sh_{K(0)_{\Phi(0)}}(Q_{\Phi(0)},D_{\Phi(0)})$ inducing its trivialization as an $\mb{E}_K(\Phi)$-torsor is also easily described: It corresponds to the \emph{split} $1$-motif
\[
 \bigl[\gr^W_0H^g(\Int)\xrightarrow{0}\SHom(W_{-2}H^g(\Int),\Gm)],
\]
equipped with the obvious trivialization of its Tate module.

\subsubsection{}\label{background:subsubsec:ekphiaction}
We can use the above description to give a moduli interpretation to the action of $\mb{E}_K(\Phi)$ on $\Sh_{K_\Phi}(Q_\Phi,D_\Phi)$. This amounts to giving an action of $\Sh_{K(0)_{\Phi(0)}}(Q_{\Phi(0)},D_{\Phi(0)})$ on $\Sh_{K_\Phi}(Q_\Phi,D_\Phi)$ over $\Sh_{K(0)_{\Phi(0),h}}(G_{\Phi(0),h},D_{\Phi(0),h})$.

Suppose that we are given a $\Rat$-scheme $S$ and a pair of tuples
\[
\bigl((\mathcal{Q}(0),\lambda(0),\alpha(0),\dual{\alpha}(0),\varepsilon(0)),(\mathcal{Q},\lambda,\alpha,\dual{\alpha},\varepsilon)\bigr)\in \Sh_{K(0)_{\Phi(0)}}(Q_{\Phi(0)},D_{\Phi(0)})(S)\times\Sh_{K_\Phi}(Q_\Phi,D_\Phi)(S)
\]
The image of this tuple $(\mathcal{Q}',\lambda',\alpha',\dual{(\alpha')},\varepsilon')$ in $\Sh_{K_\Phi}(Q_\Phi,D_\Phi)(S)$ under the natural action can be described explicitly.

The isomorphisms $\alpha(0)\circ\alpha^{-1}$ and $\dual{\alpha}(0)\circ(\dual{\alpha})^{-1}$ allow us to identify:
\[
 \mathcal{Q}^{\et} = \mathcal{Q}(0)^{\et}\;;\;\mathcal{Q}^{\mult,C} = \mathcal{Q}(0)^{\mult,C},
\]
compatibly with the maps $\lambda^{\et}$ and $\lambda(0)^{\et}$.

So the $1$-motif $\mathcal{Q}$ is a tuple $(\mathcal{Q}^{\ab},\mathcal{Q}^{\et},\mathcal{Q}^{\mult},c,\dual{c},\tau)$, and the $1$-motif $\mathcal{Q}(0)$ is a tuple $(0,\mathcal{Q}^{\et},\mathcal{Q}^{\mult},0,0,\tau(0))$. Here, $\tau(0)$ is a trivialization of the trivial $\Gm$-bi-extension of $\mathcal{Q}^{\et}\times\mathcal{Q}^{\mult,C}$. The $1$-motif $\mathcal{Q}'$ is now associated with the same tuple as $\mathcal{Q}$, except that the trivialization $\tau'$ is now $\tau(0)\tau$.

As for the remaining data, by construction, we have canonical identifications $\mathcal{Q}^{',?}=\mathcal{Q}^{?}$, for $?=\ab,\et,\mult$. Therefore, the tuple $(\lambda^{\ab},\lambda^{\et},\lambda^{\mult,C})$ still defines a polarization $\lambda'$ on $\mathcal{Q}'$, and $\alpha$ and $\dual{\alpha}$ induce in an obvious way the isomorphisms $\alpha'$ and $\alpha^{',\vee}$.

First, observe that both $\varepsilon(0)$ and $\varepsilon$ determine a $\nu(K)$-orbit of isomorphisms $\underline{\widehat{\Int}}\xrightarrow{\simeq}\underline{\widehat{\Int}}(1)$; equivalently, they determine a point in $\Sh_{K(0)_{\Phi(0),h}}(G_{\Phi(0),h},D_{\Phi(0),h})$. Since our tuples, by assumption, lie above the same point in $\Sh_{K(0)_{\Phi(0),h}}(G_{\Phi(0),h},D_{\Phi(0),h})$, it follows that they determine the \emph{same} $\nu(K)$-orbit. In particular, we can assume that $\varepsilon(0)$ and $\varepsilon$ admit lifts
\[
 (\eta(0),u(0))\in\underline{\Isom}(H(0)(\widehat{\Int}),\widehat{T}(\mathcal{Q}(0)))\;;\;(\eta,u)\in\underline{\Isom}(H^g(\widehat{\Int}),\widehat{T}(\mathcal{Q}))
\]
with $u(0)=u$.

$\widehat{T}(\mathcal{Q}(0))$ is canonically an extension of $\gr^W_0\widehat{T}(\mathcal{Q})$ by $W_{-2}\widehat{T}(\mathcal{Q})$, and its push-out along the inclusion
\[
j:W_{-2}\widehat{T}(\mathcal{Q})\hookrightarrow W_{-1}\widehat{T}(\mathcal{Q})
\]
gives an extension $j_*\widehat{T}(\mathcal{Q}(0))$ of $\gr^W_0\widehat{T}(\mathcal{Q})$ by $W_{-1}\widehat{T}(\mathcal{Q})$. By construction, $\widehat{T}(\mathcal{Q}')$ is canonically isomorphic to the Baer sum of $\widehat{T}(\mathcal{Q})$ and $j_*\widehat{T}(\mathcal{Q}(0))$ in the category of extensions of $\gr^W_0\widehat{T}(\mathcal{Q})$ by $W_{-1}\widehat{T}(\mathcal{Q})$.

Similarly, $H^g(\widehat{\Int})$ is canonically isomorphic to the Baer sum with the \emph{trivial} extension of $\gr^W_0H^g(\widehat{\Int})$ by $W_{-1}H^g(\widehat{\Int})$. Therefore, the Baer sum of $\eta$ and $j_*\eta(0)$ now determines a canonical isomorphism:
\[
 \eta':H^g(\widehat{\Int})\xrightarrow{\simeq}\widehat{T}(\mathcal{Q}').
\]

The $K_{\Phi}$-orbit of $(\eta',u)$ now gives us the level structure $\varepsilon'$.

\subsubsection{}\label{background:subsubsec:sheavesxikphi}
Let $(\mathcal{Q},\lambda)$ be the tautological polarized $1$-motif over $\Sh_{K_\Phi}(Q_\Phi,D_\Phi)$. The $\widehat{\Int}$-sheaf $\bm{H}_{\widehat{\Int}}(\Phi)$ over $\Sh_{K_\Phi}(Q_\Phi,D_\Phi)_{\Comp}$, along with its weight filtration and polarization pairing, has a canonical descent over $\Sh_{K_\Phi}(Q_\Phi,D_\Phi)$ realized by the total Tate module $\widehat{T}(\mathcal{Q})$: We will sometimes denote this descent also by $\bm{H}_{\widehat{\Int}}(\Phi)$.

Now, consider the analytic vector bundle $\Reg{\Sh_{K_\Phi}(Q_\Phi,D_\Phi)(\Comp)}^{\an}\otimes\bm{H}_B(\Phi)$ over $\Sh_{K_\Phi}(Q_\Phi,D_\Phi)(\Comp)$: It is equipped with an integrable connection, a parallel weight filtration, a Hodge filtration, and a non-degenerate polarization pairing with values in the structure sheaf. The de Rham realization of $\mathcal{Q}$ (or rather, its dual) gives a canonical descent $\bm{H}_{\dR}(\Phi)$ over $\Sh_{K_\Phi}(Q_\Phi,D_\Phi)$ of this vector bundle, along with all its additional structures.

In the particular case where $\Phi=(G,X^+,1)$, $\Sh_{K_\Phi}(Q_\Phi,D_\Phi)=\Sh_K$ is the Siegel modular variety itself, and we obtain the usual moduli interpretation of $\Sh_K$ as the space parameterizing triples $(A,\lambda,\varepsilon)$ over $\Rat$-schemes $S$, where $(A,\lambda)$ is a polarized abelian scheme over $S$ and
\[
 \varepsilon\in H^0\bigl(S,\underline{\Isom}\bigl(H_{\widehat{\Int}},\widehat{T}(A)\bigr)
\]
is a $K$-level structure on $(A,\lambda)$.

In this case, we will omit the $\Phi$ from our notation for the sheaves over $\Sh_K$, and simply write $\bm{H}_B$, $\bm{H}_{\on{MH}}$, $\bm{H}_{\widehat{\Int}}$, $\bm{H}_{\dR}$, for the sheaves $\bm{H}_B(\Phi)$, etc.

\subsubsection{}\label{background:subsubsec:boundarymapmoduli}
We will identify $W_\Phi(\Real)(-1)$ with $W_\Phi(\Real)$ via the choice of connected component $X^+$ determined by $\Phi$: This also identifies $\mb{H}(\Phi)$ with the cone of positive definite symmetric bilinear forms on $\gr^W_0H(\Real)$. Fix a rational polyhedral cone $\sigma\subset W_\Phi(\Real)$, satisfying $\sigma^{\circ}\subset\mb{H}(\Phi)$. Then we have the associated twisted toric embedding $\Sh_{K_\Phi}(Q_\Phi,D_\Phi)\into\Sh_{K_\Phi}(Q_{\Phi},D_{\Phi},\sigma)$ over $\Sh_{\overline{K}_\Phi}(\overline{Q}_\Phi,\overline{D}_\Phi)$. Let $Z_{K_\Phi}(Q_\Phi,D_\Phi,\sigma)\subset\Sh_{K_\Phi}(Q_{\Phi},D_{\Phi},\sigma)$ be the closed stratum.

Given a point $s$ of $Z_{K_\Phi}(Q_\Phi,D_\Phi,\sigma)$, write $R(\Phi,\sigma,s)$ for the complete local ring of $\Sh_{K_\Phi}(Q_{\Phi},D_{\Phi},\sigma)$ at $s$. Set $S\coloneqq S(\Phi,\sigma,s)=\Spec R(\Phi,\sigma,s)$, and
\[
 V\coloneqq V(\Phi,\sigma,s)=S(\Phi,\sigma,s)\times_{\Sh_{K_\Phi}(Q_{\Phi},D_{\Phi},\sigma)}\Sh_{K_\Phi}(Q_\Phi,D_\Phi)\subset S(\Phi,\sigma,s).
\]

Consider the polarized $1$-motif $(\mathcal{Q},\lambda)\vert_V$ over $V$ obtained from the restriction of the tautological one over $\Sh_{K_\Phi}(Q_\Phi,D_\Phi)$. The condition that $\sigma\subset\mb{H}(\Phi)$ ensures that this object satisfies the positivity condition in the definition from (\ref{semistable:subsubsec:degpol}): Indeed, it implies that for any divisor $D'\subset S$ with support in $S\backslash V$, the symmetric bi-linear pairing $(y,x)\mapsto\nu_{D'}(\tau(y,x))$ lies in the closure of $\mb{H}(\Phi)$ within $W_\Phi(\Real)$, and thus is positive semi-definite.

Therefore, in the notation of (\ref{semistable:subsec:degen}), $(\mathcal{Q},\lambda)\vert_V$ is an object in $\mb{DD}_{\on{pol}}(S,V)$. In particular, by the equivalence of categories in \eqref{semistable:subsubsec:mumford}, there exists a canonical polarized abelian scheme $(\mathcal{A}(\Phi,\sigma,s),\psi(\Phi,\sigma,s))$ over $V$ with semi-abelian extension over $S$ such that:
\[
 \on{M}_{(S,V)}\bigl((\mathcal{A}(\Phi,\sigma,s),\psi(\Phi,\sigma,s))\bigr)=(\mathcal{Q},\lambda)\vert_V.
\]

For simplicity, set $\mathcal{A}'_V=\mathcal{A}(\Phi,\sigma,s)$ and $\psi'=\psi(\Phi,\sigma,s)$. Using \eqref{semistable:eqn:torsion}, we have a canonical isomorphism of total Tate modules over $V$:
\begin{align}\label{background:eqn:mumfordtotaltate}
\widehat{T}(\mathcal{Q})\vert_V\xrightarrow{\simeq}\widehat{T}(\mathcal{A}'_V)
\end{align}
This respects the Weil pairings on both sides induced by the respective polarizations.

Suppose now that we are given a section $(\eta,u)\in H^0\bigl(S,\underline{\Isom}(H^g_{\widehat{\Int}},\widehat{T}(\mathcal{Q}))\bigr)$; then we obtain an isomorphism:
\[
 \tilde{\eta}:\underline{H}({\widehat{\Int}})\xrightarrow[\simeq]{g}\underline{H}^g({\widehat{\Int}})\xrightarrow[\simeq]{\eta}\widehat{T}(\mathcal{Q})\vert_V\xrightarrow[\simeq]{\eqref{background:eqn:mumfordtotaltate}}\widehat{T}(\mathcal{A}'_V).
\]
The pair $(\tilde{\eta},u\circ(N(g)\nu(g)))$ is now a section of $\underline{\Isom}(H(\widehat{\Int}),\widehat{T}(\mathcal{A}'_V))$.

From this, we deduce that the tautological $K_{\Phi}$-level structure $\varepsilon'$ on $(\mathcal{Q},\lambda)$ produces in a canonical way a $K$-level structure $\eta'$ on $(\mathcal{A}'_V,\psi')$.

Therefore, using the moduli interpretation of $\Sh_K$, we obtain a canonical map:
\begin{align}\label{background:eqn:phisigmasmap}
i'(\Phi,\sigma,s):V\to\Sh_K
\end{align}
corresponding to the triple $(\mathcal{A}'_V,\psi',\eta')$.

\subsubsection{}\label{background:subsubsec:formalboundarychar0}
Now, suppose that we have an admissible rpcd $\Sigma$ for $(G,X,K)$ with $\sigma\in\Sigma(\Phi)$. Set $\Upsilon=[(\Phi,\sigma)]\in\on{Cusp}_K^{\Sigma}(G,X)$.

By~\eqref{background:lem:deltacirc_trivial}, $\Delta_K^{\circ}(\Phi)$ is trivial, and so $Z_K(\Upsilon)$ is canonically isomorphic to $Z_{K_\Phi}(Q_\Phi,D_\Phi,\sigma)$. Under this isomorphism, the point $s\in Z_{K_\Phi}(Q_\Phi,D_\Phi,\sigma)$ corresponds to a point $s'\in Z_K(\Upsilon)\subset\Sh^{\Sigma}_K$.

By \eqref{background:thm:pink}, $\widehat{\Sh}_{K_\Phi}(Q_\Phi,D_\Phi)$ is canonically isomorphic to the completion of $\Sh^{\Sigma}_K$ along the stratum $Z_K(\Upsilon)$. Here, $\widehat{\Sh}_{K_\Phi}(Q_\Phi,D_\Phi)$ is the completion of $\Sh_{K_\Phi}(Q_\Phi,D_\Phi)$ along $Z_{K_\Phi}(Q_\Phi,D_\Phi,\sigma)$. Therefore, there is an identification of $R(\Phi,\sigma,s)$ with the complete local ring of $\Sh^{\Sigma}_K$ at $s'$. This gives us a map $S(\Phi,\sigma,s)\to\Sh^{\Sigma}_K$, whose restriction to $V(\Phi,\sigma,s)$ factors through $\Sh_K$ thus giving us:
\begin{align}\label{background:eqn:phisigmasmap2}
i(\Phi,\sigma,s):V\to\Sh_K.
\end{align}
This map corresponds to a triple $(\mathcal{A}_V,\psi,\eta)$ over $V$.

\subsubsection{}\label{background:subsubsec:totaltateisom}
Consider the complex analytic open sub-space $\on{U}_{K_\Phi}(Q_\Phi,D_\Phi)\subset\Sh_{K_\Phi}(Q_\Phi,D_\Phi)(\Comp)$, equipped with its local isomorphism $\on{U}_{K_\Phi}(Q_\Phi,D_\Phi)\xrightarrow{\eqref{background:eqn:adeliccover}}\Sh_K(\Comp)$.

Over it, we have the polarized variation of mixed $\Int$-Hodge structures $\bm{H}_{\on{MH}}(\Phi)_{\Int}\vert_{\on{U}_{K_\Phi}(Q_\Phi,D_\Phi)}$: By the description of the map $X^+\to F_{P,X^+} = D_\Phi$ in \eqref{background:subsubsec:siegelbdry}, we find that, if we \emph{forget} the weight filtration on this variation of mixed Hodge structures, the remaining data forms a polarized variation of \emph{pure} Hodge structures of weights $(-1,0),(0,-1)$. By construction, this is simply the restriction of $\bm{H}_{\on{MH},\Int}$ via the map~\eqref{background:eqn:adeliccover}. In particular, we obtain a canonical isomorphism of $\widehat{\Int}$-sheaves:
\begin{align}\label{background:eqn:ukphicanisom}
\widehat{T}(\mathcal{Q})\vert_{\on{U}_{K_\Phi}(Q_\Phi,D_\Phi)}\xrightarrow{\simeq}\bm{H}_{\widehat{\Int}}(\Phi)\vert_{\on{U}_{K_\Phi}(Q_\Phi,D_\Phi)}\xrightarrow{\simeq}\bm{H}_{\widehat{\Int}}\vert_{\on{U}_{K_\Phi}(Q_\Phi,D_\Phi)}\xrightarrow{\simeq}\widehat{T}(\mathcal{A})\vert_{\on{U}_{K_\Phi}(Q_\Phi,D_\Phi)}.
\end{align}

Suppose now that $s$ belongs to $Z_{K_\Phi}(Q_\Phi,D_\Phi,\sigma)(\Comp)$. Then the field of rational functions $Q(V)$ on $V$ (which is also the fraction field of $R(\Phi,\sigma,s)$) contains the field of meromorphic functions on a small neighborhood of $s$ in $\on{U}_{K_\Phi}(Q_\Phi,D_\Phi,\sigma)$. Therefore,~\eqref{background:eqn:ukphicanisom} gives us a canonical isomorphism of sheaves over $V$:
\begin{align}\label{background:eqn:analytictotaltate}
\widehat{T}(\mathcal{Q})\vert_V\xrightarrow{\simeq}\widehat{T}(\mathcal{A}_V).
\end{align}

Combining~\eqref{background:eqn:analytictotaltate} with~\eqref{background:eqn:mumfordtotaltate} gives us a canonical isomorphism of total Tate modules:
\begin{align}\label{background:eqn:analytic_mumford_comp}
\widehat{T}(\mathcal{A}'_V)\xrightarrow{\simeq}\widehat{T}(\mathcal{A}_V).
\end{align}

\begin{prp}\label{background:prp:analytic_mumford_comp}
The isomorphism~\eqref{background:eqn:analytic_mumford_comp} arises from a unique isomorphism
\[
 (\mathcal{A}'_V,\psi',\eta')\xrightarrow{\simeq}(\mathcal{A}_V,\psi,\eta)
\]
of triples over $V$. In particular, the maps \eqref{background:eqn:phisigmasmap} and~\eqref{background:eqn:phisigmasmap2} are identical.
\end{prp}
\begin{proof}
The existence of the isomorphism follows from the main technical result of~\cite{lan:analytic}, and comes down to a comparison between the algebraic and analytic definitions of theta functions; cf. in particular, Prop. 4.2.2 of \emph{loc. cit.} However, we can sketch a direct proof here. Set $Y=\gr^W_0H^g(\Int)$; then, restricting $\mathcal{Q}$ to $\on{U}_{K_\Phi}(Q_\Phi,D_\Phi)$ gives us a $1$-motif $[Y\xrightarrow{j}\mathcal{Q}^{\sab}]\vert_{\on{U}_{K_\Phi}(Q_\Phi,D_\Phi)}$. By construction, $\mathcal{A}\vert_{\on{U}_{K_\Phi}(Q_\Phi,D_\Phi)}$ is the analytic quotient of $\mathcal{Q}^{\sab}\vert_{\on{U}_{K_\Phi}(Q_\Phi,D_\Phi)}$ by $j(Y)$. It is easy to see that this quotient has semi-abelian degeneration over $\on{U}_{K_\Phi}(Q_\Phi,D_\Phi,\sigma)$.

This immediately implies that $\mathcal{A}_V$ has a semi-abelian extension over $S(\Phi,\sigma,s)$, and that the formal completion of this semi-abelian extension is canonically isomorphic to that of $\mathcal{Q}^{\sab}$.

Similarly, the dual $\dual{\mathcal{A}}\vert_{\on{U}_{K_\Phi}(Q_\Phi,D_\Phi)}$ is a quotient of $\mathcal{Q}^{\vee,\sab}$, which is classified by the projection of $j$ onto $\mathcal{Q}^{\ab}$.

In particular, $\mathcal{A}_V$ is an object of $\mb{DEG}(S,V)$, and by the equivalence in~\eqref{semistable:subsubsec:mumford}, corresponds to an object in $\mb{DD}(S,V)$ of the form
\[
\mathcal{Q}'=[Y\xrightarrow{j'}\mathcal{Q}^{\sab}\vert_V].
\]
The proposition amounts to the equality $j=j'$.

Now, the projection of $j'$ onto $\mathcal{Q}^{\ab}\vert_V$ must agree with that of $j$. Indeed, this projection also classfies the semi-abelian scheme $\mathcal{Q}^{\sab,\vee}\vert_V$. Therefore, $j$ and $j'$ must differ by a map $f:Y\to\mathcal{Q}^{\mult}\vert_V$, which determines a $1$-motif $\mathcal{Q}(0)$ over $V$ with $\mathcal{Q}(0)^{\ab}=0$. Note that $f$ is determined by a pairing $\tau_0:Y\times X\to\Gmh{V}$, where $X=\gr^W_0H^{g,\vee}(\Int)(\nu)$. We have to show that this pairing is trivial.

Putting together~\eqref{background:eqn:mumfordtotaltate} and~\eqref{background:eqn:analytictotaltate} gives us a canonical isomorphism $\widehat{T}(\mathcal{Q})\vert_V\xrightarrow{\simeq}\widehat{T}(\mathcal{Q}')$ as extensions of $Y\otimes\underline{\widehat{\Int}}$ by $\widehat{T}(\mathcal{Q}^{\sab})$. In turn, this implies that $\widehat{T}(\mathcal{Q}(0))$ must be the \emph{split} extension of $Y\otimes\underline{\widehat{\Int}}$ by $\SHom(X,\widehat{\underline{\Int}}(1))$ over $V$.

Now, the only global sections of $\Reg{V}^\times$ admitting $n^{\text{th}}$-roots in $\Reg{V}^\times$ for all $n\in\Int_{>0}$ are the constants. From this, and the explicit description of $\widehat{T}(\mathcal{Q}(0))$ in terms of the pairing $\tau_0$ (cf.~\eqref{semistable:subsubsec:ntorsion}), we find that it can be a split extension if and only if $\tau_0$ takes values in $\Comp^\times$.

Therefore, to show that $\tau_0$ is trivial, we can restrict to a small unit disk around $s$ in $\on{U}_{K_\Phi}(Q_\Phi,D_\Phi,\sigma)$, and so can replace $R$ with $\Comp\pow{t}$, and $V$ with $\Spec\Comp((t))$. In this case, by~\cite[Ch. III, Prop. 8.1]{faltings_chai}, there is a natural quotient map:
\[
 \mathcal{Q}^{\sab}\bigl(\Comp((t))\bigr) \to \mathcal{A}_V\bigl(\Comp((t))\bigr),
\]
whose kernel is $j'(Y)$. On the other hand, the analytic construction gives us another such quotient map whose kernel is $j(Y)$. Arguing as in~\cite[4.5.5.3]{lan:thesis}, one checks that both quotient maps are actually equal. In particular, $j$ and $j'=j+f$ have the same image in $\mathcal{Q}^{\sab}\vert_V$. This immediately implies that we must have $f=0$.
\end{proof}

\subsubsection{}\label{background:subsubsec:1motifsmodulip}
Assume now that $H(\Int)$ has discriminant $1$. Fix a prime $p$. We will now look at moduli spaces over $\Int_{(p)}$ of polarized $1$-motifs.

We will consider level sub-groups of the form $K=K_pK^p$, where $K^p\subset G(\Adele_f^p)$ is neat and $K_p\subset G(\Rat_p)$ is the stabilizer of $H(\Int_p)\subset H(\Rat_p)$. Fix a clr $\Phi$: then we obtain an induced decomposition $K_{\Phi}=K_{\Phi,p}K^p_{\Phi}$, where $K_{\Phi,p}\subset Q_\Phi(\Adele_f)$ is the stabilizer of $H^g(\Int_p)\subset H(\Rat_p)$.

We will now, using its moduli interpretation, produce an integral model $\Ss_{K_\Phi}(Q_\Phi,D_\Phi)$ over $\Int_{(p)}$ for $\Sh_{K_\Phi}(Q_\Phi,D_\Phi)$

Fix a $\Int_{(p)}$-scheme $S$ and a polarized $1$-motif $(Q,\lambda)$, equipped with isomorphisms $\alpha$ and $\dual{\alpha}$ exactly as in \eqref{background:subsubsec:1motifsmoduli}. We will require that the map $Q^{\et}\to Q^{\vee,\et}$ is identified along $\alpha$ and $\dual{\alpha}$ with the map $\lambda^{\et}:Y\to Y'$.\footnote{In characteristic $0$, this condition was implied by the existence of level structures.}

If $\widehat{T}^p(Q)$ is the prime-to-$p$ Tate module of $Q$, in complete analogy with the definitions above, we can speak of the sheaf $\underline{\Isom}\bigl(H^g(\widehat{\Int}^p),\widehat{T}^p(Q)\bigr)$ of pairs of isomorphisms $\eta^p:\underline{H}^g(\widehat{\Int}^p)\xrightarrow{\simeq}\widehat{T}^p(Q)$ and $u^p:\underline{\widehat{\Int}}^p\xrightarrow{\simeq}\underline{\widehat{\Int}}^p(1)$ that are compatible with $\lambda$, the filtrations $W_{\bullet}$, and $\alpha$ and $\dual{\alpha}$.

A $K_{\Phi}^p$-level structure $\varepsilon^p$ on the tuple $(Q,\lambda,\alpha^{\et},\alpha^{\mult})$ will now be a global section of $\underline{\Isom}\bigl(H^g(\widehat{\Int}^p),\widehat{T}^p(Q)\bigr)/K_{\Phi}^p$.

Let $\Ss_{K_\Phi}(Q_\Phi,D_\Phi)$ be the stack of groupoids over $\Int_{(p)}$ such that $\Ss_{K_\Phi}(Q_\Phi,D_\Phi)(S)$ is the category of tuples $(\mathcal{Q},\lambda,\alpha,\dual{\alpha},\varepsilon)$ as above. The restriction of this stack over $\Rat$ is represented by $\Sh_{K_\Phi}(Q_\Phi,D_\Phi)$.

In fact, we will see below that $\Ss_{K_\Phi}(Q_\Phi,D_\Phi)$ is itself represented by a smooth quasi-projective scheme over $\Int_{(p)}$.

If $\Phi$ is a clr of the form $(G,X^+,1)$, then the associated $\Int_{(p)}$-scheme $\Ss_{K_\Phi}(Q_\Phi,D_\Phi)$ is an integral model for $\Sh_K$, and we will denote it by $\mathcal{S}_K$.

\subsubsection{}\label{background:subsubsec:mbmkphi}
Let $\Ss_{K_{\Phi,h}}(G_{\Phi,h},D_{\Phi,h})$ be the moduli space of triples $(\mathcal{B},\lambda^{\ab},\varepsilon^{p,\ab})$ over $\Int_{(p)}$-schemes $S$, where $(\mathcal{B},\lambda^{\ab})$ is a principally polarized abelian scheme over $S$, and $\varepsilon^{p,\ab}$ is a $K^p_{\Phi,h}$-level structure on $(\mathcal{B},\lambda^{\ab}))$; that is, a section
\[
 \varepsilon^{\ab,p}\in H^0\bigl(S,\underline{\Isom}(\gr^W_{-1}H(\widehat{\Int}^p),\widehat{T}^p(\mathcal{B}))/K^p_{\Phi,h}\bigr).
\]

By~\cite[Theorem 7.9]{mumford:git}, $\Ss_{K_{\Phi,h}}(G_{\Phi,h},D_{\Phi,h})$ is represented by a quasi-projective scheme over $\Int_{(p)}$, which we denote by the same symbol; its smoothness amounts to the fact that the deformation problem for principally polarized abelian varieties is formally smooth; for instance, cf.~\cite[Corollary 2.12]{chai_oort:moduli}.

The generic fiber of $\Ss_{K_{\Phi,h}}(G_{\Phi,h},D_{\Phi,h})$ is $\Sh_{K_{\Phi,h}}(G_{\Phi,h},D_{\Phi,h})$: This amounts to seeing that, over a $\Rat$-scheme $S$, giving a $K_{\Phi,h}$-level structure is equivalent to giving a $K^{p}_{\Phi.h}$-level structure. This in turn follows from the fact that the $p$-primary part $K_{\Phi,h,p}$ of $K_{\Phi,h}$ is the hyperspecial compact open sub-group:
\[
 K_{\Phi,h,p} = \GSp(\gr^W_{-1}H^g(\Int_p),\gr^W_{-1}\psi^g)\subset G(\Rat_p).
\]

\subsubsection{}\label{background:subsubsec:mbckphi}
We will now see that we have a tower $\Ss_{K_\Phi}(Q_\Phi,D_\Phi)\to\Ss_{\overline{K}_\Phi}(\overline{Q}_\Phi,\overline{D}_\Phi)\to\Ss_{K_{\Phi,h}}(G_{\Phi,h},D_{\Phi,h})$, where $\Ss_{\overline{K}_\Phi}(\overline{Q}_\Phi,\overline{D}_\Phi)$ is a $\Int_{(p)}$-model for $\Sh_{\overline{K}_\Phi}(\overline{Q}_\Phi,\overline{D}_\Phi)$ mapping smoothly onto $\Ss_{K_{\Phi,h}}(G_{\Phi,h},D_{\Phi,h})$, and $\Ss_{K_\Phi}(Q_\Phi,D_\Phi)$ is an $\mb{E}_K(\Phi)$-torsor over $\Ss_{\overline{K}_\Phi}(\overline{Q}_\Phi,\overline{D}_\Phi)$.

To construct $\Ss_{\overline{K}_\Phi}(\overline{Q}_\Phi,\overline{D}_\Phi)$, consider, for any $\Ss_{K_{\Phi,h}}(G_{\Phi,h},D_{\Phi,h})$-scheme $S$, a $1$-motif
\[
 \mathcal{Q}_1 = [\gr^W_0\underline{H}(\Int)\xrightarrow{c}\mathcal{B}\vert_S]
\]
over $S$.

Let $\mathrm{I}(\mathcal{Q}_1)$ be the \'etale sheaf over $S$ parameterizing pairs $(\eta^{\sab,p},u^p)$ of isomorphisms:
\[
 \eta^{\sab,p}:\frac{\underline{H}^g(\widehat{\Int}^p)}{W_{-2}\underline{H}^g(\widehat{\Int}^p)}\xrightarrow{\simeq}\widehat{T}(\mathcal{Q}_1)\;;\;u^p:\widehat{\underline{\Int}}^p\xrightarrow{\simeq}\widehat{\underline{\Int}}^p(1),
\]
satisfying the following conditions:
\begin{itemize}
  \item $\eta^{\sab,p}$ preserves $W_{\bullet}$-filtrations;
  \item The composition
  \[
\gr^W_0\underline{H}^g(\widehat{\Int}^p)\xrightarrow[\gr^W_0\eta^{\sab,p}]{\simeq}\widehat{T}^p(\mathcal{Q}_1^{\et})=\gr^W_0\underline{H}^g(\widehat{\Int}^p)
\]
is the identity.
\item The induced pair $(\gr^W_{-1}\eta^{\sab,p},u^p)$ is a section of $\underline{\Isom}\bigl(\gr^W_{-1}H^g(\widehat{\Int}^p),\widehat{T}^p(\mathcal{B})\bigr)$.
\end{itemize}

There is a natural action of $\overline{K}^p_{\Phi}$ on $\mathrm{I}(\mathcal{Q}_1)$ via right composition, and a \defnword{$\overline{K}^p_{\Phi}$-level structure} on $\mathcal{Q}_1$ is a section:
\[
 \varepsilon^{p,\sab}\in H^0\bigl(S,\mathrm{I}(\mathcal{Q}_1)/\overline{K}^p_{\Phi}\bigr).
\]

Note that every $\overline{K}^p_{\Phi}$-level structure on $\mathcal{Q}_1$ induces (via the $\gr^W_{-1}$ operation) a $K^p_{\Phi,h}$-level structure on $(\mathcal{B},\lambda^{\ab})\vert_S$.

$\Ss_{\overline{K}_\Phi}(\overline{Q}_\Phi,\overline{D}_\Phi)$ will now be the moduli space over $\Ss_{K_{\Phi,h}}(G_{\Phi,h},D_{\Phi,h})$ parameterizing pairs $(\mathcal{Q}_1,\varepsilon^{\sab,p})$ inducing the tautological $K^p_{\Phi,h}$-level structure on $(\mathcal{B},\lambda^{\ab})\vert_S$.
ruby
We can also define a canonical abelian scheme $\mathcal{A}_K(\Phi)\to\Ss_{K_{\Phi,h}}(G_{\Phi,h},D_{\Phi,h})$ extending $A_K(\Phi)\to \Sh_{K_{\Phi,h}}(G_{\Phi,h},D_{\Phi,h})$. This will be defined just as $\Ss_{\overline{K}_\Phi}(\overline{Q}_\Phi,\overline{D}_\Phi)$ was, except that $\mathrm{I}(\mathcal{Q}_1)$ will be replaced with the sheaf $\mathrm{I}_0(\mathcal{Q}_1)$ parameterizing pairs $(\eta^{0,p},u^p)$ with $u^p$ as usual, and $\eta^{0,p}$ an isomorphism:
\[
 \eta^{0,p}:\gr^W_{-1}\underline{H}^g(\widehat{\Int}^p)\oplus\gr^W_0\underline{H}^g(\widehat{\Int}^p)\xrightarrow{\simeq}\widehat{T}(\mathcal{Q}_1).
\]
The pair $(\eta^{0,p},u^p)$ is required to satisfy the same set of conditions as a section of $\mathrm{I}(\mathcal{Q}_1)$.

$\mathcal{A}_K(\Phi)$ will now classify pairs $(\mathcal{Q}_1,\varepsilon^{p,0})$, where $\varepsilon^{p,0}$ is a section of $\mathrm{I}_0(\mathcal{Q}_1)/(K^p_{\Phi,V}\rtimes K^p_{\Phi,h})$.

As in~\eqref{background:subsubsec:ekphiaction}, we can define natural actions of $\mathcal{A}_K(\Phi)$ on itself and on $\Ss_{\overline{K}_\Phi}(\overline{Q}_\Phi,\overline{D}_\Phi)$ using Baer sums. The split $1$-motif $[\gr^W_0\underline{H}^g(\Int)\xrightarrow{0}\mathcal{B}]$, equipped with the natural trivialization of its Tate module, provides us the identity section $\Ss_{K_{\Phi,h}}(G_{\Phi,h},D_{\Phi,h})\to\mathcal{A}_K(\Phi)$. This makes $\mathcal{A}_K(\Phi)$ an abelian scheme over $\Ss_{K_{\Phi,h}}(G_{\Phi,h},D_{\Phi,h})$, and $\Ss_{\overline{K}_\Phi}(\overline{Q}_\Phi,\overline{D}_\Phi)\to\Ss_{K_{\Phi,h}}(G_{\Phi,h},D_{\Phi,h})$ a torsor under $\mathcal{A}_K(\Phi)$.

 In particular, $\Ss_{\overline{K}_\Phi}(\overline{Q}_\Phi,\overline{D}_\Phi)$ is relatively representable and smooth over $\Ss_{K_{\Phi,h}}(G_{\Phi,h},D_{\Phi,h})$.

Using the methods of~\eqref{background:subsubsec:comppoints} and~\eqref{background:subsubsec:xiphirep}, it is not hard to see that the generic fibers of $\Ss_{\overline{K}_\Phi}(\overline{Q}_\Phi,\overline{D}_\Phi)$ and $\mathcal{A}_K(\Phi)$ are precisely $\Sh_{\overline{K}_\Phi}(\overline{Q}_\Phi,\overline{D}_\Phi)$ and $A_K(\Phi)$, and that the induced action of $A_K(\Phi)$ on $\Sh_{\overline{K}_\Phi}(\overline{Q}_\Phi,\overline{D}_\Phi)$ is the canonical one.

\subsubsection{}\label{background:subsubsec:mbekphi_action}
Finally, we will describe the $\mb{E}_K(\Phi)$-action on $\Ss_{K_\Phi}(Q_\Phi,D_\Phi)$, and will leave it to the reader to check that this makes it an $\mb{E}_K(\Phi)$-torsor over $\Ss_{\overline{K}_\Phi}(\overline{Q}_\Phi,\overline{D}_\Phi)$.

In the notation of~\eqref{background:subsubsec:ekphi_siegel}, one has the moduli schemes 
\[
\Sh_{K(0)_{\Phi(0)}}(Q_{\Phi(0)},D_{\Phi(0)})\;;\;\Sh_{K(0)_{\Phi(0),h}}(G_{\Phi(0),h},D_{\Phi(0),h}).
\]
From the discussion above, they both admit integral models 
\[
\Ss_{K(0)_{\Phi(0)}}(Q_{\Phi(0)},D_{\Phi(0)})\;;\;\Ss_{K(0)_{\Phi(0),h}}(G_{\Phi(0),h},D_{\Phi(0),h})
\]
over $\Int_{(p)}$. In fact, $\Ss_{K(0)_{\Phi(0),h}}(G_{\Phi(0),h},D_{\Phi(0),h})$ is the finite \'etale $\Int_{(p)}$-scheme parameterizing $\nu(K^p)$-orbits of isomorphisms $\underline{\widehat{\Int}}^p\xrightarrow{\simeq}\underline{\widehat{\Int}}^p(1)$.

The section $\Sh_{K(0)_{\Phi(0),h}}(G_{\Phi(0),h},D_{\Phi(0),h})\to\Ss_{K(0)_{\Phi(0)}}(Q_{\Phi(0)},D_{\Phi(0)})$, defined in~\eqref{background:subsubsec:ekphi_siegel} using the split $1$-motif, immediately extends to one over $\Ss_{K(0)_{\Phi(0),h}}(G_{\Phi(0),h},D_{\Phi(0),h})$, and so identifies
\[
\Ss_{K(0)_{\Phi(0)}}(Q_{\Phi(0)},D_{\Phi(0)}) = \mb{E}_K(\Phi)\times\Ss_{K(0)_{\Phi(0),h}}(G_{\Phi(0),h},D_{\Phi(0),h}).
\]

The level structure $\varepsilon^{p,\ab}$ on $(\mathcal{B},\lambda^{\ab})$ gives a morphism 
\[
\Ss_{K_{\Phi,h}}(G_{\Phi,h},D_{\Phi,h})\to\Ss_{K(0)_{\Phi(0),h}}(G_{\Phi(0),h},D_{\Phi(0),h}), 
\]
and, as in~\eqref{background:subsubsec:ekphiaction}, one easily checks that there is a natural action of the moduli scheme $\Ss_{K(0)_{\Phi(0)}}(Q_{\Phi(0)},D_{\Phi(0)})$ on $\Ss_{K_\Phi}(Q_\Phi,D_\Phi)$ over $\Ss_{K(0)_{\Phi(0),h}}(G_{\Phi(0),h},D_{\Phi(0),h})$, which can be described using Baer sums.

\subsubsection{}\label{background:subsubsec:intfunct}
Suppose that we have two clrs $\Phi_1=(P_1,X_1^+,g_1)$ and $\Phi_2=(P_2,X_2^+,g_2)$; and $\gamma\in G(\Rat)$, $q\in Q_{\Phi_2}(\Adele_f)$ with $\Phi_1\xrightarrow{(\gamma,q)_K}\Phi_2$ and $\gamma\cdot P_1=P_2$. Then the moduli interpretation of the induced isomorphism $\Sh_{K_{\Phi_1}}(Q_{\Phi_1},D_{\Phi_1})\xrightarrow{\simeq}\Sh_{K_{\Phi_2}}(Q_{\Phi_2},D_{\Phi_2})$ from~\eqref{background:subsubsec:functoriality} shows that it extends to an isomorphism
\begin{align}\label{background:eqn:intfunct}
\rho(\gamma,q):\Ss_{K_{\Phi_1}}(Q_{\Phi_1},D_{\Phi_1})\xrightarrow{\simeq}\Ss_{K_{\Phi_2}}(Q_{\Phi_2},D_{\Phi_2}).
\end{align}
We leave the details to the reader.

\subsubsection{}\label{background:subsubsec:chaifal}
We now review the arithmetic theory of compactifications of $\Ss_K$ developed by Chai and Faltings in~\cite{faltings_chai}. The only thing of (minor) note is that we present their results entirely in the ad\'elic language.

As in \eqref{background:subsubsec:boundarymapmoduli}, for any rational polyhedral cone $\sigma\subset W_\Phi(\Real)$ satisfying $\sigma^\circ\subset\mb{H}(\Phi)$, we can consider the twisted toric embedding $\Ss_{K_\Phi}(Q_\Phi,D_\Phi)\into\Ss_{K_\Phi}(Q_{\Phi},D_{\Phi},\sigma)$ over $\Ss_{\overline{K}_\Phi}(\overline{Q}_\Phi,\overline{D}_\Phi)$. Let $\mathcal{Z}_{K_\Phi}(Q_\Phi,D_\Phi,\sigma)$ be the closed stratum in $\Ss_{K_\Phi}(Q_{\Phi},D_{\Phi},\sigma)$. Given a point $s_0^\beef$ of $\mathcal{Z}_{K_\Phi}(Q_\Phi,D_\Phi,\sigma)$, write $R(\Phi,\sigma,s_0^\beef)$ for the complete local ring of $\Ss_{K_\Phi}(Q_{\Phi},D_{\Phi},\sigma)$ at $s_0^\beef$, and set:
\[
 S\coloneqq S(\Phi,\sigma,s_0^\beef)=\Spec R(\Phi,\sigma,s_0^\beef);\;; V\coloneqq V(\Phi,\sigma,s_0^\beef)=S(\Phi,\sigma,s)\times_{\Ss_{K_\Phi}(Q_{\Phi},D_{\Phi},\sigma)}\Ss_{K_\Phi}(Q_\Phi,D_\Phi).
\]

Exactly as in \emph{loc. cit.} the polarized $1$-motif $(\mathcal{Q},\lambda)\vert_V$ produces a canonical polarized abelian scheme $(\mathcal{A}(\Phi,\sigma,s_0^\beef),\psi(\Phi,\sigma,s_0^\beef))$ over $V$, and produces a map:
\begin{align}\label{background:eqn:iphisigs0}
 i(\Phi,\sigma,s_0^\beef):V(\Phi,\sigma,s_0^\beef)\to\mathcal{S}_K.
\end{align}

Using~\eqref{background:subsubsec:intfunct}, we find that, if we have another clr $\Phi'$ and $\gamma\in G(\Rat)$, $q\in Q_\Phi(\Adele_f)$ with $\Phi\xrightarrow{(\gamma,q)_K}\Phi'$ and $\gamma\cdot P=P'$, then there exists an isomorphism
\[
 \rho(\gamma,q):\mathcal{Z}_{K_\Phi}(Q_\Phi,D_\Phi,\sigma)\xrightarrow{\simeq}\mathcal{Z}_{K_{\Phi'}}(Q_{\Phi'},D_{\Phi'},\sigma'),
\]
where $\sigma'=\gamma(\sigma)$. In particular, the scheme $\mathcal{Z}_{K_\Phi}(Q_\Phi,D_\Phi,\sigma)$ depends only on the class $\Upsilon$ of $(\Phi,\sigma)$ in $\on{Cusp}^{\Sigma}_K(G,X)$. We will therefore denote it by $\mathcal{Z}_K(\Upsilon)$.

The next theorem follows from~\cite[Ch. IV]{faltings_chai}, ~\cite[6.4.1.1]{lan:thesis} and~\eqref{background:prp:analytic_mumford_comp}.

\begin{thm}[Chai-Faltings]\label{background:thm:chaifal}
Let $\Sigma$ be a smooth admissible rpcd for $(G,X,K)$. Then the open immersion $\Sh_K\into\Sh^\Sigma_K$ over $\Rat$ extends to an open immersion of smooth $\Int_{(p)}$-schemes $\mathcal{S}_K\into\mathcal{S}^{\Sigma}_K$ with the following properties:
\begin{enumerate}
\item\label{chaifal:stratum}For every $\Upsilon\in\on{Cusp}^\Sigma_K(G,X)$, the immersion $Z_K(\Upsilon)\into\Sh^\Sigma_K$ extends to a locally closed immersion
\[
 \mathcal{Z}_K(\Upsilon)\into\mathcal{S}^\Sigma_K.
\]
\item\label{chaifal:strata}The stratification~\eqref{background:thm:pink}\eqref{pink:stratification} extends to an integral one:
\[
 \mathcal{S}^\Sigma_K=\bigsqcup_{\Upsilon}\mathcal{Z}_K(\Upsilon).
\]
For any fixed $\Upsilon$, the closure of $\mathcal{Z}_K(\Upsilon)$ in $\mathcal{S}^\Sigma_K$ is the closed sub-space:
\[
 \overline{\mathcal{Z}}_K(\Upsilon)=\bigsqcup_{\Upsilon'\preceq\Upsilon}\mathcal{Z}_K(\Upsilon').
\]
\item\label{chaifal:completion}Given $\Upsilon=[(\Phi,\sigma)]$, let $\widehat{\Ss}_{K_{\Phi}}(Q_{\Phi},D_{\Phi},\sigma)$ be the formal completion of $\Ss_{K_\Phi}(Q_{\Phi},D_{\Phi},\sigma)$ along $\mathcal{Z}_{K_\Phi}(Q_\Phi,D_\Phi,\sigma)$. Then the isomorphism between $\mathcal{Z}_{K_\Phi}(Q_\Phi,D_\Phi,\sigma)$ and $\mathcal{Z}_K(\Upsilon)$ lifts to an isomorphism of formal algebraic spaces:
\[
 \widehat{\Ss}_{K_{\Phi}}(Q_{\Phi},D_{\Phi},\sigma)\xrightarrow{\simeq}\bigl(\mathcal{S}^{\Sigma}_K\bigr)^{\wedge}_{\mathcal{Z}_K(\Upsilon)}
\]
restricting to the one from~\eqref{background:thm:pink}\eqref{pink:completion} in characteristic $0$. This extension is characterized by the following property: Given a point $s_0^\beef$ of $\mathcal{Z}_{K_\Phi}(Q_\Phi,D_\Phi,\sigma)$, with corresponding point $s'_0$ of $\mathcal{Z}_K(\Upsilon)$, consider the induced map:
\[
 \Spec R(\Phi,\sigma,s_0^\beef)\xrightarrow{\simeq}\Spec\widehat{\Rg}_{\mathcal{S}^{\Sigma}_K,s'_0}\to\mathcal{S}^{\Sigma}_K.
\]
The restriction of this map to $V(\Phi,\sigma,s_0^\beef)$ factors through $\Ss_K$ and is identified with $i(\Phi,\sigma,s_0^\beef)$.
\end{enumerate}
\end{thm}

\qed

\section{The local structure at the boundary of a Shimura variety of Hodge type}\label{sec:boundary}

\emph{We will fix a prime number $p$ for the entirety of this section}. The goal is to prove the main technical theorem~\eqref{boundary:thm:main} on the local structure at the boundary of an integral model for a Shimura variety of Hodge type over a $p$-adic place of its reflex field.

\subsection{Shimura varieties and absolute Hodge cycles}\label{boundary:subsec:abshodge}

This sub-section is a direct generalization of the first part of \cite[\S 2]{kisin:abelian}: We are simply transposing the results from the world of pure Shimura varieties (of Hodge type) and abelian varieties to that of mixed Shimura varieties\footnote{We will only be considering mixed Shimura varieties attached to rational boundary components of pure ones, but our discussion should apply just as well to arbitrary mixed Shimura varieties of Hodge type.} and $1$-motifs.

We will fix a Shimura datum $(G,X)$ of Hodge type. This means that it is equipped with an embedding into a Siegel Shimura datum
\[
\iota:(G,X)\into (\GSp(H),\on{S}^{\pm}(H)),
\]
which we will also fix.

Set $G^{\beef}=\GSp(H)$ and $X^{\beef}=\on{S}^{\pm}(H)$. Let $E=E(G,X)$ be the reflex field of $(G,X)$.

Suppose that ${K}^{\beef}\subset G^{\beef}(\Adele_f)$ and $K\subset G(\Adele_f)$ are neat compact open sub-groups with $K\subset K^\beef$. 

\emph{For the remainder of this section, we will always assume that $H$ and ${K}^{\beef}$ have been chosen so that $H$ admits a self-dual lattice $H(\Int)$ such that $H(\widehat{\Int})$ is stabilized by $K$ and such that ${K}^{\beef}_p\subset {G}^\beef(\Rat_p)$ is exactly the stabilizer of $H(\Int_p)$.}

It is not possible to arrange for the existence of such a lattice for every choice of symplectic representation $H$; however, using Zarhin's trick, one can always ensure it after replacing $H$ with $H^{\oplus 8}$. 

The main reason for this restriction is that we need to apply the theory of~\eqref{background:subsubsec:chaifal}.

\subsubsection{}\label{boundary:subsubsec:abshodge}
Suppose that we are given a clr $\Phi$ for $(G,X)$, with ${\Phi}^\beef=\iota_*\Phi$ the corresponding clr for $(G^{\beef},X^{\beef})$. The notation here is as in~\eqref{background:subsubsec:functoriality}.

We then obtain a map of mixed Shimura varieties: $\Sh_{K_\Phi}(Q_\Phi,D_\Phi)\to E\otimes\Sh_{K^\beef_{\Phi^\beef}}(Q_{\Phi^\beef},D_{\Phi^\beef})$. In particular, restriction along this map gives us a $1$-motif $\mathcal{Q}(\Phi)$ over $\Sh_{K_\Phi}(Q_\Phi,D_\Phi)$. In the language of \cite[\S 2.2]{brylinski:1motifs}, this $1$-motif is equipped with a certain family of absolute Hodge cycles, which we will now describe.

Let $\{s_{\alpha}\}\subset H(\Rat)^\otimes$ be a family of tensors such that $G\subset GL(H)$ is their point-wise stabilizer. For each index $\alpha$, via the functor in~\eqref{background:subsubsec:varmixedhodge}, we then obtain a canonical map of mixed Hodge structures $\bm{s}_{\alpha,\Phi}:\mb{1}\to \bm{H}^\otimes_{\on{MH}}(\Phi)$. In particular, we obtain canonical sections:
\begin{align*}
 s_{\alpha,\Phi,B}&\in H^0\bigl(\Sh_{K_\Phi}(Q_\Phi,D_\Phi)(\Comp),W_0\bm{H}^\otimes_B(\Phi)\bigr);\\
 s_{\alpha,\Phi,\dR}&\in H^0\bigl(\Sh_{K_\Phi}(Q_\Phi,D_\Phi)(\Comp),F^0\bm{H}^\otimes_{\dR}(\Phi)\cap W_0\bm{H}^\otimes_{\dR}(\Phi)\bigr)^{\nabla=0};\\
 s_{\alpha,\Phi,\et}&\in H^0\bigl(\Sh_{K_\Phi}(Q_\Phi,D_\Phi)(\Comp),W_0\bm{H}^\otimes_{\Adele_f}(\Phi)\bigr).
\end{align*}
The latter two are identified with $1\otimes s_{\alpha,\Phi,B}$ via the canonical comparison isomorphisms between the Betti and de Rham (resp. \'etale) realizations.

\begin{prp}\label{boundary:prp:abshodge}
\mbox{}
\begin{enumerate}
\item\label{abshodge:derham}For each $\alpha,$ $s_{\alpha,\Phi,\dR}$ descends to a section of $\bm{H}^\otimes_{\dR}(\Phi)$ defined over $\Sh_{K_\Phi}(Q_\Phi,D_\Phi)$.
\item\label{abshodge:etale}For each $\alpha$, $s_{\alpha,\Phi,\et}$ descends to a section of $\bm{H}^\otimes_{\Adele_f}(\Phi)$ defined over $\Sh_{K_\Phi}(Q_\Phi,D_\Phi)$.
\end{enumerate}
\end{prp}
\begin{proof}
When $\Phi=(G,X^+,1)$, and $\Sh_{K_\Phi}(Q_\Phi,D_\Phi)$ is a pure Shimura variety of Hodge type, this is deduced in~\cite[(2.2)]{kisin:abelian} from Deligne's theorem that all Hodge classes on abelian varieties over $\Comp$ are absolutely Hodge~\cite[Ch. I, Theorem 2.11]{dmos}. The same proof applies in our setting, except that we have to replace the use of Deligne's theorem with the result of Brylinski that all Hodge classes on $1$-motifs over $\Comp$ are absolutely Hodge~\cite[Th\'eor\`eme (2.2.5)]{brylinski:1motifs}.
\end{proof}

\subsubsection{}\label{boundary:subsubsec:blasius}
We now specialize to the case where $\Phi=(G,X^+,1)$. The $1$-motif $\mathcal{Q}(\Phi)$ in this case is simply an abelian scheme $\mathcal{A}$. As usual, in this case, we will omit $\Phi$ from the notation for all the cohomological cycles we have defined above.

Suppose that $F$ is a finite extension of $E_v$, where $v\vert p$ is a finite place of $E$, and suppose that $x\in\Sh_K(F)$. Then we have the de Rham comparison isomorphism
\[
\Bdr\otimes_{F}H^1_{\dR}(\mathcal{A}_x/F)\xrightarrow{\simeq}\Bdr\otimes_{\Rat_p}H^1_{\et}(\mathcal{A}_{x,\bar{F}},\Rat_p).
\]
\begin{prp}\label{boundary:prp:blasius}
Under this isomorphism, $s_{\alpha,\dR,x}$ is carried to $s_{\alpha,\et,x}$.
\end{prp}
\begin{proof}
  This is the main result of \cite{blasius:padic}, which applies directly when $\mathcal{A}_x$ is in fact defined over a number field. For the generality we need, as pointed out in \cite[5.6.3]{moonen:models}, we can either appeal to a trick of Lieberman as in \cite[5.2.16]{vasiu:preab}, or we can directly use the fact that $\mathcal{A}_x$ arises from the family $\mathcal{A}$ defined over the number field $E$.
\end{proof}

\subsubsection{}\label{boundary:subsubsec:twotensors}
The clr $\Phi$ will again be arbitrary. Fix $\sigma\subset W_\Phi(\Real)(-1)$ with $\sigma^{\circ}\subset\mb{H}(\Phi)$. Fix a point $t$ in $Z_{K_\Phi}(Q_\Phi,D_\Phi,\sigma)$, and let $R(\Phi,\sigma,t)$ be the complete local ring of $\Sh_{K_\Phi}(Q_{\Phi},D_{\Phi},\sigma)$ at $t$. Set
\[
 V\coloneqq V(\Phi,\sigma,t)=\Spec R(\Phi,\sigma,t)\times_{\Sh_{K_\Phi}(Q_{\Phi},D_{\Phi},\sigma)}\Sh_{K_\Phi}(Q_\Phi,D_\Phi).
\]
There is a canonical $1$-motif $\mathcal{Q}=\mathcal{Q}(\Phi)\vert_V$ over $V$, equipped with additional tensors
\begin{align*}
\{s_{\alpha,\Phi,\et}\}\subset H^0\bigl(V,\Adele_f\otimes\widehat{T}(\mathcal{Q})^\otimes\bigr)&;\{s_{\alpha,\Phi,\dR}\}\subset H^0(V,H^1_{\dR}(\mathcal{Q})^\otimes).
\end{align*}

Via the isomorphism $Z_{K_\Phi}(Q_\Phi,D_\Phi,\sigma)\xrightarrow{\simeq}Z_K(\Upsilon)$, we obtain a point $t'\in Z_K(\Upsilon)$ (where $\Upsilon=[(\Phi,\sigma)]$) and an identification of $R(\Phi,\sigma,t)$ with the complete local ring of $\Sh^\Sigma_K$ at $t'$. Through this, we obtain a map $V(\Phi,\sigma,t)\to\Sh_K$, and thus an abelian scheme $\mathcal{A}$ over $V$, equipped with additional tensors
\begin{align*}
\{s_{\alpha,\et}\}\subset H^0\bigl(V,\Adele_f\otimes\widehat{T}(\mathcal{A})^\otimes\bigr)&;\{s_{\alpha,\Phi,\dR}\}\subset H^0(V,H^1_{\dR}(\mathcal{A})^\otimes).
\end{align*}

We also have canonical isomorphisms
\begin{align*}
\widehat{T}(\mathcal{Q})\xrightarrow{\simeq}\widehat{T}(\mathcal{A})&;H^1_{\dR}(\mathcal{Q})\xrightarrow{\simeq}H^1_{\dR}(\mathcal{A}).
\end{align*}

\begin{prp}\label{boundary:prp:twotensors}
The two isomorphisms carry, for each $\alpha$, $s_{\alpha,\Phi,\et}$ to $s_{\alpha,\et}$ (resp. $s_{\alpha,\Phi,\dR}$ to $s_{\alpha,\dR}$).
\end{prp}
\begin{proof}
We can assume that $t$ is $\Comp$-valued. By~\eqref{background:prp:analytic_mumford_comp}, we can identify the isomorphism $\widehat{T}(\mathcal{Q})\xrightarrow{\simeq}\widehat{T}(\mathcal{A})$ with the analytic one arising from~\eqref{background:eqn:ukphicanisom}. For the latter, the proposition is clear by construction. A similar argument works for the de Rham realizations as well.
\end{proof}

\subsection{The canonical log $F$-crystal and its properties}\label{boundary:subsec:logfcrys}

For the rest of the section, we will fix a clr $\Phi^\beef$ for $({G}^\beef,X^\beef)$. We will use the choice of isomorphism of Hodge structures $\Rat\xrightarrow{\simeq}\Rat(1)$ determined by $X^{\beef,+}$ to identify $W_{\Phi^{\beef}}(\Real)(-1)$ with $W_{\Phi^{\beef}}(\Real)$.

\subsubsection{}\label{boundary:subsubsec:sigma}
Let $\sigma^\beef\subset W_{\Phi^{\beef}}(\Real)$ be a rational polyhedral cone with $\sigma^{\beef,\circ}\subset\mb{H}(\Phi^\beef)$. Given a point $s_0^\beef\in\mathcal{Z}_{{K}^{\beef}}(\Phi^\beef,\sigma^\beef)(\overline{\Field}_p)$, let $R(\sigma^\beef,s_0^\beef)\coloneqq R(\Phi^\beef,\sigma^\beef,s_0^\beef)$ be the complete local ring of $\Ss_{K^\beef_{\Phi^\beef}}(Q_{\Phi^\beef},D_{\Phi^\beef},\sigma^\beef)$ at $s_0^\beef$, and let $R^{\sab}$ be the complete local ring of $\Ss_{\overline{K}^\beef_{\Phi^\beef}}(\overline{Q}_{\Phi^\beef},\overline{D}_{\Phi^\beef},\sigma^\beef)$ at the image of $s_0^\beef$. Set $S(\sigma^\beef,s_0^\beef)=\Spec R(\sigma^\beef,s_0^\beef)$ and let $V(\sigma^\beef,s_0^\beef)\subset S(\sigma^\beef,s_0^\beef)$ be the complement of the boundary divisor.

Let $\mathcal{Q}(\sigma^\beef,s_0^\beef)$ be the tautological object in $\mb{DD}(S(\sigma^\beef,s_0^\beef),V(\sigma^\beef,s_0^\beef))$ determined by the natural map $V(\sigma^\beef,s_0^\beef)\to\mb{\Xi}$, and let $\mathcal{A}(\sigma^\beef,s_0^\beef)$ be the corresponding object of $\mb{DEG}(S(\sigma^\beef,s_0^\beef),V(\sigma^\beef,s_0^\beef))$ associated with it via the equivalence in~\eqref{semistable:subsubsec:mumford}.

Set $\mathsf{X}=\gr^W_0H^g(\Int)$; then the perfect pairing
\[
 \psi^g: H^g(\Int)\times H^g(\Int)\to\Int
\]
identifes $\gr^W_0H^{\vee,g}(\nu)$ with $\gr^W_0H^g(\Int)=\mathsf{X}$.

We have canonical identifications
\[
\mathcal{Q}(\sigma^\beef,s_0^\beef)^{\et}=\mathcal{Q}(\sigma^\beef,s_0^\beef)^{\mult,C} = \mathsf{X}.
\]
Moreover, the abelian part $\mathcal{B}=\mathcal{Q}(\sigma^\beef,s_0^\beef)^{\ab}$ depends only on the image of $s_0^\beef$ in $\Ss_{\overline{K}^\beef_{\Phi^\beef}}(\overline{Q}_{\Phi^\beef},\overline{D}_{\Phi^\beef},\sigma^\beef)$: It is equipped with a principal polarization $\lambda^{\ab}$, which we can use to identify it with $\dual{\mathcal{B}}$.

Over $R^{\sab}$, we also have a canonical homomorphism $\dual{c}:\mathsf{X}\to\dual{\mathcal{B}}=\mathcal{B}$, which classifies the semi-abelian part $\mathcal{G}\coloneqq\mathcal{Q}^{\sab}$. This is an extension:
\begin{equation}\label{boundary:eqn:semi_abelian_ext}
1\to\underline{\Hom}(\mathsf{X},\Gm)\to\mathcal{G}\to\mathcal{B}\to 1.
\end{equation}

Set $\widehat{\mathcal{U}}(\sigma^\beef,s_0^\beef)=\Spf R(\sigma^\beef,s_0^\beef)$. Set $K_0=W[p^{-1}]$, and let $\widehat{\mathcal{U}}^{\an}(\sigma^\beef,s_0^\beef)$ be the analytic space over $K_0$ associated with $\widehat{\mathcal{U}}({\sigma^\beef,s_0^\beef})$ (cf.~\ref{appendix:subsec:rigid}). We will write $\widehat{\mathcal{U}}^{\an,\circ}(\sigma^\beef,s_0^\beef)$ for the complement of the boundary divisor in $\widehat{\mathcal{U}}^{\an}(\sigma^\beef,s_0^\beef)$.

The goal of this sub-section is to explicitly describe the log Dieudonn\'e $F$-crystal $\mathcal{M}(\sigma^\beef,s_0^\beef)$ over $R(\sigma^\beef,s_0^\beef)$ that is associated with $\mathcal{Q}(\sigma^\beef,s_0^\beef)$. That is, we want to write down in concrete terms a $\varphi$-module over $R(\sigma^\beef,s_0^\beef)$, along with the logarithmic topologically nilpotent connection on it. 

Also, for later technical purposes, we will need to allow ourselves to change $\sigma^\beef$ in a way that replaces $R(\sigma^\beef,s_0^\beef)$ by complete local rings in a combination of blow-ups and blow-downs of $\Ss_{K^\beef_{\Phi^\beef}}(Q_{\Phi^\beef},D_{\Phi^\beef},\sigma^\beef)$. This necessitates a bit of notational and conceptual baggage that the reader can ignore if she so chooses.

\subsubsection{}\label{boundary:subsubsec:tori_mult_1motifs}
In the notation of~\eqref{background:subsubsec:torustors}, set $\mb{S}=\mb{S}_{{K}^{\beef}}(\Phi^\beef)$, $\mb{B}=\mb{B}_{{K}^{\beef}}(\Phi^\beef)$, and $\mb{E}=\mb{E}_{{K}^{\beef}}(\Phi^\beef)$.

As observed in~\eqref{background:subsubsec:mbekphi_action}, we can identify $\Spec R^{\sab}\times\mb{E}$ with the scheme over $R^{\sab}$ that parameterizes principally polarized $1$-motifs $(\mathcal{Q}(0),\lambda(0))$ satisfying:
\[
\mathcal{Q}(0)^{\et}=\mathcal{Q}(0)^{\mult,C}=\mathsf{X}\;;\;\mathcal{Q}(0)^{\ab}=0,
\]
and equipped with additional level structures, which will not be essential here.

Equivalently, $\Spec R^{\sab}\times\mb{E}$ parameterizes symmetric bilinear pairings:
\[
 \tau_0:\mathsf{X}\times\mathsf{X}\to\Gm
\]
with some additional structures. In particular, if $\mb{E}_0$ is the torus, with character group $\mb{S}_0\coloneqq\on{Sym}^2\mathsf{X}$, then we have a natural finite \'etale map of tori:
\[
 \Spec R^{\sab}\times\mb{E}\to\Spec R^{\sab}\times\mb{E}_0
\]
corresponding to an injective map of character groups $\mb{S}_0\into\mb{S}$.

\subsubsection{}\label{boundary:subsubsec:noncansplit}
Let $\mb{\Xi}$ be the restriction of the $\mb{E}$-torsor $\Ss_{K^\beef_{\Phi^\beef}}(Q_{\Phi^\beef},D_{\Phi^\beef})$ to $\Spec R^{\sab}$. Since $R^{\sab}$ is strictly henselian, $\mb{\Xi}$ is trivializable. Fix an $\mb{E}$-equivariant isomorphism:
\[
\iota^{\sab}:\Spec R^{\sab}\times\mb{E}\xrightarrow{\simeq}\mb{\Xi}.
\]

In particular, by the discussion in~\eqref{boundary:subsubsec:tori_mult_1motifs}, such a trivialization equips any $\mb{\Xi}$-scheme $S$ with a natural pairing $\tau_0:\mathsf{X}\times\mathsf{X}\to\Reg{S}^\times$ corresponding to the identity section of the trivial $\mb{E}_0$-torsor.

Let $\mb{E}\hookrightarrow \mb{E}(\sigma^\beef)$ be the torus embedding associated with $\sigma^\beef$, and let $\mb{S}(\sigma^\beef)\subset \mb{S}$ be the sub-monoid such that
\[
\mb{E}(\sigma^\beef) = \Spec~\Int[\mb{S}(\sigma^\beef)].
\]
Let $\mb{\Xi}(\sigma^\beef)$ be the restriction of $\Ss_{K^\beef_{\Phi^\beef}}(Q_{\Phi^\beef},D_{\Phi^\beef},\sigma^\beef)$ to $\Spec R^{\sab}$.

Then $\iota^{\sab}$ allows us to identify $\Spec R^{\sab}\times\mb{E}(\sigma^\beef)$ with $\mb{\Xi}(\sigma^\beef)$. In particular, given a point $s_0^\beef$ as above, it gives us a point $s'_0$ in the closed orbit of $\mb{E}(\sigma^\beef)$, and an isomorphism of $R^{\sab}$-algebras:
\begin{align}\label{boundary:eqn:noncansplit}
R^{\sab}\widehat{\otimes}_WB{(\sigma^\beef,s'_0)}\xrightarrow{\simeq}R(\sigma^\beef,s_0^\beef).
\end{align}
Here, $B(\sigma^\beef,s'_0)$ is the complete local ring of $\mb{E}(\sigma^\beef)$ at $s'_0$.

\subsubsection{}\label{boundary:subsubsec:blowups}
Suppose that $\tilde{\sigma}^\beef\subset \sigma^\beef$ is another rational polyhedral cone that is not contained in a proper face of $\sigma^\beef$. Then we obtain a map of torus embeddings:
\begin{align}\label{boundary:eqn:sigmatildetoremb}
\mb{E}(\tilde{\sigma}^\beef)\to\mb{E}(\sigma^\beef).
\end{align}
Moreover, under this map, the closed orbit of the target is contained within the image of the closed orbit of the source.

Twisting~\eqref{boundary:eqn:sigmatildetoremb} by the torsor $\mb{\Xi}$ produces a birational map of $R^{\sab}$-schemes:
\begin{align}\label{boundary:eqn:sigmatildecorr}
\mb{\Xi}(\tilde{\sigma}^\beef)\to\mb{\Xi}(\sigma^\beef)
\end{align}
such that the closed stratum $\mathcal{Z}(\sigma^\beef)\subset\mb{\Xi}(\sigma^\beef)$ is contained within the image of $\mathcal{Z}(\tilde{\sigma}^\beef)$.

If $\tilde{s}^\beef_0\in\mathcal{Z}(\tilde{\sigma}^\beef)(\overline{\Field}_p)$ is a point lifting $s_0^\beef\in\mathcal{Z}(\sigma^\beef)(\overline{\Field}_p)$
Then (\ref{boundary:eqn:sigmatildecorr}) induces a birational map of $R^{\sab}$-schemes
\begin{equation}\label{boundary:eqn:blowupab}
f_{\tilde{s}^\beef_0,s_0^\beef}:S(\tilde{\sigma}^\beef,\tilde{s}^\beef_0)\to S(\sigma^\beef,s_0^\beef),
\end{equation}
carrying $V(\tilde{\sigma}^\beef,\tilde{s}^\beef_0)$ into $V(\sigma^\beef,s_0^\beef)$. This map is compatible with the splittings~\eqref{boundary:eqn:noncansplit} induced by the secion $\iota^{\sab}$.

\subsubsection{}\label{boundary:subsubsec:open}
Analytifying $f_{\tilde{s}^\beef_0,s_0^\beef}$, we obtain a map of analytic spaces
\[
f_{\tilde{s}^\beef_0,s_0^\beef}:\widehat{\mathcal{U}}^{\an,\circ}(\tilde{\sigma}^\beef,\tilde{s}^\beef_0)\to\widehat{\mathcal{U}}^{\an,\circ}(\sigma^\beef,s_0^\beef).
\]
This identifies $\widehat{\mathcal{U}}^{\an,\circ}(\widetilde{\sigma}^\beef,\tilde{s}^\beef_0)$ with an open subspace of $\widehat{\mathcal{U}}^{\an,\circ}(\sigma^\beef,s_0^\beef)$. 

To see this, note that the decomposition \eqref{boundary:eqn:noncansplit} reduces us to considering the map $(\Spf B(\tilde{\sigma},\tilde{s}'_0))^{\an}\to(\Spf B(\sigma^\beef,s'_0))^{\an}$. 

As above, we will use the superscript $\circ$ to denote the complement of the boundary divisor. Then every $h\in\mb{S}$ can be viewed as a non-vanishing global function on $(\Spf B({\sigma^\beef,s_0^\beef}))^{\an,\circ}$.

 The point $\tilde{s}'_0$ determines a map of monoids $\tilde{s}^\sharp_0:\mb{S}(\tilde{\sigma})\to\overline{\Field}_p$. Let $\beta(h)\in W^\times$ be the Teichm\"uller lift of $\tilde{s}^\sharp_0(h)\in\overline{\Field}_p^{\times}$. Then
 \[
  \bigl(\Spf B({\tilde{\sigma}^\beef,\tilde{s}^\beef_0})\bigr)^{\an,\circ}\subset\bigl(\Spf B({\sigma^\beef,s_0^\beef})\bigr)^{\an,\circ}
 \]
 is the subspace defined by the conditions $\vert h-\beta(h)\vert<1$, as $h$ varies over elements in $\mb{S}(\tilde{\sigma}^\beef)^\times\subset\mb{S}$.

\subsubsection{}\label{boundary:subsubsec:equiv}
More generally, consider the set
\begin{align}\label{boundary:eqn:equivset}
 \bigl\{(\sigma^\beef,s_0^\beef):\;\sigma^\beef\subset W_{\Phi^{\beef}}(\Real)(-1)\text{ rational polyhedral cone with $\sigma^{\beef,\circ}\subset\mb{H}(\Phi^\beef)$, and }s_0^\beef\in\mathcal{Z}(\sigma^\beef)(\overline{\Field}_p)\bigr\}.
\end{align}
We will equip it with the minimal equivalence relation $\sim$ such that $(\tilde{\sigma}^\beef,\tilde{s}^\beef_0)\sim(\sigma^\beef,s_0^\beef)$ whenever $\tilde{\sigma}^\beef\subset\sigma^\beef$ and $\tilde{s}^\beef_0$ maps to $s_0^\beef$ under the map $\mathcal{Z}(\tilde{\sigma}^\beef)\to\mathcal{Z}(\sigma^\beef)$.

Concretely, this means that $(\tilde{\sigma}^\beef,\tilde{s}^\beef_0)$ is equivalent to $(\sigma^\beef,s_0^\beef)$ if two conditions hold: the intersection $\tilde{\sigma}^{\beef,\circ}\cap\sigma^{\beef,\circ}$ is non-empty; and, with $\sigma'\subset W_{\Phi^{\beef}}(\Real)(-1)$ the closure of $\tilde{\sigma}^{\beef,\circ}\cap\sigma^{\beef,\circ}$, there is a point $s'_0\in\mathcal{Z}(\sigma')(\overline{\Field}_p)$ mapping to $\tilde{s}^\beef_0$ and $s_0^\beef$ under the maps $\mathcal{Z}(\sigma')\to\mathcal{Z}(\sigma^\beef)$ and $\mathcal{Z}(\sigma')\to\mathcal{Z}(\tilde{\sigma}^\beef)$, respectively.

By \eqref{boundary:subsubsec:open}, we can naturally view $\widehat{\mathcal{U}}^{\an,\circ}({\sigma',s'_0})$ as an open sub-space of both $\widehat{\mathcal{U}}^{\an,\circ}({\tilde{\sigma}^\beef,\tilde{s}^\beef_0})$ and $\widehat{\mathcal{U}}^{\an,\circ}({\sigma^\beef,s_0^\beef})$. We will therefore suggestively denote it as an `intersection':
\[
\widehat{\mathcal{U}}^{\an,\circ}({\tilde{\sigma}^\beef,\tilde{s}^\beef_0})\cap\widehat{\mathcal{U}}^{\an,\circ}({\sigma^\beef,s_0^\beef}) \coloneqq \widehat{\mathcal{U}}^{\an,\circ}({\sigma',s'_0}).
\]

\subsubsection{}\label{boundary:subsubsec:fcrystal}
Fix any Frobenius lift $\varphi:R^{\sab}\to R^{\sab}$ and equip $B(\sigma^\beef,s_0^\beef)$ with the Frobenius lift $\varphi_{\sigma}:B(\sigma^\beef,s_0^\beef)\to B(\sigma^\beef,s_0^\beef)$ induced by the $p$-power map on $\mb{S}$. The completed tensor product of the two lifts now determines a Frobenius lift $\varphi=\varphi\widehat{\otimes}\varphi_{\sigma}$ on $R(\sigma^\beef,s_0^\beef)$.

The section $R^{\sab}\to\mb{\Xi}$ determining $\iota^{\sab}$ gives us a principally polarized $1$-motif $(\mathcal{Q}^{\cl},\lambda^{\cl})$ over $R^{\sab}$. On the other hand, as observed in~\eqref{boundary:subsubsec:tori_mult_1motifs}, $\iota^{\sab}$ also determines a map $\mb{\Xi}\to\mb{E}$, and thus a pairing
\[
 \tau_0:\mathsf{X}\times\mathsf{X}\to\Reg{\mb{\Xi}}^\times\to\Reg{V(\sigma^\beef,s_0^\beef)}^\times.
\]
Equivalently, it determines an object $\mathcal{Q}(0)$ in $\mb{DD}(S(\sigma^\beef,s_0^\beef),V(\sigma^\beef,s_0^\beef))$ with
\[
 \mathcal{Q}(0)^{\et}=\mathcal{Q}(0)^{\mult}=\mathsf{X}\;;\;\mathcal{Q}(0)^{\ab}=0.
\]

The pair $(\mathcal{Q}^{\cl},\tau_{0})$ is now an object in $\widetilde{\mb{DD}}(S(\sigma^\beef,s_0^\beef),V(\sigma^\beef,s_0^\beef))$; cf.~\eqref{semistable:subsubsec:ddpoltilde}. In the notation of \emph{loc. cit.}, we have a canonical isomorphism:
\[
 \on{B}_{(S({\sigma^\beef,s_0^\beef}),V({\sigma^\beef,s_0^\beef}))}\bigl((\mathcal{Q}^{\cl},\tau_{0})\bigr)\xrightarrow{\simeq}\mathcal{Q}({\sigma^\beef,s_0^\beef}).
\]

Set $\mathcal{M}({\sigma^\beef,s_0^\beef})=\Dieu(\mathcal{A}({\sigma^\beef,s_0^\beef}))(S({\sigma^\beef,s_0^\beef}))$ and $\mathcal{M}^{\cl}({\sigma^\beef,s_0^\beef})=\Dieu(\mathcal{Q}^{\cl})(S({\sigma^\beef,s_0^\beef}))$: these are both finite free $\Reg{S({\sigma^\beef,s_0^\beef})}$-modules equipped with a $\varphi$-module structure and compatible integrable logarithmic connections. They are also equipped with Hodge filtrations $F^\bullet\mathcal{M}({\sigma^\beef,s_0^\beef})$, $F^\bullet\mathcal{M}^{\cl}({\sigma^\beef,s_0^\beef})$; and weight filtrations $W_\bullet\mathcal{M}({\sigma^\beef,s_0^\beef})$, $W_\bullet\mathcal{M}^{\cl}({\sigma^\beef,s_0^\beef})$.

We have canonical isomorphisms:
\begin{align}\label{boundary:eqn:weightfilt}
 \gr^W_2\mathcal{M}({\sigma^\beef,s_0^\beef})\xrightarrow{\simeq}\gr^W_2\mathcal{M}^{\cl}({\sigma^\beef,s_0^\beef})&\xrightarrow{\simeq}\Reg{S(\sigma^\beef,s_0^\beef)}(-1)\otimes \mathsf{X};\nonumber\\ W_0\mathcal{M}({\sigma^\beef,s_0^\beef})\xrightarrow{\simeq}W_0\mathcal{M}^{\cl}({\sigma^\beef,s_0^\beef})&\xrightarrow{\simeq}\SHom(\mathsf{X},\Reg{S(\sigma^\beef,s_0^\beef)});\\
 \gr^W_1\mathcal{M}({\sigma^\beef,s_0^\beef})\xrightarrow{\simeq}\gr^W_1\mathcal{M}^{\cl}({\sigma^\beef,s_0^\beef})&\xrightarrow{\simeq}\Dieu(\mathcal{B})(S(\sigma^\beef,s_0^\beef)).\nonumber
\end{align}

Let $\Dieu(0)$ be the extension of $W_1\Dieu(\mathcal{Q}^{\cl})$ by $\mb{1}(-1)\otimes\mathsf{X}$ in $\mb{LDieu}(S(\sigma^\beef,s_0^\beef),V(\sigma^\beef,s_0^\beef))$ obtained by pushing $\Dieu(\mathcal{Q}(0))$ out along the inclusion:
\[
 \underline{\Hom}(\mb{1},\mathsf{X}) = W_0\Dieu(\mathcal{Q}^{\cl})\into W_1\Dieu(\mathcal{Q}^{\cl}).
\]

From the construction of $\Dieu(\mathcal{A}({\sigma^\beef,s_0^\beef}))$ in (\ref{semistable:subsubsec:logffull}) from $\Dieu(\mathcal{Q}^{\cl})$ and the pairing $\tau_{0}$, we see that $\Dieu(\mathcal{A}(\sigma^\beef,s_0^\beef))$ is canonically isomorphic to the Baer sum of $\Dieu(\mathcal{Q}^{\cl})$ and $\Dieu(0)$ as extensions of $W_1\Dieu(\mathcal{Q}^{\cl})$ by $\mb{1}(-1)\otimes\mathsf{X}$.

From the explicit description of $\Dieu(0)$ in~\eqref{semistable:subsubsec:noabcrys}, we deduce that, as a $\varphi$-module over $R(\sigma^\beef,s_0^\beef)$, $\Dieu(0)(S(\sigma^\beef,s_0^\beef))$ is naturally isomorphic to the \emph{split} extension of $\Reg{S(\sigma^\beef,s_0^\beef)}(-1)\otimes\mathsf{X}$ by $\SHom(\mathsf{X},\Reg{S})$.

Therefore, we have an isomorphism of $\varphi$-modules $\mathcal{M}^{\cl}({\sigma^\beef,s_0^\beef})\xrightarrow{\simeq}\mathcal{M}({\sigma^\beef,s_0^\beef})$ preserving Hodge and weight filtrations and inducing the canonical isomorphisms from (\ref{boundary:eqn:weightfilt}) on the associated graded pieces of the weight filtrations.

Furthermore, let $\nabla^{\cl}:\mathcal{M}({\sigma^\beef,s_0^\beef})\to\mathcal{M}({\sigma^\beef,s_0^\beef})\otimes\flogdiff{S({\sigma^\beef,s_0^\beef})/W}$ be the integrable connection induced via this isomorphism from that on $\mathcal{M}^{\cl}({\sigma^\beef,s_0^\beef})$. If $\nabla$ is the natural connection on $\mathcal{M}({\sigma^\beef,s_0^\beef})$, the difference
\begin{align}\label{boundary:eqn:theta_difference}
\Theta=\nabla-\nabla^{\cl}:\mathcal{M}({\sigma^\beef,s_0^\beef})\to\mathcal{M}({\sigma^\beef,s_0^\beef})\otimes\flogdiff{R({\sigma^\beef,s_0^\beef})/W}
\end{align}
is an $R({\sigma^\beef,s_0^\beef})$-linear map. It factors through:
\[
\gr^W_2\mathcal{M}({\sigma^\beef,s_0^\beef})=\Reg{S({\sigma^\beef,s_0^\beef})}(-1)\otimes \mathsf{X}\to\SHom(\mathsf{X},\flogdiff{S({\sigma^\beef,s_0^\beef})/W})=W_0\mathcal{M}({\sigma^\beef,s_0^\beef})\otimes\flogdiff{S({\sigma^\beef,s_0^\beef})/W},
\]
where the map in the middle $\Reg{S({\sigma^\beef,s_0^\beef})}(-1)\otimes \mathsf{X}\to\SHom(\mathsf{X},\flogdiff{S({\sigma^\beef,s_0^\beef})/W})$ is the one induced by the pairing:
\begin{align*}
  \mathsf{X}\times \mathsf{X}&\to\flogdiff{R({\sigma^\beef,s_0^\beef})/W};\\
  (x_1,x_2)&\mapsto-\dlog(\tau_{0}(x_1,x_2)).
\end{align*}

\subsubsection{}\label{boundary:subsubsec:manlogtriv}
We will now apply the theory of \eqref{semistable:subsec:unip}, especially \eqref{semistable:subsubsec:uniplogsmooth}. Let $a\in B({\sigma,s'_0})$ be an equation for the boundary divisor $S({\sigma^\beef,s_0^\beef})\backslash V({\sigma^\beef,s_0^\beef})$. Let $M({\sigma^\beef,s_0^\beef})=M_0(\mathcal{A}({\sigma^\beef,s_0^\beef}))$ be the module of unipotent nearby cycles.

Now,
\[
R({\sigma^\beef,s_0^\beef})[a^{-1}]^\times/R({\sigma^\beef,s_0^\beef})^\times\xrightarrow{\simeq}B({\sigma^\beef,s_0^\beef})[a^{-1}]^\times/B({\sigma^\beef,s_0^\beef})^\times\xrightarrow{\simeq}\mb{S}/\mb{S}(\sigma)^\times.
\]
Set $\Lambda(\sigma)=\mb{S}/\mb{S}(\sigma)^\times$. Then, in the notation of (\ref{semistable:subsubsec:unipequiv}), $M({\sigma^\beef,s_0^\beef})$ is an object in $\mb{LFI}(\overline{\Field}_p,\Lambda(\sigma))$.

Since the monodromy operator
 \[
  N({\sigma^\beef,s_0^\beef}):M({\sigma^\beef,s_0^\beef})\to M({\sigma^\beef,s_0^\beef})\otimes\Lambda(\sigma)
 \]
 is just the residue of the connection $\nabla$, it can be described explicitly using information from \eqref{boundary:subsubsec:fcrystal}: $M({\sigma^\beef,s_0^\beef})$ is equipped with a weight filtration $W_\bullet M({\sigma^\beef,s_0^\beef})$ with $W_0M({\sigma^\beef,s_0^\beef})=\Hom(\mathsf{X},K_0)$ and $\gr^W_2M({\sigma^\beef,s_0^\beef})=K_0(-1)\otimes \mathsf{X}$. Then $N({\sigma^\beef,s_0^\beef})$ is induced by a natural map $\mathsf{X}\to\Hom(\mathsf{X},\Lambda(\sigma))$, or, equivalently, a natural pairing
\[
 \mathsf{X}\times \mathsf{X}\to\Lambda(\sigma).
\]
This is exactly the symmetric pairing induced from the map $\mb{S}_0\to\mb{S}\to\mb{S}/\mb{S}(\sigma)^\times=\Lambda(\sigma)$.

\subsubsection{}\label{boundary:subsubsec:blowupslogfcrys}
Let $\tilde{\sigma}^\beef\subset\sigma^\beef\subset\mb{H}(\Phi^\beef)$ and $\tilde{s}^\beef_0\in\mathcal{Z}(\tilde{\sigma}^\beef)(\overline{\Field}_p)$ be as in (\ref{boundary:subsubsec:blowups}).

Then~\eqref{boundary:eqn:blowupab} produces a canonical identification in $\mb{LDieu}_{\wt}(S{(\tilde{\sigma}^\beef,\tilde{s}^\beef_0)},V{(\tilde{\sigma}^\beef,\tilde{s}^\beef_0)})$:
\begin{align}\label{boundary:eqn:mtildesigma}
 f_{\tilde{s}^\beef_0,s_0^\beef}^*\mathcal{M}({\sigma^\beef,s_0^\beef})=\mathcal{M}{(\tilde{\sigma}^\beef,\tilde{s}^\beef_0)}.
\end{align}

The map $f_{\tilde{s}^\beef_0,s_0^\beef}$ also produces a functor
\[
f_{\tilde{s}^\beef_0,s_0^\beef}^*:\mb{LFI}(\overline{\Field}_p,\Lambda(\sigma^\beef))\to\mb{LFI}(\overline{\Field}_p,\Lambda(\tilde{\sigma}^\beef)).
\]
Concretely, given a tuple $(D,\varphi_D,N_D)$ on the left hand side, we have $f_{\tilde{s}^\beef_0,s_0^\beef}^*(D,\varphi_D,N_D)=(D,\varphi_D,f_{\tilde{s}^\beef_0,s_0^\beef}^*N_D)$, where $f_{\tilde{s}^\beef_0,s_0^\beef}^*N_D$ is the composition:
\[
 D\xrightarrow{N_D}D\otimes\Lambda(\sigma)\to D\otimes\Lambda(\tilde{\sigma}^\beef).
\]

Now (\ref{boundary:eqn:mtildesigma}) gives us a canonical identification in $\mb{LFI}(\overline{\Field}_p,\Lambda(\tilde{\sigma}))$: $f_{\tilde{s}^\beef_0,s_0^\beef}^*M({\sigma^\beef,s_0^\beef})=M{(\tilde{\sigma}^\beef,\tilde{s}^\beef_0)}$.

Given $(\sigma^\beef,s_0^\beef)$ in the set \eqref{boundary:eqn:equivset}, we find from above that the $\varphi$-module over $K_0$ underlying $M({\sigma^\beef,s_0^\beef})$ depends only on the equivalence class of $(\sigma^\beef,s_0^\beef)$.

\emph{Fix such an equivalence class}. Write $M_0$ for the associated $\varphi$-module over $K_0$. Given a particular representative $(\sigma^\beef,s_0^\beef)$ in this equivalence class, there is a map $N({\sigma^\beef,s_0^\beef}):M_0\to M_0\otimes\Lambda(\sigma^\beef)$ giving $M_0$ the structure of an object $M({\sigma^\beef,s_0^\beef})$ in $\mb{LFI}(\overline{\Field}_p,\Lambda(\sigma^\beef))$.

\subsubsection{}\label{boundary:subsubsec:liewp_embedding}
Notice that the principal polarization on $\mathcal{A}(\sigma^\beef,s_0^\beef)$ induces a non-degenerate alternating pairing:
\[
 \psi(\sigma^\beef,s_0^\beef):\mathcal{M}(\sigma^\beef,s_0^\beef)\times\mathcal{M}(\sigma^\beef,s_0^\beef)\to\Reg{S(\sigma^\beef,s_0^\beef)}(-1),
\]
which in turn produces a symplectic pairing:
\[
\psi_0:M_0\times M_0\to K_0(-1).
\]
As above, it is easily checked that this pairing does not depend on the choice of $(\sigma^\beef,s_0^\beef)$ in its equivalence class.

Let $P_{\wt}\subset\GSp(M_0,\psi_0)$ be the parabolic subgroup preserving the weight filtration, and let $U_{\wt}\subset P_{\wt}$ be the center of its unipotent radical. Then $\Lie U_{\wt}\subset\End(M_0)$ consists of endomorphisms that factor as:
\[
M_0\to\gr^W_2M_0 = K_0(-1)\otimes\mathsf{X}\xrightarrow{f}\Hom(\mathsf{X},K_0)=W_0M_0\into M_0,
\]
where $f:K_0(-1)\otimes\mathsf{X}\to\Hom(\mathsf{X},K_0)$ is induced from a symmetric bilinear pairing $\mathsf{X}\times\mathsf{X}\to K_0(1)$.

In particular, $(\Lie U_{\wt})(-1)=K_0(-1)\otimes_{K_0}\Lie U_{\wt}$ admits a natural rational structure: The space of symmetric bilinear pairings on $\mathsf{X}$ with values in $\Rat$. But this in turn is canonically identified with the Lie algebra $\Lie W_{\Phi^{\beef}}$.

Putting this all together, we find that we have a canonical isomorphism:
\begin{equation}\label{boundary:eqn:liewp_embedding}
K_0\otimes_{\Rat}\Lie W_{\Phi^{\beef}}\xrightarrow{\simeq}(\Lie U_{\wt})(-1).
\end{equation}
This identifies $\Rat_p\otimes\Lie W_{\Phi^{\beef}}$ with the space of $\varphi$-invariants on the right-hand side.

\subsubsection{}\label{boundary:subsubsec:padichodge}
Let $\Reg{\widehat{\mathcal{U}}^{\an}({\sigma^\beef,s_0^\beef})}^{\log}$ be the $\Reg{\widehat{\mathcal{U}}^{\an}({\sigma^\beef,s_0^\beef})}$-algebra defined in \eqref{semistable:subsubsec:osanlog}: It has the structure of an ind-object over $\mb{LFI}(S({\sigma^\beef,s_0^\beef}),V({\sigma^\beef,s_0^\beef}))$. There is a canonical isomorphism of (ind-)log $F$-isocrystals~\eqref{semistable:eqn:paralleltrans}:
\begin{align}\label{boundary:eqn:parallel}
 \xi_{\sigma^\beef,s_0^\beef}:\Reg{\widehat{\mathcal{U}}({\sigma^\beef,s_0^\beef})^{\an}}^{\log}\otimes_{K_0}M_0\xrightarrow{\simeq}\mathcal{M}({\sigma^\beef,s_0^\beef})^{\an,\log}\coloneqq\Reg{\widehat{\mathcal{U}}({\sigma^\beef,s_0^\beef})^{\an}}^{\log}\otimes_{\Reg{\widehat{\mathcal{U}}^{\an}({\sigma^\beef,s_0^\beef})}}\mathcal{M}^{\an}({\sigma^\beef,s_0^\beef}).
\end{align}

Let $\widehat{\mathcal{U}}^{\an,\circ}({\sigma^\beef,s_0^\beef})\subset\widehat{\mathcal{U}}^{\an}({\sigma^\beef,s_0^\beef})$ be the complement of the boundary divisor. There is a canonical horizontal isomorphism of vector bundles over $\widehat{\mathcal{U}}^{\an,\circ}({\sigma^\beef,s_0^\beef})$ with integrable connections:
\begin{align}\label{boundary:eqn:manderham}
 \mathcal{M}^{\an}({\sigma^\beef,s_0^\beef})\vert_{\widehat{\mathcal{U}}^{\an,\circ}({\sigma^\beef,s_0^\beef})}\xrightarrow{\simeq}\underline{H}^1_{\dR}(\mathcal{A}({\sigma^\beef,s_0^\beef})/V({\sigma^\beef,s_0^\beef}))\vert_{\widehat{\mathcal{U}}^{\an,\circ}({\sigma^\beef,s_0^\beef})}
\end{align}
This respects the Hodge filtrations on both sides.

Fix an algebraic closure $\overline{K}_0$ for $K_0$. For any finite extension $L/K_0$ within $\overline{K}_0$ and any point $x\in\widehat{\mathcal{U}}^{\an,\circ}(L)$, write $\mathcal{A}_x$ for the fiber of $\mathcal{A}({\sigma^\beef,s_0^\beef})$ at $x$.

A choice of uniformizer $\pi\in L$ allows us to define a specialization map $x_{\pi}$ on $\Reg{\widehat{\mathcal{U}}^{\an}({\sigma^\beef,s_0^\beef})}^{\log}$; cf.~\eqref{semistable:subsubsec:hyodokatocomp}. Specializing (\ref{boundary:eqn:parallel}) along $x_{\pi}$ and using (\ref{boundary:eqn:manderham}) gives us the Hyodo-Kato isomorphism:
\begin{align}\label{boundary:eqn:hyodokatox}
 \beta_{\on{H-K},x,\pi}:L\otimes_{K_0}M_0\xrightarrow{\simeq}H^1_{\dR}(\mathcal{A}_{x}/L).
\end{align}

Furthermore, from (\ref{semistable:prp:comparison}) and (\ref{semistable:prp:phinmodule}), we obtain a natural comparison isomorphism
\begin{align}\label{boundary:eqn:comp}
\beta_{\st,x}:\Bst\otimes_{\Rat_p}H^1_{\et}\bigl(\mathcal{A}_{x,\overline{K}_0},\Rat_p\bigr)\xrightarrow{\simeq}\Bst\otimes_{K_0}M_0.
\end{align}
This isomorphism is compatible with the actions of $\Gamma_L=\Gal(\overline{K}_0/L)$ and $\varphi$, where $\Gamma_L$ (resp $\varphi$) acts via its diagonal action on the left hand side (resp. right hand side) and via its action on $\Bst$ on the right hand side (resp. left hand side). If we equip $M_0$ with the monodromy operator
\[
 N_{x}:M_0\xrightarrow{N({\sigma^\beef,s_0^\beef})}M_0\otimes\Lambda(\sigma^\beef)\xrightarrow{1\otimes x^\sharp} M_0\otimes(\overline{K}_0^\times/\Reg{\overline{K}_0}^\times),
\]
then (\ref{boundary:eqn:comp}) is also compatible with monodromy operators on both sides. Again, here, we are equipping the right hand side with the operator $N\otimes 1+1\otimes N_{x}$ and the left hand side with the operator $N\otimes 1$, where $N:\Bst\to\Bst\otimes(\overline{K}_0^\times/\Reg{\overline{K}_0}^\times)$ is the canonical monodromy operator from \cite[3.2.2]{fontaine:periodes}.

Using the embedding $L\otimes_{K_0}\Bst\into\Bdr$ induced by the choice of uniformizer $\pi$, we obtain the composition
\begin{align}\label{boundary:eqn:hkst}
\beta_{\dR,x}:\Bdr\otimes_{\Rat_p}H^1_{\et}\bigl(\mathcal{A}_{x,\overline{K}_0},\Rat_p\bigr)\xrightarrow{1\otimes\beta_{\st,x}}\Bdr\otimes_L(L\otimes_{K_0}M_0)\xrightarrow[\simeq]{1\otimes\beta_{\on{H-K},x,\pi}}\Bdr\otimes_LH^1_{\dR}(\mathcal{A}_{x}/L),
\end{align}
which is just the canonical de Rham comparison isomorphism.

\subsubsection{}\label{boundary:subsubsec:m0_classical}
Let $\mathcal{Q}^{\cl}$ be the $1$-motif over $R^{\sab}$ obtained from the splitting~\eqref{boundary:eqn:noncansplit}, and let $\mathcal{M}^{\cl,\an}$ be the corresponding $F$-isocrystal over $\widehat{\mathcal{U}}^{\sab,\an}$. Let $M_0^{\cl}$ be the corresponding module of unipotent nearby cycles over $K_0$. Then we have a canonical isomorphism of $F$-isocrystals~\eqref{semistable:eqn:paralleltrans}:
\[
 \xi^{\cl}:\Reg{\widehat{\mathcal{U}}^{\sab,\an}}\otimes_{K_0}M_0^{\cl}\xrightarrow{\simeq}\mathcal{M}^{\cl,\an}.
\]

Given any pair $(\sigma^\beef,s_0^\beef)$ in our fixed equivalence class, we now have a canonical identification of $\varphi$-modules:
\begin{equation}\label{boundary:eqn:m0cl_isomorphism}
 \Reg{\widehat{\mathcal{U}}^{\an}(\sigma^\beef,s_0^\beef)}\otimes_{K_0}M_0^{\cl}\xrightarrow{\simeq}\mathcal{M}^{\an}(\sigma^\beef,s_0^\beef)
\end{equation}
preserving Hodge and weight filtrations and inducing the canonical isomorphisms from~\eqref{boundary:eqn:weightfilt} on the associated graded pieces. The induced connection on the right hand side differs from the natural one by the linear map $\Theta$ defined in~\eqref{boundary:eqn:theta_difference}. Therefore,~\eqref{boundary:eqn:m0cl_isomorphism} gives us an identification:
\begin{equation}\label{boundary:eqn:m0cl_description}
M_0^{\cl} = \{x\in H^0\bigl(\widehat{\mathcal{U}}^{\an}(\sigma^\beef,s_0^\beef),\mathcal{M}^{\an}(\sigma^\beef,s_0^\beef)\bigr):\;\nabla(x) = \Theta(x)\}.
\end{equation}

By construction, we have a map of groups $R(\sigma^\beef,s_0^\beef)[a^{-1}]^\times\xrightarrow{u\mapsto\ell_u}\Reg{\widehat{\mathcal{U}}^{\an}(\sigma^\beef,s_0^\beef)}^{\log}$ carrying any element of $u\in R(\sigma^\beef,s_0^\beef)^\times$ to $\log(u)$. Consider the map
\[
\mb{S}_0\into\mb{S}\xrightarrow{m\mapsto\ell_m}\Reg{\widehat{\mathcal{U}}^{\an}(\sigma^\beef,s_0^\beef)}^{\log},
\]
where we view $\mb{S}$ as a subgroup of $B(\sigma^\beef,s_0^\beef)[a^{-1}]^\times\subset R(\sigma^\beef,s_0^\beef)[a^{-1}]^\times$. This gives us a $\varphi$-equivariant symmetric pairing
\[
 \mathsf{X}\times\mathsf{X}\to\Reg{\widehat{\mathcal{U}}^{\an}(\sigma^\beef,s_0^\beef)}^{\log}(1),
\]
and thus a $\varphi$-equivariant map
\[
 \Reg{\widehat{\mathcal{U}}^{\an}(\sigma^\beef,s_0^\beef)}^{\log}(1)\otimes\mathsf{X}\to \SHom(\mathsf{X},\Reg{\widehat{\mathcal{U}}^{\an}(\sigma^\beef,s_0^\beef)}^{\log}),
\]
which we can view as a $\varphi$-equivariant endomorphism $\mathsf{A}\in \Reg{\widehat{\mathcal{U}}^{\an}(\sigma^\beef,s_0^\beef)}^{\log}\otimes_{K_0}\Lie U_{\wt}$. Comparing with the construction of $\Theta$ in~\eqref{boundary:eqn:theta_difference}, we find
\[
d\mathsf{A} = -\Theta\in \Omega^{1,\log}_{\widehat{\mathcal{U}}^{\an}(\sigma^\beef,s_0^\beef)/K_0}\otimes\Lie U_{\wt}.
\]

From this, it is easy to see that the $\varphi$-equivariant automorphism $\exp(\mathsf{A})$ of $\mathcal{M}^{\an,\log}(\sigma^\beef,s_0^\beef)$ induces a $\varphi$-equivariant isomorphism $M_0^{\cl}\xrightarrow{\simeq}M_0$. Tensoring with $\Reg{\widehat{\mathcal{U}}^{\sab,\an}}$ produces an isomorphism:
\begin{align}\label{boundary:eqn:exp_A}
 \exp(\mathsf{A}):\mathcal{M}^{\cl,\an}\xrightarrow{\simeq}\Reg{\widehat{\mathcal{U}}^{\sab,\an}}\otimes_{K_0}M_0^{\cl}\xrightarrow{\simeq}\Reg{\widehat{\mathcal{U}}^{\sab,\an}}\otimes_{K_0}M_0.
\end{align}
The image of the Hodge filtration $F^\bullet\mathcal{M}^{\cl,\an}$ under this isomorphism produces a filtration $F^\bullet_{\cl}(\Reg{\widehat{\mathcal{U}}^{\sab,\an}}\otimes_{K_0}M_0)$ on the target.

\subsubsection{}\label{boundary:subsubsec:good_trivialization}
For future use, it will be useful to have a different realization of the isomorphism~\eqref{boundary:eqn:liewp_embedding}. Let $\mathcal{Q}_x$ be the $1$-motif over $L$ obtained by pulling $\mathcal{Q}(\sigma^\beef,s_0^\beef)$ along $x$: It is equipped with a principal polarization $\lambda_x$. Since $x$ is also a point of $\Ss_{K^\beef_{\Phi^\beef}}(Q_{\Phi^\beef},D_{\Phi^\beef})$, $(\mathcal{Q}_x,\lambda_x)$ is equipped with a $K^\beef_{\Phi^\beef}$-level structure. In particular, if $T_p(\mathcal{Q}_x)$ is the $p$-adic Tate module of $\mathcal{Q}_x$, this means that we can find an isomorphism:
\[
 \eta_p:H^g(\Int_p)\xrightarrow{\simeq}T_p(\mathcal{Q}_x),
\]
and a trivialization $u_p:\Int_p\xrightarrow{\simeq}\Int_p(1)$ over $\overline{K}_0$ (equivalently, a compatible family of $p$-power roots of unity in $\overline{K}_0^\times$) with the following properties:
\begin{itemize}
\item $\eta_p$ preserves $W_{\bullet}$ filtrations;
\item The map
\[
 \Int_p\otimes\mathsf{X}=\gr^W_0H^g(\Int_p)\xrightarrow[\simeq]{\gr^W_0\eta_p}\gr^W_0T_p(\mathcal{Q}_x)=\Int_p\otimes\mathsf{X}
\]
is the identity;
\item Via $\eta_p$, the Weil pairing $e_{\lambda_x}$ on $T_p(\mathcal{Q}_x)$ pulls back to $u_p\circ\psi^g$.
\end{itemize}

Using the canonical identifications
\[
H^1_{\et}\bigl(\mathcal{A}_{x,\overline{K}_0},\Int_p\bigr) = \dual{T_p(\mathcal{A}_x)} = \dual{T_p(\mathcal{Q}_x)},
\]
we now obtain an isomorphism:
\[
 \alpha=\beta_{\st,x}\circ(\dual{\eta}_p)^{-1}:\Bst\otimes\dual{H}(\Rat)\xrightarrow{\simeq}\Bst\otimes_{K_0}M_0,
\]
which preserves $W_{\bullet}$-filtrations and polarization pairings (up to $\Int_p^\times$-multiples). Moreover, let $u_0:K_0(-1)\xrightarrow{\simeq}K_0$ be the isomorphism obtained by applying the Dieudonn\'e module functor to the trivialization $u$. Then we have commuting diagrams:
\begin{align}\label{boundary:eqn:trivializations_commute}
\begin{diagram}
\Bst\otimes W_0\dual{H}(\Rat)&\rEquals&\Hom(\mathsf{X},\Bst)\\
\dTo_{\simeq}^{W_0\alpha}&&\dEquals\\
\Bst\otimes W_0M_0&\rEquals&\Hom(\mathsf{X},\Bst)
\end{diagram}\;&;\;
\begin{diagram}
\Bst\otimes\gr^W_2\dual{H}(\Rat)&\rEquals&\Bst\otimes\mathsf{X}\\
\dTo_{\simeq}^{\gr^W_2\alpha}&&\dTo^{\simeq}_{u^{-1}_0\otimes 1}\\
\Bst\otimes\gr^W_2M_0&\rEquals&\Bst(-1)\otimes\mathsf{X}
\end{diagram}.
\end{align}
Here, all equalities are canonical identifications.

Since $\alpha$ preserves $W_{\bullet}$-filtrations and polarization pairings, it induces an isomorphism:
\begin{equation}\label{boundary:eqn:liewp_embedding_bst}
 \Bst\otimes_{\Rat}\Lie W_{\Phi^{\beef}}\xrightarrow{\simeq}\Bst\otimes_{K_0}\Lie U_{\wt}.
\end{equation}

\begin{lem}\label{boundary:lem:liewp_embedding}
The embedding~\eqref{boundary:eqn:liewp_embedding_bst} restricts to an isomorphism
\[
 K_0\otimes_{\Rat}\Lie W_{\Phi^{\beef}}\xrightarrow{\simeq}(\Lie U_{\wt})(-1)
\]
which is precisely the one defined in~\eqref{boundary:eqn:liewp_embedding}.
\end{lem}
\begin{proof}
This is immediate from the commuting diagrams~\eqref{boundary:eqn:trivializations_commute}. Note that the inclusion $K_0(-1)\subset\Bst$ is the natural one generated by Fontaine's cyclotomic period $t$.
\end{proof}

\subsubsection{}\label{boundary:subsubsec:Nx_sigma}
Consider the monodromy operator
\[
 N_x:M_0\to M_0\otimes(\overline{K}_0^\times/\Reg{\overline{K}_0}^\times).
\]

Equip $M_0\otimes(\overline{K}_0^\times/\Reg{\overline{K}_0}^\times)$ with the $\varphi$-module structure given by the $K_0$-semi-linear endomorphism:
\[
 m\otimes a\mapsto \varphi_0(m)\otimes a^p.
\]

Then we have a $\varphi$-equivariant isomorphism:
\[
 \nu_p:M_0\otimes(\overline{K}_0^\times/\Reg{\overline{K}_0}^\times)\xrightarrow[\simeq]{1\otimes\nu_p}M_0\otimes_{K_0} K_0(-1)=M_0(-1).
\]

\begin{lem}\label{boundary:lem:Nx_sigma}
The map $\nu_p\circ N_x:M_0\to M_0(-1)$ is $\varphi$-equivariant, and in fact belongs to
\[
(\sigma^{\beef,\circ}\cap\Lie W_{\Phi^{\beef}})\subset K_0(-1)\otimes_{K_0}\Lie U_{\wt}\subset\End(M_0)(-1).
\]
\end{lem}
\begin{proof}
Indeed, $\nu_p\circ N_x$ is associated with the symmetric pairing
\[
 \mathsf{X}\times\mathsf{X}\to\Lambda(\sigma^\beef)\xrightarrow{x^\sharp}\overline{K}_0^\times/\Reg{\overline{K}_0}^\times\xrightarrow{\nu_p}\Rat.
\]
\end{proof}

\subsection{Canonical Tate tensors}\label{boundary:subsec:tatetensors}

Fix a place $v\vert p$ of $E$. Let $\Ss_K$ be the normalization of $\mathcal{S}_{{K}^{\beef}}$ in $\Sh_K$. In this sub-section, we will investigate the intersection of ${\Ss_K}$ with the formal boundary charts constructed above. In particular, we will show that every point in the special fiber of such an intersection carries a canonical family of crystalline `Tate tensors'. 

Away from the boundary, this follows quite readily from the argument used in the proof of~\cite[(2.3.5)]{kisin:abelian}, using the fact that the Hodge tensors on the abelian scheme $\mathcal{A}\to\Sh_K$ are parallel for the Gauss-Manin connection. The same idea works at the boundary as well, but its correct execution turns out to be a bit more intricate. 

The main additional difficulty is that, by the theory of~\eqref{semistable:subsubsec:unip}, elements of the space of unipotent nearby cycles propagate to parallel tensors over $\mathcal{M}^{\an,\log}(\sigma^\beef,\tilde{s}^\beef_0)$, and not over $\mathcal{M}^{\an}(\sigma^\beef,\tilde{s}^\beef_0)$.

The key thing is to show that, for the tensors arising from the $\{s_\alpha\}$ via the $p$-adic comparison isomorphism, their restrictions over $\widehat{U}^{\an,\circ}_G(\sigma^\beef,t_0)$ are in fact sections of $\mathcal{M}^{\an}(\sigma^\beef,s_0^\beef)$, and that these sections can therefore be identified with the de Rham realizations of the $\{s_{\alpha}\}$ by checking at any one point of the space. The main technical input is~\eqref{boundary:lem:parallellem}.

\subsubsection{}\label{boundary:subsubsec:tsigmat0}
Assume that we are given $(\sigma^\beef,s_0^\beef)$ in the set \eqref{boundary:eqn:equivset} such that
\[
 {\Ss_K}\times_{\mathcal{S}_{{K}^{\beef}},i{(\Phi,\sigma,s_0^\beef)}}V({\sigma^\beef,s_0^\beef})\neq\emptyset.
\]
This fiber product is finite over $V(\sigma^\beef,s^\beef_0)$, and the normalization in this finite cover of $S({\sigma^\beef,s_0^\beef})$ is of the form $\Spec R_G({\sigma^\beef,s_0^\beef})$, where $R_G({\sigma^\beef,s_0^\beef})$ is finite over the semi-local ring $\Reg{E,(v)}\otimes_{\Int_{(p)}}R({\sigma^\beef,s_0^\beef})$.

Choose a point $t_0\in(\Spec R_G({\sigma^\beef,s_0^\beef}))(\overline{\Field}_p)$, and let $R_G({\sigma^\beef,t_0})$ be the corresponding local quotient of $R_G({\sigma^\beef,s_0^\beef})$. Then $R_G({\sigma^\beef,t_0})$ is normal and finite over the local ring $\Reg{F}\otimes_WR({\sigma^\beef,s_0^\beef})$, where $F$ is a finite extension of $K_0$ within $\overline{K}_0$ appearing as a factor of the algebra $E\otimes_{\Int_{(p)}}W$.

\subsubsection{}
Let $\widehat{\mathcal{U}}^{\an}_G({\sigma^\beef,t_0})$ be the rigid analytic space over $F$ attached to the formal $\Reg{F}$-scheme $\widehat{\mathcal{U}}_G({\sigma^\beef,t_0})\coloneqq \Spf R_G({\sigma^\beef,t_0})$. Then we have a finite map of normal, irreducible analytic spaces over $F$,
\[
u_{\sigma^\beef,t_0}:\widehat{\mathcal{U}}^{\an}_G({\sigma^\beef,t_0})\to\widehat{\mathcal{U}}^{\an}(\sigma^\beef,s_0^\beef)_F.
\]
Let $\widehat{\mathcal{U}}^{\an,\circ}_G({\sigma^\beef,t_0})\subset\widehat{\mathcal{U}}^{\an}_G({\sigma^\beef,t_0})$ be the pre-image of $\widehat{\mathcal{U}}^{\an,\circ}(\sigma^\beef,s_0^\beef)_F$.

Let $\overline{\Sh}_K$ be the generic fiber of $\overline{\Ss}_K$, and let $\overline{\Sh}_{K,F}^{\an}$ be the associated rigid analytic space over $F$: It contains the rigid analytic space $\Sh_{K,F}^{\an}$ as a dense open. Then $\widehat{\mathcal{U}}^{\an}_G({\sigma^\beef,t_0})$ can be identified with an open sub-space in $\overline{\Sh}_{K,F}^{\an}$, and $\widehat{\mathcal{U}}^{\an,\circ}_G({\sigma^\beef,t_0})\subset\widehat{\mathcal{U}}^{\an}_G({\sigma^\beef,t_0})$ is the intersection of $\widehat{\mathcal{U}}^{\an}_G({\sigma^\beef,t_0})$ with $\Sh_{K,F}^{\an}$. 


\subsubsection{}\label{boundary:subsubsec:tequiv}
Just as in~\eqref{boundary:subsec:logfcrys}, for technical reasons, we must allow ourselves to vary the polyhedral cone $\sigma^\beef$.

Suppose now that we have another pair $(\tilde{\sigma}^\beef,\tilde{s}^\beef_0)$ as in~\eqref{boundary:eqn:equivset} that is equivalent to $(\sigma^\beef,s_0^\beef)$ in the sense of~\eqref{boundary:subsubsec:equiv}, and is such that the set
\begin{align}\label{boundary:eqn:teqintersection}
u_{\sigma^\beef,t_0}\bigl(\widehat{\mathcal{U}}^{\an,\circ}_G({\sigma^\beef,t_0})(\overline{K}_0)\bigr)\cap\bigl(\widehat{\mathcal{U}}^{\an,\circ}({\sigma^\beef,s_0^\beef})\cap\widehat{\mathcal{U}}^{\an,\circ}{(\tilde{\sigma}^\beef,\tilde{s}^\beef_0)}\bigr)(\overline{K}_0)\subset\widehat{\mathcal{U}}^{\an,\circ}({\sigma^\beef,s_0^\beef})(\overline{K}_0)
\end{align}
is non-empty. 

Then we have:
\begin{align}\label{boundary:eqn:teqnonempty}
 {\Ss_K}\times_{\mathcal{S}_{{K}^{\beef}},i{(\Phi,\tilde{\sigma},\tilde{s}^\beef_0)}}V{(\tilde{\sigma}^\beef,\tilde{s}^\beef_0)}\neq\emptyset.
\end{align}
Once again, we can consider the ring of functions $R_G(\tilde{\sigma}^\beef,\tilde{s}_0^\beef)$ of the normalization of $S(\tilde{\sigma}^\beef,\tilde{s}_0)$ in this finite $V(\tilde{\sigma}^\beef,\tilde{s}_0)$-scheme: It is a semi-local ring. For any closed point $\tilde{t}_0$ of $R_G(\tilde{\sigma}^\beef,\tilde{s}_0^\beef)$, we obtain a corresponding local normal quotient $R_G(\tilde{\sigma}^\beef,\tilde{t}_0)$, and the analytic spaces $\widehat{\mathcal{U}}^{\an}_G(\tilde{\sigma}^\beef,\tilde{t}_0)$ and $\widehat{\mathcal{U}}^{\an,\circ}_G(\tilde{\sigma}^\beef,\tilde{t}_0)$, along with the finite map
\[
u_{\tilde{\sigma}^\beef,\tilde{t}_0}:\widehat{\mathcal{U}}^{\an}_G(\tilde{\sigma}^\beef,\tilde{t}_0)\to \widehat{\mathcal{U}}^{\an}{(\tilde{\sigma}^\beef,\tilde{s}_0)}.
\]

We will now say that a triple $(\tilde{\sigma}^\beef,\tilde{s}^\beef_0,\tilde{t}_0)$ with $\tilde{t}_0$ a closed point of $\Spec R_G(\tilde{\sigma}^\beef,\tilde{s}_0^\beef)$ is \defnword{$\mathcal{S}_K$-equivalent} to $(\sigma^\beef,s^\beef_0,t_0)$ if the intersection from~\eqref{boundary:eqn:teqintersection} is non-empty and contained in the image of $u_{\tilde{\sigma}^\beef,\tilde{t}_0}$.

\subsubsection{}\label{boundary:subsubsec:salphax}
Fix a finite extension $L/F$ and a point $x\in\widehat{\mathcal{U}}^{\an,\circ}_G({\sigma^\beef,t_0})(L)\subset\Sh_K(L)$. We then have $\Gamma_L$-invariant tensors:
\[
 \{s_{\alpha,\et,x}\}\subset H^1_{\et}\bigl(\mathcal{A}_{x,\overline{K}_0},\Rat_p)^{\otimes}.
\]

Via the isomorphism \eqref{boundary:eqn:comp}, we now obtain tensors:
\[
 \{s_{\alpha,\st,x}\}\subset M_0^\otimes
\]
such that, for each $\alpha$, $1\otimes s_{\alpha,\st,x}=\beta_{\st,x}(1\otimes s_{\alpha,\et,x})\in\Bst\otimes M_0^\otimes$. By construction, for each $\alpha$, $s_{\alpha,\st,x}$ is $\varphi$-invariant, and satisfies $N_{x}(s_{\alpha,\st,x})=0$. In other words, if $M_{0,x}=x^*M_0$ is the induced $(\varphi,N)$-module over $K_0$, we can view $s_{\alpha,\st,x}$ as a morphism of $(\varphi,N)$-modules over $K_0$:
\[
 s_{\alpha,\st,x}:\mb{1}\to M_{0,x}^\otimes.
\]

\subsubsection{}
Note that, by~\eqref{boundary:lem:Nx_sigma}, for any $y\in\widehat{\mathcal{U}}^{\an,\circ}({\sigma^\beef,s_0^\beef})(\overline{K}_0)$, the monodromy element $\nu_p\circ N_y$ lies in $\sigma^\beef\cap\Lie W_{\Phi^{\beef}}$. Let
\begin{equation}\label{boundary:eqn:sigma_G_first_def}
 \sigma_G=\langle \nu_p\circ N_{y}: y\in \widehat{\mathcal{U}}^{\an,\circ}_G({\sigma^\beef,t_0})(\overline{K}_0)\rangle\subset\sigma^\beef
\end{equation}
be the rational polyhedral cone generated by the monodromy operators attached to points lifting to $\widehat{\mathcal{U}}^{\an,\circ}_G({\sigma^\beef,t_0})$.

Since $R_G({\sigma^\beef,t_0})$ is normal, there exists $s'_0\in\mathcal{Z}(\sigma_G)(\overline{\Field}_p)$ such that the map $\widehat{\mathcal{U}}_G({\sigma^\beef,t_0})\to\widehat{\mathcal{U}}({\sigma^\beef,s_0^\beef})$ lifts to a map $\widehat{\mathcal{U}}_G({\sigma^\beef,t_0})\to\widehat{\mathcal{U}}(\sigma_G,s'_0)$. Let $\xi_{\sigma_G,s'_0}$ be as in~\eqref{boundary:eqn:parallel}, and set
\[
\tilde{s}_{\alpha,\st,x}=\xi_{\sigma_G,s'_0}(1\otimes s_{\alpha,\st,x})\in H^0\bigl(\widehat{\mathcal{U}}^{\an}({\sigma_G,s'_0}),\mathcal{M}(\sigma_G,s'_0)^{\an,\log,\otimes}\bigr).
\]

\begin{prp}\label{boundary:prp:parequiv}
The following statements are equivalent:
\begin{enumerate}
  \item\label{rat:et}For any $y\in\widehat{\mathcal{U}}^{\an,\circ}_G({\sigma^\beef,t_0})(\overline{K}_0)$, and any index $\alpha$, we have:
  \[
   s_{\alpha,\st,y}=s_{\alpha,\st,x}\in M_0^\otimes.
  \]
  \item\label{rat:dr} For any $y\in\widehat{\mathcal{U}}^{\an,\circ}_G({\sigma^\beef,t_0})(\overline{K}_0)$, and any index $\alpha$, we have:
  \[
   \beta_{\on{H-K},y,\pi}^{-1}(s_{\alpha,\dR,y})=1\otimes s_{\alpha,\st,x}\in\overline{K}_0\otimes_{K_0}M_0^\otimes.
  \]
  \item\label{rat:mon} For any $y\in\widehat{\mathcal{U}}^{\an,\circ}_G({\sigma^\beef,t_0})(\overline{K}_0)$, and any index $\alpha$, we have:
  \[
   N_{y}(s_{\alpha,\st,x})=0\in M_0^\otimes\otimes\bigl(\overline{K}_0^\times/\Reg{\overline{K}_0}^\times\bigr).
  \]
  \item\label{rat:par} For any index $\alpha$, the parallel tensor $\tilde{s}_{\alpha,\st,x}$ belongs to $H^0\bigl(\widehat{\mathcal{U}}^{\an}(\sigma_G,s'_0),\mathcal{M}(\sigma_G,s'_0)^{\an,\otimes}\bigr)$.
  \item\label{rat:shimura}For any index $\alpha$, the restriction of $\tilde{s}_{\alpha,\st,x}$ to $\widehat{\mathcal{U}}^{\an,\circ}_G({\sigma^\beef,t_0})$ coincides with that of $s_{\alpha,\dR}$ under the canonical isomorphism of filtered vector bundles with integrable connection~\eqref{boundary:eqn:manderham}:
\[
 \mathcal{M}^{\an}({\sigma^\beef,s_0^\beef})\vert_{\widehat{\mathcal{U}}^{\an,\circ}_G({\sigma^\beef,t_0})}\xrightarrow{\simeq}\underline{H}^1_{\dR}(\mathcal{A}/\Sh_K)\vert_{\widehat{\mathcal{U}}^{\an,\circ}_G({\sigma^\beef,t_0})}.
\]
\end{enumerate}
\end{prp}
\begin{proof}
  We will show:
  \[
   \eqref{rat:et}\Leftrightarrow\eqref{rat:dr}\Rightarrow\eqref{rat:mon}\Leftrightarrow\eqref{rat:par}\Rightarrow\eqref{rat:shimura}\Rightarrow\eqref{rat:dr}.
  \]

  Let $y\in\widehat{\mathcal{U}}^{\an,\circ}(\overline{K}_0)$ be another point. After enlarging $L$ if necessary, we can assume that $y$ is an $L$-valued point. Statement~\eqref{rat:et} is equivalent to the assertion that, for any $\alpha$, we have:
  \[
   \beta_{\st,y}(1\otimes s_{\alpha,\et,y})=1\otimes s_{\alpha,\st,y}\in \Bst\otimes_{K_0}M_0^\otimes.
  \]

  This equality holds if and only if it is still true after a change of scalars along the embedding $L\otimes_{K_0}\Bst\into\Bdr$ attached to the choice of a uniformizer $\pi\in L$. Therefore, using (\ref{boundary:eqn:hkst}) and (\ref{boundary:prp:blasius}), \eqref{rat:et} is now equivalent to the assertion:
  \[
   \beta_{\on{H-K},y,\pi}^{-1}(s_{\alpha,\dR,y})=1\otimes s_{\alpha,\st,x}\in L\otimes_{K_0}M_0^\otimes.
  \]
  So we have shown \eqref{rat:et}$\Leftrightarrow$\eqref{rat:dr}.

  Since $s_{\alpha,\st,y}$ induces a map of $(\varphi,N)$-modules $\mb{1}\to M_{0,y}^\otimes$, \eqref{rat:mon} is implied by \eqref{rat:et}.

  Statement~\eqref{rat:par} holds if and only if, for any $\alpha$, the parallel tensor $\tilde{s}_{\alpha,\st,x}$ induces a morphism in $\mb{LFI}(S(\sigma_G,s'_0),V(\sigma_G,s'_0))$:
  \[
   \tilde{s}_{\alpha,\st,x}:\mb{1}\to \mathcal{M}(\sigma_G,s'_0)^{\an,\otimes}.
  \]
  By the equivalence of categories from~\eqref{semistable:subsubsec:unipequiv}, this holds if and only if, for any $\alpha$, $s_{\alpha,\st,x}\in M_0^\otimes$ induces a morphism in $\mb{LFI}(\overline{\Field}_p,\Lambda(\sigma_G))$:
  \[
   s_{\alpha,\st,x}:\mb{1}\to M(\sigma_G,s'_0)^\otimes.
  \]

  In turn, this is equivalent to requiring that, for all maps $\nu:\Lambda(\sigma_G)\to\Rat$, and for any index $\alpha$, the composition
  \[
   M_0\xrightarrow{N(\sigma_G,s'_0)}M_0\otimes\Lambda(\sigma_G)\xrightarrow{1\otimes\nu} M_0
  \]
  carries $s_{\alpha,\st,x}$ to $0$.

  Now, by the definition of $\sigma_G$, $\Lambda(\sigma_G)$ is the smallest quotient of $\Lambda(\sigma^\beef)$ with the following property: All maps $y^\sharp:\Lambda(\sigma^\beef)\to\overline{K}_0^\times/\Reg{\overline{K}_0}^\times$ for $y\in\widehat{\mathcal{U}}^{\an,\circ}_G({\sigma^\beef,t_0})(K_0)$ factor through $\Lambda(\sigma_G)$. In other words, the vector space $\Hom(\Lambda(\sigma_G),\Rat)$ is generated by maps of the form $\nu_p\circ N_y$, for $y\in\widehat{\mathcal{U}}^{\an,\circ}_G(\sigma^\beef,t_0)$.
  From this, the equivalence of \eqref{rat:mon} and \eqref{rat:par} is immediate.

  Assuming \eqref{rat:par}, we find that both $\tilde{s}_{\alpha,\st,x}$ and $s_{\alpha,\dR,\widehat{\mathcal{U}}^{\an,\circ}_G({\sigma^\beef,t_0})}$ are parallel tensors in
  \[
  H^0\bigl(\widehat{\mathcal{U}}^{\an,\circ}_G(\sigma^\beef,t_0),\mathcal{M}^{\an,\otimes}({\sigma^\beef,s_0^\beef})\bigr),
  \]
  whose fibers at $x$ coincide with $s_{\alpha,\dR,x}$.

  Since $\widehat{\mathcal{U}}^{\an,\circ}_G({\sigma^\beef,t_0})$ is a smooth, irreducible analytic space, we find that they must coincide everywhere on $\widehat{\mathcal{U}}^{\an,\circ}_G({\sigma^\beef,t_0})$. This shows \eqref{rat:par}$\Rightarrow$\eqref{rat:shimura}.

  Now, if we assume~\eqref{rat:shimura}, then, for any $y\in\widehat{\mathcal{U}}^{\an,\circ}_G({\sigma^\beef,t_0})(\overline{K}_0)$, the fiber of $\tilde{s}_{\alpha,\st,x}$ at $y$ coincides with $s_{\alpha,\dR,y}$. On the other hand, by the construction of $\tilde{s}_{\alpha,\st,x}$, this fiber is precisely $\beta_{\on{H-K},y,\pi}(s_{\alpha,\st,x})$. This shows that~\eqref{rat:shimura} implies~\eqref{rat:dr}.
\end{proof}

\begin{lem}\label{boundary:lem:parallellem}
Let $(\tilde{\sigma}^\beef,\tilde{s}_0,\tilde{t}_0)$ be a triple $\Ss_K$-equivalent to $(\sigma^\beef,s_0,t_0)$ with $\tilde{\sigma}^{\beef,\circ}\subset\sigma^{\beef,\circ}$. Assume that $\sigma^\beef$ (resp. $\tilde{\sigma}^\beef$) is generated by elements of the form $\nu_p\circ N_y$ with $y\in \widehat{\mathcal{U}}_G^{\an,\circ}(\sigma^\beef,t_0)(\overline{K}_0)$ (resp. $y\in \widehat{\mathcal{U}}_G^{\an,\circ}(\tilde{\sigma}^\beef,\tilde{t}_0)(\overline{K}_0)$).

Then the natural map:
\[
 H^0\bigl(\widehat{\mathcal{U}}_G^{\an,\circ}(\sigma^\beef,t_0),\Reg{\widehat{\mathcal{U}}^{\an}(\sigma^\beef,s_0^\beef)}^{\log}\bigr)^{\nabla=0} \to  H^0\bigl(\widehat{\mathcal{U}}_G^{\an,\circ}(\tilde{\sigma}^\beef,\tilde{t}_0),\Reg{\widehat{\mathcal{U}}^{\an}(\tilde{\sigma}^\beef,\tilde{s}^\beef_0)}^{\log}\bigr)^{\nabla=0}
\]
is injective.
\end{lem}
\begin{proof}
Without loss of generality, we can assume that $\tilde{\sigma}$ has codimension $1$ in $\sigma$, and that there exists a point $y_0\in \widehat{\mathcal{U}}^{\an,\circ}_G(\sigma^\beef,t_0)(\overline{K}_0)$ such that $B_0 = \nu_p\circ N_{y_0}$ is contained in $\sigma^{\beef,\circ}$, but not in $\tilde{\sigma}^{\beef,\circ}$. 

By~\eqref{boundary:subsubsec:noncansplit}, we have compatible splittings:
\[
 R(\sigma^\beef,s_0^\beef) = R^{\sab}\widehat{\otimes}B(\sigma^\beef,s'_0)\;;\;R(\tilde{\sigma}^\beef,\tilde{s}^\beef_0) = R^{\sab}\widehat{\otimes}B(\tilde{\sigma}^\beef,\tilde{s}'_0).
\]
Using these splittings, we can view every $h\in\mb{S}$ as a non-vanishing global function on both $\widehat{\mathcal{U}}^{\an,\circ}(\sigma^\beef,s_0^\beef)$ and $\widehat{\mathcal{U}}^{\an,\circ}(\tilde{\sigma}^\beef,\tilde{s}^\beef_0)$.

Suppose that $\sigma^\beef$ has rank $k$. We then have a surjection of free groups
\[
 \Lambda(\sigma^\beef) = \mb{S}/\mb{S}(\sigma^\beef)^\times \to\mb{S}/\mb{S}(\tilde{\sigma}^\beef)^\times = \Lambda(\tilde{\sigma}^\beef)
\]
of ranks $k$ and $k-1$, respectively. Let $v_k\in\Lambda(\sigma^\beef)$ be a generator for the kernel of this map, and fix a splitting:
\[
 \Lambda(\sigma^\beef) = \Lambda(\tilde{\sigma}^\beef)\oplus\langle v_k\rangle.
\]
Note that $v_k$ belongs to $\mb{S}(\tilde{\sigma}^\beef)^\times$ but not to $\mb{S}(\sigma^\beef)^\times$. In particular, the value of the pairing $\langle B_0,v_k\rangle$ must be non-zero.

Fix a basis $\{v_1,\ldots,v_{k-1}\}$ for $\Lambda(\tilde{\sigma}^\beef)$ and a splitting:
\[
 \mb{S} = \mb{S}(\sigma^\beef)^\times\oplus\langle v_1,\ldots,v_k\rangle.
\]
This allows us to view $v_k$ as a function on $\widehat{\mathcal{U}}^{\an,\circ}(\sigma^\beef,s_0^\beef)$. Note that, for any point $x\in\widehat{\mathcal{U}}^{\an,\circ}(\sigma^\beef,s_0^\beef)$, we have $\vert v_k(x)\vert_p = p^{-\langle \nu_p\circ N_x,v_k\rangle}$.

We now obtain isomorphisms of sheaves of algebras:
\begin{align*}
 \Reg{\widehat{\mathcal{U}}^{\an}(\sigma^\beef,s_0^\beef)}[X_1,\ldots,X_k]\xrightarrow[\simeq]{X_i\mapsto\ell_{v_i}}\Reg{\widehat{\mathcal{U}}^{\an}(\sigma^\beef,s_0^\beef)}^{\log}\;;\;
 \Reg{\widehat{\mathcal{U}}^{\an}(\tilde{\sigma}^\beef,\tilde{s}^\beef_0)}[X_1,\ldots,X_{k-1}]\xrightarrow[\simeq]{X_i\mapsto\ell_{v_i}}\Reg{\widehat{\mathcal{U}}^{\an}(\tilde{\sigma}^\beef,\tilde{s}^\beef_0)}^{\log}.
\end{align*}
Here, the induced connection on the left-hand side carries $X_i$ to $\dlog(v_i)$, $i=1,\ldots,k$.

Moreover, under these isomorphisms the restriction map $\Reg{\widehat{\mathcal{U}}^{\an}(\sigma^\beef,s_0^\beef)}^{\log}\to\Reg{\widehat{\mathcal{U}}^{\an}(\tilde{\sigma}^\beef,\tilde{s}^\beef_0)}^{\log}$ is identified with:
\begin{align}\label{boundary:eqn:zanlogexpl}
\Reg{\widehat{\mathcal{U}}^{\an}(\sigma^\beef,s_0^\beef)}[X_1,\ldots,X_k]&\to\Reg{\widehat{\mathcal{U}}^{\an}(\tilde{\sigma}^\beef,\tilde{s}^\beef_0)}[X_1,\ldots,X_{k-1}]\\
X_j&\mapsto\begin{cases}
             X_j\text{, if $j\leq k-1$}\\
             \log(v_k)\text{, if $j=k$}.
             \end{cases}\nonumber
\end{align}

We will abbreviate a $k$-tuple $(j_1,\ldots,j_k)\in\Nat^k$ by $\underline{j}$. For $1\leq j\leq k$, let $\underline{e}_k$ be the $i$-tuple with $1$ in the $j^{\text{th}}$ position and $0$s elsewhere. Suppose that we have an element
 \[
 a=\sum_{\underline{j}\in\Nat^k}a_{\underline{j}}X_1^{j_1}\cdots X_i^{j_i}\in H^0\bigl(\widehat{\mathcal{U}}^{\an,\circ}_G(\sigma^\beef,t_0), \Reg{\widehat{\mathcal{U}}^{\an}(\sigma^\beef,s_0^\beef)}^{\log}\bigr)^{\nabla=0}.
\]
Here, the coefficients $a_{\underline{j}}$ belong to $H^0(\widehat{\mathcal{U}}^{\an,\circ}_G(\sigma^\beef,t_0), \Reg{\widehat{\mathcal{U}}^{\an}(\sigma^\beef,s^\beef_0)})$. One easily checks that the condition $\nabla(a)=0$ translates to:
\[
 da_{\underline{j}}=-\sum_{m=1}^k(j_m+1)a_{\underline{j}+\underline{e}_m}\dlog(v_m),\text{ for all $\underline{j}\in\Nat^i$}.
\]
In particular, if $\underline{j}\in\Nat^i$ is such that $a_{\underline{j}}\neq 0$, but $a_{\underline{j}+\underline{e}_s}=0$, for $1\leq s\leq k$, then we must have $da_{\underline{j}}=0$, and so $a_{\underline{j}}$ must be a non-zero constant.

Suppose that $a\neq 0$, and choose an index $\underline{j}$ such that $a_{\underline{j}}$ is a non-zero constant and such that $a_{\underline{j}+\underline{e}_k}=0$. The lemma will follow if we can show that the image of $a$ under~\eqref{boundary:eqn:zanlogexpl} is non-zero when restricted over $\widehat{\mathcal{U}}^{\an,\circ}_G(\tilde{\sigma}^\beef,\tilde{t}_0)$.

Assume that this is untrue. Then we must have:
\[
\sum_{s=0}^{j_k}a_{(j_1,\ldots,j_{k-1},s)}\log(v_k)^{s}=0\in H^0(\widehat{\mathcal{U}}^{\an,\circ}_G(\tilde{\sigma}^\beef,\tilde{t}_0), \Reg{\widehat{\mathcal{U}}^{\an}(\sigma^\beef,s^\beef_0)})
\]
Indeed, this is simply the coefficient for the monomial $X_1^{j_1}\cdots X_{i-1}^{j_{k-1}}$ in the image of $a$ under~\eqref{boundary:eqn:zanlogexpl}. Therefore, we find that $\log(v_k)$ satisfies a non-zero monic polynomial over $H^0(\widehat{\mathcal{U}}^{\an,\circ}_G(\sigma^\beef,t_0), \Reg{\widehat{\mathcal{U}}^{\an}(\sigma^\beef,s^\beef_0)})$.

By~\eqref{rigid:lem:logarithm}, this implies that $\vert v_k(x)\vert_p=1$, for all $x\in\widehat{\mathcal{U}}^{\an}_G(\sigma,s_0)(\overline{K}_0)$. However, we have $\vert v_k(x_0)\vert_p = p^{-\langle B_0,v_k\rangle}$, and we have already seen that $\langle B_0,v_k\rangle\neq 0$. This gives us the desired contradiction.
\end{proof}

\begin{prp}\label{boundary:prp:parallel}
The equivalent statements of~\eqref{boundary:prp:parequiv} are true.
\end{prp}
\begin{proof}
It is enough to prove assertion~\eqref{rat:shimura} of~\eqref{boundary:prp:parequiv}. For this, replacing $\sigma$ by $\sigma_G$, we can assume that $\sigma$ is generated by elements of the form $\nu_p\circ N_x$, for $x\in\widehat{\mathcal{U}}^{\an,\circ}_G({\sigma^\beef,t_0})(\overline{K}_0)$.

  Via~\eqref{boundary:eqn:parallel}, we now obtain a canonical identifications:
  \[
  H^0\bigl(\widehat{\mathcal{U}}^{\an,\circ}_G({\sigma^\beef,t_0}),\mathcal{M}^{\an,\log,\otimes}({\sigma^\beef,s_0^\beef})\bigr)^{\nabla=0}=H^0\bigl(\widehat{\mathcal{U}}^{\an,\circ}_G({\sigma^\beef,t_0}),\Reg{\widehat{\mathcal{U}}^{\an}({\sigma^\beef,s_0^\beef})}^{\log}\bigr)^{\nabla=0}\otimes_{K_0}M_0^\otimes.
  \]
  In particular, for every $\alpha$, the restriction of $s_{\alpha,\dR}$ over $\widehat{\mathcal{U}}^{\an,\circ}_G({\sigma^\beef,t_0})$ can naturally be viewed as an element
  \[
   s_{\alpha,\dR,(\sigma,s_0)}\in H^0\bigl(\widehat{\mathcal{U}}^{\an,\circ}_G({\sigma^\beef,t_0}),\Reg{\widehat{\mathcal{U}}^{\an}({\sigma^\beef,s_0^\beef})}^{\log}\bigr)^{\nabla=0}\otimes_{K_0}M_0^\otimes.
  \]
  Assertion~\eqref{rat:shimura} holds if and only if, for all $\alpha$:
  \begin{align}\label{boundary:eqn:salphaident}
  s_{\alpha,\dR,(\sigma,s_0)}=1\otimes s_{\alpha,\st,x}\in H^0\bigl(\widehat{\mathcal{U}}^{\an,\circ}_G({\sigma^\beef,t_0}),\Reg{\widehat{\mathcal{U}}^{\an}({\sigma^\beef,s_0^\beef})}^{\log}\bigr)^{\nabla=0}\otimes_{K_0}M_0^\otimes.
  \end{align}

  Let $\tilde{\sigma}\subset\sigma$ be the polyhedral cone generated by $\nu_p\circ N_x$, and let $\tilde{s}^\beef_0\in\mathcal{Z}(\tilde{\sigma})(\overline{\Field}_p)$ be the specialization of $x$. From~\eqref{boundary:lem:parallellem} we find that the map:
  \begin{align}\label{boundary:eqn:restrictinj}
   H^0\bigl(\widehat{\mathcal{U}}^{\an,\circ}_G({\sigma^\beef,t_0}),\Reg{\widehat{\mathcal{U}}^{\an}({\sigma^\beef,s_0^\beef})}^{\log}\bigr)^{\nabla=0}\to H^0\bigl(\widehat{\mathcal{U}}^{\an,\circ}{(\tilde{\sigma}^\beef,\tilde{t}_0)},\Reg{\widehat{\mathcal{U}}^{\an}{(\tilde{\sigma}^\beef,\tilde{s}^\beef_0)}}^{\log}\bigr)^{\nabla=0}
  \end{align}
  is injective.

  Since $\tilde{\sigma}$ is $1$-dimensional, the identity in assertion~\eqref{rat:mon} is trivially valid for $y\in\widehat{\mathcal{U}}^{\an,\circ}_G{(\tilde{\sigma}^\beef,\tilde{t}_0)}(\overline{K}_0)$. Therefore, we find that, for every $\alpha$, the restriction of $s_{\alpha,\dR}$ over $\widehat{\mathcal{U}}^{\an,\circ}{(\tilde{\sigma},\tilde{t}_0)}$, viewed as an element
  \[
   s_{\alpha,\dR,(\tilde{\sigma},\tilde{t}_0)}\in H^0\bigl(\widehat{\mathcal{U}}^{\an,\circ}{(\tilde{\sigma},\tilde{t}_0)},\Reg{\widehat{\mathcal{U}}^{\an}{(\tilde{\sigma}^\beef,\tilde{s}^\beef_0)}}^{\log}\bigr)^{\nabla=0}\otimes_{K_0}M_0^\otimes,
  \]
  coincides with $1\otimes s_{\alpha,\st,x}$. By the injectivity of~\eqref{boundary:eqn:restrictinj}, the identity~\eqref{boundary:eqn:salphaident} has to hold for every $\alpha$. This completes the proof.
\end{proof}

\begin{corollary}\label{boundary:cor:tatetensorprps}
For any index $\alpha$, there is a canonical tensor $s_{\alpha,\st,0}\in M_0^\otimes$ with the following properties:
\begin{itemize}
  \item It is $\varphi$-invariant: $\varphi_0(s_{\alpha,\st,0})=s_{\alpha,\st,0}$.
  \item For any $(\tilde{\sigma}^\beef,\tilde{s}^\beef_0,\tilde{t}_0)$ in the ${\Ss_K}$-equivalence class of $(\sigma^\beef,s_0^\beef,t_0)$, and any $y\in\widehat{\mathcal{U}}^{\an,\circ}_G{(\tilde{\sigma},\tilde{t}_0)}(\overline{K}_0)$, we have: $N_{y}(s_{\alpha,\st,0})=0$.
  \item If $y$ is an $L$-valued point for some finite extension $L/K_0$ within $\overline{K}_0$, for any uniformizer $\pi\in L$, we have:
  \begin{align*}
   \beta_{\st,y}^{-1}(1\otimes s_{\alpha,\st,0})&=s_{\alpha,\et,y}\in\biggl(H^1_{\et}\bigl(\mathcal{A}_{y,\overline{K}_0},\Rat_p\bigr)^\otimes\biggr)^{\Gamma_L};\\
   \beta_{\on{H-K},y,\pi}(1\otimes s_{\alpha,\st,0})&=s_{\alpha,\dR,y}\in F^0\bigl(H^1_{\dR}(\mathcal{A}_y/L)^\otimes\bigr).
  \end{align*}
\end{itemize}
\end{corollary}
\begin{proof}
By assertion~\eqref{rat:et} of~\eqref{boundary:prp:parequiv}, we can take $s_{\alpha,\st,0}=s_{\alpha,\st,x}$, for any $x\in\widehat{\mathcal{U}}^{\an,\circ}_G({\sigma^\beef,t_0})(\overline{K}_0)$.
\end{proof}

\begin{rem}
\label{boundary:rem:pel_case_tensors}
Note that in the PEL case, where the tensors can be taken to be realizations of polarizations or endomorphisms, the canonical tensor above is simply the realization of the reduction of such a polarization or endomorphism. The compatibility with the comparison isomorphisms simply amounts to the functoriality of these isomorphisms.
\end{rem}

\subsection{The structure of the boundary}\label{boundary:subsec:structure}
In this sub-section, we will set up the notation required to state the main technical theorem~\eqref{boundary:thm:main} on the structure of $\Spec R_G({\sigma^\beef,t_0})$. This theorem shows that it has the expected shape: It is a completion of a twisted toric embedding over a finite normal $R^{\sab}$-scheme.

\subsubsection{}\label{boundary:subsubsec:tautred}
Consider the scheme over $K_0$ that parameterizes, for any $K_0$-algebra $B$, the set of isomorphisms
\[
 B\otimes_{\Rat_p}\dual{H}(\Rat_p)\xrightarrow{\simeq}B\otimes_{K_0}M_0
\]
carrying $\{1\otimes s_{\alpha}\}$ onto $\{1\otimes s_{\alpha,\st,s_0}\}$. By~\eqref{boundary:cor:tatetensorprps} and~\eqref{boundary:eqn:comp}, this scheme has a $\Bst$-valued point, and is thus a $G$-torsor over $K_0$. By Steinberg's theorem, it must have a $K_0$-valued point. In particular, the point-wise stabilizer of $\{s_{\alpha,\st,s_0}\}$ within $\GL(M_0)$ is isomorphic to $G_{K_0}$. In a slight abuse of notation, we will identify this stabilizer with $G_{K_0}$.

Set
\[
U_{\wt,G}=U_{\wt}\cap G_{K_0}\;;\;\mb{B}_G=\mb{B}\cap(\Lie U_{\wt,G})(-1)\subset(\Lie U_{\wt})(-1).
\]
Here, we are using~\eqref{boundary:lem:liewp_embedding} to view $\mb{B}\subset W_{\Phi^{\beef}}(\Rat)$ as a subgroup of $(\Lie U_{\wt})(-1)$.

Let $\mb{E}_G\subset\mb{E}$ be the sub-torus with co-character group $\mb{B}_G$; then $\mb{E}^G\coloneqq\mb{E}/\mb{E}_G$ is again a torus, with co-character group $\mb{B}/\mb{B}_G$. The quotient
\[
\mb{\Xi}^G = \mb{\Xi}/\mb{E}_G
\]
is an $\mb{E}^G$-torsor over $R^{\sab}$.

\subsubsection{}
Given any $R^{\sab}$-scheme $Z$ and a section $\beta:X\to\mb{\Xi}^G$, the pre-image of this section in $\mb{\Xi}\vert_Z$ is an $\mb{E}_G$-torsor over $Z$, which we will denote by $\mb{\Xi}_G(\beta)$.

Let $R_G^{\sab}$ be the normalization in $R_G({\sigma^\beef,t_0})$ of the image of $R^{\sab}$. It is not hard to see that, if we chose a different pair $(\tilde{\sigma}^\beef,\tilde{s}^\beef_0)$ in the ${\Ss_K}$-equivalence class of $(\sigma^\beef,s_0^\beef,t_0)$, then the normalization of $\Reg{F}\otimes_WR^{\sab}$ in $R_G{(\tilde{\sigma}^\beef,\tilde{t}_0)}$ will coincide with $R_G^{\sab}$.

If $Z=\mb{\Xi}$, then we have the tautological section $\iota\in\mb{\Xi}(\mb{\Xi})$, inducing a section:
\[
 \iota^G:\mb{\Xi}\to\mb{\Xi}^G.
\]
Suppose that $a_{\sigma}\in R({\sigma^\beef,s_0^\beef})$ is an equation for the boundary divisor; then $\iota^G$ induces a section over $\Spec R_G({\sigma^\beef,t_0})[a_{\sigma}^{-1}]$, which we denote by $\iota^G(\sigma,s_0)$.

\begin{thm}\label{boundary:thm:main}
\mbox{}
\begin{enumerate}[itemsep=0.12in]
\item\label{main:rationality}The inclusion $\mb{B}_G\subset(\Lie U_{\wt,G})(-1)$ induces an isomorphism:
\[
 K_0\otimes\mb{B}_G\xrightarrow{\simeq}(\Lie U_{\wt,G})(-1).
\]
\item\label{main:reduction}The section
\[
\iota^G_{(m)}(\sigma,s_0):\Spec R_G({\sigma^\beef,t_0})[a_{\sigma}^{-1}]\to\mb{\Xi}^G
\]
is already defined over $\Spec R_G^{\sab}$, and is independent of the choice of $(\sigma^\beef,s_0^\beef,t_0)$ within its ${\Ss_K}$-equivalence class. In particular, $\mb{\Xi}\vert_{\Spec R_G^{\sab}}$ admits a canonical reduction of structure group to an $\mb{E}_G$-torsor:
\[
 \mb{\Xi}_G\coloneqq\mb{\Xi}_G(\iota^G(\sigma,s_0)).
\]
\item\label{main:local}There exist: 
\begin{itemize}
  \item A torus $\mb{E}^\diamond_G$ equipped with an isogeny $\mb{E}^\diamond_G\to \mb{E}_G$;
  \item An $\mb{E}^\diamond_G$-torsor $\mb{\Xi}^\diamond_G$ over $R_G^{\sab}$ inducing $\mb{\Xi}_G$ along the above isogeny;
\end{itemize}
such that, for any $(\tilde{\sigma}^\beef,\tilde{s}_0^\beef,\tilde{t}_0)$ in the ${\Ss_K}$-equivalence class of $(\sigma^\beef,s_0^\beef,t_0)$, the map
\[
\Spec R_G{(\tilde{\sigma}^\beef,\tilde{t}_0)}\to\Spec R{(\tilde{\sigma}^\beef,\tilde{s}^\beef_0)}\to\mb{\Xi}(\tilde{\sigma})
\]
lifts canonically to a map $\Spec R_G{(\tilde{\sigma}^\beef,\tilde{t}_0)}\to\mb{\Xi}^\diamond_G(\tilde{\sigma}_G)$, and identifies $\widehat{\mathcal{U}}_G(\tilde{\sigma}^\beef,\tilde{t}_0)$ with the completion at a point of $\mb{\Xi}^\diamond_G(\tilde{\sigma}_G)$.
\end{enumerate}
\end{thm}

\subsubsection{}\label{boundary:subsubsec:remarks}
Here are some remarks to clarify the assertions of the theorem:
\begin{enumerate}[itemsep=0.11in]
\item Claim~\eqref{main:rationality} should be viewed as a rationality property for Hodge cycles on abelian varieties with respect to $p$-adic uniformizations, and is closely related to ideas from \cite{andre:padic_betti}. It is the crystalline avatar of Theorem~\ref{rationality}, and, as observed in the introduction, is a significant result for our purposes. However, in the PEL case it admits a simple proof, which is indicated in~\eqref{boundary:rem:pel_case}.

\item\label{main:concrete}As explained in the introduction, claim (\ref{main:reduction}) should be viewed as the key result of this paper. It has a very concrete meaning: Choose a section $\iota^{\sab}$ of $\mb{\Xi}$ over $R^{\sab}_{s_0^\beef}$ as in (\ref{boundary:subsubsec:noncansplit}) with associated element $\alpha\in\mb{E}(\mb{\Xi})$ carrying $\iota^{\sab}$ to $\iota$. We can think of $\alpha$ as a homomorphism $\mb{S}\to H^0(\mb{\Xi},\Gm)$.

Use $\iota^{\sab}$ to also denote the section of the $\mb{E}^G$-torsor $\mb{\Xi}^G$ induced from $\iota^{\sab}$. Let $\overline{\alpha}\in\mb{E}^G(\mb{\Xi})$ be the image of $\alpha$. Then $\overline{\alpha}$ carries $\iota^{\sab}$ to $\iota^G$. Therefore we find that, if $\mb{S}^G\subset\mb{S}$ is the character group of $\mb{E}^G$, then (\ref{main:reduction}) is equivalent to:

For all $h\in\mb{S}^G$, the image of $\overline{\alpha}(h)=\alpha(h)$ in $R_G({\sigma^\beef,t_0})[a_{\sigma}^{-1}]^\times$ lies in the sub-group $R_G^{\sab,\times}$.

In fact, since $R_G^{\sab}$ is integrally closed in $R_G(\sigma^\beef,t_0)$ by construction, it is enough to show:

There exists an integer $m\geq 1$ such that, for all $h\in\mb{S}^G$, the image of $\alpha(h)^m$ in $R_G({\sigma^\beef,t_0})[a_{\sigma}^{-1}]^\times$ lies in the sub-group $R_G^{\sab,\times}$

 The independence from the choice of $(\sigma^\beef,s_0^\beef,t_0)$ is easily checked.

\item Claim~\eqref{main:local} gives the desired description of the local structure at the boundary of $\Ss_K$. As the reader will note, its proof involves the theory of Chai-Faltings and $p$-adic Hodge theory. In particular, it makes no use of the general characteristic $0$ theory of compactifications. 

We still have to explain the notation in claim (\ref{main:local}). We have:
\[
\tilde{\sigma}_G=\tilde{\sigma}^\beef\cap\bigl(\mb{B}_G\otimes\Real\bigr).
\]
Though it is not \emph{a priori} clear, it is a consequence of claim~\eqref{main:local} that this definition agrees with the one given in~\eqref{boundary:eqn:sigma_G_first_def}. 

Given this cone and the $\mb{E}_G^\diamond$-torsor $\mb{\Xi}^\diamond_G$, we can construct the twisted toric embedding $\mb{\Xi}_G\into\mb{\Xi}_G(\tilde{\sigma}_G)$ of schemes over $R^{\sab}_G$. This is the scheme that enters the statement of~\eqref{main:local}.
\end{enumerate}

The rest of the section is dedicated to the proof of (\ref{boundary:thm:main}). It will be completed in~\eqref{boundary:subsec:final}.

\subsection{A rationality property of Hodge cycles}\label{boundary:subsec:rationality}
In this subsection, we will prove assertion \eqref{main:rationality} of \eqref{boundary:thm:main}. We already know by~\eqref{boundary:cor:tatetensorprps} that the monodromy operators $N_x$ corresponding to the the points $x\in\widehat{\mathcal{U}}^{\an,\circ}_G({\sigma^\beef,t_0})(\overline{K}_0)$ all lie in $\mb{B}\cap(\Lie U_{\wt,G})(-1)=\mb{B}_G$. We have to show that they generate a large enough space.

This is of course a numerical question. Namely, set $d_{\wt}\coloneqq\dim_{K_0}U_{\wt,G}$; then we have to show $\rank\mb{B}_G = d_{\wt}$. This will follow from two bounds:~\eqref{boundary:lem:tsabdim} and~\eqref{boundary:lem:nudim}. The first of these is a consequence of the versality of the deformation ring $R^{\sab}$ and the second is a simple fact about sub-schemes of completed torus embeddings.

\subsubsection{}\label{boundary:subsubsec:shimuradim}
We will need a little preparation before we can prove the first bound. Recall that the $G(\Real)$-homogeneous space $X$ determines a canonical conjugacy class of co-characters $[\mu]$ of $G$ defined over the reflex field $E$, defined in the following way: If we fix $x\in X$, then a representative $\mu_x$ for $[\mu]$ over $\Comp$ is given by:
\[
\mu_x:\Gmh{\Comp}\xrightarrow{z\mapsto (z,1)}\Gmh{\Comp}\times\Gmh{\Comp}\xrightarrow{\simeq}\mathbb{S}_{\Comp}\xrightarrow{h_x}G_{\Comp}.
\]
Let $P_{\mu_x}\subset G_{\Comp}$ be the parabolic sub-group whose Lie algebra consists of the non-negative weight spaces of $\mu_x$. By construction, the map carrying a point $x\in X$ to the Hodge filtration on $V_{\Comp}$ induced by $h_x$ exhibits $X$ as a finite analytic space over an open sub-space of the Grassmannian $G_{\Comp}/P_{\mu_x}$. We therefore have:
\[
 d\coloneqq\dim\Sh_K(G,X)=\dim X=\dim G-\dim P_{\mu_x}.
\]

\subsubsection{}\label{boundary:subsubsec:cocharacters}
Suppose now that we have a finite extension $L/K_0$ within $\overline{K}_0$ and a point $y\in\widehat{\mathcal{U}}^{\an,\circ}_G({\sigma^\beef,t_0})(L)$. Attached to this (and a uniformizer $\pi\in L$) is the Hyodo-Kato isomorphism:
\[
 \beta_{\on{H-K},y,\pi}:L\otimes_{K_0}M_0\xrightarrow{\simeq}H^1_{\dR}(\mathcal{A}_y/L).
\]
Via this isomorphism, we obtain a Hodge filtration on $L\otimes_{K_0}M_0$:
\[
 F^\bullet_y\bigl(L\otimes_{K_0}M_0\bigr)=\beta_{\on{H-K},y,\pi}^{-1}\bigl(F^\bullet H^1_{\dR}(\mathcal{A}_y/L)\bigr).
\]

\begin{lem}\label{boundary:lem:cocharacters}
\mbox{}
\begin{enumerate}
\item\label{cochar:weight}The weight filtration $W_{\bullet}M_0$ is split by a co-character
\[
 w:\Gmh{K_0}\to G_{K_0}.
\]
In particular, the stabilizer $P_{\wt,G}\subset G_{K_0}$ of $W_{\bullet}M_0$ is a parabolic sub-group.
\item\label{cochar:hodge}The Hodge filtration $F^\bullet_y\bigl(L\otimes_{K_0}M_0\bigr)$ is split by a co-character
\[
 \tilde{\mu}_y:\Gmh{L}\to G_L
\]
such that $\tilde{\mu}^{-1}_y$ belongs to the conjugacy class $[\mu]$.
\item\label{cochar:qwtg}Let $Q_{\wt,G}\subset P_{\wt,G}$ be the largest subgroup acting trivially on $W_0M_0$. Then we can choose $\tilde{\mu}_y$ so that it factors through $L\otimes_{K_0}Q_{\wt,G}$.
\end{enumerate}
\end{lem}
\begin{proof}
Attached to $y$ is the monodromy element
\[
B=\nu_p\circ N_y\in\Rat\otimes\mb{B}_G\subset\Lie U_{\wt,G}(-1)\subset\End(M_0)(-1).
\]

Fix an isomorphism $K_0(-1)\xrightarrow{\simeq}K_0$. Then we can view $B$ as a nilpotent endomorphism of $M_0$:
\[
 M_0\to \gr^W_2M_0=K_0(-1)\otimes \mathsf{X}\to\Hom(\mathsf{X},K_0(-1))=W_0M_0(-1)\into M_0(-1)\xrightarrow{\simeq}M_0,
\]
where the middle map is induced by the pairing $B:\mathsf{X}\times \mathsf{X}\to\Rat\into K_0$. Since $B$ belongs to $\sigma^{\circ}$, it induces a positive definite pairing on $\mathsf{X}$. In particular, we have $\ker B=W_1M_0$ and $\im B=W_0M_0$.

For any integer $i$, set $U^iM_0=W_{-i+1}M_0$. Then $U^\bullet M_0$ is the filtration on $M_0$ attached to the nilpotent endomorphism $B$ via the recipe in (2.5.1) of \cite[IV]{saavedra_rivano}. In particular, since $B\in\Lie G_{K_0}$, it follows from Prop. 2.5.3 of \emph{loc. cit.} that $U^\bullet M_0$ is split by a co-character $w':\Gmh{K_0}\to G_{K_0}$. Since $G_{K_0}$ contains the central sub-group $\Gmh{K_0}\subset\GL(M_0)$,~\eqref{cochar:weight} is now immediate.

Choose an embedding $L\into\Comp$, and view $y$ as a point $[(x,g)]\in\Sh_K(\Comp)$. Then the existence of the co-character $\tilde{\mu}_y$ in~\eqref{cochar:hodge} is immediate from Prop. 2.2.2 of \emph{loc. cit.} and the following: There exists an isomorphism
\[
 \beta:\Comp\otimes \dual{H}(\Rat)\xrightarrow{\simeq}H^1_{\dR}(\mathcal{A}_y/\Comp)
\]
carrying $\{1\otimes s_{\alpha}\}$ to $\{s_{\alpha,\dR,y}\}$, and such that the induced filtration
\[
F^\bullet_y(\Comp\otimes \dual{H}(\Rat))=\beta^{-1}(F^\bullet H^1_{\dR}(\mathcal{A}_x/\Comp))
\]
is split by the co-character $\mu_x^{-1}$.

It follows from~\cite[4.2.17]{dat_orlik_rapoport} that we can choose $\tilde{\mu}_y$ to factor through $L\otimes_{K_0}P_{\wt,G}$. But we have
\[
F^1_y(L\otimes_{K_0}M_0)\cap(L\otimes_{K_0}W_0M_0) = 0.
\]
Therefore, $\tilde{\mu}_y(\Gm)$ must act trivially on $L\otimes_{K_0}W_0M_0$, and so $\tilde{\mu}_y$ must in fact factor through $Q_{\wt,G,L}$. This shows~\eqref{cochar:qwtg}.
\end{proof}

\subsubsection{}\label{boundary:subsubsec:grassmann}
Write $\on{Gr}_{G,[\mu]}$ for the Grassmannian over $K_0$ defined as follows: For any $K_0$-algebra $A$, $\on{Gr}_{G,[\mu]}(A)$ is the set of decreasing filtrations $F^\bullet(A\otimes_{K_0}M_0)$, which, \'etale locally on $\Spec A$, can be split by a co-character $\Gmh{A}\to G_A$ in the conjugacy class $[\mu]$.

Let $\on{U}_{G,[\mu]}\subset\on{Gr}_{G,[\mu]}$ be the open sub-variety such that, for any $K_0$-algebra $A$, we have:
\[
 \on{U}_{G,[\mu]} (A)=\{F^\bullet(A\otimes_{K_0}M_0)\in\on{Gr}_{G,[\mu]}(A):\;F^1(A\otimes_{K_0}M_0)\cap(A\otimes_{K_0} W_0M_0)=0\}.
\]

Choose $x\in X$; then we have
\[
\dim\on{U}_{G,[\mu]}=\dim\on{Gr}_{G,[\mu]}=\dim G_{\Comp}/P_{\mu_x}=d.
\]

Set $M_0^{\sab}=M_0/W_0M_0$. Let $\on{Gr}^{\sab}$ be the Grassmannian over $K_0$ such that, for all $K_0$-algebras $A$, $\on{Gr}^{\sab}(A)$ is the set of decreasing filtrations $F^\bullet(A\otimes_{K_0}M_0^{\sab})$ satisfying:
\begin{itemize}
\item $F^i(A\otimes_{K_0}M_0^{\sab})=0$, if $i<1$, and $F^0(A\otimes_{K_0}M_0^{\sab})=A\otimes_{K_0}M_0^{\sab}$;
\item $\rank F^1(A\otimes_{K_0}M_0^{\sab})=g$;
\item $F^1(A\otimes_{K_0}M_0^{\sab})+A\otimes_{K_0}W_1M_0^{\sab}=A\otimes_{K_0}M_0^{\sab}$.
\end{itemize}

Then we obtain a morphism:
\begin{align}\label{boundary:eqn:sabproj}
\on{U}_{G,[\mu]}&\to\on{Gr}^{\sab}\\
F^\bullet(A\otimes_{K_0}M_0)&\mapsto\bigl(F^\bullet(A\otimes_{K_0}M_0)+(A\otimes_{K_0}W_0M_0)\bigr)/(A\otimes_{K_0}W_0M_0).\nonumber
\end{align}

Note that $Q_{\wt,G}$ acts naturally on $M_0^{\sab}$. Let $\on{U}^{\sab}_{G,[\mu]}\subset\on{Gr}^{\sab}$ be the closed sub-scheme such that, for any $A$ as above, $\on{U}^{\sab}_{G,[\mu]}(A)$ consists of the filtrations $F^\bullet(A\otimes_{K_0}M_0^{\sab})$ that, \'etale locally on $\Spec A$, can be split by a co-character $\tilde{\mu}:\Gmh{A}\to Q_{\wt,G,A}\subset G_A$ lying in the conjugacy class $[\mu]$. Then~\eqref{boundary:eqn:sabproj} clearly factors through a $Q_{\wt,G}$-equivariant map $\bm{\pr}:\on{U}_{G,[\mu]}\to\on{U}^{\sab}_{G,[\mu]}$.

\begin{lem}\label{boundary:lem:uwttorsor}
$\bm{\pr}$ exhibits $\on{U}_{G,[\mu]}$ as a $U_{\wt,G}$-torsor over $\on{U}^{\sab}_{G,[\mu]}$. In particular, $\on{U}^{\sab}_{G,[\mu]}$ is smooth and connected of dimension $d-d_{\wt}$.
\end{lem}
\begin{proof}
Since any filtration in $\on{U}^{\sab}_{G,[\mu]}$ is \'etale locally split by a cocharacter of $Q_{\wt,G}$ in the conjugacy class $[\mu]$, it is clear that the fibers of $\bm{\pr}$ are all non-empty.

Now, $U_{\wt,G}$ acts trivially on $M_0^{\sab}$ and thus on $\on{U}^{\sab}_{G,[\mu]}$. So it suffices to show: Given a $K_0$-algebra $A$, and two filtrations
\[
F_i^\bullet(A\otimes_{K_0}M_0)\in\on{U}_{G,[\mu]}(A),\;\text{$i=1,2$}
\]
inducing the same filtration of $A\otimes_{K_0}M_0^{\sab}$, there is a unique element of $U_{\wt,G}(A)$ carrying one to the other.

By \'etale descent, we can assume that there is a cocharacter $\tilde{\mu}:\Gmh{A}\to Q_{\wt,G,A}$ splitting $F_1^\bullet(A\otimes_{K_0}M_0)$:
\[
 A\otimes_{K_0}M_0 = F_1^1(A\otimes_{K_0}M_0)\oplus M_1^0,
\]
where $M_1^0$ is the space on which $\tilde{\mu}(\Gmh{A})$ acts trivially. In particular, $A\otimes_{K_0}W_0M_0\subset M_1^0$.

Now, since
\[
F_1^\bullet(A\otimes_{K_0}M_0)\oplus (A\otimes_{K_0}W_0M_0) = F_2^\bullet(A\otimes_{K_0}M_0)\oplus (A\otimes_{K_0}W_0M_0),
\]
there exists a homomorphism $\Phi:F^1_1(A\otimes_{K_0}M_0)\to(A\otimes_{K_0} W_0M_0)$ such that
\[
 F_2^1(A\otimes_{K_0}M_0) = (1+\Phi)(F_1^1(A\otimes_{K_0}M_0)).
\]
We can extend $\Phi$ to a map on $M_0$ by setting it equal to $0$ on $M_1^0$, and so view it as an endomorphism of $M_0$. Then $\Phi$ is an element of $A\otimes_{K_0}\Lie U_{\wt}$.

Consider the map:
\begin{align*}
 A\otimes_{K_0}\Lie U_{\wt}& \to \bigoplus_{\alpha}(A\otimes_{K_0}M^{\otimes})\\
 f&\mapsto (f(1\otimes s_{\alpha,\st,0}))_{\alpha}.
\end{align*}
This map is equivariant of the action of $\tilde{\mu}(\Gmh{A})$, and since $\tilde{\mu}(z)$ acts on $A\otimes_{K_0}\Lie U_{\wt}$ via $z\mapsto z^{-1}$, its image must land within the eigenspace of the target where $\tilde{\mu}(\Gmh{A})$ acts by the same character.

On the other hand, since $F^\bullet_2=(1+\Phi)(F^\bullet_1)$ is also a $G_A$-split filtration of $A\otimes_{K_0}M_0$, we find that, for all $\alpha$, $\Phi(1\otimes s_{\alpha,\st,0})$ must lie in $F^0_1(A\otimes_{K_0}M^\otimes)$, which is the sum of the non-negative eigenspaces for $\tilde{\mu}(\Gmh{A})$. This shows that we must have $\Phi(1\otimes s_{\alpha,\st,0}) = 0$, for all indices $\alpha$. In other words, $\Phi\in A\otimes_{K_0}\Lie U_{\wt,G}$, and $1+\Phi\in U_{\wt,G}(A)$ is the unique element carrying $F^\bullet_1$ to $F^\bullet_2$.
\end{proof}

\subsubsection{}\label{boundary:subsubsec:periodgrass}
Let $\mathcal{G}$ be the tautological semi-abelian extension of $\mathcal{B}$ over $R^{\sab}$ (cf.~\ref{boundary:eqn:semi_abelian_ext}). Set $\mathcal{M}^{\sab}=\Dieu(\mathcal{G})(\Spec R^{\sab})$: this is a Dieudonn\'e $F$-crystal over $\Spec R^{\sab}$.

Restricting to $\widehat{\mathcal{U}}^{\sab,\an} = (\Spf R^{\sab})^{\an}$, we obtain an $F$-isocrystal $\mathcal{M}^{\sab,\an}$ over $\widehat{\mathcal{U}}^{\sab,\an}$. There is a canonical identification $M_0^{\sab}=\bigl(\mathcal{M}^{\sab,\an}\bigr)^{\nabla=0}$ giving us an isomorphism of $F$-isocrystals:
\[
 \xi^{\sab}:\Reg{\widehat{\mathcal{U}}^{\sab,\an}}\otimes_{K_0}M_0^{\sab}\xrightarrow{\simeq}\mathcal{M}^{\sab,\an}.
\]

By construction, given any pair $(\sigma^\beef,s_0^\beef)$, there is a canonical isomorphism:
\begin{align}\label{boundary:eqn:msabmsigma}
 \mathcal{M}({\sigma^\beef,s_0^\beef})/W_0\mathcal{M}({\sigma^\beef,s_0^\beef})\xrightarrow{\simeq}\mathcal{M}^{\sab}\vert_{S({\sigma^\beef,s_0^\beef})}.
\end{align}
Under this isomorphism, the trivialization of $\mathcal{M}({\sigma^\beef,s_0^\beef})/W_0\mathcal{M}({\sigma^\beef,s_0^\beef})$ induced from~\eqref{boundary:eqn:parallel} agrees with $\xi^{\sab}$.

Let $F^\bullet\mathcal{M}^{\sab,\an}$ be the Hodge filtration; then we obtain the filtration $(\xi^{\sab})^{-1}(F^\bullet\mathcal{M}^{\sab,\an})$ on $\Reg{\widehat{\mathcal{U}}^{\sab,\an}}\otimes_{K_0}M_0^{\sab}$. If $\on{Gr}^{\on{sab},\an}$ is the analytic space over $K_0$ attached to $\on{Gr}^{\on{sab}}$, then this filtration gives rise to a map of $K_0$-analytic spaces
\[
 \eta:\widehat{\mathcal{U}}^{\sab,\an}\to\on{Gr}^{\on{sab},\an}.
\]

\begin{lem}\label{boundary:lem:tsabdim}
The map $\eta$ is unramified, and carries $\widehat{\mathcal{U}}^{\sab,\an}_G = (\Spf R^{\sab}_G)^{\an}$ into $\on{U}^{\sab,\an}_{G,[\mu]}$. In particular, we have:
\[
 \dim R_G^{\sab}\leq d-d_{\wt}+1.
\]
\end{lem}
\begin{proof}
Fix a finite extension $L/K_0$ within $\overline{K}_0$ and a point $x\in\widehat{\mathcal{U}}^{\sab,\an}(L)$. Let $L[\epsilon]=L[\epsilon]/(\epsilon^2)$ be the ring of dual numbers over $L$, and let $\tilde{x}\in\widehat{\mathcal{U}}^{\sab,\an}(L[\epsilon])$ be a point lifting $x$.

By the crystalline property of the de Rham cohomology of $\mathcal{G}$, we obtain a canonical isomorphism:
\[
 j_{\tilde{x}}:L[\epsilon]\otimes_LH^1_{\dR}(\mathcal{G}_x/L)\xrightarrow{\simeq}H^1_{\dR}\bigl(\mathcal{G}_{\tilde{x}}/L[\epsilon]\bigr).
\]

This gives us a filtration:
\[
F^\bullet_{\tilde{x}}\bigl(L[\epsilon]\otimes_LH^1_{\dR}(\mathcal{G}_x/L))=j_{\tilde{x}}^{-1}\biggl(F^\bullet H^1_{\dR}\bigl(\mathcal{G}_{\tilde{x}}/L[\epsilon]\bigr)\biggr).
\]

By basic deformation theory, the induced map:
\begin{align}\label{boundary:eqn:tangent_space_1}
\biggl(\text{Lifts $\tilde{x}\in\widehat{\mathcal{U}}^{\sab,\an}(L[\epsilon])$ of $x$}\biggr)&\to\biggl(\text{Lifts $F^\bullet\bigl(L[\epsilon]\otimes_LH^1_{\dR}(\mathcal{G}_x/L)\bigr)$ of $F^\bullet H^1_{\dR}(\mathcal{G}_x/L)$}\biggr)\\
\tilde{x}&\mapsto F^\bullet_{\tilde{x}}\bigl(L[\epsilon]\otimes_LH^1_{\dR}(\mathcal{G}_x/L)\bigr)\nonumber
\end{align}
is an injection.

On the other hand, by the very definition of $\on{Gr}^{\sab}$, we have:
\begin{align}\label{boundary:eqn:tangent_space_2}
\biggl(\text{Lifts $\widetilde{\eta(x)}\in\on{Gr}^{\sab,\an}(L[\epsilon])$ of $\eta(x)$}\biggr)&=\biggl(\text{Lifts $F^\bullet\bigl(L[\epsilon]\otimes_{K_0}M^{\sab}_0\bigr)$ of $F^\bullet_x(L\otimes_{K_0}M^{\sab}_0)$}\biggr).
\end{align}

Specializing $\xi^{\sab}$ at $x$ produces an isomorphism $\xi^{\sab}_x:L\otimes_{K_0}M_0^{\sab}\xrightarrow{\simeq}H^1_{\dR}(\mathcal{G}_x/L)$. Specializing at $\tilde{x}$, we obtain the isomorphism:
\[
 \xi^{\sab}_{\tilde{x}}:L[\epsilon]\otimes_{K_0}M^{\sab}_0\xrightarrow{\simeq}H^1_{\dR}\bigl(\mathcal{G}_{\tilde{x}}/L[\epsilon]\bigr).
\]

Composing $j_{\tilde{x}}$ with $1\otimes\xi^{\sab}_x$ gives us another isomorphism:
\[
 j_{\tilde{x}}\circ(1\otimes\xi^{\sab}_x):L[\epsilon]\otimes_{K_0}M^{\sab}_0\xrightarrow{\simeq}H^1_{\dR}\bigl(\mathcal{G}_{\tilde{x}}/L[\epsilon]\bigr).
\]

Using the arguments along the lines of those in~\eqref{semistable:prp:phinmodule}, it is not hard to see that we have $j_{\tilde{x}}\circ(1\otimes\xi^{\sab}_x)=\xi^{\sab}_{\tilde{x}}$. This implies that the right hand side of~\eqref{boundary:eqn:tangent_space_1} is canonically identified with that of~\eqref{boundary:eqn:tangent_space_2}. Therefore, $\eta$ induces injections of tangent spaces at all points of $\widehat{\mathcal{U}}^{\sab,\an}$, and is thus unramified.

To see that the restriction of $\eta$ to $\widehat{\mathcal{U}}_G^{\sab,\an}$ factors through $\on{U}^{\sab,\an}_{G,[\mu]}$, choose a triple $(\sigma^\beef,s_0^\beef,t_0)$ in our fixed ${\Ss_K}$-equivalence class, and consider the composition:
\begin{align}\label{boundary:eqn:sigmasabcomp}
 \widehat{\mathcal{U}}^{\an}_G({\sigma^\beef,t_0})\to\widehat{\mathcal{U}}^{\sab,\an}_G\xrightarrow{\eta}\on{Gr}^{\sab,\an}.
\end{align}
Since the map $\widehat{\mathcal{U}}^{\an}_G({\sigma^\beef,t_0})\to\widehat{\mathcal{U}}^{\sab,\an}_G$ is dominant (cf.~\ref{rigid:lem:dominant}), it is enough to show that~\eqref{boundary:eqn:sigmasabcomp} factors through $\on{U}^{\sab,\an}_{G,[\mu]}$. But this is clear from~\eqref{boundary:lem:cocharacters}.

The bound on $\dim R_G^{\sab}$ now follows from~\eqref{boundary:lem:uwttorsor}.
\end{proof}

\subsubsection{}\label{boundary:subsubsec:nudim}
We now turn our attention to the second bound. \emph{The notation from here to the end of the proof of (\ref{boundary:lem:nudim}) will be strictly local, and has no connection to that used anywhere else in this section.}

We will put ourselves in the following situation: Let $\mb{Y}$ be a free abelian group of finite rank and let $\tau\subset\Real\otimes\dual{\mb{Y}}$ be a non-degenerate rational polyhedral cone of maximal dimension. Then the monoid $\mb{Y}_{\tau}=\mb{Y}\cap\dual{\tau}$ has no non-trivial invertible elements.

Fix a finite extension $L/K_0$ within $\overline{K}_0$. Let $R=\Reg{L}\pow{\mb{Y}_{\tau}}$ be the completion of the monoid ring $\Reg{L}[\mb{Y}_{\tau}]$ along the ideal generated by the non-invertible elements of $\mb{Y}_{\tau}$. Let $T$ be a quotient domain of $R$, flat over $W$. Write $\widehat{\mathcal{U}}$ (resp. $\widehat{\mathcal{V}}$) for the formal schemes $\Spf R$ (resp. $\Spf T$). Let $\widehat{\mathcal{U}}^{\an,\circ}\subset\widehat{\mathcal{U}}^{\an}$ be the complement of the boundary divisor, that is, the vanishing locus of elements in $\mb{Y}_{\tau}\backslash\{1\}$. Set $\widehat{\mathcal{V}}^{\an,\circ}=\widehat{\mathcal{U}}^{\an,\circ}\cap\widehat{\mathcal{V}}^{\an}$.

For each point $x\in\widehat{\mathcal{U}}^{\an}(\overline{K}_0)$, we obtain a homomorphism of groups:
\[
 \nu_x:\mb{Y}\xrightarrow{x^\sharp}\overline{K}_0^\times\xrightarrow{\nu_p}\Rat.
\]
Set:
\[
 d(T)=\dim_{\Rat}\langle \nu_x:\;x\in\widehat{\mathcal{V}}^{\an,\circ}(\overline{K}_0)\rangle\subset\Hom(\mb{Y},\Rat).
\]

\begin{lem}\label{boundary:lem:nudim}
Suppose that $\widehat{\mathcal{V}}^{\an,\circ}$ is non-empty. Then we have:
\[
 d(T)\geq\dim T-1.
\]
\end{lem}
\begin{proof}
  It is clear from the definition that $d(R)=\rank \mb{Y}=\dim R-1$.

  We will prove the lemma by induction on the rank $r$ of $\mb{Y}$. If $r=0$, we must have $R=T$, and so we are done. If $r=1$ and $R\neq T$, then $\dim T=1$, and the assertion is vacuous. Therefore, we can assume that $r\geq 2$.

  Set
  \[
   \Sigma=\{D\subset\Hom(\mb{Y},\Rat):\;D\text{ a hyperplane}; D\cap\tau^{\circ}\neq\emptyset\}\subset\bb{P}(\dual{\mb{Y}})(\Rat).
  \]
  For any $D\in\Sigma$, set $\mb{Y}_D=\dual{D}$ and $\tau_D=\tau\cap(\Real\otimes D)$. Observe that $\tau_D$ is again maximal dimensional in $\Real\otimes D$, and so we can form the completed monoid ring $R_D=\Reg{L}\pow{\mb{Y}_D\cap\dual{\tau}_D}$. The corresponding map
  \[
   \Spec R_D\to\Spec R
  \]
  is the normalization of a closed immersion, and its image is an irreducible Weil divisor $S_D\subset\Spec R$.

  Set $\widehat{\mathcal{U}}_D=\Spf R_D$ and
  \[
  \widehat{\mathcal{U}}_D^{\an,\circ}=\widehat{\mathcal{U}}^{\an}_D\times_{\widehat{\mathcal{U}}^{\an}}\widehat{\mathcal{U}}^{\an,\circ}\;;\;\widehat{\mathcal{V}}_D^{\an,\circ}=\widehat{\mathcal{V}}^{\an}\times_{\widehat{\mathcal{U}}^{\an}}\widehat{\mathcal{U}}^{\an,\circ}.
  \]

  Fix $x\in\widehat{\mathcal{V}}^{\an,\circ}(\overline{K}_0)$ and set:
  \[
   \Sigma_x=\{D\in\Sigma:\;\nu_x\notin D\}.
  \]

  We claim that there exists $D\in\Sigma_x$ such that $\widehat{\mathcal{V}}_D^{\an,\circ}\neq\emptyset$. Assume that this is not true. Let $\mathcal{D}\subset\Spec R$ be the boundary divisor. Then our hypothesis says that, for every $D\in\Sigma_x$, the intersection $E_D\coloneqq\Spec T\cap S_D$ is contained in $\mathcal{D}'\coloneqq \Spec(\Field_p\otimes R)\cup\mathcal{D}$ (as a topological sub-space).

  However, the irreducible components of both $E_D$ and $(\Spec T)\cap \mathcal{D}'$ are divisors in $\Spec T$. Therefore, the union $\bigcup_{D\in\Sigma_x}E_D$ contains only finitely many irreducible components (since this is certainly true for $(\Spec T)\cap\mathcal{D}'$).

  Let $\{E_i:1\leq i\leq n\}$ be the irreducible components of this union. Then we can partition $\Sigma_x$ into finitely many sub-sets: $\Sigma_x=\bigsqcup_{i=1}^k\Sigma_i$, such that, for each index $i$, we have:
  \[
   \bigcap_{D\in\Sigma_i}S_D\supset E_i\neq\emptyset.
  \]
  But this can only happen if, for each $i$, the intersection $K(i)\coloneqq\cap_{D\in\Sigma_i}D$ is a non-zero sub-space of $\Hom(\mb{Y},\Rat)$. Moreover, it would also imply that, for every hyperplane $D\in\Sigma_x$, there exists an index $i$ such that $K(i)\subset D$. Therefore, $\Sigma_x$ is a finite union of Zariski closed sub-spaces of $\bb{P}(\dual{\mb{Y}})(\Rat)$; but, by definition, it is also a Zariski open sub-space. This gives us the contradiction.

 Fix $D\in\Sigma_x$ such that $\widehat{\mathcal{V}}_D^{\an,\circ}\neq\emptyset$. Let $\Spec T_D$ be an irreducible component of $\Spec(R_D\otimes_RT)$ such that $(\Spf T_D)^{\an}\cap\widehat{\mathcal{U}}_D^{\an,\circ}$ is non-empty. Then we have:
 \[
 \dim T-1=\dim T_D\leq d(T_D)+1\leq d(T).
 \]
 Here, the inequality in the middle holds by our inductive hypothesis, and the last inequality holds because $\nu_x\notin D$.
\end{proof}

\begin{proof}[Proof of claim~\eqref{main:rationality} of~\eqref{boundary:thm:main}]
It is enough to show: $\dim_{\Rat}(\Rat\otimes\mb{B}_G)= d_{\wt}$.

By choosing $(\sigma^\beef,s_0^\beef,t_0)$ appropriately in its ${\Ss_K}$-equivalence class, we can assume that $\sigma^\beef\subset\Real\otimes\mb{B}$ has maximal dimension. We have a splitting $R({\sigma^\beef,s_0^\beef})=R^{\sab}\widehat{\otimes}B({\sigma^\beef,s_0^\beef})$, where $B({\sigma^\beef,s_0^\beef})=W\pow{\mb{S}(\sigma^\beef)}$ is simply the completed monoid ring attached to the monoid $\mb{S}(\sigma)$ (cf.~\ref{boundary:subsubsec:noncansplit}).

Set $\widehat{\mathcal{U}}^{\sab}=\Spf R_G^{\sab}$ and $\widehat{\mathcal{U}}^{\sab}=\Spf R^{\sab}$. Suppose that we are given a finite extension $L/K_0$, and $y\in\widehat{\mathcal{U}}^{\sab}(\Reg{L})$ corresponding to a map $y^\sharp:R^{\sab}\to\Reg{L}$. Then we can identify $R({\sigma^\beef,s_0^\beef})\otimes_{R^{\sab},y^\sharp}\Reg{L}$ with the completed monoid ring $\Reg{L}\pow{\mb{S}(\sigma^\beef)}$.

Choose any point $x\in\widehat{\mathcal{U}}^{\an,\circ}_G({\sigma^\beef,t_0})(L)$ with associated map $x^\sharp:R_G({\sigma^\beef,t_0})\to\Reg{L}$. Let $y\in\widehat{\mathcal{U}}^{\sab}(\Reg{L})\subset\widehat{\mathcal{U}}^{\sab}(\Reg{L})$ be the restriction of $x$. Let
\[
\Spec\tilde{A}\subset\Spec(R_G({\sigma^\beef,t_0})\otimes_{R_G^{\sab},y^\sharp}\Reg{L})
\]
be the irreducible component such that $x$ belongs to $(\Spf\tilde{A})^{\an}(L)$. Then we have a finite map:
\[
 \Reg{L}\pow{\mb{S}(\sigma^\beef)}=R({\sigma^\beef,s_0^\beef})\otimes_{R^{\sab},y^\sharp}\Reg{L}\to\tilde{A}.
\]

Let $A\subset\tilde{A}$ be the image of this map. We can now apply~\eqref{boundary:lem:nudim} with $\mb{Y}=\mb{S}$, $\tau=\sigma^\beef$ and $T=A$. This gives us the inequality:
\begin{align}\label{boundary:eqn:nudimA}
 d(A)\geq\dim(A)-1.
\end{align}
Here, $d(A)$ is the dimension of the vector space:
\[
 \langle \nu_p\circ N_{x'}:\; x'\in(\Spf A)^{\an}(\overline{K}_0)\cap\widehat{\mathcal{U}}^{\an,\circ}(\overline{K}_0)\rangle\subset\Rat\otimes\mb{B}.
\]

By~\eqref{boundary:cor:tatetensorprps}, for all $x'\in\widehat{\mathcal{U}}^{\an,\circ}(\overline{K}_0)$, we have:
\[
 \nu_p\circ N_{x'}\in(\Lie G_{K_0})(-1)\cap(\Rat\otimes\mb{B})=\Rat\otimes\mb{B}_G.
\]
Therefore,~\eqref{boundary:eqn:nudimA} shows:
\begin{align}\label{boundary:eqn:dimBG}
 \dim_{\Rat}(\Rat\otimes\mb{B}_G)\geq\dim(A)-1.
\end{align}

It follows from~\eqref{boundary:lem:tsabdim} that
\begin{align}\label{boundary:eqn:tsabdim}
\dim R_G^{\sab}\leq d-d_{\wt}+1.
\end{align}

Let $\mf{P}\subset R_G({\sigma^\beef,t_0})$ be the kernel of the map $R_G({\sigma^\beef,t_0})\to A$. Then $\mf{P}$ is minimal over $\pf_yR_G({\sigma^\beef,t_0})$. Therefore, using \cite[Theorem 15.1]{matsumura:ring_theory}, we find:
\begin{align}\label{boundary:eqn:dimbounds}
\dim\bigl(R_G({\sigma^\beef,t_0})_{\mf{P}}\bigr)\leq \dim\bigl((R_G^{\sab})_{\pf_y}\bigr)\leq\dim(R_G^{\sab}[p^{-1}])=d-d_{\wt}.
\end{align}
Using the fact that $R_G({\sigma^\beef,t_0})$ is catenary~\cite[Theorem 29.4]{matsumura:ring_theory}, we get:
\begin{align}\label{boundary:eqn:dimbounds2}
\dim(A)&=\dim(R_G({\sigma^\beef,t_0})/\mf{P})=\dim(R_G({\sigma^\beef,t_0}))-\dim\bigl((R_G({\sigma^\beef,t_0}))_{\mf{P}}\bigr)\nonumber\\
&= d+1-\dim\bigl((R_G({\sigma^\beef,t_0}))_{\mf{P}}\bigr)\geq d+1-(d-d_{\wt})\geq d_{\wt}+1.
\end{align}

Combining this with~\eqref{boundary:eqn:dimBG} shows:
\[
 d_{\wt}\geq\dim_{\Rat}(\Rat\otimes\mb{B}_G)\geq d_{\wt}.
\]

This finishes the proof.
\end{proof}

\begin{rem}
\label{boundary:rem:pel_case}
In the PEL case, the above proof can be simplified considerably, and in fact will not require any $p$-adic Hodge theory beyond the functoriality of logarithmic Dieudonn\'e theory.

Note that $\mb{B}$ can be identified with a subspace of the space $B(\mathsf{X})$ of symmetric bilinear forms on $\mathsf{X}$, which in turn can be identified with the space of symmetric maps $\mathsf{X}\to \mathsf{X}^\vee$. The identification of this space with a subspace of $\Lie U_{\wt}$ proceeds by identifying the latter with the space of maps $M_0\to M_0$ that factor through a symmetric homomorphism
\[
\gr^W_2M_0 = \mathsf{X}\otimes K_0\to \Hom(\mathsf{X},K_0) = W_0M_0.
\]

Suppose that $\{s_{\alpha}\}\subset \End(V)\subset V^\otimes$ can be taken to be a collection of endomorphisms. Then the corresponding Tate tensors
\[
\{s_{\alpha,\st,0}\}\subset \End(M_0)\subset M_0^\otimes
\]
are simply the log crystalline realizations of the endomorphisms of the universal log $1$-motif that are induced from the $\{s_{\alpha}\}$. 

The subspace $\Lie U_{\wt,G}\subset \Lie U_{\wt}$ consists of those elements that anti-commute with the collection $\{s_{\alpha,\st,0}\}$. Therefore, the desired rationality statement amounts to showing that each endomorphism $s_{\alpha,\st,0}$ of $M_0$ respects the weight filtration $W_\bullet M_0$, and that the induced endomorphism of 
\[
W_0M_0 = \Hom(\mathsf{X},K_0)\;;\; \gr^W_2M_0 = \mathsf{X}\otimes K_0
\]
respects the integral structures
\[
\mathsf{X}^\vee \subset \Hom(\mathsf{X},K_0)\;;\; \mathsf{X}\subset \mathsf{X}\otimes K_0.
\]
But this can easily be deduced from the fact that $s_{\alpha,\st,0}$ is the realization of an endomorphism of a log $1$-motif whose log Dieudonn\'e realization is $M_0$, and whose \'etale and multiplicative parts are identified with $\mathsf{X}$ in a way compatible with the realization.
\end{rem}

\subsection{Reduction of structure group}\label{boundary:subsec:reduction}

\subsubsection{}\label{boundary:subsubsec:unitst}
Fix a trivialization $\iota^{\sab}$ as in~\eqref{boundary:subsubsec:noncansplit}, with the corresponding splitting:
\begin{align}\label{boundary:eqn:noncansplitunits}
R({\sigma^\beef,s_0^\beef})=R^{\sab}\widehat{\otimes}B({\sigma^\beef,s_0^\beef}).
\end{align}

Let $\alpha:\mb{S}\to\Gmh{\mb{\Xi}}$ be the homomorphism attached to the trivialization $\iota^{\sab}$. Let $Q(R_G({\sigma^\beef,t_0}))$ be the fraction field of $R_G({\sigma^\beef,t_0})$. Via the restriction $H^0(\mb{\Xi},\Gm)\to Q(R_G({\sigma^\beef,t_0}))^\times$, we can view $\alpha$ as a map $\mb{S}\to Q(R_G({\sigma^\beef,t_0}))^\times$. As mentioned in remark~\eqref{main:concrete} of~\eqref{boundary:subsubsec:remarks}, to prove assertion~\eqref{main:reduction} of the theorem, we have to show that there exists $m\in\Int_{>0}$ such that, for all $h\in\mb{S}^G$, we have $\alpha(h)^m\in R_G^{\sab,\times}\subset Q(R_G({\sigma^\beef,t_0}))^\times$.

We will do this in stages.

\begin{lem}\label{boundary:lem:tsigmaunits}
For any $h\in\mb{S}^G$, $\alpha(h)$ lies in $R_G({\sigma^\beef,t_0})^\times\subset Q(R_G({\sigma^\beef,t_0}))^\times$.
\end{lem}
\begin{proof}
Given $x\in\widehat{\mathcal{U}}^{\an,\circ}_G({\sigma^\beef,t_0})(\overline{K}_0)$, the homomorphism $\alpha$ specializes to a map $\alpha_x:\mb{S}\to\overline{K}_0^\times$. The induced map:
\begin{align}\label{boundary:eqn:xsharp}
\mb{S}\xrightarrow{\alpha_x}\overline{K}_0^\times\rightarrow\overline{K}_0^\times/\Reg{\overline{K}_0}^\times
\end{align}
is precisely the one corresponding to the monodromy operator $N_x:M_0\to M_0\otimes(\overline{K}^\times_0/\Reg{\overline{K}_0}^\times)$; cf.~\eqref{boundary:subsubsec:manlogtriv}. By~\eqref{boundary:cor:tatetensorprps}, it follows that \eqref{boundary:eqn:xsharp} factors through $\mb{S}_G\coloneqq\mb{S}/\mb{S}^G$.

In particular, we find that, for any point $x\in\widehat{\mathcal{U}}^{\an,\circ}_G({\sigma^\beef,t_0})(\overline{K}_0)$, $\alpha(h)(x)=\alpha_x(h)\in\overline{K}_0^\times$ belongs to $\Reg{\overline{K}_0}^\times$. The lemma now follows from~\eqref{rigid:lem:units}.
\end{proof}

\subsubsection{}
Consider the logarithm homomorphism:
\begin{align*}
\log:\widehat{\bb{G}}_m^{\an}&\to\mathbb{G}_a^{\an}\\
x&\mapsto \sum_{i=1}^\infty(-1)^{i+1}\frac{(x-1)^i}{i}.
\end{align*}

It induces a map of analytic groups:
\begin{align}\label{boundary:eqn:ellmap}
\ell:\widehat{\mb{E}}^{G,\an}=\SHom\bigl(\mb{S}^G,\widehat{\bb{G}}_m^{\an}\bigr)\xrightarrow{\log}\SHom\bigl(\mb{S}^G,\bb{G}_a^{\an}\bigr)=(\Lie U_{\wt}/\Lie U_{\wt,G})(-1)\otimes\bb{G}_a^{\an}.
\end{align}
Here, the last identity follows from assertion~\eqref{main:rationality} of~\eqref{boundary:thm:main}.

If we think of $\alpha$ as a map $\alpha:\mb{\Xi}\to\mb{E}$, it follows from~\eqref{boundary:lem:tsigmaunits} that the composition:
\[
 \Spec Q(R_G({\sigma^\beef,t_0}))\to\mb{\Xi}\to\mb{E}\to\mb{E}^G
\]
arises from a map $\overline{\alpha}:\Spec R_G({\sigma^\beef,t_0})\to\mb{E}^G$. Moreover, since $\overline{\Field}_p^\times$ is torsion, for a sufficiently divisible $m\in\Int_{\geq 1}$, $\overline{\alpha}^m$ will give rise to a map of formal schemes $\widehat{\mathcal{U}}_G({\sigma^\beef,t_0})=\Spf R_G({\sigma^\beef,t_0})\to\widehat{\mb{E}}^G$, which we denote again by $\overline{\alpha}^m$.

\begin{prp}\label{boundary:prp:tsabpinvunits}
There exists a (unique) map
\[
\ell_{\overline{\alpha},m}:\widehat{\mathcal{U}}^{\sab,\an}_G\to(\Lie U_{\wt}/\Lie U_{\wt,G})(-1)\otimes\bb{G}_a^{\an}
\]
such that the following diagram commutes:
\begin{diagram}
\widehat{\mathcal{U}}^{\an}_G({\sigma^\beef,t_0})&\rTo^{\overline{\alpha}^m}&\widehat{\mb{E}}^{G,\an}\\
\dTo&&\dTo_{\ell}\\
\widehat{\mathcal{U}}^{\sab,\an}_G&\rTo_{\ell_{\overline{\alpha},m}}&(\Lie U_{\wt}/\Lie U_{\wt,G})(-1)\otimes\bb{G}_a^{\an}.
\end{diagram}
\end{prp}
\begin{proof}
Let $F^\bullet_{\cl}(\Reg{\widehat{\mathcal{U}}^{\sab,\an}_G}\otimes_{K_0}M_0)$ be the filtration on $\Reg{\widehat{\mathcal{U}}^{\sab,\an}}\otimes_{K_0}M_0$ constructed in~\eqref{boundary:subsubsec:m0_classical}. The induced filtration on
\[
 \mathcal{M}^{\sab,\an} = \Reg{\widehat{\mathcal{U}}^{\sab,\an}_G}\otimes_{K_0}M_0^{\sab}
\]
is the canonical Hodge filtration considered in~\eqref{boundary:subsubsec:periodgrass}. Therefore, by~\eqref{boundary:lem:tsabdim}, the associated map $\widehat{\mathcal{U}}^{\sab,\an}_G\to\on{Gr}^{\sab,\an}$ maps into $\on{U}^{\sab,\an}_{G,[\mu]}$.

Moreover, by (the proof of)~\eqref{boundary:lem:uwttorsor}, there is a unique $U_{\wt,G}$-orbit of endomorphisms $B\in\Reg{\widehat{\mathcal{U}}^{\sab,\an}_G}\otimes\Lie U_{\wt}$ such that $\exp(B)(F^\bullet_{\cl})$ is split by a cocharacter $\mu:\Gm\to Q_{\wt,G}$ in the conjugacy class $[\mu]$. Let
\[
 \ell_{\overline{\alpha}}:\widehat{\mathcal{U}}^{\sab,\an}_G\to(\Lie U_{\wt}/\Lie U_{\wt,G})\otimes\bb{G}_a^{\an}
\]
be the map associated with this orbit. It can now be checked that $\ell_{\overline{\alpha},m} \coloneqq -m\ell_{\overline{\alpha}}$ is the map whose existence is being asserted by the proposition.
\end{proof}

The following corollary completes the proof of~\eqref{main:reduction} and shows that $\mb{\Xi}\vert_{\Spec R^{\sab}_G}$ admits a canonical reduction of structure group to an $\mb{E}_G$-torsor $\mb{\Xi}_G$.

\begin{corollary}\label{boundary:cor:tsabunits}
Assertion~\eqref{main:reduction} of~\eqref{boundary:thm:main} holds. More precisely: There exists $m\in\Int_{\geq 1}$ such that, for all $h\in\mb{S}^G$, we have:
\[
\alpha(h)^m\in R_G^{\sab,\times}.
\]
\end{corollary}
\begin{proof}
This is now immediate from~\eqref{rigid:lem:logarithm}.
\end{proof}

\subsection{The end of the proof}\label{boundary:subsec:final}

We are almost there. The only remaining point is to find the torus $\mb{E}^\diamond_G$ and the $\mb{E}^\diamond_G$-torsor $\mb{\Xi}^\diamond_G$ over $R^{\sab}_G$ promised by claim~\eqref{main:local} of~\eqref{boundary:thm:main}. The purpose of this subsection is to resolve this, and thereby complete the proof. The main additional observation required is~\eqref{boundary:lem:finite_cover_torsor}. Essentially, what we have shown above covers the case where $K_p = K^\beef_p \cap G(\Rat_p)$. The main observation here shows that the desired result is true when $G = G^\beef$ and $K_p = K^{\beef,\diamond}_p$ is an arbitrary compact open subgroup of $K^\beef_p$. Combining the two situations gives the result for general $G$ and $K_p$.

\subsubsection{}\label{boundary:subsubsec:finite_cover}
Choose a compact open subgroup $K^{\beef,\diamond}\subset K^\beef$. Then, $\Sh_{K^{\beef,\diamond}}\to \Sh_{K^\beef}$ is a finite \'etale cover. Let $\Ss_{K^{\beef,\diamond}}^{\Sigma^\beef}\to \Ss^{\Sigma}_{K^\beef}$ be the normalization of $\Ss_{K^\beef}$ in $\Sh_{K^{\beef,\diamond}}$.

We also have the tower of mixed Shimura varieties
\begin{align}\label{boundary:eqn:siegel_tower_char0}
\Sh_{K^{\beef,\diamond}_{\Phi^\beef}}(Q_{\Phi^\beef},D_{\Phi^\beef})\to \Sh_{\overline{K}^{\beef,\diamond}_{\Phi^\beef}}(\overline{Q}_{\Phi^\beef},\overline{D}_{\Phi^\beef}),
\end{align}
which is a torsor under the torus $\mb{E}_{K^{\beef,\diamond}}(\Phi^\beef)$.

This tower is finite \'etale over the $\mb{E}=\mb{E}_{K^\beef}(\Phi^\beef)$-torsor
\begin{align*}
\Sh_{K^\beef_{\Phi^\beef}}(Q_{\Phi^\beef},D_{\Phi^\beef})\to \Sh_{\overline{K}^\beef_{\Phi^\beef}}(\overline{Q}_{\Phi^\beef},\overline{D}_{\Phi^\beef}),
\end{align*}
which admits the canonical integral model
\begin{align}\label{boundary:eqn:siegel_tower1_integral}
\Ss_{K^\beef_{\Phi^\beef}}(Q_{\Phi^\beef},D_{\Phi^\beef})\to \Ss_{\overline{K}^\beef_{\Phi^\beef}}(\overline{Q}_{\Phi^\beef},\overline{D}_{\Phi^\beef}),
\end{align}

We can now consider the normalization
\begin{align}\label{boundary:eqn:siegel_tower2_integral}
\Ss_{K^{\beef,\diamond}_{\Phi^\beef}}(Q_{\Phi^\beef},D_{\Phi})\to \Ss_{\overline{K}^{\beef,\diamond}_{\Phi^\beef}}(\overline{Q}_{\Phi^\beef},\overline{D}_{\Phi^\beef})
\end{align}
of~\eqref{boundary:eqn:siegel_tower1_integral} in~\eqref{boundary:eqn:siegel_tower_char0}.

\begin{lem}
\label{boundary:lem:finite_cover_torsor}
The map in~\eqref{boundary:eqn:siegel_tower2_integral} is an $\mb{E}_{K^{\beef,\diamond}}(\Phi^\beef)$-torsor.
\end{lem}
\begin{proof}
For any integer $n\geq 1$, set
\[
K^\beef(n) = \ker(\GSp(H(\widehat{\Int}))\to \GSp(H(\Int/n\Int))).
\] 
Choose an integer $n\geq 3$ such that
\[
K^\beef(n)\subset K^{\beef,\diamond},
\]
so that we have
\begin{align}\label{boundary:eqn:cocharacter_groups}
n\cdot B(\mathsf{X}) = \mb{B}_{K^\beef(n)}(\Phi^\beef)\subset \mb{B}_{K^{\beef,\diamond}}(\Phi^\beef)\subset B(\mathsf{X}) = \mb{B}_{K^\beef(1)}(\Phi^\beef).
\end{align}
Here, as in~\eqref{boundary:rem:pel_case}, $B(\mathsf{X})$ is the space of symmetric bilinear forms on $\mathsf{X}$, which can be identified with $\mb{B}_{K^\beef(0)}(\Phi^\beef)$.

Let $\mathcal{Q}^{\ab}$ be the universal principally polarized abelian scheme over $\Ss_{K^\beef_h}(G_{\Phi^\beef,h},D_{\Phi^\beef,h})$. Then the scheme
\[
\SHom(\mathsf{X},\mathcal{Q}^{\ab})\to \Ss_{K^\beef_h}(G_{\Phi^\beef,h},D_{\Phi^\beef,h}) 
\]
parameterizes semi-abelian schemes $\mathcal{Q}^{\sab}$ with principally polarized abelian part $\mathcal{Q}^{\ab}$ and multiplicative part $\mathsf{X}$.

There now exists a sequence of finite flat morphisms of smooth $\Ss_{K^\beef_h}(G_{\Phi^\beef,h},D_{\Phi^\beef,h})$-schemes
\[
\SHom(\frac{1}{n}\mathsf{X},\mathcal{Q}^{\ab})\to \Ss_{\overline{K}^\beef_{\Phi^\beef}}(\overline{Q}_{\Phi^\beef},\overline{D}_{\Phi^\beef}) \to\SHom(\mathsf{X},\mathcal{Q}^{\ab}).
\]
Indeed, the second morphism is tautological from the moduli description, since it parameterizes certain prime-to-$p $level structures on $\mathcal{Q}^{\sab}$ lifting those on $\mathcal{Q}^{\ab}$. The first morphism arises from the following fact: Giving a map
$\frac{1}{n}\mathsf{X}\to \mathcal{Q}^{\ab}$
lifting the classifying map of the semi-abelian scheme $\mathcal{Q}^{\sab}$ is equivalent to giving a splitting of the surjection $\mathcal{Q}^{\sab}[n]\to \mathcal{Q}^{\ab}[n]$ of finite flat group schemes. In particular, one can use this splitting to give the desired level structures on $\mathcal{Q}^{\sab}$ that the scheme in the middle is supposed to parameterize.

Now, consider the scheme
\begin{align}\label{boundary:eqn:symmetric_triv}
\SHom^{\mathrm{symm}}(\frac{1}{n}\mathsf{X},\mathcal{Q}^{\sab})\to \SHom(\frac{1}{n}\mathsf{X},\mathcal{Q}^{\ab})
\end{align}
parameterizing lifts $\tilde{c}_n:\frac{1}{n}\mathsf{X}\to\mathcal{Q}^{\sab}$ of the universal morphism $c_n:\frac{1}{n}\mathsf{X}\to\mathcal{Q}^{\ab}$ that are \emph{symmetric} in the following sense: In the notation of~\eqref{semistable:subsec:1motifs}, the restriction of $\tilde{c}_n$ to $\mathsf{X}$ corresponds to a trivialization $\tau$ of the universal biextension of $\mathsf{X}\times \mathsf{X}$ obtained by pulling the Poincar\'e bundle on $\mathcal{Q}^{\ab}\times \mathcal{Q}^{\ab,\vee}$ along the morphism
\[
\mathsf{X}\times \mathsf{X}\xrightarrow{c\times \lambda^{\ab}c}\mathcal{Q}^{\ab}\times \mathcal{Q}^{\ab,\vee}.
\]
Here, $\lambda^{\ab}:\mathcal{Q}^{\ab}\xrightarrow{\simeq} \mathcal{Q}^{\ab,\vee}$ is the tautological principal polarization. Then the scheme in~\eqref{boundary:eqn:symmetric_triv} parameterizes homomorphisms such that the corresponding trivialization $\tau$ is symmetric.

It is easy to check that~\eqref{boundary:eqn:symmetric_triv} is a torsor under the torus
\[
\mb{E}_{K^\beef(n)}(\Phi^\beef) = \SHom(\frac{1}{n}B(\mathsf{X}),\Gm) = \SHom^{\mathrm{symm}}(\frac{1}{n}\mathsf{X},\SHom(\mathsf{X},\Gm))
\]
that parameterizes pairings $\frac{1}{n}\mathsf{X}\times \mathsf{X}\to \Gm$ that are symmetric on $\mathsf{X}\times \mathsf{X}$.

Therefore, one can push it forward along the isogeny $\mb{E}_{K^\beef(n)}(\Phi^\beef)\to \mb{E}_{K^{\beef,\diamond}}(\Phi^\beef)$ obtained via the inclusion of cocharacter groups in~\eqref{boundary:eqn:cocharacter_groups}, and obtain a canonical $\mb{E}_{K^{\beef,\diamond}}(\Phi^\beef)$-torsor over $\SHom(\frac{1}{n}\mathsf{X},\mathcal{Q}^{\ab})$.

We now claim that the obtained $\mb{E}_{K^{\beef,\diamond}}(\Phi^\beef)$-torsor over the finite flat cover
\[
\SHom(\frac{1}{n}\mathsf{X},\mathcal{Q}^{\ab})\times_{\Ss_{\overline{K}^\beef_{\Phi^\beef}}(\overline{Q}_{\Phi^\beef},\overline{D}_{\Phi^\beef}) }\Ss_{\overline{K}^{\beef,\diamond}_{\Phi^\beef}}(\overline{Q}_{\Phi^\beef},\overline{D}_{\Phi^\beef}) \to \Ss_{\overline{K}^{\beef,\diamond}_{\Phi^\beef}}(\overline{Q}_{\Phi^\beef},\overline{D}_{\Phi^\beef}) 
\]
descends to an $\mb{E}_{K^{\beef,\diamond}}(\Phi^\beef)$-torsor over the base that is identified with the morphism~\eqref{boundary:eqn:siegel_tower2_integral}.

This is a statement that can be checked over the generic fiber, where it can be deduced from the moduli description of the spaces involved.
\end{proof}

\subsubsection{}\label{boundary:subsubsec:closed_normalization}
Assume now that $K^{\beef,\diamond}$ has been chosen so that $K = K^{\beef,\diamond}\cap G(\Adele_f)$, and such that the map
\[
\Sh_K \to \Sh_{K^{\beef,\diamond}}
\]
is a closed immersion. This is always possible by~\cite[Prop. 1.15]{deligne:travaux}.

From~\eqref{boundary:lem:finite_cover_torsor}, we obtain a finite map
\begin{align}\label{boundary:eqn:finite_cover_phi}
\Ss_{K^{\beef,\diamond}_{\Phi^\beef}}(Q_{\Phi^\beef},D_{\Phi^\beef},\tilde{\sigma}^\beef)\to \Ss_{K^\beef_{\Phi^\beef}}(Q_{\Phi^\beef},D_{\Phi^\beef},\tilde{\sigma}^\beef)
\end{align}
of normal twisted toric varieties.

Since $\mathcal{S}_K\to \mathcal{S}_{K^\beef}$ lifts to a map $\mathcal{S}_K\to \mathcal{S}_{K^{\beef,\diamond}}$, one finds that the composition
\[
\Spec R_G(\tilde{\sigma}^\beef,\tilde{t}_0)\to \Spec R(\tilde{\sigma}^\beef,\tilde{s}^\beef_0)\to \Ss_{K^\beef_{\Phi^\beef}}(Q_{\Phi^\beef},D_{\Phi^\beef},\tilde{\sigma}^\beef)
\]
lifts to a map
\[
\Spec R_G(\tilde{\sigma}^\beef,\tilde{t}_0)\to \Ss_{K^{\beef,\diamond}_{\Phi^\beef}}(Q_{\Phi^\beef},D_{\Phi^\beef},\tilde{\sigma}^\beef).
\]

Let
\[
\mathcal{Z}_{K^{\beef,\diamond}_{\Phi^\beef}}(Q_{\Phi^\beef},D_{\Phi^\beef},\tilde{\sigma}^\beef)\subset \Ss_{K^{\beef,\diamond}_{\Phi^\beef}}(Q_{\Phi^\beef},D_{\Phi^\beef},\tilde{\sigma}^\beef)
\] 
be the closed stratum, and let $\tilde{s}_0^{\beef,\diamond}\in\mathcal{Z}_{K^{\beef,\diamond}_{\Phi^\beef}}(Q_{\Phi^\beef},D_{\Phi^\beef},\tilde{\sigma}^\beef)(\overline{\Field}_p)$ be the image of $\tilde{t}_0$. 

Let $R(\tilde{\sigma}^\beef,\tilde{s}_0^{\beef,\diamond})$ be the complete local ring of $ \Ss_{K^{\beef,\diamond}_{\Phi^\beef}}(Q_{\Phi^\beef},D_{\Phi^\beef},\tilde{\sigma}^\beef)$ at $\tilde{s}_0^{\beef,\diamond}$, so that we have a finite map of complete local rings
\[
R(\tilde{\sigma}^\beef,\tilde{s}_0^{\beef,\diamond})\to R_G(\tilde{\sigma}^\beef,\tilde{t}_0).
\]
The fact that $\Sh_K\to \Sh_{K^{\beef,\diamond}}$ is a closed immersion now implies that $R_G(\tilde{\sigma}^\beef,\tilde{t}_0)$ is the normalization of the image of the above map. Moreover, $R_G^{\sab}$ is also identified with the normalization of a quotient of the complete local ring of $\Ss_{\overline{K}^{\beef,\diamond}_{\Phi}}(\overline{Q}_{\Phi},\overline{D}_{\Phi})$ at the image of $\tilde{s}^{\beef,\diamond}_0$.

\subsubsection{}\label{boundary:subsubsec:pre_proof}
The restriction of $\Ss_{K^{\beef,\diamond}_{\Phi^\beef}}(Q_{\Phi^\beef},D_{\Phi^\beef})$ over $\Spec R_G^{\sab}$ is an $\mb{E}_{K^{\beef,\diamond}}(\Phi^\beef)$-torsor, which we denote by $\mb{\Xi}^\diamond$. 

Let $\mb{E}_G^\diamond$ be the torus with co-character group
\[
\mb{B}_G\cap \mb{B}_{K^{\beef,\diamond}}(\Phi^\beef)\subset \mb{B}_G.
\]
It is a sub-torus of $\mb{E}_{K^\beef,\diamond}(\Phi^\diamond)$ equipped with an isogeny $\mb{E}^\diamond_G\to \mb{E}_G$.



We can now complete the proof of our main result:
\begin{proof}[Proof of~\eqref{boundary:thm:main}]
Assertions~\eqref{main:rationality} and~\eqref{main:reduction} have already been shown, so we only have to prove~\eqref{main:local}.

Since $\Spec R_G(\tilde{\sigma}^\beef,t_0)$ is a scheme over the twisted torus embedding $\mb{\Xi}^\diamond(\tilde{\sigma}^\beef)$, we obtain a tautological section
\begin{align}\label{boundary:eqn:taut_section_diamond}
\Spec Q(R_G(\tilde{\sigma}^\beef,\tilde{t}_0))\to \mb{\Xi}^\diamond/\mb{E}_G^\diamond.
\end{align}
By the results of~\eqref{boundary:subsec:reduction}, the composition of this section with the finite map
\[
\mb{\Xi}^\diamond/\mb{E}_G^\diamond\to \mb{\Xi}/\mb{E}_G
\]
is defined over $\Spec R^{\sab}_G$. Since $R^{\sab}_G$ is integrally closed in $Q(R_G(\tilde{\sigma}^\beef,\tilde{t}_0))$, this implies that~\eqref{boundary:eqn:taut_section_diamond} is also defined over $R^{\sab}_G$. In other words, $\mb{\Xi}^\diamond$ has a canonical reduction of structure group $\mb{\Xi}^\diamond_G$ to an $\mb{E}^\diamond_G$-torsor over $R^{\sab}_G$, given by the pre-image of this section.

Suppose that $\tilde{\sigma}_G = \tilde{\sigma}^\beef\cap(\mb{B}_G\otimes \Real)$, and $\mb{\Xi}_G^\diamond(\tilde{\sigma}_G)$ is the associated twisted torus embedding over $R^{\sab}_G$. 

From the definition of $\mb{\Xi}^\diamond_G$, we find that the natural map $\Spec Q(R_G({\tilde{\sigma}^\beef,\tilde{t}_0}))\to\mb{\Xi}$ factors through $\mb{\Xi}^\diamond_G$. Therefore, the composition of finite maps
\begin{align}\label{boundary:eqn:tsigrsigxisig}
 \Spec R_G({\tilde{\sigma}^\beef,\tilde{t}_0})\to\Spec R({\tilde{\sigma}^\beef,\tilde{s}_0^\beef})\to\mb{\Xi}(\sigma^\beef)
\end{align}
must lift to a map $\Spec R_G({\tilde{\sigma}^\beef,\tilde{t}_0})\to\mb{\Xi}^\diamond_G(\tilde{\sigma}_G)$.

The discussion in~\eqref{boundary:subsubsec:closed_normalization} shows that $R_G(\tilde{\sigma}_0,\tilde{t}_0)$ is the normalization of a quotient of a complete local ring of $\mb{\Xi}^\diamond_G(\tilde{\sigma}_G)$. But, on the other hand, we have:
\[
 \dim\mb{\Xi}^\diamond_G=\dim(R_G^{\sab})+\rank\mb{B}_G\leq d-d_{\wt}+1+d_{\wt}=d+1=\dim(R_G({\tilde\sigma^\beef,\tilde{t}_0})).
\]
Here, the inequality in the middle follows from~\eqref{boundary:lem:tsabdim}. Therefore, under~\eqref{boundary:eqn:tsigrsigxisig}, $\Spec R_G({\tilde{\sigma}^\beef,\tilde{t}_0})$ must map finitely and dominantly onto the completion of the normal scheme $\mb{\Xi}^\diamond_G(\tilde{\sigma}_G)$ at an $\overline{\Field}_p$-valued point. 

This completes the proof of the theorem.
\end{proof}

\section{Compactifications of Hodge type and their stratifications}\label{sec:strata}

In this section, we will deduce the main theorems of the paper from the results of \S~\ref{sec:boundary}. We preserve the notation specified in~\eqref{boundary:subsec:abshodge}.

\subsection{Toroidal compactifications of Hodge type}\label{strata:subsec:toroidal}

\subsubsection{}
Fix a clr $\Phi$ for $(G,X)$, and let ${\Phi}^\beef$ be the induced clr for $(G^\beef,X^\beef)$.

Fix a neat level $K=K_pK^p\subset G(\Adele_f)$, as well as a neat sub-group ${K}^{\beef}={K}^{\beef}_p{K}^{\beef,p}\subset\mathcal{G}(\Adele_f)$ containing $K$ such that the map $\Sh_K\to\Sh_{{K}^{\beef}}$ is a closed immersion.

We then obtain a finite map of mixed Shimura varieties over $E$~\eqref{background:eqn:phitildephi}:
\[
 \Sh_{K_\Phi}(Q_\Phi,D_\Phi)\to E\otimes\Sh_{K^\beef_{\Phi^\beef}}(Q_{\Phi^\beef},D_{\Phi^\beef}).
\]
This respects the natural tower structures on each side.

Let $\Ss_{K^\beef_{\Phi^\beef}}(Q_{\Phi^\beef},D_{\Phi^\beef})$ be the natural integral model for $\Sh_{K^\beef_{\Phi^\beef}}(Q_{\Phi^\beef},D_{\Phi^\beef})$ defined in~\eqref{background:subsubsec:1motifsmodulip}. It has a tower structure:
\begin{align}\label{strata:eqn:towertildephi}
\Ss_{K^\beef_{\Phi^\beef}}(Q_{\Phi^\beef},D_{\Phi^\beef})\to\Ss_{\overline{K}^\beef_{\Phi^\beef}}(\overline{Q}_{\Phi^\beef},\overline{D}_{\Phi^\beef})\to\Ss_{K^{\beef,h}_{\Phi^\beef}}(G_{\Phi^\beef,h},D_{\Phi^\beef,h})
\end{align}

Let $\Ss_{K_\Phi}(Q_\Phi,D_\Phi)\to\Ss_{\overline{K}_\Phi}(\overline{Q}_\Phi,\overline{D}_\Phi)\to\Ss_{K_{\Phi,h}}(G_{\Phi,h},D_{\Phi,h})$ be the tower obtained by taking the normalization of~\eqref{strata:eqn:towertildephi} in the the corresponding tower for $\Sh_{K_\Phi}(Q_\Phi,D_\Phi)$; cf.~\eqref{boundary:subsec:tatetensors} for the definition. We will soon see that the first map in the tower is an $\mb{E}_K(\Phi)$-torsor, and that the second, under certain conditions, is a torsor under an abelian scheme $\mathcal{A}_K(\Phi)$ over $\Ss_{K_{\Phi,h}}(G_{\Phi,h},D_{\Phi,h})$. In particular, the singularities of the tower are all concentrated in $\Ss_{K_{\Phi,h}}(G_{\Phi,h},D_{\Phi,h})$.

\subsubsection{}
For any rational polyhedral cone $\sigma^\beef\subset\Real\otimes\mb{B}_{{K}^{\beef}}({\Phi}^\beef)$, we obtain the twisted torus embedding $\Ss_{K^\beef_{\Phi^\beef}}(Q_{\Phi^\beef},D_{\Phi^\beef})\into\Ss_{K^\beef_{\Phi^\beef}}(Q_{\Phi^\beef},D_{\Phi^\beef},\sigma^\beef)$. Within the target of this embedding, we have the closed stratum $Z_{K^\beef_{\Phi^\beef}}(Q_{\Phi^\beef},D_{\Phi^\beef},\sigma^\beef)$.

Set $\sigma=\sigma^\beef\cap W_\Phi(\Real)(-1)$, and let $\Ss_{K_\Phi}(Q_{\Phi},D_{\Phi},\sigma)$ be the normalization of $\Ss_{K^\beef_{\Phi^\beef}}(Q_{\Phi^\beef},D_{\Phi^\beef},\sigma^\beef)$ in $\Sh_{K_\Phi}(Q_\Phi,D_\Phi)$: It does not depend on the choice of $\sigma^\beef$ intersecting $W_\Phi(\Real)(-1)$ in $\sigma$. By a lemma of Harris~\cite[Lemma 3.1]{harris:functorial}, the generic fiber of $\Ss_{K_\Phi}(Q_{\Phi},D_{\Phi},\sigma)$ is precisely the twisted toric variety $\Sh_{K_\Phi}(Q_{\Phi},D_{\Phi},\sigma)$. Let $Z_{K_\Phi}(Q_\Phi,D_\Phi,\sigma)$ be the closed stratum in $\Sh_{K_\Phi}(Q_{\Phi},D_{\Phi},\sigma)$, and let $\mathcal{Z}_{K_\Phi}(Q_\Phi,D_\Phi,\sigma)$ be the normalization of $Z_{K^\beef_{\Phi^\beef}}(Q_{\Phi^\beef},D_{\Phi^\beef},\sigma^\beef)$ in $Z_{K_\Phi}(Q_\Phi,D_\Phi,\sigma)$. Then we obtain a finite map $\mathcal{Z}_{K_\Phi}(Q_\Phi,D_\Phi,\sigma)\to\Ss_{K_\Phi}(Q_{\Phi},D_{\Phi},\sigma)$ extending the closed immersion 
\[
Z_{K_\Phi}(Q_\Phi,D_\Phi,\sigma)\into\Sh_{K_\Phi}(Q_{\Phi},D_{\Phi},\sigma).
\]

\subsubsection{}\label{strata:subsubsec:phifunctorialp}
Suppose that we are given two clrs $\Phi_1$, $\Phi_2$ for $(G,X)$ with $\Phi_1\xrightarrow{(\gamma,q)_K}\Phi_2$, for $\gamma\in G(\Rat)$, $q\in Q_{\Phi_2}(\Adele_f)$, and with $\gamma\cdot P_{\Phi_1}=P_{\Phi_2}$. This gives us an isomorphism~\eqref{background:subsubsec:cuspsisom}: $\rho(\gamma,q):\Sh_{K_{\Phi_1}}(Q_{\Phi_1},D_{\Phi_1})\xrightarrow{\simeq}\Sh_{K_{\Phi_2}}(Q_{\Phi_2},D_{\Phi_2})$.

Now, we also have ${\Phi}^\beef_1\xrightarrow{(\iota(\gamma),\iota(q))_{{K}^{\beef}}}{\Phi}^\beef_2$, and so an isomorphism of mixed Shimura varieties $\tilde{\rho}(\gamma,q):\Sh_{K^\beef_{\Phi^\beef_1}}(Q_{\Phi^\beef_1},D_{\Phi^\beef_1})\xrightarrow{\simeq}\Sh_{K^\beef_{\Phi^\beef_2}}(Q_{\Phi^\beef_2},D_{\Phi^\beef_2})$. By~\eqref{background:subsubsec:intfunct}, this extends to an isomorphism $\Ss_{K^\beef_{\Phi^\beef_1}}(Q_{\Phi^\beef_1},D_{\Phi^\beef_1})\xrightarrow{\simeq}\Ss_{K^\beef_{\Phi^\beef_2}}(Q_{\Phi^\beef_2},D_{\Phi^\beef_2})$.

We therefore find that $\rho(\gamma,q)$ also extends to an isomorphism $\rho(\gamma,q):\Ss_{K_{\Phi_1}}(Q_{\Phi_1},D_{\Phi_1})\xrightarrow{\simeq}\Ss_{K_{\Phi_2}}(Q_{\Phi_2},D_{\Phi_2})$. By construction, it preserves the tower structures on either side.

If, further, we choose a rational polyhedral cone $\sigma_1\subset W_{\Phi_1}(\Real)(-1)$ and set $\sigma_2=\on{int}(\gamma)(\sigma_2)\in W_{\Phi_2}(\Real)(-1)$, then $\rho(\gamma,q)$ induces isomorphisms:
\[
\Ss_{K_{\Phi_1}}(Q_{\Phi_1},D_{\Phi_1},\sigma_1)\xrightarrow{\simeq}\Ss_{K_{\Phi_2}}(Q_{\Phi_2},D_{\Phi_2},\sigma_2)\;;\;\mathcal{Z}_{K_{\Phi_1}}(Q_{\Phi_1},D_{\Phi_1},\sigma_1)\xrightarrow{\simeq}\mathcal{Z}_{K_{\Phi_2}}(Q_{\Phi_2},D_{\Phi_2},\sigma_2).
\]

\subsubsection{}
Fix an admissible rpcd $\Sigma^\beef$ for $(G^\beef,X^{\beef},{K}^{\beef})$ and let $\Sigma$ be the induced admissible rpcd for $(G,X,K)$. Associated with this is a map of toroidal compactifications (cf.~\ref{background:prp:funct}) $\Sh^{\Sigma}_K\to\Sh^{\Sigma^\beef}_{{K}^{\beef}}$. By~\cite[3.4]{harris:functorial}, this map identifies $\Sh^{\Sigma}_K$ with the normalization of $\Sh^{\Sigma^\beef}_{{K}^{\beef}}$ in $\Sh_K$.

Now, assume that $\Sigma^\beef$ is smooth. Let $\mathcal{S}^{\Sigma^\beef}_{{K}^{\beef}}$ be the Chai-Faltings compactification of $\mathcal{S}_{{K}^{\beef}}$ (cf.~\ref{background:thm:chaifal}). Let $\Ss^{\Sigma}_K$ be the normalization of $\mathcal{S}^{\Sigma^\beef}_{{K}^{\beef}}$ in $\Sh_K$

Choose $\Upsilon=[(\Phi,\sigma)]$ in $\on{Cusp}_K^{\Sigma}(G,X)$; and set $\Upsilon^\beef=\iota_*\Upsilon=[({\Phi}^\beef,\sigma^\beef)]$. We obtain a diagram:
\begin{diagram}
  Z_{K_\Phi}(Q_\Phi,D_\Phi,\sigma)&\rTo&Z_{{K}^{\beef}_{\Phi^\beef}}(Q_{\Phi^\beef},D_{\Phi^\beef},\sigma^\beef)&\rInto&\mathcal{Z}_{K^\beef_{\Phi^\beef}}(Q_{\Phi^\beef},D_{\Phi^\beef},\sigma^\beef)\\
  \dTo^{\simeq}&&\dTo^{\simeq}&&\dTo^{\simeq}\\
  Z_K(\Upsilon)&\rTo&Z_{{K}^{\beef}}(\Upsilon^\beef)&\rInto&\mathcal{Z}_{{K}^{\beef}}(\Upsilon^\beef),
\end{diagram}
where the vertical maps are canonical isomorphisms. Here, we are using the following: For any $h\in X$, twisting by $h(i)$ is a Cartan involution on $G/\Gm$. Therefore, $Z_G^{\circ}$ is isogenous to a product of $\Gm$ and a compact torus. By~\eqref{background:lem:deltacirc_trivial}, we then conclude that the group $\Delta_K^{\circ}(\Phi)$ is trivial, and so $Z_{K_\Phi}(Q_\Phi,D_\Phi,\sigma)$ does indeed map isomorphically onto $Z_K(\Upsilon)$.

Let $\mathcal{Z}_K(\Upsilon)$ be the normalization of $\mathcal{Z}_{{K}^{\beef}}(\Upsilon^\beef)$ in $Z_K(\Upsilon)$. Then we see that there is a canonical isomorphism $\mathcal{Z}_{K_\Phi}(Q_\Phi,D_\Phi,\sigma)\xrightarrow{\simeq}\mathcal{Z}_K(\Upsilon)$. Moreover, the map
\[
\mathcal{Z}_K(\Upsilon)\to\mathcal{Z}_{{K}^{\beef}}(\Upsilon^\beef)\into\mathcal{S}^{\Sigma^\beef}_{{K}^{\beef}}
\]
lifts to a map $\mathcal{Z}_K(\Upsilon)\to\Ss^{\Sigma}_K$, extending the locally closed immersion $Z_K(\Upsilon)\into\Sh^{\Sigma}_K$.

Also, for an integer $n\geq 1$, let $K^\beef(n)\subset G^\beef(\Adele_f)$ be the full level-$n$ compact open subgroup defined as in~\eqref{boundary:lem:finite_cover_torsor}.

We can now state our main result on the structure of $\Ss^{\Sigma}_K$:
\begin{thm}\label{strata:thm:strata}
\mbox{}
\begin{enumerate}
 \item~\label{strata:abelian_torsor}Suppose that $K_p = K^\beef(n)_p\cap G(\Rat_p)$, for some $n\geq 1$. Then, for any clr $\Phi$ for $(G,X)$, the abelian scheme $A_K(\Phi)\to \Sh_{K_{\Phi,h}}(G_{\Phi,h},D_{\Phi,h})$ extends to an abelian scheme $\mathcal{A}_K(\Phi)\to\Ss_{K_{\Phi,h}}(G_{\Phi,h},D_{\Phi,h})$, and the $A_K(\Phi)$-torsor structure on 
\[
 \Sh_{\overline{K}_\Phi}(\overline{Q}_\Phi,\overline{D}_\Phi)\to \Sh_{K_{\Phi,h}}(G_{\Phi,h},D_{\Phi,h})
 \] 
 extends to an $\mathcal{A}_K(\Phi)$-torsor structure on 
\[
 \Ss_{\overline{K}_\Phi}(\overline{Q}_\Phi,\overline{D}_\Phi)\to\Ss_{K_{\Phi,h}}(G_{\Phi,h},D_{\Phi,h}).
\]
  \item~\label{strata:strataemb}For any $\Upsilon\in\on{Cusp}^{\Sigma}_K(G,X)$, the map $\mathcal{Z}_K(\Upsilon)\to\Ss^{\Sigma}_K$ is a locally closed immersion. We have a stratification:
      \[
       \Ss^{\Sigma}_K=\bigsqcup_{\Upsilon}\mathcal{Z}_K(\Upsilon),
      \]
      where $\Upsilon$ ranges over $\on{Cusp}_K^{\Sigma}(G,X)$.
\item~\label{strata:strataclosure}
 For any fixed $\Upsilon$, the closure of $\mathcal{Z}_K(\Upsilon)$ in $\Ss_K^\Sigma$ is precisely the closed sub-space:
\[
 \overline{\mathcal{Z}}_K(\Upsilon)=\bigsqcup_{\Upsilon'\preccurlyeq\Upsilon}\mathcal{Z}_K(\Upsilon').
\]
  \item~\label{strata:torsor}For any clr $\Phi$, $\Ss_{K_\Phi}(Q_\Phi,D_\Phi)$ has the structure of an $\mb{E}_K(\Phi)$-torsor over $\Ss_{\overline{K}_\Phi}(\overline{Q}_\Phi,\overline{D}_\Phi)$, extending that of $\Sh_{K_\Phi}(Q_\Phi,D_\Phi)$ over $\Sh_{\overline{K}_\Phi}(\overline{Q}_\Phi,\overline{D}_\Phi)$. In particular, for any rational polyhedral cone $\sigma\subset W_\Phi(\Real)(-1)$, $\Ss_{K_\Phi}(Q_{\Phi},D_{\Phi},\sigma)$ is a twisted torus embedding for the torus $\mb{E}_K(\Phi)$, and $\mathcal{Z}_{K_\Phi}(Q_\Phi,D_\Phi,\sigma)$ is its closed stratum.
  \item~\label{strata:strataisom}Suppose that $\Upsilon=[(\Phi,\sigma)]$. Let $\widehat{\Ss}_{K_{\Phi}}(Q_{\Phi},D_{\Phi},\sigma)$ be the formal completion of $\Ss_{K_\Phi}(Q_{\Phi},D_{\Phi},\sigma)$ along the closed stratum $\mathcal{Z}_{K_\Phi}(Q_\Phi,D_\Phi,\sigma)$. Then the canonical isomorphism 
  \[
  \mathcal{Z}_{K_\Phi}(Q_\Phi,D_\Phi,\sigma)\xrightarrow{\simeq}\mathcal{Z}_K({\Upsilon})
   \]
  lifts to an isomorphism of formal schemes:
      \[
       \widehat{\Ss}_{K_{\Phi}}(Q_{\Phi},D_{\Phi},\sigma)\xrightarrow{\simeq}\bigl(\Ss^{\Sigma}_K\bigr)^{\wedge}_{\mathcal{Z}_K(\Upsilon)}.
      \]
      This isomorphism restricts to the one from~\eqref{background:thm:pink}\eqref{pink:completion} over the generic fiber.
\end{enumerate}
\end{thm}

\begin{rem}\label{strata:rem:conedecomp}
\mbox{}
\begin{itemize}
\item It follows from~\eqref{strata:strataisom} and~\eqref{appendix:lem:abstract_hecke} that the integral model $\Ss^{\Sigma}_K$ for $\Sh^{\Sigma}_K$ does not depend on the choice of $\Sigma^\beef$ such that $\Sigma=\iota^*\Sigma^\beef$.
\item Suppose that $\Sigma'$ is an admissible rpcd for $(G,X,K)$ that refines $\Sigma$; then the methods of~\cite[Ch. II]{kkms} allow us to construct an open embedding $\Ss_K\into\Ss^{\Sigma'}_K$ over $\Reg{E,(v)}$, and a birational map $\Ss^{\Sigma'}_K\to\Ss^\Sigma_K$, which is an integral model over $\Reg{E,(v)}$ for $\Sh^{\Sigma'}_K\to\Sh^{\Sigma}_K$, and which has a stratification satisfying, \emph{mutatis mutandum}, the conclusions of~\eqref{strata:thm:strata}. Alternatively, one can also argue as in~\cite[THeorem (2.4.12)]{hoermann:thesis}, and show that we can arrange the choice of $\Sigma^\beef$ so that $\Sigma$ is smooth.

In any case, we can and will assume that there exist good integral models as above for compactifications associated with \emph{smooth} complete rpcds.  
\end{itemize}
\end{rem}

We postpone the proof of the theorem to \eqref{strata:subsec:integral}; in brief, it amounts to putting together~\eqref{background:thm:pink} and~\eqref{boundary:thm:main}. For now, we can quickly deduce Theorems~\ref{main} and~\ref{morita:good_reduction} of the introduction.

\begin{proof}[Proof of Theorem~\ref{main}]
Fix $\Upsilon=[(\Phi,\sigma)]$ in $\on{Cusp}_K^{\Sigma}(G,X)$. Combining assertion~\eqref{strata:strataisom} of~\eqref{strata:thm:strata} with Artin approximation, we find there is an \'etale neighborhood $V\to\Ss^{\Sigma}_K$ of $\mathcal{Z}_K(\Upsilon)$ such that the open immersion $V\vert_{\Ss_K}\into V$ is again \'etale over the open immersion $\Ss_{K_\Phi}(Q_\Phi,D_\Phi)\into\Ss_{K_\Phi}(Q_{\Phi},D_{\Phi},\sigma)$. By assertion~\eqref{strata:torsor}, this is a twisted toric embedding, and so the complement of $\Ss_{K_\Phi}(Q_\Phi,D_\Phi)$ in $\Ss_{K_\Phi}(Q_{\Phi},D_{\Phi},\sigma)$ is a relative effective Cartier divisor over $\Reg{E,(v)}$. Hence, the complement of $V\vert_{\Ss_K}$ in $V$ is a relative Cartier divisor over $\Reg{E,(v)}$. Now, using assertion~\eqref{strata:strataemb}, we see that the complement of $\Ss$ in $\Ss^{\Sigma}_K$ must be a relative Cartier divisor.

By~\eqref{strata:rem:conedecomp}, we can replace $\Sigma$ by a smooth refinement.

Then both $\Ss_{K_\Phi}(Q_\Phi,D_\Phi)$ and $\Ss_{K_\Phi}(Q_{\Phi},D_{\Phi},\sigma)$ are smooth over $\Ss_{\overline{K}_\Phi}(\overline{Q}_\Phi,\overline{D}_\Phi)$. Since $\Ss_K\into\Ss^{\Sigma}_K$ is \'etale locally isomorphic to $\Ss_{K_\Phi}(Q_\Phi,D_\Phi)\into\Ss_{K_\Phi}(Q_{\Phi},D_{\Phi},\sigma)$, we find that the singularities of $\Ss^{\Sigma}_K$ can be no worse than those of $\Ss_K$.
\end{proof}

\begin{corollary}\label{strata:cor:proper}
$\Ss_K$ is projective over $\Reg{E,(v)}$ if and only if $G^{\ad}$ is anisotropic; equivalently, if and only if $G^{\ad}(\Rat)$ contains no non-trivial unipotent elements.
\end{corollary}
\begin{proof}
Indeed, it is clear from the theorem and the description of the stratification that $\Ss_K$ is projective if and only if $G$ does not admit any proper parabolic sub-groups defined over $\Rat$.
\end{proof}

\subsubsection{}\label{strata:subsubsec:morita}
Before we prove Theorem~\ref{morita:good_reduction}, let us first recall the \defnword{Mumford-Tate group} $\MT_A$ associated with an abelian variety $A$ over $\Comp$: One way to define it is as the fundamental group of the Tannakian category of polarizable rational Hodge structures generated by the rational Hodge structure $H^1(A(\Comp),\Rat)$ (cf.~\cite[Ch. II]{dmos}). In particular, it is a connected reductive group and there is a canonical map $h_A:\bb{S}\rightarrow\MT_{A,\Real}$ that gives rise to the Hodge decomposition of $H^1(A(\Comp),\Comp)$. The pair $(\MT_A,X_A)$, where $X_A$ is the $\MT_A(\Real)$-conjugacy class of $h_A$, is a Shimura datum of Hodge type.

Suppose now that $A$ is defined over a number field $F$. The \defnword{Mumford-Tate group} $\MT_A$ of $A$ is $\MT_{\sigma^*A}$, for any embedding $\sigma:F\into\Comp$. The main result of \cite[Ch. I]{dmos} shows that $\MT_A$ does not depend on the choice of embedding. We can now restate Theorem~\ref{morita:good_reduction} as follows:
\begin{thm}\label{strata:thm:morita}
Suppose that $\MT_A$ is anisotropic modulo center. Then $A$ has potentially good reduction at all finite places of $F$.
\end{thm}
\begin{proof}
 Using Zarhin's trick~\cite[\S 6]{zarhin:finiteness}, we can replace $A$ by an abelian variety isogenous to $A^8$, and assume that $A$ is principally polarized over $F$. Extending $F$ if necessary, we can assume that it contains the reflex field $E=E(\MT_A,X_A)$. Fix $\sigma:F\into\Comp$, and set $H=H^1\bigl((\sigma^*A)(\Comp),\Rat\bigr)$ equipped with a pairing attached to the principal polarization on $\sigma^*A$. We then have a natural embedding of Shimura data:
 \[
  (G,X)\coloneqq(\MT_A,X_A)\into (G^\beef,X^{\beef})\coloneqq (\GSp(H),\on{S}^{\pm}(H)).
 \]

 Fix a prime $p$ and a place $v\vert p$ for $E$, and a neat level sub-group ${K}^{\beef}\subset\mathcal{G}(\Adele_f)$, so that ${K}^{\beef}_p$ is the stabilizer of $H^1(A,\Int_p)$. Set $K={K}^{\beef}\cap G(\Adele_f)$. Let $\Ss_K\to \mathcal{S}_{{K}^{\beef}}$ be the corresponding finite map of integral models of Shimura varieties over $\Reg{E,(v)}$. By~\eqref{strata:cor:proper}, $\Ss_K$ is proper over $\Reg{E,(v)}$.

 Let $\mathcal{A}\to\mathcal{S}_{{K}^{\beef}}$ be the universal abelian scheme. By construction, there is a finite extension $F'/F$ and a point $x\in\Ss_K(F')$ such that $\mathcal{A}_x$ is isomorphic to $A$. Since $\Ss_K$ is proper, for any place $v'\vert v$ of $F'$, $x$ extends to an $\Reg{F',(v')}$-valued point of $\Ss_K$, implying that $A$ has potentially good reduction over $v$.

 Since $v$ was arbitrary, this proves the theorem.
\end{proof}

\subsubsection{}
For any scheme $S$, write $\pi_0(S)$ for its set of connected components. The following easy corollary to~\eqref{strata:thm:strata} is often useful. For instance, it implies the geometric irreducibility of the moduli space of polarized K3 surfaces of degree $2d$ over $\Field_p$ when $p\nmid d^2$ (cf.~\cite{mp:tatek3}).
\begin{corollary}\label{strata:cor:conncomp}
Suppose that the special fiber $k(v)\otimes \Ss_K$ is geometrically reduced. Then, for any finite extension $F/E$ and any place $w\vert v$ of $F$, the natural maps:
\[
 \pi_0\bigl(F\otimes_E\Sh_K\bigr)\leftarrow\pi_0\bigl(\Reg{F,(w)}\otimes_{\Reg{E,(v)}}\Ss_K\bigr)\rightarrow\pi_0\bigl(k(w)\otimes_{\Reg{E,(v)}}\Ss_K\bigr)
\]
are both isomorphisms.
\end{corollary}
\begin{proof}
 By Theorem~\ref{main}, the hypothesis implies that $k(v)\otimes\Ss^{\Sigma}_K$ is also geometrically reduced. Since $\Ss_K$ is fiber-wise dense in $\Ss^{\Sigma}_K$, we reduce to showing the following general statement:

 Suppose that $S$ is a flat, proper algebraic space over $\Reg{E,(v)}$ with geometrically reduced special fiber. Then the natural maps:
 \[
  \pi_0\bigl(F\otimes_ES)\leftarrow\pi_0\bigl(\Reg{F,(w)}\otimes_{\Reg{E,(v)}}S\bigr)\rightarrow\pi_0\bigl(k(w)\otimes_{\Reg{E,(v)}}S\bigr)
 \]
 are isomorphisms. Indeed, by replacing $S$ with $\Spec H^0(S,\Reg{S})$, we are reduced to the case where $S$ is finite and \'etale over $\Reg{E,(v)}$, where the statement is obvious.
\end{proof}

\subsubsection{}\label{strata:subsubsec:hecke}
We can also extend Hecke actions to the compactifications. More generally, suppose that we have an embedding of Shimura data:
\[
 \eta:(G',X')\into (G,X).
\]
Fix a compact open sub-group $K'\subset G'(\Adele_f)$, and $g\in G(\Adele_f)$ such that $g\eta(K')g^{-1}\subset K$. Let $\Sigma'$ be an admissible rpcd for $(G',X')$ refining $(\eta,g)^*\Sigma$.

Let $E'=E(G',X')$ be the reflex field, and let $v'\vert v$ be a place of $E'$ above $v'$. Consider the open immersion of algebraic spaces over $E'$:
\[
 \Sh_{K'}=\Sh_{K'}(G',X')\into\Sh_{K'}^{\Sigma'}(G',X')=\Sh_{K'}^{\Sigma'}.
\]
Using the symplectic embedding $\iota\circ\eta$ of $(G',X')$,~\eqref{strata:thm:strata} and~\eqref{strata:rem:conedecomp}, we obtain a normal integral model $\Ss_{K'}\into\Ss_{K'}^{\Sigma'}$ over $\Reg{E,(v')}$ for this immersion.

By construction, the map $(\eta,g):\Sh_{K'}\to E'\otimes_E\Sh_K$ extends to a finite map of normal schemes $\Ss_{K'}\to\Reg{E',(v')}\otimes_{\Reg{E,(v)}}\Ss_K$. More generally, for any clr $\Phi'$ for $(G',X')$ with $(\eta,g)_*\Phi'=\Phi$, the map $\Sh_{K'_{\Phi'}}(Q_{\Phi'},D_{\Phi'})\to E'\otimes_E\Sh_{K_\Phi}(Q_\Phi,D_\Phi)$ (cf.~\ref{background:eqn:phitildephi}) extends to a finite map:
\begin{align}\label{strata:eqn:phi'phi}
 \Ss_{K'_{\Phi'}}(Q_{\Phi'},D_{\Phi'})\to\Reg{E',(v')}\otimes_{\Reg{E,(v)}}\Ss_{K_\Phi}(Q_\Phi,D_\Phi).
\end{align}

Given $\sigma'\in\Sigma'(\Phi')$ and $\sigma\in\Sigma(\Phi)$ such that $\eta(\sigma')$ is contained in $\sigma$ and intersects $\sigma^{\circ}$ non-trivially,~\eqref{strata:eqn:phi'phi} extends to a map:
\[
 \Ss_{K'_{\Phi'}}(Q_{\Phi'},D_{\Phi'},\sigma')\to\Reg{E',(v')}\otimes_{\Reg{E,(v)}}\Ss_{K_\Phi}(Q_{\Phi},D_{\Phi},\sigma).
\]
This carries $\mathcal{Z}_{K'_{\Phi'}}(Q_{\Phi'},D_{\Phi'},\sigma')$ into $\Reg{E',(v')}\otimes_{\Reg{E}}\mathcal{Z}_{K_\Phi}(Q_\Phi,D_\Phi,\sigma)$.

Therefore, for any $\Upsilon'=[(\Phi',\sigma')]$ in $\on{Cusp}_{K'}^{\Sigma'}(G',X')$ with $\Upsilon=(\eta,g)_*\Upsilon'=[(\Phi,\sigma)]$, we obtain a map:
\begin{align}\label{strata:eqn:functstratacomp}
(\eta,g):\bigl(\Ss^{\Sigma'}_{K'}\bigr)^{\wedge}_{\mathcal{Z}_{K'}(\Upsilon')}\xrightarrow{\simeq}\widehat{\Ss}_{K'_{\Phi'}}(Q_{\Phi'},D_{\Phi'},\sigma')\to\widehat{\Ss}_{K_{\Phi}}(Q_{\Phi},D_{\Phi},\sigma)\xrightarrow{\simeq}\bigl(\Ss^{\Sigma}_{K}\bigr)^{\wedge}_{\mathcal{Z}_{K}(\Upsilon)}.
\end{align}

From~\eqref{background:prp:funct} and~\eqref{appendix:lem:abstract_hecke}, we now obtain:
\begin{prp}\label{strata:prp:hecke}
Suppose that $\Sigma$ is complete. Then the map $\Ss_{K'}\to \Reg{E',(v')}\otimes_{\Reg{E,(v)}}\Ss_K$ extends uniquely to a map
\[
 (\eta,g):\Ss^{\Sigma'}_{K'}\to\Reg{E',(v')}\otimes_{\Reg{E,(v)}}\Ss^{\Sigma}_K
\]
satisfying the following property: For every cusp label $\Upsilon'$ with $\Upsilon=(\eta,g)_*\Upsilon'$, $(\eta,g)$ carries $\mathcal{Z}_{K'}(\Upsilon')$ into $\Reg{E',(v')}\otimes_{\Reg{E,(v)}}\mathcal{Z}_K(\Upsilon)$, and the corresponding map between the formal completions along these locally closed sub-schemes is identified with that obtained from~\eqref{strata:eqn:functstratacomp}.
\end{prp}
\qed

\subsection{Stratifications of the integral model}\label{strata:subsec:integral}

The notation will be as above.

\subsubsection{}\label{strata:subsubsec:abscheme_shimura_data}
Let $\Phi$ be a clr for $(G,X,K)$ and let ${\Phi}^\beef$ be the induced clr for $(G^\beef,X^{\beef},{K}^{\beef})$. By the analytic description of $A_K(\Phi)$ and $A_{{K}^{\beef}}({\Phi}^\beef)\vert_{\Sh_{K_{\Phi,h}}(G_{\Phi,h},D_{\Phi,h})}$ in~\eqref{background:subsubsec:abscheme}, we find that the former is associated with the $G_{\Phi,h}$-representation $V_\Phi$ with the lattice $K_{\Phi,V}\subset V_\Phi(\Adele_f)$ and the latter is associated with the representation $V_{\Phi^\beef}$ and the lattice ${K}^{\beef}_{{\Phi}^\beef,V}\subset V_{\Phi^{\beef}}(\Adele_f)$. The homomorphism of abelian schemes
\begin{align}\label{strata:eqn:Akphi_generic_embedding}
 A_K(\Phi)\to A_{{K}^{\beef}}({\Phi}^\beef)\vert_{\Sh_{K_{\Phi,h}}(G_{\Phi,h},D_{\Phi,h})}
\end{align}
is associated with the natural inclusion $V_\Phi\into V_{\Phi^{\beef}}$. 

\begin{prp}\label{strata:prp:abelian_scheme}
Suppose that $K_{\Phi,V,p}={K}^{\beef}(n)_{{\Phi}^\beef,V,p}\cap V_\Phi(\Rat_p)$, for some $n\geq 1$. Then the normalization of $\mathcal{A}_{{K}^{\beef}}({\Phi}^\beef)\vert_{\Ss_{K_{\Phi,h}}(G_{\Phi,h},D_{\Phi,h})}$ in $A_K(\Phi)$ is an abelian scheme $\mathcal{A}_K(\Phi)\to\Ss_{K_{\Phi,h}}(G_{\Phi,h},D_{\Phi,h})$.
\end{prp}
\begin{proof}
First, assume that $n=1$. In this case, the homomorphism~\eqref{strata:eqn:Akphi_generic_embedding} is a prime-to-$p$ isogeny onto its image

Let $W_\bullet H\subset H$ be the filtration determined by the parabolic subgroup $P^{\beef}$. Then
\[
 V_{\Phi^{\beef}} \xrightarrow{\simeq} \Hom\bigl(\gr^W_0H,\gr^W_{-1}H\bigr)
\]
is equipped with a symplectic form arising from that on $\gr^W_{-1}H$. This induces a polarization on $A_{{K}^{\beef}}({\Phi}^\beef)$ of prime-to-$p$ degree.

Let $V'\subset V_{\Phi^{\beef}}$ be the orthogonal complement to $V_\Phi$. Then, just as in~\eqref{background:subsubsec:abscheme}, to the $G_{\Phi,h}$-representation $V'$ and the lattice $V'(\Adele_f)\cap{K}^{\beef}_{{\Phi}^\beef,V}$, we can attach an abelian subscheme $A'\subset A_{{K}^{\beef}}({\Phi}^\beef)\vert_{\Sh_{K_{\Phi,h}}(G_{\Phi,h},D_{\Phi,h})}$ equipped with a canonical polarization. Moreover, the product homomorphism:
\begin{align}\label{strata:eqn:product_hom}
 \beta:A_K(\Phi)\times_{\Sh_{K_{\Phi,h}}(G_{\Phi,h},D_{\Phi,h})} A'\to A_{{K}^{\beef}}({\Phi}^\beef)\vert_{\Sh_{K_{\Phi,h}}(G_{\Phi,h},D_{\Phi,h})}
\end{align}
is an isogeny of abelian schemes.

The proposition in the case $n=1$ is now immediate from~\eqref{appendix:lem:abelian_scheme_extension}. In particular, if $K^\flat = K^p(K^\sharp_p\cap G(\Rat_p))$, then
\[
\mathcal{A}_{K^\flat}(\Phi)\to \Sh_{K^\flat_{\Phi,h}}(G_{\Phi,h},D_{\Phi,h})
\]
is an abelian scheme.

If $n\geq 2$, then there exists a compact open subgroup $K^{\beef,\diamond}\subset G^\beef(\Adele_f)$ such that $K^{\beef,\diamond}_p = K^\beef(n)_p$ and such that $K = K^{\beef,\diamond}\cap G(\Adele_f)$.

One can show as in the proof of~\eqref{boundary:lem:finite_cover_torsor} that 
\[
A_{K^\beef(n)}(\Phi^\beef)\to \Sh_{K^\beef(n)_{\Phi^\beef}}(G_{\Phi^\beef,h},D_{\Phi^\beef,h})
\]
can be identified with the abelian scheme $\SHom(\frac{1}{n} \mathsf{X},\mathcal{Q}^{\ab})$, where $\mathcal{Q}^{\ab}$ is the universal principally polarized abelian scheme over the base Shimura variety. From this one can deduce that
\[
\mathcal{A}_{K^{\beef,\diamond}}(\Phi^\beef)\to \Ss_{K^{\beef,\diamond}_{\Phi^\beef}}(G_{\Phi^\beef,h},D_{\Phi^\beef,h})
\]
is a abelian scheme. 

By abuse of notation, write $\mathcal{A}_{K^\flat}(\Phi)$, $\mathcal{A}_{K^\beef}(\Phi^\beef)$, $\mathcal{A}_{K^{\beef,\diamond}}(\Phi^\beef)$ for the base change of these abelian schemes over $\Ss_{K_{\Phi,h}}(G_{\Phi,h},D_{\Phi,h})$ along the obvious morphisms. 

We can then consider the fiber product
\[
\mathcal{A}_{K^\flat}(\Phi)\times_{\mathcal{A}_{K^\beef}(\Phi^\beef)}\mathcal{A}_{K^{\beef,\diamond}}(\Phi^\beef),
\]
which is a disjoint union of abelian schemes finite over $\mathcal{A}_{K^\flat}(\Phi)$. Now, as can be checked over the generic fiber, $\mathcal{A}_K(\Phi)$ is identified with one of these abelian schemes.
\end{proof}

\begin{corollary}\label{strata:cor:abelian_torsor}
Assertion~\eqref{strata:abelian_torsor} of~\eqref{strata:thm:strata} holds.
\end{corollary}
\begin{proof}
It only remains to show that $\Ss_{\overline{K}_\Phi}(\overline{Q}_\Phi,\overline{D}_\Phi)\to\Ss_{K_{\Phi,h}}(G_{\Phi,h},D_{\Phi,h})$ is an $\mathcal{A}_K(\Phi)$-torsor over $\Ss_{K_{\Phi,h}}(G_{\Phi,h},D_{\Phi,h})$. It is easy to see that this morphism is projective and $\mathcal{A}_K(\Phi)$-equivariant. 

To show that it is an $\mathcal{A}_K(\Phi)$-torsor, we can work \'etale locally on the base, and assume that we have a surjective \'etale morphism $\mathcal{U}\to \Ss_{K_{\Phi,h}}(G_{\Phi,h},D_{\Phi,h})$ such that $\Sh_{\overline{K}_\Phi}(\overline{Q}_\Phi,\overline{D}_\Phi)\vert_{\mathcal{U}[p^{-1}]}$ is a trivializable $A_K(\Phi)$-torsor over $\mathcal{U}[p^{-1}]$, and that $\Ss_{\overline{K}^\beef_{\Phi^\beef}}(\overline{Q}_{\Phi^\beef},\overline{D}_{\Phi^\beef})\vert_{\mathcal{U}}$ is a trivializable $\mathcal{A}_{{K}^{\beef}}({\Phi}^\beef)$-torsor over $\mathcal{U}$.

Fix a section $\mathcal{U}[p^{-1}]\to \Sh_{\overline{K}_\Phi}(\overline{Q}_\Phi,\overline{D}_\Phi)$; it induces a section of $\Sh_{\overline{K}^\beef_{\Phi^\beef}}(\overline{Q}_{\Phi^\beef},\overline{D}_{\Phi^\beef})$, which, by the valuative criterion for properness and Weil's extension theorem~\cite[\S~4.4, Theorem 1]{blr}, extends to a section of $\Ss_{\overline{K}^\beef_{\Phi^\beef}}(\overline{Q}_{\Phi^\beef},\overline{D}_{\Phi^\beef})$ over $\mathcal{U}$.\footnote{Note that this is precisely what fails if we worked instead with $\mb{E}_K(\Phi)$-torsors: There can be plenty of sections that are not integral, and so one has to work to show integrality.} Here, we are using the fact that $\Ss_{\overline{K}^\beef_{\Phi^\beef}}(\overline{Q}_{\Phi^\beef},\overline{D}_{\Phi^\beef})$ is isomorphic to an abelian scheme over $\mathcal{U}$.

This section now allows us to identify the map of $\mathcal{U}$-schemes $\Sh_{\overline{K}_\Phi}(\overline{Q}_\Phi,\overline{D}_\Phi)\vert_{\mathcal{U}}\to\Ss_{\overline{K}^\beef_{\Phi^\beef}}(\overline{Q}_{\Phi^\beef},\overline{D}_{\Phi^\beef})\vert_{\mathcal{U}}$
with $A_K(\Phi)\vert_{\mathcal{U}}\to\mathcal{A}_{{K}^{\beef}}({\Phi}^\beef)\vert_{\mathcal{U}}$.

This implies that it must also identify $\Ss_{\overline{K}_\Phi}(\overline{Q}_\Phi,\overline{D}_\Phi)\vert_{\mathcal{U}}$ with $\mathcal{A}_K(\Phi)\vert_{\mathcal{U}}$ as schemes over $\mathcal{U}$, and so finishes the proof.
\end{proof}

\begin{prp}\label{strata:prp:ztildephiintersect}
For any $\Upsilon^\beef$ in $\on{Cusp}^{\Sigma^\beef}_{{K}^{\beef}}(G^\beef,X^{\beef})$, the reduced locally closed sub-scheme
\[
 \bigl(\Ss_K^{\Sigma}\times_{\mathcal{S}_{{K}^{\beef}}^{\Sigma^\beef}}\mathcal{Z}_{{K}^{\beef}}(\Upsilon^\beef)\bigr)_{\on{red}}\into\Ss_K^{\Sigma}
\]
is normal and flat over $\Int_{(p)}$.
\end{prp}
\begin{proof}
 This is a local statement. Choose a closed point $s_0\in\Ss_K^{\Sigma}(\overline{\Field}_p)$ mapping to $s_0^\beef\in\mathcal{Z}_{{K}^{\beef}}(\Upsilon^\beef)(\overline{\Field}_p)$. Suppose that $\Upsilon^\beef=[({\Phi}^\beef,\sigma^\beef)]$, so that we can identify the complete local ring of $\mathcal{S}_{{K}^{\beef}}^{\Sigma^\beef}$ at $s_0^\beef$ with $R\coloneqq R({\Phi}^\beef,\sigma^\beef,s_0^\beef)$, the complete local ring of $\Ss_{K^\beef_{\Phi^\beef}}(Q_{\Phi^\beef},D_{\Phi^\beef},\sigma^\beef)$ at a closed point $s^\beef_0$ of $Z_{K^\beef_{\Phi^\beef}}(Q_{\Phi^\beef},D_{\Phi^\beef},\sigma^\beef)$. 

 Let $R_G$ be the complete local ring of $\Ss^{\Sigma}_K$ at $s_0$, let $R^{\sab}$ be the complete local ring of $\Ss_{\overline{K}^\beef_{\Phi^\beef}}(\overline{Q}_{\Phi^\beef},\overline{D}_{\Phi^\beef})$ at the image of $s^\beef_0$, and let $R_G^{\sab}$ be the normalization in $R_G$ of the image of $R^{\sab}$.

 According to~\eqref{boundary:thm:main}\eqref{main:reduction}, the $\mb{E}_{{K}^{\beef}}({\Phi}^\beef)$-torsor $\Ss_{K^\beef_{\Phi^\beef}}(Q_{\Phi^\beef},D_{\Phi^\beef})$ admits a canonical reduction of structure group to an $\mb{E}_G$-torsor $\mb{\Xi}_G$ over $\Spec R_G^{\sab}$. Here, $\mb{E}_G$ is a sub-torus of $\mb{E}_{{K}^{\beef}}({\Phi}^\beef)$ with co-character group $\mb{B}_G$. 

 Moreover, by assertion~\eqref{main:local} of the same theorem, there exists an isogeny $\mb{E}_G^\diamond\to \mb{E}_G$ of tori, and an $\mb{E}_G^\diamond$-torsor $\mb{\Xi}_G^\diamond$ over $R_G^{\sab}$ that induces $\mb{\Xi}_G$ with the following property: If $\sigma_G=\sigma^\beef\cap(\Real\otimes\mb{B}_G)$ with corresponding torus embedding $\mb{E}_G^\diamond\hookrightarrow \mb{E}_G^\diamond(\sigma_G)$ and twisted torus embedding
 \[
\mb{\Xi}_G^\diamond\hookrightarrow \mb{\Xi}_G^\diamond(\sigma_G),
 \]
 then $R_G$ is identified with a complete local ring at a closed point in the closed stratum of $\mb{\Xi}_G^\diamond(\sigma_G)$.

 Fix a trivialization
 \[
\Spec R_G^{\sab}\times\mb{E}_G^\diamond \xrightarrow{\simeq}\mb{\Xi}^\diamond_G.
 \]
 Using this, we find compatible isomorphisms of $R_G^{\sab}$-algebras:
 \begin{align*}
 R_G^{\sab}\widehat{\otimes}_WB(\sigma^\beef,s^\beef_0)&\xrightarrow{\simeq}R_G^{\sab}\otimes_{R^{\sab}}R;\\
 R_G^{\sab}\widehat{\otimes}_WB^\diamond(\sigma_G,t_0)&\hookrightarrow R_G.
 \end{align*}
 Here, $B(\sigma^\beef,s^\beef_0)$ is the complete local ring of $\mb{E}_{{K}^\beef}({\Phi}^\beef,\sigma^\beef)$ at a point $s^\beef_0$ in its closed stratum, and $B^\diamond(\sigma_G,t_0)$ is the complete local ring of $\mb{E}_G^{\diamond}(\sigma_G)$ at a point $t_0$ mapping to $s^\beef_0$.

 Let $I\subset R$ be the ideal so that $R/I$ is the complete local ring of $\mathcal{Z}_{{K}^{\beef}}(\Upsilon^\beef)$ at $s_0^\beef$. Let $I(\tilde{\sigma}^\beef,s^\beef_0)\subset B(\sigma^\beef,s^\beef_0)$ be the ideal defining the closed stratum in $\Spec B(\sigma^\beef,s^\beef_0)$. Then, using the explicit description of $R_G^{\sab}$-algebras above, we find that:
 \begin{align*}
 R_G\otimes_R(R/I)&\xrightarrow{\simeq}R_G\otimes_{R_G^{\sab}\otimes_{R^{\sab}}R}(R_G^{\sab}\otimes_{R^{\sab}}(R/I)\\
 &\xrightarrow{\simeq}R_G^{\sab}\widehat{\otimes}_W(B^\diamond(\sigma_G,t_0)/I(\sigma^\beef,s^\beef_0)B^\diamond(\sigma_G,t_0)).
 \end{align*}

 So, to finish the proof, it is enough to show that the maximal reduced quotient of the ring
 \[
 B^\diamond(\sigma_G,t_0)/I(\sigma^\beef,s^\beef_0)B^\diamond(\sigma_G,t_0)
 \]
 is formally smooth over $W$. In fact, let $I^\diamond(\sigma_G,t_0)\subset B^\diamond(\sigma_G,t_0)$ be the ideal defining the closed stratum in $\Spec B^\diamond(\sigma_G,t_0)$. Then some finite power of this ideal is contained in the image of $I(\sigma^\beef,s^\beef_0)$. So it is enough to show that the closed stratum in $\mb{E}_G^\diamond(\sigma_G)$ is smooth over $\Int$. But this stratum is isomorphic to the torus with co-character group $\mb{B}^\diamond_G/\langle\sigma_G\rangle$, where $\langle\sigma_G\rangle$ is the sub-group of $\mb{B}^\diamond_G$ generated by $\sigma_G\cap\mb{B}^\diamond_G\subset\mb{B}^\diamond_G$; cf.~\cite[p. 16]{kkms}.
\end{proof}

\begin{corollary}\label{strata:cor:stratadecomp}
Assertions~\eqref{strata:strataemb} and~\eqref{strata:strataclosure} of~\eqref{strata:thm:strata} hold.
\end{corollary}
\begin{proof}
Using~\eqref{background:thm:chaifal}\eqref{chaifal:strata}, we see that we have a decomposition into locally closed sub-spaces:
\[
 \Ss^{\Sigma}_K=\bigsqcup_{\Upsilon^\beef}\bigl(\Ss^{\Sigma}_K\times_{\mathcal{S}^{\Sigma^\beef}_{{K}^{\beef}}}\mathcal{Z}_{{K}^{\beef}}(\Upsilon^\beef)\bigr)_{\on{red}},
\]
where $\Upsilon^\beef$ runs over $\on{Cusp}^{\Sigma^\beef}_{{K}^{\beef}}(G^\beef,X^{\beef})$. So to prove assertion~\eqref{strata:strataemb}, we have to show that, for each such $\Upsilon^\beef$, the natural map
\[
 \bigsqcup_{\Upsilon:\;\iota_*\Upsilon=\Upsilon^\beef}\mathcal{Z}_K(\Upsilon)\to\bigl(\Ss^{\Sigma}_K\times_{\mathcal{S}^{\Sigma^\beef}_{{K}^{\beef}}}\mathcal{Z}_{{K}^{\beef}}(\Upsilon^\beef)\bigr)_{\on{red}}
\]
is an isomorphism. But, by construction,~\eqref{background:thm:pink}\eqref{pink:stratification},~\eqref{background:prp:funct}, and~\eqref{strata:prp:ztildephiintersect}, this is a finite, birational map of normal schemes, and is thus an isomorphism.

Assertion~\eqref{strata:strataclosure} is proven similarly, using the analogue in characteristic $0$~\eqref{background:thm:pink}\eqref{pink:stratification}, and that for the Chai-Faltings compactification~\eqref{background:thm:chaifal}\eqref{chaifal:strata}.
\end{proof}

\subsubsection{}\label{strata:subsubsec:tensors}
Suppose that we have $\Upsilon=[(\Phi,\sigma)]$ in $\on{Cusp}_K^\Sigma(G,X)$. Fix a point $s_0\in\mathcal{Z}_K(\Upsilon)(\overline{\Field}_p)$, and let $R(\Upsilon,s_0)$ be the complete local ring of $\Ss^\Sigma_K$ at $s_0$. Set
\[
V\coloneqq=V(\Upsilon,s_0)=\Spec R(\Upsilon,s_0)\times_{\Ss^\Sigma_K}\Ss_K.
\]
We have a canonical abelian scheme $\mathcal{A}$ over $V$, equipped with canonical tensors:
\[
 \{s_{\alpha,\et}\}\subset H^0\bigl(V[p^{-1}],\Adele_f\otimes\widehat{T}(\mathcal{A})^\otimes\bigr).
\]

Suppose that ${\Phi}^\beef=\iota_*\Phi$. By construction, we have a map $V\to\Ss_{K^\beef_{\Phi^\beef}}(Q_{\Phi^\beef},D_{\Phi^\beef})$ corresponding to a $1$-motif $\mathcal{Q}$ over $V$ such that, for any integer $n$, $\mathcal{A}[n]$ is canonically isomorphic over $V$ to $\mathcal{Q}[n]$. In particular, over $V[p^{-1}]$ we can identify $\widehat{T}(\mathcal{Q})$ with $\widehat{T}(\mathcal{A})$. In particular, we can view $s_{\alpha,\et}$ as a section of $\widehat{T}(\mathcal{Q})^\otimes$.

Now, let $I(\Upsilon,s_0)\subset\underline{\Isom}(H^g(\widehat{\Int}),\widehat{T}(\mathcal{Q}))$ (cf.~\ref{background:subsubsec:1motifsmoduli}) be the \'etale sub-sheaf over $V[p^{-1}]$ consisting of sections $(\eta,u)$ such that, for every $\alpha$, $\eta$ carries $s_{\alpha}\in H(\Adele_f)^\otimes$ to $s_{\alpha,\et}$. The stabilizer $K_{0,\Phi}$ of $H^g(\widehat{\Int})$ in $Q_\Phi(\Adele_f)$ acts on $I(\Upsilon,s_0)$ via pre-composition.
\begin{lem}\label{strata:lem:etaletorsor}
This action makes $I(\Upsilon,s_0)$ a torsor under $K_{0,\Phi}$.
\end{lem}
\begin{proof}
It is easy to see that any two sections of $I(\Upsilon,s_0)$ differ by a unique element of $K_{0,\Phi}$. So we only have to show that $I(\Upsilon,s_0)$ is non-empty. For this, choose any $t\in Z_K(\Upsilon)$ specializing to $s_0$. Let $R(\Upsilon,t)$ be the complete local ring of $\Sh^\Sigma_K$ at $t$, and set
\[
 V(\Upsilon,t)=\Spec R(\Upsilon,t)\times_{\Sh^\Sigma_K}\Sh_K.
\]
Then we have formally \'etale map $V(\Upsilon,t)\to V[p^{-1}]$.

Now, we can identify $R(\Upsilon,t)$ with the complete local ring $R(\Phi,\sigma,t)$ of $\Sh_{K_\Phi}(Q_{\Phi},D_{\Phi},\sigma)$ at $t$. By~\eqref{boundary:prp:twotensors}, for each $\alpha$, the restriction of $s_{\alpha,\et}$ to $V(\Upsilon,t)$ is identified with $s_{\alpha,{\Phi}^\beef,\et}$. So we can use the complex analytic uniformization of $\Sh_{K_\Phi}(Q_\Phi,D_\Phi)(\Comp)$ to deduce that $I(\Upsilon,s_0)$ is non-empty over $V(\Upsilon,t)$, and thus over $V[p^{-1}]$.
\end{proof}

\begin{prp}\label{strata:prp:xikphitorsor}
Assertion~\eqref{strata:torsor} of~\eqref{strata:thm:strata} holds.
\end{prp}

Before presenting the proof, we make a couple of remarks.

\begin{rem}
\label{strata:rem:naive_torsor}
Note that there is genuine content to~\eqref{strata:prp:xikphitorsor}. In general,  if $X\to S$ is a torsor under a torus $T$ over a flat, normal $\mathbb{Z}_{(p)}$-scheme $S$, and if $Y\hookrightarrow X[p^{-1}]$ is a torsor under a sub-torus $T'\subset T$, then the normalization of the Zariski closure of $Y$ in $X$ need not be a $T'$-torsor.

For an example, take $S = \mathrm{Spec}~\mathbb{Z}_{(p)}$, and $X = \mathbb{G}_m^2$, viewed as a trivial torsor over $\mathbb{G}_m^2$, and $Y = \mathrm{Spec}~\mathbb{Q}[T^{\pm 1},U^{\pm 1}]/(TU^{-1} - p^{-1})$, which is a torsor under the diagonal subtorus $\mathbb{G}_m\subset T$. The Zariski closure of $Y$ in $X$ is
\[
\mathrm{Spec}~\mathbb{Z}_{(p)}[T^{\pm 1},U^{\pm 1}]/(pTU^{-1} - 1),
\]
which has empty special fiber! In particular, it is not faithfully flat (though it is normal), and so is no longer a torsor under the diagonal sub-torus.
\end{rem}

\begin{rem}
\label{strata:rem:local_models}
Suppose that one were able to prove~\eqref{strata:prp:xikphitorsor} directly somehow. This would give us good integral models for the mixed Shimura varieties appearing at the boundary. However, this would not be sufficient to prove the main theorem~\ref{strata:thm:strata}. One has to still show that every point of the boundary is accessible through one of these rational boundary components; equivalently, one would have to know that the boundary divisor is flat over $\Int_{(p)}$. The only way I know to see this is to use~\eqref{boundary:thm:main}.
\end{rem}

\begin{proof}[Proof of~\eqref{strata:prp:xikphitorsor}]
  Let ${\Phi}^\beef$ be the clr for $(G^\beef,X^{\beef})$ induced from $\Phi$. Set $\mb{\Xi}\coloneqq\Ss_{K_\Phi}(Q_\Phi,D_\Phi)$ and
  \[
  \mb{\Xi}^\beef\coloneqq \Ss_{\overline{K}_\Phi}(\overline{Q}_\Phi,\overline{D}_\Phi)\times_{\Ss_{\overline{K}^\beef_{\Phi^\beef}}(\overline{Q}_{\Phi^\beef},\overline{D}_{\Phi^\beef})}\Ss_{K^\beef_{\Phi^\beef}}(Q_{\Phi^\beef},D_{\Phi^\beef}).
  \]

  Also, set $\mb{E}=\mb{E}_K(\Phi)$ and $\mb{E}^\beef=\mb{E}_{{K}^{\beef}}({\Phi}^\beef)$, so that we have a homomorphism $\mb{E}\to \mb{E}^\beef$ of tori that is an isogeny onto its image.

  Then $\mb{\Xi}^\beef$ is an $\mb{E}^\beef$-torsor over $\Ss_{\overline{K}_{\Phi}}(\overline{Q}_\Phi,\overline{D}_\Phi)$, while $\mb{\Xi}[p^{-1}]$ is an $\mb{E}$-torsor over its generic fiber. There is an $\mb{E}$-equivariant map $\mb{\Xi}[p^{-1}]\to \mb{\Xi}^\beef$, and so the $\mb{E}$-action on $\mb{\Xi}[p^{-1}]$ lifts to an action on $\mb{\Xi}$.

  We need to show that $\mb{\Xi}$ is an $\mb{E}$-torsor over $\Ss_{\overline{K}_\Phi}(\overline{Q}_\Phi,\overline{D}_\Phi)$. For this, it is sufficient to work over a complete local ring of the base. Fix a point $t_0\in\Ss_{\overline{K}_\Phi}(\overline{Q}_\Phi,\overline{D}_\Phi)(\overline{\Field}_p)$ and let $R(\Phi,t_0)$ be the complete local ring of $\Ss_{\overline{K}_\Phi}(\overline{Q}_\Phi,\overline{D}_\Phi)$ at $t_0$. 

  Choose a rational polyhedral cone $\sigma\in \Sigma(\Phi)$, and a point $s_0\in\mathcal{Z}_{K_\Phi}(Q_\Phi,D_\Phi,\sigma)(\overline{\Field}_p)$ mapping to $t'_0$. If $\Upsilon=[(\Phi,\sigma)]$, we can view $s_0$ as a point of $\mathcal{Z}_K(\Upsilon)$, and thus of $\Ss^{\Sigma}_K$. Suppose that $\sigma^\beef\in\Sigma^\beef({\Phi}^\beef)$ contains $\sigma^\circ$ in its interior and $s_0^\beef$ is the image of $s_0$ in $Z_{K^\beef_{\Phi^\beef}}(Q_{\Phi^\beef},D_{\Phi^\beef},\sigma^\beef)$. Then the complete local ring $R(\sigma,s_0)$ of $\Ss^{\Sigma}_K$ at $s_0$ is a finite algebra over the complete local ring $R(\sigma^\beef,s_0^\beef)$ of $\mathcal{S}^{\Sigma^\beef}_{{K}^{\beef}}$. This algebra is described explicitly in~\eqref{boundary:thm:main}.

  By construction, the sub-ring $R(\Phi,t_0)\subset R(\sigma,s_0)$ is identified with the ring denoted $R_G^{\sab}$ in~\eqref{boundary:thm:main}. Let $\mb{E}^\diamond_G$ be the torus defined there, equipped with a finite homomorphism $\mb{E}^\diamond_G\to \mb{E}^\beef$\footnote{Note that what we have denoted by $\mb{E}^\beef$ here is denoted $\mb{E}$ in ~\eqref{boundary:thm:main}.}

  Let $\mb{B}^\diamond_G\subset \mb{B}^\beef$ be the co-character group of $\mb{E}^\diamond_G$. We claim that $\mb{B}^\diamond_G=\mb{B}$. We will prove this claim in~\eqref{strata:lem:bgb} below. It implies that the homomorphism $\mb{E}^\diamond_G\to\mb{E}^\beef$ is canonically identified with $\mb{E}\to \mb{E}^\beef$. 

  Now it follows from assertion~\eqref{main:local} of~\eqref{boundary:thm:main} that $\mb{\Xi}\vert_{\Spec R(\Phi,t_0)}$ is an $\mb{E}^\diamond_G = \mb{E}$-torsor. It can be verified from the construction that this $\mb{E}$-torsor structure is compatible with that on its generic fiber.
\end{proof}

\begin{lem}\label{strata:lem:bgb}
In the notation of the proof above, we have $\mb{B}^\diamond_G=\mb{B}$.
\end{lem}
\begin{proof}
We have to recall the construction of the sub-group $\mb{B}^\diamond_G\subset\mb{B}^\beef$. First, we have the canonical $\varphi$-module $M_0$ over $K_0=W[p^{-1}]$ associated with the point $s_0^\beef$ of $\mathcal{Z}_{{K}^{\beef}}({\Phi}^\beef,\tilde{\sigma})$; cf.~\eqref{boundary:subsubsec:blowupslogfcrys}. It is equipped with a weight filtration $W_\bullet M_0$ and a symplectic pairing $\psi_0$ with values in $K_0(-1)$. Let $U_{\wt}\subset\GSp(M_0,\psi_0)$ be the center of the unipotent radical of the parabolic subgroup stabilizing $W_\bullet M_0$.

We have a canonical family of $\varphi$-invariant tensors $\{s_{\alpha,\st,0}\}\subset M_0^{\otimes}$ whose stabilizer in $\GL(M_0)$ is isomorphic to $G_{K_0}$.

We have a canonical isomorphism~\eqref{boundary:eqn:liewp_embedding}:
\[
 K_0\otimes\Lie W_{\Phi^{\beef}}\xrightarrow{\simeq}K_0(-1)\otimes\Lie U_{\wt}.
\]

Choose any point $x\in(\Spec R(\sigma,s_0))(\overline{K}_0)$ that does not lie on the boundary divisor. In~\eqref{boundary:lem:liewp_embedding}, we showed that~\eqref{boundary:eqn:liewp_embedding} is obtained as the descent of an isomorphism arising from a composition
\[
\Bst\otimes_{\Rat}H(\Rat)\xrightarrow[\simeq]{1\otimes(\dual{\eta}_p)^{-1}}\Bst\otimes H^1_{\et}\bigl(\mathcal{A}_{x,\overline{K}_0},\Rat_p)\xrightarrow[\simeq]{\beta_{\st,x}}\Bst\otimes_{K_0}M_0.
\]
Here, $\beta_{\st,x}$ is the semistable comparison isomorphism, and $\eta_p$ is the $p$-primary part of any section of $\underline{\Isom}(H^g(\widehat{\Int}),\widehat{T}(\mathcal{Q}))$ over $\overline{K}_0$.

By~\eqref{boundary:cor:tatetensorprps}, for every index $\alpha$, $\beta_{\st,x}$ carries $1\otimes s_{\alpha,\et,x}$ to $1\otimes s_{\alpha,\st,0}$. Therefore, if we choose $\eta$ to be a section of $I(\Upsilon,s_0)$ (which is always possible by~\eqref{strata:lem:etaletorsor}), then, for every $\alpha$,  the above composition will carry $1\otimes s_{\alpha}$ to $1\otimes s_{\alpha,\st,0}$. In particular, if $U_{\wt,G}=U_{\wt}\cap G_{K_0}$, this composition will induce an isomorphism
\[
 \Bst\otimes_{\Rat}\Lie W_\Phi\xrightarrow{\simeq}\Bst\otimes_{K_0}\Lie U_{\wt,G},
\]
which will descend to an isomorphism
\[
 K_0\otimes_{\Rat}\Lie W_\Phi\xrightarrow{\simeq}K_0(-1)\otimes_{K_0}\Lie U_{\wt,G}
\]
that is compatible with~\eqref{boundary:eqn:liewp_embedding}.

Now, by definition, $\mb{B}^\diamond_G\otimes\Rat \subset\mb{B}^\beef\otimes\Rat$ is the intersection of the pre-image of $\Lie U_{\wt,G}(-1)$ under~\eqref{boundary:eqn:liewp_embedding} with a compact open subgroup $K^{\beef,\diamond}_{\Phi^\beef}\subset Q_{\Phi^\beef}(\Adele_f)$ such that
\[
K^{\beef,\diamond}_{\Phi^\beef}\cap Q_{\Phi}(\Adele_f) = K_{\Phi}.
\]
By what we have seen before this is also the intersection of this compact open subgroup with $\Lie W_\Phi$, and this is precisely $\mb{B}$.
\end{proof}

\begin{rem}\label{strata:rem:etale_analog}
The above result has the following \'etale analog, which implies Theorem~\ref{rationality}: Set $\widehat{V}(\mathcal{A}_x)=\Adele_f\otimes\widehat{T}(\mathcal{A}_x)$, and observe that this is equipped with the canonical Weil pairing $\psi_x$ induced from the polarization on $\mathcal{A}_x$. Let $U_{\Adele_f}\subset\GSp(\widehat{V}(\mathcal{A}_x),\psi_x)$ be the center of the unipotent radical preserving the weight filtration.

Then as above, we have a canonical isomorphism of Galois modules:
\[
\Adele_f(1)\otimes\Lie W_{\Phi^{\beef}}\xrightarrow{\simeq}\Lie U_{\Adele_f}.
\]
The stabilizer of the tensors $\{s_{\alpha,\et,x}\}\subset\widehat{V}(\mathcal{A}_x)$ gives us a canonical copy $G_{\Adele_f}\subset\GSp(\widehat{V}(\mathcal{A}_x),\psi_x)$. The non-emptiness of the torsor $I(\Upsilon,s_0)$ now shows that the canonical isomorphism above must map $\Adele_f(1)\otimes\Lie W_\Phi$ isomorphically onto $\Lie U_{G,\Adele_f} = \Lie G_{\Adele_f}\cap\Lie U_{\Adele_f}.$
\end{rem}

The next result completes the proof of~\eqref{strata:thm:strata}.

\begin{prp}\label{strata:prp:completion}
Assertion~\eqref{strata:strataisom} of~\eqref{strata:thm:strata} holds.
\end{prp}
\begin{proof}
Set $({\Phi}^\beef,\sigma^\beef)=\iota_*(\Phi,\sigma)$, and let $\Upsilon^\beef$ be its class in $\on{Cusp}_{{K}^{\beef}}^{\Sigma^\beef}(G^\beef,X^{\beef})$. We obtain a commutative diagram:
\begin{diagram}
\widehat{\Sh}_{K_\Phi}(Q_\Phi,D_\Phi,\sigma)&\rInto&\widehat{\Ss}_{K_{\Phi}}(Q_{\Phi},D_{\Phi},\sigma)&\rTo&\widehat{\Ss}_{K^\beef_{\Phi^\beef}}(Q_{\Phi^\beef},D_{\Phi^\beef},\sigma^\beef)\\
\dTo^{\simeq}&&&&\dTo_{\simeq}\\
\bigl(\Sh^\Sigma_K\bigr)^{\wedge}_{Z_K(\Upsilon)}&\rInto&\bigl(\Ss^\Sigma_K\bigr)^{\wedge}_{\mathcal{Z}_K(\Upsilon)}&\rTo&\bigl(\mathcal{S}^{\Sigma^\beef}_{{K}^{\beef}}\bigr)^{\wedge}_{\mathcal{Z}_{{K}^{\beef}}(\Upsilon^\beef)}.
\end{diagram}

The horizontal maps on the right-hand side are finite. So, applying~\eqref{appendix:lem:formalext}, we only have to show the following: Suppose that we are given a point $s_0\in\mathcal{Z}_{K_\Phi}(Q_\Phi,D_\Phi,\sigma)(\overline{\Field}_p)$ with image $s_0^\beef$ in $Z_{K^\beef_{\Phi^\beef}}(Q_{\Phi^\beef},D_{\Phi^\beef},\sigma^\beef)$. Let $R(\Phi,\sigma,s_0)$ (resp. $R(\Upsilon,s_0)$, $R({\Phi}^\beef,\sigma^\beef,s_0^\beef)$) be the complete local ring of $\Ss_{K_\Phi}(Q_{\Phi},D_{\Phi},\sigma)$ (resp. $\Ss^{\Sigma}_K$, resp. $\Ss_{K^\beef_{\Phi^\beef}}(Q_{\Phi^\beef},D_{\Phi^\beef},\sigma^\beef)$) at $s_0^\beef$ (resp. $s_0$, $s_0^\beef$). Then $R(\Phi,\sigma,s_0)$ and $R(\Upsilon,s_0)$ are isomorphic as $R({\Phi}^\beef,\sigma^\beef,s_0^\beef)$-algebras.
This can be readily deduced from the proof of~\eqref{strata:prp:xikphitorsor} and~\eqref{boundary:thm:main}\eqref{main:local}.
\end{proof}

\subsection{The case of hyperspecial level}\label{strata:subsec:smooth}

\subsubsection{}\label{main:subsubsec:derhamext}
The relative first de Rham homology of the universal abelian scheme $\mathcal{A}\to \mathcal{S}_{{K}^{\beef}}$ provides us with an extension of the filtered vector bundle $(\bm{H}_{\dR},F^0\bm{H}_{\dR})$ over $\mathcal{S}_{{K}^{\beef}}$. We will continue to use the same symbol to refer to this extension as well.

More generally, given a clr ${\Phi}^\beef$ for $(G^\beef,X^{\beef})$, the (covariant) de Rham realization of the universal $1$-motif over $\Ss_{K^\beef_{\Phi^\beef}}(Q_{\Phi^\beef},D_{\Phi^\beef})$ gives us an extension $(\bm{H}_{\dR}({\Phi}^\beef),W_\bullet\bm{H}_{\dR}({\Phi}^\beef),F^0\bm{H}_{\dR}({\Phi}^\beef))$ of the doubly filtered vector bundle over $\Sh_{K^\beef_{\Phi^\beef}}(Q_{\Phi^\beef},D_{\Phi^\beef})$, which we had denoted by the same symbol.

Given any $\sigma^\beef\in\Sigma^\beef({\Phi}^\beef)$, and any point $s_0^\beef\in \mathcal{Z}_{K^\beef_{\Phi^\beef}}(Q_{\Phi^\beef},D_{\Phi^\beef},\sigma^\beef)$, write as usual $R=R({\Phi}^\beef,\sigma^\beef,s_0^\beef)$ for the complete local ring of $\Ss_{K^\beef_{\Phi^\beef}}(Q_{\Phi^\beef},D_{\Phi^\beef},\sigma^\beef)$ at $s_0^\beef$, and let $V\subset\Spec R$ be the intersection with $\Ss_{K^\beef_{\Phi^\beef}}(Q_{\Phi^\beef},D_{\Phi^\beef})$. Then the evaluation over $\Spec R$ of the log Dieudonn\'e crystal associated with the tautological $1$-motif $\mathcal{Q}\vert_V$ (or rather the degenerating abelian scheme associated with this $1$-motif) gives us a canonical extension over $\Spec R$ of the restriction of $\bm{H}_{\dR}({\Phi}^\beef)$ to $V$, along with its two filtrations.

From this, faithfully flat descent, and~\cite[Prop. 4.2]{ferrand_raynaud}, we see that there exists a canonical extension $\bm{H}_{\dR}({\Phi}^\beef,\sigma^\beef)$ over $\Ss_{K^\beef_{\Phi^\beef}}(Q_{\Phi^\beef},D_{\Phi^\beef},\sigma^\beef)$ of $\bm{H}_{\dR}({\Phi}^\beef)$ as a doubly filtered vector bundle. It is equipped with an integrable connection with logarithmic poles along the boundary divisor.

It can be checked from the explicit description of the log Dieudonn\'e crystal in~\eqref{boundary:subsubsec:fcrystal} that this construction agrees with the analytic one given in~\eqref{background:subsubsec:derham_extension} over $\Sh_{K_\Phi}(Q_{\Phi},D_{\Phi},\sigma)(\Comp)$.

There is now a canonical extension $(\bm{H}_{\dR}(\Sigma^\beef),F^0\bm{H}_{\dR}(\Sigma^\beef))$ of $(\bm{H}_{\dR},F^0\bm{H}_{\dR})$ to a filtered vector bundle over $\mathcal{S}^{\Sigma^\beef}_{{K}^{\beef}}$ equipped with an integrable connection with logarithmic poles along the boundary, and characterized by the following property: For any cusp label $\Upsilon^\beef=[({\Phi}^\beef,\sigma^\beef)]$, the restriction of this extension to
\[
(\mathcal{S}^{\Sigma^\beef}_{{K}^{\beef}})^{\wedge}_{\mathcal{Z}_{{K}^{\beef}}(\Upsilon^\beef)}=\widehat{\Ss}_{K^\beef_{\Phi^\beef}}(Q_{\Phi^\beef},D_{\Phi^\beef},\sigma^\beef)
\]
is isomorphic, as a filtered vector bundle with integrable connection, to the restriction of $\bm{H}_{\dR}({\Phi}^\beef,\sigma^\beef)$.

\subsubsection{}
We will now make some further assumptions: First, the prime $p$ will be such that the group $G$ admits a reductive model $G_{\Int_{(p)}}$ over $\Int_{(p)}$. Second, we will assume that the principally polarized lattice $H(\Int)\subset H(\Rat)$ has been chosen such that the symplectic embedding $\iota:G\into\GSp(H)$ arises from a closed immersion of $\Int_{(p)}$-groups $G_{\Int_{(p)}}\into\GL\bigl(H_{\Int_{(p)}}\bigr)$. It is always possible to choose such a lattice; cf.~\cite[2.3.1]{kisin:abelian}.\footnote{There is an additional hypothesis in the statement of the cited lemma that the group $G$ not have factors of type $B$ when $p=2$. However, this is not necessary; cf.~\cite[Lemma 4.7]{Kim2016-fb}.} Finally, we will assume that $K_p = G_{\Int_{(p)}}(\Int_p)$ is hyperspecial. We will call such an embedding \defnword{$p$-integral}.

The notation and hypotheses from the previous sections will continue to hold. The main result now is the following~\cite[2.3.5]{kisin:abelian}, which is augmented for the case $p=2$ by~\cite{Kim2015-bp}:

\begin{thm}[Kisin]\label{strata:thm:smooth}
$\Ss_K$ is smooth over $\Reg{E,(v)}$.
\end{thm}
\qed

\subsubsection{}\label{strata:subsubsec:hyperspecial_boundary}
Fix a clr $\Phi$ for $(G,X,K)$. The parabolic sub-group $P_\Phi$ extends to a parabolic sub-group $P_{\Phi,\Int_{(p)}}\subset G_{\Int_{(p)}}$. Also, it follows from~\cite[1.6.9]{hoermann:thesis} that the normal sub-group $Q_\Phi\subset P_\Phi$ extends to a normal sub-group $Q_{\Phi,\Int_{(p)}}\subset P_{\Phi,\Int_{(p)}}$ with reductive image in the Levi quotient $P_{\Phi,\Int_{(p)}}/U_{\Phi,\Int_{(p)}}$; this image gives us a reductive model $G_{\Phi,h,\Int_{(p)}}$ for the group $G_{\Phi,h}$.

Now, the level at $p$ $K_{\Phi,h,p}\subset G_{\Phi,h}(\Rat_p)$ is a conjugate of $G_{\Phi,h,\Int_{(p)}}(\Int_p)$, and is thus hyperspecial. Therefore, it follows from~\eqref{strata:thm:smooth} that $\Ss_{K_{\Phi,h}}(G_{\Phi,h},D_{\Phi,h})$ is smooth over $\Reg{E,(v)}$. Since $\Ss_{K_\Phi}(Q_\Phi,D_\Phi)$ is smooth over $\Ss_{K_{\Phi,h}}(G_{\Phi,h},D_{\Phi,h})$ (it is a torus torsor over a torsor under an abelian scheme over $\Ss_{K_{\Phi,h}}(G_{\Phi,h},D_{\Phi,h})$), we see that it must also be smooth over $\Reg{E,(v)}$. Of course, we can also deduce this from the fact that it is \'etale locally isomorphic to $\Ss_K$.

We will consider the projective limit:
\[
 \Ss_p(Q_\Phi,D_\Phi)=\varprojlim_{K^p\subset G(\Adele_f^p)}\Ss_{K_{\Phi,h,p}K^p_{\Phi}}(Q_\Phi,D_\Phi).
\]
Since the transition maps in this system are finite \'etale, this is a scheme over $\Reg{E,(v)}$ that is locally of finite type. From the previous paragraph, we also find that it is regular and formally smooth over $\Reg{E,(v)}$. We will denote the generic fiber of this scheme by $\Sh_p(Q_\Phi,D_\Phi)$.

\begin{prp}\label{strata:prp:canonical}
The scheme $\Ss_p(Q_\Phi,D_\Phi)$ is a \defnword{canonical model} for $\Sh_p(Q_\Phi,D_\Phi)$ over $\Reg{E,(v)}$. That is, given any regular, formally smooth scheme $S$ over $\Reg{E,(v)}$, any map $E\otimes S\to\Sh_p(Q_\Phi,D_\Phi)$ extends to a map $S\to\Ss_p(Q_\Phi,D_\Phi)$.

In particular, $\Ss_p(Q_\Phi,D_\Phi)$, as well as its $Q_{\Phi}(\Adele_f^p)$-equivariant structure, is canonically determined by the Shimura datum $(G,X)$ and the reductive model $G_{\Int_{(p)}}$. It does not depend on the choice of symplectic embedding.
\end{prp}
\begin{proof}
This is shown as in~\cite[2.3.8]{kisin:abelian}, except that, instead of extension theorems for abelian schemes, we need to use ones for $1$-motifs; cf.~\eqref{appendix:lem:neronogg1motifs} and~\eqref{appendix:lem:1motifspurity}.
\end{proof}

\begin{proof}[Proof of Theorem~\ref{smooth}]
The immersion $\Ss_K\into\Ss^{\Sigma}_K$ is \'etale locally isomorphic to $\Ss_{K_\Phi}(Q_\Phi,D_\Phi)\into\Ss_{K_\Phi}(Q_{\Phi},D_{\Phi},\sigma)$, for some clr $\Phi$ and $\sigma\in\Sigma(\Phi)$. If we choose $\Sigma$ to be smooth and complete, then it follows that $\Ss^{\Sigma}_K$ is again smooth, and that the boundary is a normal crossing divisor.

The claim that $\Ss^{\Sigma}_K$ depends only on $\Sigma$ and not on the choice of symplectic embedding is a consequence of~\eqref{strata:thm:strata},~\eqref{strata:prp:canonical} and~\eqref{appendix:lem:abstract_hecke}
\end{proof}

\subsubsection{}\label{min:subsubsec:smooth_tensors}
Suppose that the tensors $\{s_{\alpha}\}\subset H(\Rat)^\otimes$ actually lie in $H(\Int_{(p)})^\otimes$, and that $G_{\Int_{(p)}}$ is their point-wise stabilizer in $\GL\bigl(H_{\Int_{(p)}}\bigr)$. By~\cite[1.3.2]{kisin:abelian}, this can always be arranged. It is shown in~\cite[2.3.9]{kisin:abelian} that the associated sections $\{s_{\alpha,\dR}\}$ of $\bm{H}_{\dR}^\otimes\vert_{\Sh_K}$ extend over $\Ss_K$.

Define the functor $\mathcal{P}_{\dR}(\iota)$ on $\Ss_K$-schemes, so that, for any $\Ss_K$-scheme $T$, $\mathcal{P}_{\dR}(\iota)(T)$ consists of isomorphisms of vector bundles
\[
\xi:\Reg{T}\otimes_{\Int_{(p)}}H(\Int_{(p)})\xrightarrow{\simeq}\bm{H}_{\dR}\vert_T
\]
such that, for all $\alpha$, $\xi(1\otimes s_{\alpha})=s_{\alpha,\dR}\in H^0(T,\bm{H}_{\dR}^\otimes)$. Then $\mathcal{P}_{\dR}(\iota)$ is a $G_{\Int_{(p)}}$-torsor over $\Ss_K$. This follows from the proof of~\cite[2.3.9]{kisin:abelian}, and is explained in detail for a special case in~\cite[\S 4]{mp:reg}.

\begin{prp}\label{min:prp:canonical_extension}
\mbox{}
\begin{enumerate}
\item~\label{can_ext:tensors}For all $\alpha$, $s_{\alpha,\dR}$ extends (necessarily uniquely) to a section
\[
 s_{\alpha,\dR,\Sigma}\in H^0\bigl(\Ss^\Sigma_K,\bm{H}^\otimes_{\dR}(\Sigma)\bigr).
\]
\item~\label{can_ext:torsor}Consider the functor $\mathcal{P}_{\dR}^{\Sigma}(\iota)$ on $\Ss^\Sigma_K$-schemes assigning to every $T$ the set of isomorphisms:
\[
 \xi:\Reg{T}\otimes_{\Int_{(p)}}H(\Int_{(p)})\xrightarrow{\simeq}\bm{H}_{\dR}(\Sigma)\vert_T
\]
such that, for all $\alpha$, $\xi(1\otimes s_{\alpha})=s_{\alpha,\dR,\Sigma}\in H^0(T,\bm{H}_{\dR}^\otimes(\Sigma))$: this is a $G_{\Int_{(p)}}$-torsor over $\Ss^\Sigma_K$, extending $\mathcal{P}_{\dR}(\iota)$.
\end{enumerate}
\end{prp}
\begin{proof}
Let $U\subset\Ss^{\Sigma}_K$ be the complement of the special fiber of the boundary divisor $\mb{D}^{\Sigma}_K$: The complement of $U$ has codimension $2$ in the normal scheme $\Ss^{\Sigma}_K$. Therefore, the restriction functor from vector bundles on $\Ss^{\Sigma}_K$ to those on $U$ is fully faithful.

So it suffices to show that, given an index $\alpha$, $s_{\alpha,\dR}$ extends over $U$, and that $\mathcal{P}^{\Sigma}_{\dR}(\iota)$ admits a section over an \'etale cover of $U$. For this, it is sufficient to know that its restriction to $\Sh_K$ extends over $\Sh^{\Sigma}_K$, and that $\mathcal{P}^{\Sigma}_{\dR}(\iota)$ has a section over an \'etale cover of $\Sh^{\Sigma}_K$.

From~\eqref{boundary:prp:twotensors} and~\eqref{boundary:prp:abshodge},~\eqref{can_ext:tensors} will follow if we can show that, for each $\alpha$, $s_{\alpha,\Phi,\dR}$ extends to a section of $\bm{H}_{\dR}(\Phi,\sigma)^\otimes$ over $\Sh_{K_\Phi}(Q_{\Phi},D_{\Phi},\sigma)(\Comp)$. This is a consequence of the discussion in~\eqref{background:subsubsec:derham_extension}. In particular, the functor $\mathcal{P}_{\dR}^{\Sigma}(\iota)$ is now well-defined over all of $\Ss^{\Sigma}_K$, and the same discussion implies that it has \'etale local sections over a neighborhood of the boundary as well.
\end{proof}

\subsubsection{}\label{min:subsubsec:automorphic}
As in~\eqref{boundary:subsubsec:shimuradim}, let $[\mu]$ be the conjugacy class of co-characters of $G$ associated with $X$. We can find a finite extension $F/E$ and a place $w\vert v$ of $F$ such that $[\mu]$ has a representative $\mu:\Gmh{\Reg{F,(w)}}\to G_{\Reg{F,(w)}}$. Let $P_{\mu}\subset G_{\Reg{F,(w)}}$ be the parabolic sub-group whose Lie algebra consists of the non-negative weight spaces for $\mu$, and let $U_{\mu}\subset P_{\mu}$ be its unipotent radical.

The co-character $\mu^{-1}$ splits a decreasing filtration on $H_{\Reg{F,(w)}}$:
\[
 0=F^1H_{\Reg{F,(w)}}\subset F^0H_{\Reg{F,(w)}}\subset F^{-1}H_{\Reg{F,(w)}}=H_{\Reg{F,(w)}}.
\]
The sub-group $P_{\mu}$ is precisely the stabilizer of this filtration.

We will consider the sub-functor $\mathcal{P}^{\Sigma}_{\dR,\mu}(\iota)$ of $\Reg{F,(w)}\otimes_{\Reg{E,(v)}}\mathcal{P}^{\Sigma}_{\dR}(\iota)$ parameterizing isomorphisms $\xi$ that also satisfy:
\[
 \xi(\Reg{T}\otimes_{\Reg{F,(w)}}F^0H_{\Reg{F,(w)}})=F^0\bm{H}_{\dR}(\Sigma)\vert_T.
\]

\begin{prp}\label{min:prp:automorphic_torsor}
$\mathcal{P}^{\Sigma}_{\dR,\mu}(\iota)$ is a canonical reduction of structure group of $\Reg{F,(w)}\otimes_{\Reg{E,(v)}}\mathcal{P}^{\Sigma}_{\dR}(\iota)$ to a $P_{\mu}$-torsor over $\Reg{F,(w)}\otimes_{\Reg{E,(v)}}\Ss^\Sigma_K$.
\end{prp}
\begin{proof}
  It suffices to show that $\mathcal{P}^{\Sigma}_{\dR,\mu}(\iota)$ admits \'etale local sections over $\Ss_K$ and over $\Sh^{\Sigma}_K(\Comp)$. Over $\Ss_K$, this again follows from the proof of~\cite[(2.3.9)]{kisin:abelian}; cf.~\cite[(4.17)]{mp:reg} for an explication in a special case. Over $\Sh^{\Sigma}_K(\Comp)$, it can be deduced from the explicit local description in~\eqref{background:subsubsec:derham_extension}
\end{proof}

\begin{prp}\label{min:prp:automorphic_torsor_independence}
Let $\iota':G_{\Int_{(p)}}\into\GL\bigl(H'_{\Int_{(p)}}\bigr)$ be another $p$-integral embedding underlying an embedding of Shimura data:
\[
 \iota^\diamond:(G,X)\into (G^\diamond,X^{\diamond})\coloneqq(\GSp(H^\diamond),\on{S}^{\pm}(H^\diamond)).
\]
Then there is a canonical isomorphism $\mathcal{P}^{\Sigma}_{\dR}(\iota)\xrightarrow{\simeq}\mathcal{P}^{\Sigma}_{\dR}(\iota^\diamond)$ of $G_{\Int_{(p)}}$-torsors over $\Ss^{\Sigma}_K$, as well as a canonical isomorphism $\mathcal{P}^{\Sigma}_{\dR,\mu}(\iota)\xrightarrow{\simeq}\mathcal{P}^{\Sigma}_{\dR,\mu}(\iota^\diamond)$ of $P_{\mu}$-torsors over $\Reg{F,(w)}\otimes\Ss^{\Sigma}_K$.
\end{prp}
\begin{proof}
  Set $H^\sharp_{\Int_{(p)}}=H_{\Int_{(p)}}\oplus H^\diamond_{\Int_{(p)}}$, and equip it with the direct sum symplectic structure. We then obtain a third $p$-integral embedding:
  \[
   \iota^\sharp:G_{\Int_{(p)}}\into\GL\bigl(H_{\Int_{(p)}}\bigr)\times\GL\bigl(H^\diamond_{\Int_{(p)}}\bigr)\into\GL\bigl(H^\sharp_{\Int_{(p)}}\bigr),
  \]
  where the first map is the diagonal embedding. This gives us a third $G_{\Int_{(p)}}$-torsor $\mathcal{P}_{\dR}^{\Sigma}(\iota^\sharp)$ parameterizing certain trivializations
  \[
   \xi^\sharp:\Reg{T}\otimes_{\Int_{(p)}} H^\sharp_{\Int_{(p)}}\xrightarrow{\simeq}\bm{H}^\sharp_{\dR}(\Sigma)\vert_T.
  \]

  We claim that the restriction of $\xi^\sharp$ to $\Reg{T}\otimes_{\Int_{(p)}}H_{\Int_{(p)}}$ maps isomorphically onto $\bm{H}_{\dR}(\Sigma)\vert_T$ and gives a section of $\mathcal{P}_{\dR}^{\Sigma}(\iota)$. It suffices to check this over the generic fiber, and hence over $\Comp$, where it is clear.

  Therefore, we obtain a canonical isomorphism of $G_{\Int_{(p)}}$-torsors:
  \[
  \mathcal{P}_{\dR}^{\Sigma}(\iota^\sharp)\xrightarrow{\simeq}\mathcal{P}_{\dR}^{\Sigma}(\iota).
  \]
  A similar assertion holds with $\iota$ replaced by $\iota^\diamond$. This proves the assertion about $G_{\Int_{(p)}}$-torsors.

  To prove the assertion about $P_{\mu}$-torsors, it suffices to check that the canonical isomorphism of $G_{\Int_{(p)}}$-torsors respects the reductions of structure group to $P_{\mu}$-torsors. This can be checked over $\Sh_K(\Comp)$, where it is clear from the analytic uniformization.
\end{proof}

\section{The minimal compactification}\label{sec:min}

We will now construct integral models for the minimal or Baily-Borel-Satake compactification of the integral model $\Ss_K$ above. We will follow the strategy in \cite[\S V.2]{faltings_chai}, which is extended to the PEL (good reduction) case in \cite[\S 7.2]{lan:thesis}. Since the method here is not very different from that used in \emph{loc. cit.}, our treatment will be somewhat compressed.

The notation will be as in the previous section. We will only consider complete rpcds, so that the spaces $\mathcal{S}_{{K}^{\beef}}^{\Sigma^\beef}$ and $\Ss_K^\Sigma$ are \emph{proper} over $\Reg{E,(v)}$.

\subsection{The Hodge bundle and Fourier-Jacobi expansions}\label{min:subsec:hodge}

\begin{defn}\label{min:defn:hodge}
The \defnword{Hodge bundle} $\omega_{{K}^{\beef}}(\Sigma^\beef)$ over $\mathcal{S}^{\Sigma^\beef}_{{K}^{\beef}}$ is:
\[
\omega_{{K}^{\beef}}(\Sigma^\beef)\coloneqq\underline{\det}\bigl(\bm{H}_{\dR}(\Sigma^\beef)/F^0\bm{H}_{\dR}(\Sigma^\beef)\bigr)^{\otimes -1}.
\]
\end{defn}

\subsubsection{}\label{min:subsubsec:omega}
Write $\omega_K(\Sigma)$ for the restriction of $\omega_{{K}^{\beef}}(\Sigma^\beef)$ to $\Ss_K^{\Sigma}$: It is easy to see that this invertible sheaf does not depend on the choice of ${K}^{\beef}$ containing $K$ or on the choice of rpcd $\Sigma^\beef$ compatible with $\Sigma$.

If $\Sigma'$ is a refinement of $\Sigma$, induced from an admissible rpcd $\widetilde{\Sigma'}$ for $(G^{\beef},X^{\beef},{K}^{\beef})$, then $\omega_K(\Sigma')$ is canonically isomorphic to the pull-back of $\omega_K(\Sigma)$ along the map $\Ss^{\Sigma'}_K\to\Ss^{\Sigma}_K$.

Therefore, given any refinement $\Sigma'$ of $\Sigma$, we can without qualms denote the pull-back of $\omega_K(\Sigma)$ to $\Ss^{\Sigma'}_K$ by $\omega_K(\Sigma')$.

\begin{prp}\label{min:prp:omega}
\mbox{}
\begin{enumerate}
\item\label{omega:globalgen}A suitable power of $\omega_K(\Sigma)$ is generated by its global sections over $\Ss^{\Sigma}_K$.
\item\label{omega:projisom}If $\Sigma'$ is a refinement of $\Sigma$, for any $n\in\Int_{\geq 0}$, the natural map:
\[
 H^0(\Ss^{\Sigma}_K,\omega^{\otimes n}_K(\Sigma))\to H^0(\Ss^{\Sigma'}_K,\omega^{\otimes n}_K(\Sigma')).
\]
is an isomorphism.
\item\label{omega:gradedfin}The graded $\Reg{E,(v)}$-algebra
\[
 \bigoplus_nH^0\bigl(\Ss^\Sigma_K,\omega^{\otimes n}_K(\Sigma)\bigr)
\]
is finitely generated.
\end{enumerate}
\end{prp}
\begin{proof}
Assertion~\eqref{omega:globalgen} is shown as in~\cite[V.2.1]{faltings_chai}, using a result of Moret-Bailly~\cite[IX.2.1]{moret-bailly:pinceaux}.

Assertion~\eqref{omega:projisom} is immediate from the projection formula, and the following facts about the map $f:\Ss^{\Sigma'}_K\to\Ss^{\Sigma}_K$:
\begin{itemize}
\item $f_*\Reg{\Ss^{\Sigma'}_K}=\Reg{\Ss^{\Sigma}_K}$;
\item $R^if_*\Reg{\Ss^{\Sigma'}_K}=0$, for $i>0$.
\end{itemize}
This can be deduced as in~\cite[V.1.2(b)]{faltings_chai} from the \'etale local structure of the map $f$ (cf.~\ref{strata:prp:hecke}).

Finally,~\eqref{omega:gradedfin} follows from~\cite[7.2.2.6]{lan:thesis}.
\end{proof}

\subsubsection{}\label{min:subsubsec:omegakphi}
Fix a pair $(\Phi,\sigma)$ for $(G,X)$, and let $({\Phi}^\beef,\sigma^\beef)$ be the pair induced for $(G^\beef,X^{\beef})$. Write $\bm{H}_{\dR}(\Phi,\sigma)$ for the restriction of $\bm{H}_{\dR}({\Phi}^\beef,\sigma^\beef)$ over $\Ss_{K_\Phi}(Q_{\Phi},D_{\Phi},\sigma)$.

We then have:
\begin{align}\label{min:eqn:omegakphi1}
\underline{\det}\bigl(\bm{H}_{\dR}(\Phi,\sigma)/F^0\bm{H}_{\dR}(\Phi,\sigma)\bigr)^{\otimes -1}&\xrightarrow{\simeq}\otimes_{i=-2}^0\underline{\det}\bigl(\gr^W_i\bm{H}_{\dR}(\Phi,\sigma)/F^0\gr^W_i\bm{H}_{\dR}(\Phi,\sigma)\bigr)^{\otimes -1}\\
&\xrightarrow{\simeq}\omega^{\et}(\Phi)\otimes_{\Int}\omega^{\ab}_K(\Phi).\nonumber
\end{align}
We need to explain the symbols in the last row. Here, $\omega^{\ab}_K(\Phi)$ is the Hodge bundle associated with the universal abelian scheme $\mathcal{B}\to\Ss_{K_{\Phi,h}}(G_{\Phi,h},D_{\Phi,h})$; we denote its pull-back to $\Ss_{K_\Phi}(Q_{\Phi},D_{\Phi},\sigma)$ by the same symbol. Also, $\omega^{\et}(\Phi)=\det(\gr^WH^g(\Int))^{\otimes -1}$ is a free $\Int$-module of rank $1$.

In particular, the line bundle defined in~\eqref{min:eqn:omegakphi1} does not depend on $\sigma$, and so we denote it by $\omega_K(\Phi)$. Note that $\omega_K(\Phi)$ is actually already defined over $\Ss_{K_{\Phi,h}}(G_{\Phi,h},D_{\Phi,h})$.

\subsubsection{}\label{min:subsubsec:fourier_jacobi}
As a scheme over $\Ss_{\overline{K}_\Phi}(\overline{Q}_\Phi,\overline{D}_\Phi)$, we have:
\[
 \Ss_{K_\Phi}(Q_\Phi,D_\Phi)=\underline{\Spec}\bigl(\bigoplus_{\ell\in\mb{S}_K(\Phi)}\Psi^{(\ell)}_K(\Phi)\bigr),
\]
where, for each $\ell\in\mb{S}_K(\Phi)$, $\Psi^{(\ell)}_K(\Phi)$ is a line bundle over $\Ss_{\overline{K}_\Phi}(\overline{Q}_\Phi,\overline{D}_\Phi)$. Therefore, we have:
\begin{align}\label{min:eqn:hatmbxi}
 \widehat{\Ss}_{K_{\Phi}}(Q_{\Phi},D_{\Phi},\sigma)&=\underline{\Spf}\bigl(\widehat{\bigoplus}_{\ell\in\mb{S}_K(\Phi,\sigma)}\Psi^{(\ell)}_K(\Phi)\bigr).
\end{align}
Here, $\widehat{\bigoplus}_{\ell\in\mb{S}_K(\Phi,\sigma)}\Psi^{(\ell)}_K(\Phi)$ is the completion of $\bigoplus_{\ell\in\mb{S}_K(\Phi,\sigma)}\Psi^{(\ell)}_K(\Phi)$ along the ideal generated by $\Psi^{(\ell)}_K(\Phi)$, with $\ell\in\mb{S}_K(\Phi,\sigma)\backslash\mb{S}_K(\Phi,\sigma)^\times$.

Consider the map $\pr:\Ss_{\overline{K}_\Phi}(\overline{Q}_\Phi,\overline{D}_\Phi)\to\Ss_{K_{\Phi,h}}(G_{\Phi,h},D_{\Phi,h})$: It is a projective morphism, and so, for each $\ell\in\mb{S}_K(\Phi)$, we obtain a coherent sheaf
\[
 \on{FJ}^{(\ell)}_K(\Phi)=\pr_*\Psi^{(\ell)}_K(\Phi)
\]
over $\Ss_{K_{\Phi,h}}(G_{\Phi,h},D_{\Phi,h})$.

If $\Upsilon=[(\Phi,\sigma)]$, for any $n\in\Int_{\geq 0}$, we now obtain an evaluation map:
\begin{align}\label{min:eqn:fjphisig}
\on{FJ}_{(\Phi,\sigma)}:H^0\bigl(\Ss^{\Sigma}_K,\omega^{\otimes n}_K(\Sigma)\bigr)&\to H^0\biggl(\bigl(\Ss^{\Sigma}_K\bigr)^{\wedge}_{\mathcal{Z}_K(\Upsilon)},\omega^{\otimes n}_K(\Sigma)\biggr)\\\nonumber
&\xrightarrow[\simeq]{\eqref{min:eqn:omegakphi1}}H^0\bigl(\widehat{\Ss}_{K_{\Phi}}(Q_{\Phi},D_{\Phi},\sigma),\omega^{\otimes n}_K(\Phi)\bigr)\\\nonumber
&\into\prod_{\ell\in\mb{S}_K(\Phi)}H^0\bigl(\Ss_{K_{\Phi,h}}(G_{\Phi,h},D_{\Phi,h}),\on{FJ}^{(\ell)}_K(\Phi)\otimes\omega^{\otimes n}_K(\Phi)\bigr).\nonumber
\end{align}
Here, the last map is obtained from~\eqref{min:eqn:hatmbxi} and the projection formula.

\subsubsection{}\label{min:subsubsec:deltakphi}
Suppose that we have another clr $\Phi'$ for $(G,K)$, and that $\gamma\in G(\Rat)$ and $q\in Q_{\Phi'}(\Adele_f)$ are such that $\Phi\xrightarrow{(\gamma,q)_K}\Phi'$ and $\gamma\cdot P_\Phi=P_{\Phi'}$. Then, as explained in~\eqref{strata:subsubsec:phifunctorialp}, we have an associated tower-structure preserving isomorphism:
\[
 \rho(\gamma,q):\Ss_{K_\Phi}(Q_\Phi,D_\Phi)\xrightarrow{\simeq}\Ss_{K_{\Phi'}}(Q_{\Phi'},D_{\Phi'}).
\]

Let $\mathcal{Q}(\Phi)$ (resp. $\mathcal{Q}(\Phi')$) be the canonical $1$-motif over $\Ss_{K_\Phi}(Q_\Phi,D_\Phi)$ (resp. $\Ss_{K_{\Phi'}}(Q_{\Phi'},D_{\Phi'})$. By the construction of $\rho(\gamma,q)$ (cf.~\ref{background:subsubsec:intfunct}), there is a canonical isomorphism $\rho(\gamma,q)^*\mathcal{Q}(\Phi')\xrightarrow{\simeq}\mathcal{Q}(\Phi)$. This induces an isomorphism of line bundles
\begin{align}\label{min:eqn:rhogammaq}
 \rho(\gamma,q)^*\omega_K(\Phi')\xrightarrow{\simeq}\omega_K(\Phi).
\end{align}

Let $\Sigma'$ be another admissible rpcd for $(G,X,K)$, and suppose that we have $\sigma'\in\Sigma'(\Phi')$. Then we get a diagram:
\begin{equation}\label{min:eqn:fj_independence}
\begin{diagram}
H^0\bigl(\Ss^{\Sigma}_K,\omega^{\otimes n}_K(\Sigma)\bigr)&\rTo_{\simeq}^{\eqref{min:prp:omega}~\eqref{omega:projisom}}&H^0\bigl(\Ss^{\Sigma'}_K,\omega^{\otimes n}_K(\Sigma)\bigr)&\rTo^{\on{FJ}_{(\Phi',\sigma')}}&\prod_{\ell'\in\mb{S}_K(\Phi')}H^0\bigl(\Ss_{K_{\Phi',h}}(G_{\Phi',h},D_{\Phi',h}),\on{FJ}^{(\ell')}_K(\Phi')\otimes\omega^{\otimes n}_K(\Phi')\bigr)\\
&\rdTo(4,2)_{\on{FJ}_{(\Phi,\sigma)}}&&&\dTo^{\simeq}_{\rho(\gamma,q)^*}\\
&&&&\prod_{\ell\in\mb{S}_K(\Phi)}H^0\bigl(\Ss_{K_{\Phi,h}}(G_{\Phi,h},D_{\Phi,h}),\on{FJ}^{(\ell)}_K(\Phi)\otimes\omega^{\otimes n}_K(\Phi)\bigr).
\end{diagram}
\end{equation}
Here, the vertical isomorphism is induced via~\eqref{min:eqn:rhogammaq}.

\begin{lem}\label{min:lem:indphisig}
The above diagram commutes.
\end{lem}
\begin{proof}
 This is immediately reduced to the case where $\Sigma$ is a refinement of $\Sigma'$, and $\on{int}(\gamma)(\sigma^{\circ})$ is contained in $(\sigma')^{\circ}$. Let $\Upsilon'=[(\Phi',\sigma')]$ be the image of $(\Phi',\sigma')$ in $\mb{Cusp}_K^{\Sigma'}(G,X)$. Then we have a map $\Ss^{\Sigma'}_K\to\Ss^{\Sigma}_K$ carrying $\mathcal{Z}_K(\Upsilon')$ to $\mathcal{Z}_K(\Upsilon)$. We also have a map of formal schemes:
  \[
   \rho(\gamma,q):\widehat{\Ss}_{K_{\Phi}}(Q_{\Phi},D_{\Phi},\sigma)\to\widehat{\Ss}_{K_{\Phi'}}(Q_{\Phi'},D_{\Phi'},\sigma')
  \]
  compatible with the isomorphisms with $\bigl(\Ss^{\Sigma}_K\bigr)^{\wedge}_{\mathcal{Z}_K(\Upsilon)}$, and $\bigl(\Ss^{\Sigma'}_K\bigr)^{\wedge}_{\mathcal{Z}_K(\Upsilon')}$, respectively, and pulling $\omega_K(\Phi')$ back to $\omega_K(\Phi)$. Therefore, it is clear that the diagram commutes.
\end{proof}

\begin{corollary}\label{min:cor:fourier_jacobi}
Let $\Delta_K(\Phi)$ be as in~\eqref{background:eqn:deltaphi}. Let $\mb{P}_K(\Phi)\subset\mb{S}_K(\Phi)$ be the sub-monoid of elements that pair non-negatively with $\mb{H}(\Phi)$. Then $\on{FJ}_\Phi=\on{FJ}_{(\Phi,\sigma)}$ does not depend on the choice of $\sigma\in\Sigma(\Phi)$, and its image lands in the sub-algebra:
\[
 \biggl[\prod_{\ell\in\mb{P}_K(\Phi)}H^0\bigl(\Ss_{K_{\Phi,h}}(G_{\Phi,h},D_{\Phi,h}),\on{FJ}^{(\ell)}_K(\Phi)\otimes\omega^{\otimes n}_K(\Phi)\bigr)\biggr]^{\Delta_K(\Phi)}
\]
\end{corollary}
\qed

\subsection{Construction of the minimal compactification}\label{min:subsec:const}

\subsubsection{}
Set
\[
\Ss^{\min}_K=\Proj\bigl(\oplus_nH^0(\Ss^\Sigma_K,\omega^{\otimes n}_K(\Sigma))\bigr).
\]
This is a projective $\Reg{E,(v)}$-scheme, and using~\eqref{min:prp:omega}, we find that, for any sufficiently fine complete rpcd $\Sigma'$ for $(G,X,K)$, we have a canonical map (cf.~\cite[7.2.3]{lan:thesis}):
\[
 \oint^{\Sigma'}:\Ss^{\Sigma'}_K\to\Proj\bigl(\oplus_nH^0(\Ss^{\Sigma'}_K,\omega^{\otimes n}_K(\Sigma'))\bigr)=\Ss^{\min}_K.
\]

This map can be described as follows: Choose $N\in\Int_{\geq 1}$ sufficiently large so that $\omega^{\otimes N}_K(\Sigma')$ is generated by global sections. Then we have a proper map
\[
\Ss^{\Sigma'}_K\to\bb{P}\bigl(H^0(\Ss^{\Sigma'}_K,\omega^{\otimes N}_K(\Sigma'))\bigr),
\]
and $\oint^{\Sigma'}$ appears in the Stein factorization of this map:
\begin{align}\label{min:eqn:steinfact}
 \Ss^{\Sigma'}_K\xrightarrow{\oint^{\Sigma'}}\Ss^{\min}_K\to \bb{P}\bigl(H^0(\Ss^{\Sigma'}_K,\omega^{\otimes N}_K(\Sigma'))\bigr).
\end{align}

From this, we obtain:
\begin{prp}\label{min:prp:intsigma}
$\oint^{\Sigma'}$ is a surjective map of normal algebraic spaces over $\Reg{E,(v)}$ with geometrically connected fibers.

It is characterized by the following property: Given any $\Reg{E,(v)}$-scheme $T$, a very ample line bundle $\mathscr{L}$ over $T$, an integer $N\in\Int_{\geq 1}$, and a map $f:\Ss^{\Sigma'}_K\to T$ equipped with an isomorphism $f^*\mathscr{L}\xrightarrow{\simeq}\omega^{\otimes N}_K(\Sigma')$, there exists a (unique) map $f^{\min}:\Ss^{\min}_K\to T$ such that $f=f^{\min}\circ\oint^{\Sigma'}$.
\end{prp}
\qed

\subsubsection{}
Fix a pair $(\Phi,\sigma)$ for $(G,X)$ with $\sigma\in\Sigma'(\Phi)$. Given any section $f\in H^0\bigl(\Ss^{\Sigma}_K,\omega^{\otimes n}_K(\Sigma)\bigr)$, it follows from~\eqref{min:cor:fourier_jacobi} that its restriction to $\mathcal{Z}_{K_\Phi}(Q_\Phi,D_\Phi,\sigma)$ is determined by the section:
\[
 \on{FJ}^{(0)}_{\Phi}(f)\in H^0\bigl(\Ss_{K_{\Phi,h}}(G_{\Phi,h},D_{\Phi,h}),\omega_K^{\otimes n}(\Phi)\bigr)^{\Delta_K(\Phi)}.
\]

This implies that the composition:
\[
 \mathcal{Z}_{K_\Phi}(Q_\Phi,D_\Phi,\sigma)\to\Ss^{\Sigma'}_K\xrightarrow{\oint^{\Sigma'}}\Ss^{\min}_K.
\]
factors through a map\footnote{Recall from~\eqref{background:subsubsec:cuspsisom} that $\Delta_K(\Phi)$ acts on $\Ss_{K_{\Phi,h}}(G_{\Phi,h},D_{\Phi,h})$ via a finite quotient $\Delta_K^{\mathrm{fin}}(\Phi)$.}
\begin{align}\label{min:eqn:zkphimap}
\Delta^{\mathrm{fin}}_K(\Phi)\setminus\Ss_{K_{\Phi,h}}(G_{\Phi,h},D_{\Phi,h})\to\Ss^{\min}_K.
\end{align}

\subsubsection{}\label{min:subsubsec:zkphimap}
Let $\on{Cusp}_K(G,X)$ be the set of equivalence classes of clrs for $(G,X)$ for the relation $\Phi_1\sim\Phi_2$ whenever there exists $\Phi_1\xrightarrow{(\gamma,q)_K}\Phi_2$ with $\gamma\cdot P_{\Phi_1}=P_{\Phi_2}$. We can give it the structure of a poset with $[\Phi_1]\preceq[\Phi_2]$ whenever there exists an arrow $\Phi_1\xrightarrow{(\gamma,q)_K}\Phi_2$.

Then, we deduce from~\eqref{min:lem:indphisig} that:
\begin{itemize}
\item The scheme $\Delta^{\mathrm{fin}}_K(\Phi)\setminus\Ss_{K_{\Phi,h}}(G_{\Phi,h},D_{\Phi,h})$ depends only on the class $[\Phi]$ in $\on{Cusp}_K(G,X)$. We will therefore denote it by $\mathcal{Z}_K([\Phi])$.

\item The map $\mathcal{Z}_K([\Phi])\to\Ss^{\min}_K$ induced from~\eqref{min:eqn:zkphimap} depends only on the class $[\Phi]$, and not on a choice of representative $\Phi$, or of the cone $\sigma\in\Sigma'(\Phi)$.
\end{itemize}

\begin{lem}\label{min:lem:rank1_zkphi}
\begin{enumerate}
\item\label{rank1:rank0}Suppose that $\Phi=(G,X^+,g)$; then the map $\Ss_{K_\Phi}(Q_\Phi,D_\Phi)\to\mathcal{Z}_K([\Phi])$ is an isomorphism.
\item\label{rank1:rank1}Suppose that $\sigma\in\Sigma(\Phi)$ is a $1$-dimensional cone, so that $\mathcal{Z}_{K_\Phi}(Q_\Phi,D_\Phi,\sigma)$ has co-dimension $1$ in $\Ss_{K_\Phi}(Q_{\Phi},D_{\Phi},\sigma)$. Then the map $\mathcal{Z}_{K_\Phi}(Q_\Phi,D_\Phi,\sigma)\to\mathcal{Z}_K([\Phi])$ is an isomorphism if and only if the unipotent radical $U_\Phi\subset P_\Phi$ has dimension at most $1$.
\end{enumerate}
\end{lem}
\begin{proof}
For~\eqref{rank1:rank0}, since $P=G$, we have $\Ss_{K_\Phi}(Q_\Phi,D_\Phi)=\Ss_{K_{\Phi,h}}(G_{\Phi,h},D_{\Phi,h})$. We claim that the group $\Delta_K(\Phi)$ is actually trivial in this case. Indeed, since $W_\Phi=1$, it coincides with the group $\Delta^{\circ}_K(\Phi)$, which we know to be trivial by~\eqref{background:lem:deltacirc_trivial}. This implies that $\Ss_{K_\Phi}(Q_\Phi,D_\Phi)$ maps isomorphically onto $\mathcal{Z}_K([\Phi])$.

Now, suppose that $U_\Phi$ has dimension $1$. Then we have $W_\Phi=U_\Phi$, so that $\Ss_{\overline{K}_\Phi}(\overline{Q}_\Phi,\overline{D}_\Phi)=\Ss_{K_{\Phi,h}}(G_{\Phi,h},D_{\Phi,h})$, and $\Ss_{K_\Phi}(Q_\Phi,D_\Phi)$ is a $\Gm$-torsor over $\Ss_{K_{\Phi,h}}(G_{\Phi,h},D_{\Phi,h})$, with $\mathcal{Z}_{K_\Phi}(Q_\Phi,D_\Phi,\sigma)$ the zero section of the corresponding line bundle over $\Ss_{K_{\Phi,h}}(G_{\Phi,h},D_{\Phi,h})$. Therefore, the map $\mathcal{Z}_{K_\Phi}(Q_\Phi,D_\Phi,\sigma)\to\Ss_{K_{\Phi,h}}(G_{\Phi,h},D_{\Phi,h})$ is an isomorphism. Furthermore, since $W_\Phi$ is $1$-dimensional, $\PGL(W_\Phi)$ is trivial, which implies again that $\Delta_K(\Phi)=\Delta^{\circ}_K(\Phi)$ is trivial. Therefore, $\Ss_{K_{\Phi,h}}(G_{\Phi,h},D_{\Phi,h})$ maps isomorphically onto $\mathcal{Z}_K([\Phi])$.

Conversely, if this map is an isomorphism, then we must have $\Ss_{\overline{K}_\Phi}(\overline{Q}_\Phi,\overline{D}_\Phi)=\Ss_{K_{\Phi,h}}(G_{\Phi,h},D_{\Phi,h})$, and $\Ss_{K_\Phi}(Q_\Phi,D_\Phi)$ must be a $\Gm$-torsor over $\Ss_{\overline{K}_\Phi}(\overline{Q}_\Phi,\overline{D}_\Phi)$, with $\mathcal{Z}_{K_\Phi}(Q_\Phi,D_\Phi,\sigma)$ the zero section of the associated line bundle. This can happen exactly when $W_\Phi=U_\Phi$ has rank $1$.
\end{proof}

\begin{rem}\label{min:rem:codim_1}
$G$ admits an admissible parabolic $P$ with $W_P=U_P$ of dimension $1$, exactly when it admits $\PGL_2$ as a quotient, and in this case $P$ is the pre-image of a Borel subgroup of $\PGL_2$.
\end{rem}

\begin{prp}\label{min:prp:nointersection}
Given clrs $\Phi_1$ and $\Phi_2$ for $(G,X)$, the images of $\mathcal{Z}_K([\Phi_1])$ and $\mathcal{Z}_K([\Phi_2])$ in $\Ss^{\min}_K$ are disjoint unless $[\Phi_1]=[\Phi_2]$.
\end{prp}
\begin{proof}
Suppose that we have a point $x$ in the intersection of the two images. Consider the fiber $F_x$ of $\Ss^{\Sigma'}_K$ over $x$. This fiber is geometrically irreducible, and we can assume that it is positive dimensional.

We can find, for $i=1,2$, $\sigma_i\in\Sigma'(\Phi_i)$, and points $y_i\in\mathcal{Z}_K([(\Phi_i,\sigma_i)]\cap F_x$ connected by a geometrically integral curve $C\subset F_x$. Let $\eta$ be the generic point of $C$, and suppose $\Upsilon=[(\Phi,\sigma)]$ is such that $\eta\in\mathcal{Z}_K(\Upsilon)$. Then by~\eqref{strata:thm:strata}\eqref{strata:strataclosure}, we must have $[(\Phi_i,\sigma_i)]\preceq [(\Phi,\sigma)]$. In particular, this implies:
\begin{align}\label{min:eqn:phisiginequal}
[\Phi_i]\preceq[\Phi],
\end{align}
for $i=1,2$.

On the other hand, it follows from the argument in~\cite[7.2.3.6]{lan:thesis} that we have:
\begin{align}\label{min:eqn:phisigequal}
 \iota_*[\Phi_1]=\iota_*[\Phi]=\iota_*[\Phi_2]\in\on{Cusp}_{{K}^{\beef}}(G^\beef,X^{\beef}).
\end{align}

The proposition now follows from the conjunction of~\eqref{min:eqn:phisiginequal} and~\eqref{min:eqn:phisigequal}.
\end{proof}

The proof of the next result proceeds exactly as that of~\cite[7.2.3.13]{lan:thesis}.
\begin{corollary}\label{min:cor:completion}
Let $x$ be a geometric point of $\Ss^{\min}_K$ lifting to a point $y$ of $\mathcal{Z}_K([\Phi])$. Let $\widehat{\on{FJ}}^{(\ell)}_K(\Phi)_y$ be the completion of the stalk of $\on{FJ}^{(\ell)}_K(\Phi)$ at $y$. Then we have a canonical isomorphism of complete local rings:
\begin{align*}
\widehat{\Rg}_{\Ss^{\min}_K,x}&\xrightarrow{\simeq}\biggl[\prod_{\ell\in\mb{P}_K(\Phi)}\widehat{\on{FJ}}^{(\ell)}_K(\Phi)_y\biggr]^{\Delta_K(\Phi)}.
\end{align*}
\end{corollary}
\qed

\begin{corollary}\label{min:cor:zkiphilocimmersion}
The map $\mathcal{Z}_K([\Phi])\to\Ss^{\min}_K$ is a locally closed immersion.
\end{corollary}
\begin{proof}
Call this map $f$; then \eqref{min:cor:completion} shows that $f$ is unramified. Moreover, by construction, it is proper onto its image, and must therefore in fact be finite over its image. Since $f$ is unramified, its image is also normal.\footnote{The notion of `image' here has to be understood appropriately: For a closed stratum, this is just the schematic image. For a general stratum $\mathcal{Z}_K(\Upsilon)$, this is defined inductively by removing from $\Ss^{\min}_K$ the images of the strata in the closure of $\mathcal{Z}_K(\Upsilon)$, and then taking the schematic image of $\mathcal{Z}_K(\Upsilon)$ in this complement.} By Zariski's main theorem, it is therefore enough to know that $f$ is birational onto its image. We can check this over the generic fiber, or even over $\Comp$. Here, it follows from~\cite[6.3]{pink:thesis}. Note that we are using the fact that Stein factorization is compatible with flat base change to identify the complex fiber of $\Ss_K^{\min}$ with the construction in \emph{loc. cit.}
\end{proof}

\begin{rem}\label{min:rem:finite_quotient}
There seems to be some inconsistency between~\cite[6.3]{pink:thesis} (and hence between the above statement) and ~\cite[7.2.3.15]{lan:thesis}, where the action of the finite quotient of $\Delta_K(\Phi)$ does not intervene. It appears to us that the gap lies in~\cite[7.2.3.5]{lan:thesis}, which, in our notation, claims that the map $\Ss_{K_{\Phi,h}}(G_{\Phi,h},D_{\Phi,h})\to\Ss^{\min}_K$ is injective on geometric points. Although, as shown on pp. 8 of~\cite{morel:siegel_1}, this is certainly true in the Siegel case, it is not so clear to us that it should hold in general.
\end{rem}

\begin{thm}\label{min:thm:min}
The scheme $\Ss^{\min}_K$ is a normal, projective $\Reg{E,(v)}$ scheme, which enjoys the following properties:
\begin{enumerate}
\item\label{min:open}The map $\Ss_K\into\Ss^{\Sigma}_K\xrightarrow{\oint^{\Sigma}}\Ss^{\min}_K$ is an open immersion, and its complement in $\Ss^{\min}_K$ is flat over $\Reg{E,(v)}$.
\item\label{min:hodge}The Hodge bundle $\omega_K$ on $\Ss_K$ extends to an ample line bundle $\omega_K^{\min}$ on $\Ss^{\min}_K$. There is a canonical isomorphism $(\oint^\Sigma)^*\omega^{\min}_K\xrightarrow{\simeq}\omega_K(\Sigma)$ extending the identity over $\Ss_K$.
\item\label{min:strata}We have a stratification:
\[
 \Ss_K^{\min}=\bigsqcup_{[\Phi]}\mathcal{Z}_K([\Phi]),
\]
where $[\Phi]$ ranges over $\on{Cusp}_K(G,X)$. In this stratification, $\mathcal{Z}_K([\Phi'])$ is in the closure of $\mathcal{Z}_K([\Phi])$ if and only if $[\Phi']\preceq[\Phi]$.
\item\label{min:oint}The map $\oint^\Sigma$ is compatible with stratifications: For any $[\Phi]$, we have
\[
 (\oint^\Sigma)^{-1}\bigl(\mathcal{Z}_K([\Phi])\bigr)=\bigsqcup_{\Upsilon}\mathcal{Z}_K(\Upsilon),
\]
where $\Upsilon$ ranges over classes of the form $[(\Phi,\sigma)]$ with $\sigma\in\Sigma(\Phi)$. Moreover, the map $\mathcal{Z}_K([(\Phi,\sigma)])\to\mathcal{Z}_K([\Phi])$ is canonically isomorphic to the natural map $\mathcal{Z}_{K_\Phi}(Q_\Phi,D_\Phi,\sigma)\to\Delta^{\mathrm{fin}}_K(\Phi)\setminus\Ss_{K_{\Phi,h}}(G_{\Phi,h},D_{\Phi,h})$.
\item\label{min:koecher}Let $\Ss^1_K\subset\Ss^{\Sigma}_K$ be the pre-image of the complement in $\Ss^{\min}_K$ of the union of the strata of co-dimension at least $2$. Then $\Ss^1_K$ maps isomorphically into $\Ss^{\min}_K$. Moreover, the natural map
\[
 H^0\bigl(\Ss^{\Sigma}_K,\omega^{\otimes n}_K(\Sigma)\bigr)\to H^0\bigl(\Ss^1_K,\omega^{\otimes n}_K(\Sigma)\bigr)
\]
is an isomorphism. In particular, when $G^{\ad}$ does not admit $\PGL_2$ as a factor, the natural map
\[
 H^0\bigl(\Ss^{\Sigma}_K,\omega^{\otimes n}_K(\Sigma)\bigr)\to H^0\bigl(\Ss_K,\omega^{\otimes n}_K\bigr)
\]
is an isomorphism.
\end{enumerate}
\end{thm}
\begin{proof}
That $\Ss_K^{\min}$ is normal and projective over $\Reg{E,(v)}$ is immediate from its construction and~\cite[\S~6.7, Lemma 2]{blr}.

Assertions~\eqref{min:strata} and~\eqref{min:oint} are immediate from~\eqref{min:prp:nointersection}, the properness of $\oint^{\Sigma}$, and the corresponding assertions about the stratification of $\Ss^{\Sigma}_K$.

Since this shows that the strata are all flat over $\Reg{E,(v)}$, it also implies assertion~\eqref{min:open} (one also needs~\eqref{min:lem:rank1_zkphi}~\eqref{rank1:rank0}).

\eqref{min:koecher} is standard, except for the last statement, which follows from~\eqref{min:lem:rank1_zkphi}~\eqref{rank1:rank1} and~\eqref{min:rem:codim_1}.

It only remains to prove~\eqref{min:hodge}. In fact, we can take
\[
\omega^{\min}_K = \oint^{\Sigma}_*\omega^{\Sigma}_K.
\]
It is enough to show that this is an ample line bundle over $\Ss^{\min}_K$. The proof of this proceeds exactly as that of~\cite[7.2.4.1]{lan:thesis}.
\end{proof}

\subsubsection{}
We can now extend the Hecke correspondences. Let the notation be as in~\eqref{strata:subsubsec:hecke}. The assignment $[\Phi']\mapsto [\Phi]\coloneqq[\iota_*\Phi']$ is a well-defined map from $\on{Cusp}_{K'}(G',X')$ to $\on{Cusp}_K(G,X)$. The map~\eqref{strata:eqn:phi'phi} induces a finite map:
\begin{align}\label{min:eqn:zk'phi'}
 \mathcal{Z}_{K'}([\Phi'])=\Delta^{\mathrm{fin}}_{K'}(\Phi')\backslash\Ss_{K'_{\Phi',h}}(G_{\Phi',h},D_{\Phi',h})\to\Delta^{\mathrm{fin}}_K(\Phi)\backslash\Ss_{K_{\Phi,h}}(G_{\Phi,h},D_{\Phi,h})\bigr)=\mathcal{Z}_K([\Phi]).
\end{align}

\begin{prp}\label{min:prp:hecke}
The map
\[
 (\eta,g):\Ss_{K'}\to\Reg{E',(v')}\otimes_{\Reg{E,(v)}}\Ss_K
\]
extends uniquely to a map $\Ss^{\min}_{K'}\to\Reg{E',(v')}\otimes_{\Reg{E,(v)}}\Ss_K^{\min}$ characterized by the following property: It carries the stratum $\mathcal{Z}_{K'}(\Phi')$ to the stratum $\Reg{E',(v')}\otimes\mathcal{Z}_K([\Phi])$ via~\eqref{min:eqn:zk'phi'}.
\end{prp}
\begin{proof}
Immediate from~\eqref{min:prp:intsigma} and~\eqref{strata:prp:hecke}.
\end{proof}

\subsection{The case of hyperspecial level}\label{min:subsec:smooth}
Here, we will assume that we are in the situation of~\eqref{strata:subsec:smooth}. The assumptions and notation will as in that subsection. The following result shows that the integral model of the minimal compactification is also canonical.

\begin{thm}\label{min:thm:smooth}
The minimal compactification $\Ss^{\min}_K$ and its stratification are, up to unique isomorphism, independent of the choice of $p$-integral symplectic embedding $\iota$. Moreover, for any admissible rpcd $\Sigma$ for $(G,X,K)$, any clr $\Phi$, and any $\sigma\in\Sigma(\Phi)$, the map $\Ss^{\Sigma}_K\to\Ss^{\min}_K$ induces a smooth morphism:
\[
 \mathcal{Z}_K([(\Phi,\sigma)])\to \mathcal{Z}_K([\Phi]).
\]
\end{thm}
\begin{proof}
Let $\iota'$ be another $p$-integral embedding satisfying the conditions of~\eqref{strata:subsec:smooth}, and let $\Ss^{\mathrm{min}}_K(\iota')$ be the minimal compactification of $\Ss_K$ constructed using $\iota'$. Fix an admissible rpcd $\Sigma$ for $(G,X,K)$, so that the identity morphism on $\Ss_K$ extends to morphisms
\[
\oint_1:\Ss^\Sigma_K \to \Ss_K^{\mathrm{min}}\;;\; \oint_2:\Ss^\Sigma_K\to \Ss_K^{\mathrm{min}}(\iota')
\]
of algebraic spaces, where $\Ss^\Sigma_K$ is the canonical compactification of $\Ss_K$ given by Theorem~\ref{smooth}.

Let $\Ss'$ be the normalization of the image of the product morphism
\[
\oint_1\times \oint_2: \Ss^\Sigma_K\to \Ss^{\mathrm{min}}_K\times \Ss^{\mathrm{min}}_K(\iota').
\]
It contains the diagonal embedding of $\Ss_K$ as an open subscheme.

It is now enough to show that the natural projections of $\Ss'$ onto $\Ss^{\mathrm{min}}_K$ and $\Ss^{\min}_K(\iota')$ are both isomorphisms. Since the generic fiber $\Sh^{\mathrm{min}}_K$ of the minimal compactification is a canonical object in characteristic $0$, the morphisms $\oint_1$ and $\oint_2$ are canonically isomorphic over $E$, we see that the restrictions of the projections over the open subscheme $\Ss_K\cup \Sh^{\mathrm{min}}_K$ are isomorphisms. We now conclude by observing that this subscheme has complement of codimension at least $2$ in both $\Ss^{\mathrm{min}}_K$ and $\Ss^{\mathrm{min}}_K(\iota')$.

To finish, it is enough to show that the quotient map
\begin{align}\label{min:eqn:mkphi_quotient_unramified}
\Ss_{K_{\Phi,h}}(G_{\Phi,h},D_{\Phi,h})\to \Delta^{\mathrm{fin}}_K(\Phi)\backslash\Ss_{K_{\Phi,h}}(G_{\Phi,h},D_{\Phi,h})
\end{align}
is \'etale. The best way to show this would be to use Kisin's theory of twisting abelian varieties~\cite[\S~3]{kisin:abelian} to explicitly describe the action of $\Delta^{\mathrm{fin}}_K(\Phi)$. This method has the advantage of also working without any hypotheses on the level.

However, we will give a quicker proof here, in the case of hyperspecial level. Let ${\Phi}^\beef=\iota_*\Phi$ be the induced clr for $(G^\beef,X^{\beef},{K}^{\beef})$. Then, as explained in~\eqref{min:rem:finite_quotient}, $\Delta^{\mathrm{fin}}_{{K}^{\beef}}({\Phi}^\beef)$ is trivial. Therefore, the natural map $\eta_{K^p}:\Ss_{K_{\Phi,h}}(G_{\Phi,h},D_{\Phi,h})\to\Ss_{K^{\beef}_{\Phi^\beef,h}}(G_{\Phi^\beef,h},D_{\Phi^\beef,h})$ factors through~\eqref{min:eqn:mkphi_quotient_unramified}. So it is enough to show that $\eta_{K^p}$ is unramified.

This follows from the proof of~\cite[2.3.9]{kisin:abelian}, which shows that all complete local rings of $\Ss_{K_{\Phi,h}}(G_{\Phi,h},D_{\Phi,h})$ are quotients of those of $\Ss_{K^{\beef}_{\Phi^\beef,h}}(G_{\Phi^\beef,h},D_{\Phi^\beef,h})$.
\end{proof}

\appendix

\section{}\label{sec:appendix}

\subsection{Logarithmic Dieudonn\'e theory for degenerating abelian varieties}\label{appendix:subsec:logfcrys}
The notation will be as in (\ref{semistable:subsec:logfcrys}). The goal is to prove (\ref{semistable:prp:logdieu}). The construction we give here is essentially given in \cite{kato_trihan}, but that reference only covers the case of complete discrete valuation rings, and so we have chosen to give some more details for our more general situation.

\subsubsection{}
We will begin by considering objects in the full sub-category $\mb{DD}^{\on{tot}}(S,U)$ of $\mb{DD}(S,U)$ consisting of objects with $Q^{\ab}\vert_U=0$.

Any such object is determined by a bilinear pairing $\tau:Y_U\times X_U\to\Gmh{U}$. We will write $(Q_{\tau},\lambda^{\et})$ for the corresponding object in $\mb{DD}^{\on{tot}}(S,U)$. It corresponds to a complex $[Y_U\xrightarrow{f_{\tau}}T_U]$, where $f_{\tau}(y)(x)=-\tau(y,x)$.

The dual object $\dual{Q}_U$ is attached to the pairing $\dual{\tau}:X_U\times Y_U\to\Gmh{U}$ obtained by flipping the order of $Y$ and $X$ in the product. It corresponds to the dual map:
\begin{align*}
  f_{\dual{\tau}}: X_U &\to \dual{T}_U=\SHom(Y_U,\Gmh{U})\\
 x&\mapsto (y\mapsto-\dual{\tau}(x,y)).
\end{align*}

\subsubsection{}\label{semistable:subsubsec:ntorsion}
Let $\mathcal{G}_{\tau}\to Y\vert_U$ be the pull-back of the multiplication-by-$n$ map $T\xrightarrow{n}T$ along $f_{\tau}$. It is an extension:
\[
 0\to T_U[n]\to\mathcal{G}({\tau},n)\to Y_U\to 0.
\]
Then, as an extension of $Y_U\otimes\Int/n\Int$ by $T_U[n]\vert_U$, $Q_U[n]$ is canonically identified with $\mathcal{G}(\tau,n)\otimes\Int/n\Int$.

The pairing $Q_U[n]\times\dual{Q}_U[n]\to\mu_{n,U}$ is (up to sign) the one obtained from the pairing:
\begin{align*}
 \mathcal{G}({\tau},n)\times \mathcal{G}({\dual{\tau}},n)&\to\mu_{n,U}\\
 (y,t)\times (x,\dual{t})&\mapsto x(t)y(\dual{t})^{-1}.
\end{align*}
This makes sense, since $x(t)^ny(\dual{t})^{-n}=x(f(y))y(\dual{f}(x))^{-1}=\tau(y,x)\tau(y,x)^{-1}=1$.

\subsubsection{}\label{semistable:subsubsec:noabderham}
Any vector extension of $T_U$ can be trivialized. Therefore, the universal vector extension $[Y_U\to E_{Q_U}]$ of $[Y_U\xrightarrow{f_{\tau}}T_U]$ can be identified with
\[
 Y_U\xrightarrow{1\oplus f}(Y_U\otimes\bb{G}_a)\oplus T_U.
\]

We then have canonical isomorphisms of filtered vector bundles:
\begin{align}\label{semistable:eqn:qabtrivial}
   \underline{H}^1_{\dR}(Q_U)&=\SHom\bigl(Y,\Reg{U}\bigr)\oplus\bigl(X\otimes\Reg{U}(-1)\bigr)\\
   \underline{H}^1_{\dR}(\dual{Q}_U)&=\bigl(Y\otimes\Reg{U}(-1)\bigr)\oplus\SHom\bigl(X,\Reg{U}\bigr).
\end{align}
Moreover, the $\Reg{U}(-1)$-valued pairing on $\underline{H}^1_{\dR}(Q_U)\times\underline{H}^1_{\dR}(\dual{Q}_U)$ translates to the obvious one under these identifications.

\subsubsection{}\label{semistable:subsubsec:noabconnection}
Now, assume that $S$ is a scheme over $S_0$. We can also describe the connection
\[
\nabla:\underline{H}^1_{\dR}(Q_U)\to\underline{H}^1_{\dR}(Q_U)\otimes\Omega^1_{U/S_0}
\]
in terms of the isomorphism (\ref{semistable:eqn:qabtrivial}).

Let $U^{(1)}\subset U\times_SU$ be the first-order neighborhood of the diagonal embedding of $U$. Let $p_1,p_2:U^{(1)}\to U$ be the two projections. Then there is a canonical isomorphism of complexes:
\begin{align}\label{semistable:eqn:p1p2}
 p_1^*[Y_U\to E_{Q_U}]&\xrightarrow{\simeq}p_2^*[Y_U\to E_{Q_U}].
\end{align}

Since the categories of \'etale locally constant sheaves, and hence the categories of \'etale locally trivial tori, over $U^{(1)}$ and $U$ are canonically isomorphic, we can identify both $p_1^*E_{Q_U}$ and $p_2^*E_{Q_U}$ with $(Y\otimes\bb{G}_{a,U^{(1)}})\oplus T_{U^{(1)}}$, where $T_{U^{(1)}}$ is the canonical lift over $U^{(1)}$ of $U$.

The map $y\mapsto p_2^*f(y)p_1^*f(y)^{-1}$ defines a homomorphism from $Y_U$ to $\ker(T_{U^{(1)}}\to T_U)$. Since
\[
\Omega^1_{U/S_0}=\ker(\Reg{U^{(1)}}\to \Reg{U}),
\]
this last group can be canonically identified with $\SHom(X_U,1+\Omega^1_{U/S_0})=\SHom(X_U,\Omega^1_{U/S_0})$, and the resulting map
\[
 Y_U\to\SHom(X_U,\Omega^1_{U/S_0})
\]
is simply $y\mapsto -\dlog(\tau(y,\cdot))$.

We therefore see that (\ref{semistable:eqn:p1p2}) is identified with:
\begin{align*}
 [Y_U\xrightarrow{\on{1}\oplus p_1^*f}(Y_U\otimes\bb{G}_{a,U^{(1)}})\oplus T_{U^{(1)}}]&\xrightarrow{\simeq}[Y_U\xrightarrow{\on{1}\oplus p_2^*f}(Y_U\otimes\bb{G}_{a,U^{(1)}})\oplus T_{U^{(1)}}],
\end{align*}
where the underlying automorphism of $Y_U$ is the identity, and that of $(Y_U\otimes\bb{G}_{a,U^{(1)}})\oplus T_{U^{(1)}}$ is given by:
\[
 (y\otimes a)\oplus t\mapsto (y\otimes a)\oplus (1+a\dlog(\tau(y,\cdot)))\cdot t.
\]

From this, we find that, via the identification~\eqref{semistable:eqn:qabtrivial}, the connection on $\underline{H}^1_{\dR}(Q_U)$ has the following description: For any section $\varphi$ of $\SHom\bigl(Y_U,\Int\bigr)$, we have: $\nabla((\varphi,0))=(\varphi\otimes 1,0)$. For any section $x$ of $X_U$, we have:
\[
 \nabla(x\otimes 1)=(-\dlog(\tau(\cdot,x)),x\otimes 1).
\]

\subsubsection{}\label{semistable:subsubsec:noabcrys}
We now come to the crystalline realization $\Dieu(Q)$. We will first construct the restriction of $\Dieu$ to the full sub-category $\mb{DD}^{\on{tot}}(S,U)$ of $\mb{DD}(S,U)$.

Let $S^{\log}$ be the log scheme attached to $(S,U)$. Given a log scheme $Z$ over $S^{\log}$ in which $p$ is nilpotent and an exact nilpotent thickening of log schemes $Z\into\widetilde{Z}$ equipped with divided powers, we need to construct a canonical $\Reg{\widetilde{Z}}$-module $\Dieu(Q_{\tau})(Z\into\widetilde{Z})$. We can view $f_{\tau}$ as a homomorphism
\[
f_{\tau}:Y\to\SHom(X,\Reg{U}^\times)=\SHom(X,\Mon_{S^{\log}}^{\gp}).
\]
In particular, for any log scheme $Z$ over $S^{\log}$, we obtain a homomorphism $f_{\tau}\vert_Z:Y\vert_Z\to\SHom(X,\Mon_Z^{\gp})$.

Since $Z\into\widetilde{Z}$ is an exact nilpotent thickening, if $\mathcal{I}=\ker(\Reg{\widetilde{Z}}\to\Reg{Z})$, we also have:
\[
 1+\mathcal{I}=\ker\bigl(\Mon_{\widetilde{Z}}^{\gp}\to\Mon_Z^{\gp}\bigr).
\]
We therefore obtain a short exact sequence of \'etale sheaves over $Z$:
\begin{align}\label{semistable:eqn:shom_x}
 1\to \SHom(X\vert_Z,1+\mathcal{I})\to\SHom(X\vert_Z,\Mon_{\widetilde{Z}}^{\gp})\to\SHom(X\vert_Z,\Mon_Z^{\gp})\to 1.
\end{align}

Set $T_{\mathcal{I}}=\ker(T_{\widetilde{Z}}\to T_Z)$; we then have the short exact sequence of $\widetilde{Z}$-group schemes:
\begin{align}\label{semistable:eqn:ttildez}
 1\to T_{\mathcal{I}}\to T_{\widetilde{Z}}\to T_Z\to 1.
\end{align}

Pulling~\eqref{semistable:eqn:shom_x} back along $f_{\tau}\vert_Z$, we obtain another short exact sequence:
\begin{align}\label{semistable:eqn:etaulog}
1\to T_{\mathcal{I}}\to\mathcal{E}(\tau,Z\into\widetilde{Z})\to Y\vert_Z\to 1.
\end{align}

Write $\Gmh{\mathcal{I}}$ for $\ker(\Gmh{\widetilde{Z}}\to\Gmh{Z})$. Then the divided powers on $\mathcal{I}$ give us a map of groups:
\begin{align*}
 \ell:\Gmh{\mathcal{I}}&\to \Reg{\widetilde{Z}}\\
 x&\mapsto\sum_{n\in\Int_{\geq 1}}(-1)^{n-1}(x-1)^{[n]}.
\end{align*}

We therefore obtain a map:
\begin{align}\label{semistable:eqn:log}
\ell\otimes 1: T_{\mathcal{I}}\otimes_{\Int}\Reg{\widetilde{Z}}=\SHom(X\vert_Z,\Gmh{\mathcal{I}})\otimes_{\Int}\widetilde{Z}&\to\SHom(X\vert_Z,\Reg{\widetilde{Z}}).
\end{align}

Let $\mathcal{F}(\tau,Z\into\widetilde{Z})$ be the push-forward along the map $\ell\otimes 1:T_{\mathcal{I}}\otimes_{\Int}\Reg{\widetilde{Z}}\to\Reg{\widetilde{Z}}$ of $\mathcal{E}(\tau,Z\into\widetilde{Z})\otimes_{\Int}\Reg{\widetilde{Z}}$. We set:
\[
 \Dieu(Q_{\tau})(Z\into\widetilde{Z})=\SHom\bigl(\mathcal{F}(\tau,Z\into\widetilde{Z}),\Reg{\widetilde{Z}}\bigr).
\]

If $\widetilde{Z}$ is also a log scheme over $S^{\log}$, then (\ref{semistable:eqn:etaulog}) has a canonical splitting, and so we obtain a canonical splitting:
\begin{align}\label{semistable:eqn:cansplit}
 \Dieu(Q_{\tau})(Z\into\widetilde{Z})=\SHom(Y\vert_Z,\Reg{\widetilde{Z}})\oplus(X\otimes\Reg{\widetilde{Z}}).
\end{align}
In particular, we obtain a canonical direct summand $F^1\Dieu(Q_{\tau})(S)=X\otimes\Reg{S}\subset\Dieu(Q_{\tau})(S)$: This induces the Hodge filtration $F^\bullet\Dieu(Q_{\tau})(S)$.

There is a canonical $2$-step weight filtration $W_{\bullet}\Dieu(Q_{\tau})$ on $\Dieu(Q_{\tau})$ with $W_2\Dieu(Q_{\tau})=\Dieu(Q_{\tau})$, $W_{-1}\Dieu(Q_{\tau})=0$ and
\[
 W_0\Dieu(Q_{\tau})=W_1\Dieu(Q_{\tau})=\SHom(Y,\mb{1})\;;\;\gr^W_2\Dieu(Q_{\tau})=X\otimes\mb{1}(-1).
\]

\subsubsection{}\label{semistable:subsubsec:noabfcrys}
We still need to describe the $F$-crystal structure on $\Dieu(Q)$; that is, the maps
\begin{align*}
\varphi_{\Dieu(Q)}:\on{Fr}^*\Dieu(Q)&\to\Dieu(Q);\\
 V_{\Dieu(Q)}:\Dieu(Q)&\to\on{Fr}^*\Dieu(Q).
\end{align*}
For this, we note that we can canonically identify $\on{Fr}^*\Dieu(Q)$ with $\Dieu(Q^{(p)})$, where $Q^{(p)}$ is attached to the pairing $\tau^p:Y\vert_U\times X\vert_U\to\Gmh{U}$. So, in the notation of the construction above, we only have to construct canonical maps
\begin{align*}
 \varphi(Z\into\widetilde{Z}):\mathcal{E}(\tau,Z\into\widetilde{Z})&\to \mathcal{E}(\tau^p,Z\into\widetilde{Z});\\
 V(Z\into\widetilde{Z}):\mathcal{E}(\tau^p,Z\into\widetilde{Z})&\to \mathcal{E}(\tau,Z\into\widetilde{Z}),
\end{align*}
whose composition in either direction is just multiplication by $p$.

This is easy: They are induced, respectively, by the endomorphisms $(y,t)\mapsto (y,t^p)$ and $(y,t)\mapsto (y^p,t)$ of $Y\vert_Z\times T_{\widetilde{Z}}$.

\subsubsection{}\label{semistable:subsubsec:pairing}
We also have to describe the canonical pairing $\Dieu(Q)\times\Dieu(\dual{Q})\to\mb{1}(-1)$. This is induced from the pairing:
\begin{align*}
  \bigl(\SHom(X,\Reg{\widetilde{Z}})\oplus\mathcal{E}(\tau,Z\into\widetilde{Z})\bigr)\times \bigl(\SHom(Y,\Reg{\widetilde{Z}})\oplus\mathcal{E}(\dual{\tau},Z\into\widetilde{Z})\bigr)&\to\Reg{\widetilde{Z}}\\
  (\varphi,y,t)\times(\psi,x,\dual{t})\mapsto \varphi(x)-\psi(y)-\ell(x(\dual{t})y(t)^{-1}).
\end{align*}
Observe that $x(\dual{t})y(t)^{-1}$ lies in $1+\mathcal{I}$, since its image in $\Mon^{\gp}_{Z}$ is $\tau(x,y)\tau(x,y)^{-1}=1$.

Finally, it can be checked from the definitions and the splitting (\ref{semistable:eqn:cansplit}) that there is a canonical isomorphism of $\Reg{U}$-modules with integrable connections:
\[
 H^1_{\dR}(Q_{\tau,U})\xrightarrow{\simeq}\Dieu(Q_{\tau})(S)\vert_U
\]
respecting Hodge and weight filtrations and polarization pairings.

\subsubsection{}\label{semistable:subsubsec:ddpoltilde}
Let $\widetilde{\mb{DD}}(S,U)$ be the category of pairs $(Q,\tau_0)$, where:
\begin{itemize}
\item $Q$ is a $1$-motif over $S$;
\item $\tau_0:Q^{\et}_U\times Q^{\mult}_U\to\Gmh{U}$ is a pairing associated with an object of $\mb{DD}^{\on{tot}}(S,U)$.
\end{itemize}

A map of two such pairs $\varphi:(Q_1,\tau_{1,0})\to (Q_2,\tau_{2,0})$ is a map of $1$-motifs $\varphi:Q_1\to Q_2$ such that the pair $(\varphi^{\et},\varphi^{\mult,C})$ determines a map
\[
Q_{\tau_{1,0}}\to Q_{\tau_{2,0}}
\]
in $\mb{DD}^{\on{tot}}(S,U)$.

There is a natural notion of duality in this category: Given a pair $(Q,\tau_0)$ its dual is the object $(\dual{Q},\dual{\tau}_0)$.

There is a canonical fully faithful functor
\[
\mb{DD}^{\on{tot}}(S,U)\to\widetilde{\mb{DD}}(S,U)
\]
defined as follows: Given $Q_{\tau}$ on the left-hand side, we attach to it the pair $(Q,\tau)$, where $Q$ corresponds to the complex $[Q^{\et}_{\tau}\xrightarrow{0}Q^{\mult,C}_{\tau}]$.

We can now extend the functor $\Dieu:\mb{DD}^{\on{tot}}(S,U)\to\mb{LDieu}_{\wt}(S,U)$ to a functor
\[
 \widetilde{\Dieu}:\widetilde{\mb{DD}}(S,U)\to\mb{LDieu}_{\on{wt}}(S,U).
\]
Indeed, given $(Q,\tau_0)$ on the left hand side, we obtain:
\begin{itemize}
\item The Dieudonn\'e crystal $\Dieu(Q)$ over $S$ attached to $Q$ equipped with its weight filtration; cf.~\eqref{semistable:subsubsec:crystalline}. This can be viewed as an extension:
    \begin{align}\label{semistable:eqn:1motifextn}
     0\to\SHom(Q^{\et},\mb{1})\to\Dieu(Q)\to\Dieu(Q^{\sab})\to 0.
    \end{align}
\item The object $\Dieu(Q_{\tau_0})$ in $\mb{LDieu}_{\on{wt}}(S,U)$. This is an extension:
\[
 0\to\SHom(Q^{\et},\mb{1})\to\Dieu(Q_{\tau_0})\to\Dieu(Q^{\mult})\to 0.
\]
\end{itemize}
If we pull the latter extension back along the natural map $\Dieu(Q^{\sab})\to\Dieu(Q^{\mult})$, we obtain another extension:
\begin{align}\label{semistable:eqn:pullbackextn}
 0\to\SHom(Q^{\et},\mb{1})\to\Dieu(Q_{\tau_0})\times_{\Dieu(Q^{\mult})}\Dieu(Q^{\sab})\to\Dieu(Q^{\sab})\to 0.
\end{align}
The underlying log Dieudonn\'e crystal of $\widetilde{\Dieu}(Q)$ will now be the Baer sum of (\ref{semistable:eqn:1motifextn}) and (\ref{semistable:eqn:pullbackextn}). We can naturally equip it with a weight filtration, and it can be checked that $\widetilde{\Dieu}$ respects duality. This completes the construction of $\widetilde{\Dieu}$.

\subsubsection{}\label{semistable:subsubsec:bsufunct}
Given $(Q,\tau_0)$ in $\widetilde{\mb{DD}}(S,U)$, we can view $\tau_0$ as a trivialization of the trivial $\Gm$-bi-extension of $Q^{\et}_U\times Q^{\mult,C}_U$. Therefore, we obtain a natural functor
\[
 \on{B}_{(S,U)}:\widetilde{\mb{DD}}_{\on{pol}}(S,U)\to \mb{DD}_{\on{pol}}(S,U)
\]
defined as follows on objects: Given a pair $(Q,\tau_0)$ on the left-hand side, we assign to it the object of $\mb{DD}_{\on{pol}}(S,U)$ attached to the tuple
\[
(Q^{\ab},Q^{\et},Q^{\mult,C},c,\dual{c},\tau\vert_U\cdot\tau_0).
\]
A morphism $\varphi:(Q_1,\tau_{1,0})\to (Q_2,\tau_{2,0})$ is given by a tuple $(\varphi^{\ab},\varphi^{\et},\varphi^{\mult,C})$, which also determines a map in $\mb{DD}(S,U)$:
\[
 \on{B}_{(S,U)}(\varphi):\on{B}_{(S,U)}\bigl((Q_1,\tau_{1,0})\bigr)\to\on{B}_{(S,U)}\bigl((Q_2,\tau_{2,0})\bigr).
\]
It follows from the definitions that $\on{B}_{(S,U)}$ is a faithful functor.

\subsubsection{}\label{semistable:subsubsec:logffull}
Given $(Q^{\ab},Q^{\et},Q^{\mult,C},c,\dual{c})$, after perhaps a finite \'etale base change $S\to S'$, we can find a trivialization $\tau$ of the bi-extension $(c\times\dual{c})^*\mathcal{P}_{Q^{\ab}}$ that completes the given tuple to a $1$-motif over $S'$. Therefore, for any object $(Q_U,\lambda_U)$ in $\mb{DD}(S,U)$, there is a finite \'etale cover $S'\to S$ such that, with $U'=S'\times_SU$, the base-change of $(Q_U,\lambda_U)$ to $\mb{DD}((S',U'))$ belongs to the essential image of $\on{B}_{(S',U')}$. It is now not hard to see that (\ref{semistable:prp:logdieu}) is a consequence of the following assertion:

Given $(Q_1,\tau_{1,0})$ and $(Q_2,\tau_{2,0})$ in $\widetilde{\mb{DD}}(S,U)$ and an isomorphism in $\mb{DD}(S,U)$:
\[
 \varphi:\on{B}_{(S,U)}\bigl((Q_1,\tau_{1,0})\bigr)\xrightarrow{\simeq} \on{B}_{(S,U)}\bigl((Q_2,\tau_{2,0})\bigr),
\]
there is a canonically associated isomorphism in $\mb{LDieu}_{\wt}(S,U)$:
\[
 \Dieu(\varphi):\widetilde{\Dieu}\bigl((Q_1,\tau_{1,0})\bigr)\xrightarrow[\simeq]{\Dieu(\varphi)}\widetilde{\Dieu}\bigl((Q_2,\tau_{2,0})\bigr).
\]

To see this, first note that the existence of $\varphi$ implies that we can assume $Q_1^{\ab}=Q_2^{\ab}$, and $Q_1^{\et}=Q_2^{\et}$, $Q_1^{\mult}=Q_2^{\mult}$. Furthermore, under these identifications, we can assume that $c_{Q_1}=c_{Q_2}$ and $\dual{c}_{Q_1}=\dual{c}_{Q_2}$.

For convenience, we now drop the superfluous numerals in the subscript of these objects. We can view both $\tau_{Q_1}$ and $\tau_{Q_2}$ as trivializations of the bi-extension $(c_{Q}\times \dual{c}_{Q})^*\mathcal{P}_{Q^{\ab}}$  of $Q^{\et}\times Q^{\mult,C}$; and both $\tau_{1,0}$ and $\tau_{2,0}$ as pairings on $Q^{\et}_U\times Q^{\mult,C}_U$. They satisfy:
\[
 \tau_{Q_1}\vert_U\cdot\tau_{1,0}=\tau_{Q_2}\vert_U\cdot\tau_{2,0}.
\]

Set $\overline{\tau}_0=\tau_{2,0}\tau_{1,0}^{-1}$; the formula above shows that $\overline{\tau}_0$ extends to a pairing $Q^{\et}\times Q^{\mult,C}\to\Gmh{S}$, which we again denote by $\overline{\tau}_0$, and that it satisfies:
\begin{align}\label{semistable:eqn:tausum}
 \tau_{Q_2}\cdot\overline{\tau}_0=\tau_{Q_1}\;;\;\overline{\tau}_0\vert_U\tau_{1,0}=\tau_{2,0}.
\end{align}

Let $Q_{\overline{\tau}_0}$ be the $1$-motif over $S$ attached to $\overline{\tau}_0$. Then, as extensions of $\Dieu(Q^{\mult})$ by $\SHom(Q^{\et},\mb{1})$, the Baer sum of $\Dieu(Q_{\overline{\tau}_0})$ and $\Dieu(Q_{\tau_{1,0}})$ is canonically isomorphic to $\Dieu(Q_{\tau_{2,0}})$. To see this, one must show that the construction of~\eqref{semistable:subsubsec:noabcrys}, when applied to the pairing $\overline{\tau}_0$ in the natural way, gives us the Dieudonn\'e crystal $\Dieu(Q_{\overline{\tau}_0})$. This can be deduced using finite \'etale descent and~\cite[(4.3.5)]{dejong:formal_rigid}.

Also, as extensions of $\Dieu(Q^{\sab})$ by $\SHom(Q^{\et},\mb{1})$, the Baer sum of $\Dieu(Q_{\overline{\tau}_0})\times_{\Dieu(Q^{\mult})}\Dieu(Q^{\sab})$ and $\Dieu(Q_2)$ is canonically isomorphic to $\Dieu(Q_1)$.

The assertion we required is immediate from these facts and the construction of the functor $\widetilde{\Dieu}$ in~\eqref{semistable:subsubsec:ddpoltilde}.

\subsection{Comparison isomorphisms for semi-stable abelian varieties}\label{appendix:subsec:comparison}

Fix an algebraic closure $k$ for $\Field_p$, and let $W=W(k)$, $K_0=W[p^{-1}]$. Set $S=\Spec\Reg{K}$ for a finite extension $K/K_0$ and let $U=\Spec K\subset S$ be the generic point. Let $\overline{K}$ be an algebraic closure of $K$, and let $\Reg{\overline{K}}$ be its ring of integers. Let $\Gamma_K$ be the absolute Galois group $\Gal(\overline{K}/K)$. As above, we will fix a uniformizer $\pi\in K$ with corresponding formal divided power thickening $\mathcal{S}_{\pi}\twoheadrightarrow\Reg{K}$.

Set $\overline{S}=\Spec\Reg{\overline{K}}$ and $\overline{U}=\Spec\overline{K}$; then $\overline{S}$ has a natural log structure associated with the divisor $\overline{S}\backslash\overline{U}$. Write $\overline{S}^{\log}$ for the associated log scheme.

\subsubsection{}\label{semistable:subsubsec:torsionpoints}
Fix $(A,\psi)$ in $\mb{DEG}_{\pol}(S,U)$. Write $\Dieu\bigl(A_{\overline{K}}\bigr)$ for the induced log Dieudonn\'e crystal in $\mb{LDieu}(\overline{S},\overline{U})$. For any $1$-motif $H$ over $K$, write $T_p(H)$ for the the $p$-adic Tate module
\[
 T_p(H)=\varprojlim_{n} H[p^n](\overline{K}).
\]
This is naturally a continuous $\Int_p$-representation of $\Gamma_K$.

\begin{prp}\label{semistable:prp:torsiondieu}
There is a canonical map of $\Gamma_K$-modules:
\[
 T_p(A)\to\Hom_{\mb{LDieu}(\overline{S},\overline{U})}\bigl(\Dieu\bigl(A_{\overline{K}}\bigr),\mb{1}\bigr).
\]
\end{prp}
\begin{proof}
 The $\Gamma_K$-action on the right-hand side is via its action on $\Dieu\bigl(A_{\overline{K}}\bigr)$ through `transport-of-structure'.

 Suppose that $(A,\psi)$ corresponds to $(Q_U,\lambda_U)$ in $\mb{DD}_{\on{pol}}(S,U)$. Write $\Dieu(Q)$ (resp. $\Dieu(Q_{\overline{K}})$) for the log Dieudonn\'e crystal $\Dieu(A)$ (resp. $\Dieu(A_{\overline{K}})$). Then, by (\ref{semistable:eqn:torsion}), the proposition amounts to showing that there is a canonical map
 \[
  T_p(Q_U)\to\Hom_{\mb{LDieu}}(\overline{S},\overline{U})\bigl(\Dieu(Q_{\overline{K}}),\mb{1}\bigr).
 \]

 Let $(Q',\lambda',\tau_0)$ be a tuple in $\widetilde{\mb{DD}}_{\on{pol}}(S,U)$ mapping to $(Q_U,\lambda_U)$ under the functor in (\ref{semistable:subsubsec:bsufunct}). Then, in the category of extensions of $Q^{\et}\otimes\Int_p$ by $T_p(Q^{\sab})$, $T_p(Q_U)$ is the Baer sum of $T_p(Q')=T_p(Q'[p^{\infty}])$ and the push-forward of $T_p(Q_{\tau_0})$ along $T_p(Q^{\mult})\to T_p(Q^{\sab})$.

 Now, there is a natural map:
 \[
  T_p(Q')=\Hom_{\Reg{\overline{K}}}(\Rat_p/\Int_p,Q'[p^{\infty}])\xrightarrow{\Dieu}\Hom_{\mb{LDieu}(\overline{S},\overline{U})}(\Dieu(Q'_{\overline{K}}),\mb{1}\bigr).
 \]

 So, given the construction of $\Dieu(Q)$ in (\ref{semistable:subsubsec:logffull}), it now suffices to construct a natural map:
 \begin{align}\label{semistable:eqn:qtau0torsion}
  T_p(Q_{\tau_0})\to\Hom_{\mb{LDieu}(\overline{S},\overline{U})}(\Dieu(Q_{\tau_0,\overline{K}}),\mb{1}).
 \end{align}

 By the construction of (\ref{semistable:subsubsec:ntorsion}), we have
 \[
 Q_{\tau_0}[p^n]=\mathcal{G}(\tau_0,n)\otimes\Int/p^n\Int.
 \]
 Therefore, each section of $Q_{\tau_0}[p^n]$ is the image of a section $(y_n,t_n)$ of $\mathcal{G}(\tau_0,n)$, which satisfies $f_{\tau_0}(y_n)=t_n^{p^n}$.

 Suppose that we are now given a log scheme $Z$ over $\overline{S}^{\log}$ in which $p$ is nilpotent, and an exact nilpotent thickening of log schemes $Z\into\widetilde{Z}$ equipped with divided powers. To construct the map (\ref{semistable:eqn:qtau0torsion}), it suffices, in the notation of (\ref{semistable:subsubsec:noabfcrys}), to construct a canonical section of $\mathcal{E}(\tau_0,Z\into\widetilde{Z})\otimes_{\Int}\Reg{\widetilde{Z}}$ associated with an element $(\alpha_n)\in T_p(Q_{\tau_0})$.

 For this, let $\mathcal{I}=\ker(\Reg{\widetilde{Z}}\to\Reg{Z})$, and let $r\in\Int_{\geq 1}$ be such that $\mathcal{I}^{r+1}=0$. Then, given any section $u$ of $Q^{\mult}_{\mathcal{I}}$, we will have $u^r=1$; cf.~\cite[1.1.1]{katz:serre-tate}. This in turn implies that, given any section $t$ of $\SHom(Q^{\mult,C}\vert_Z,\Mon_Z^{\gp})$ and any lift $\tilde{t}$ of it to a section of $\SHom(Q^{\mult,C}\vert_Z,\Mon_{\widetilde{Z}}^{\gp})$, the section $\tilde{t}^r$ is a canonically determined lift of $t^r$.

 Choose $m$ such that $p^m\geq r$ and such that $p^m\Reg{\widetilde{Z}}=0$. Choose a lift $(y_m,t_m)\in\mathcal{G}(\tau_0,m)(\overline{K})$ of $\alpha_m$. Then the image of $f_{\tau}(y_m)=t_m^{p^m}$ in $\Hom(Q^{\mult,C}\vert_Z,\Mon_Z^{\gp})$ has a canonical lift $\tilde{t}_m\in \Hom(Q^{\mult,C}\vert_Z,\Mon_{\widetilde{Z}}^{\gp})$. The pair $(y_m,\tilde{t}_m)$ is a section of $\mathcal{E}(\tau_0,Z\into\widetilde{Z})$. It is easily checked that its image in $\mathcal{E}(\tau_0,Z\into\widetilde{Z})\otimes\Reg{\widetilde{Z}}$ does not depend on the choice of $m$ or the lift $(y_m,t_m)$, and so is our desired canonical section.
\end{proof}

\subsubsection{}\label{semistable:subsubsec:periodrings}
We will now prove~\eqref{semistable:prp:comparison}. For this, we will need a certain larger ring $\widehat{B}_{\st}$ containing $\Bst$, whose construction we now recall. For details, cf.~\cite[\S 2]{breuil:griffiths}.

Let $A_{\cris}$ be the $p$-adically complete $W$-algebra defined in \cite[\S 2.3]{fontaine:periodes}: it is equipped with a $\Gamma_K$-action and a compatible surjection $\theta:A_{\cris}\to\Reg{\overline{K}}$, whose kernel is equipped with divided powers. There is a canonical Frobenius lift $\varphi:A_{\cris}\to A_{\cris}$. Moreover, if $\mathcal{R}=\lim_{x\mapsto x^p}\Reg{\overline{K}}/p$ is the perfect envelope of $\Reg{\overline{K}}/p$, there is a canonical $\Gamma_K$-equivariant Teichm\"uller lift $[\cdot]:\mathcal{R}\backslash\{0\}\to A_{\cris}\backslash\{0\}$.

Any coherent sequence $(\zeta_n)_{n\in\Int_{\geq 0}}$ of $p$-power roots of unity determines an element $\underline{\zeta}\in\mathcal{R}^\times$ and thus an element $[\underline{\zeta}]\in A_{\cris}^\times$. Similarly, a coherent sequence $(\pi^{1/p^n})=(\pi_n)_{n\in\Int_{\geq 0}}$ of $p$-power roots of $\pi$ determines an element $[\underline{\pi}]\in A_{\cris}$.

Let $\widehat{A}_{\st}$ be the $p$-adic completion of the divided power algebra $A_{\cris}\langle X\rangle$, so that we have:
\[
 \widehat{A}_{\st}=\bigl\{\sum_ia_i\frac{X^i}{i!}\in A_{\cris}[p^{-1}]\pow{X}:\;a_i\to 0\text{ $p$-adically}\bigr\}.
\]
There is a natural log structure $\on{M}_{\widehat{A}_{\st}}\to \widehat{A}_{\st}$, where $\on{M}_{\widehat{A}_{\st}}\subset \widehat{A}_{\st}$ is the monoid generated by $\widehat{A}_{\st}^\times$ and the image of $\mathcal{R}\setminus\{0\}$ under the Teichm\"uller lift. The action of $\Gamma_K$ extends to $\widehat{A}_{\st}$ via the formula: $g(1+X)=[\underline{\epsilon}(g)](1+X)$. Here, $\underline{\epsilon}(g)\in\mathcal{R}^\times$ arises from the coherent sequence of $p$-power roots of unity $(g(\pi_n)\pi_n^{-1})_n$. The Frobenius lift $\varphi$ on $A_{\cris}$ extends to $\varphi:\widehat{A}_{\st}\to \widehat{A}_{\st}$ satisfying $\varphi(1+X)=(1+X)^p$. Both these additional structures respect log structures.

The surjection $\theta:A_{\cris}\to\Reg{\overline{K}}$ extends to a map $\theta:\widehat{A}_{\st}\to\Reg{\overline{K}}$ satisfying $\theta(X)=0$; clearly, $\ker\theta\subset \widehat{A}_{\st}$ admits divided powers. We view $\widehat{A}_{\st}$ as an $\mathcal{S}_{\pi}$-algebra via the map $u\mapsto [\underline{\pi}](1+X)^{-1}$. All maps involved are compatible with log structures. Moreover, $\theta:\widehat{A}_{\st}\to\Reg{\overline{K}}$ is an exact thickening, in the sense that the induced map:
\[
 \Mon_{\widehat{A}_{\st}}^{\gp}/\widehat{A}_{\st}^{\times}\to\overline{K}^\times/\Reg{\overline{K}}^\times
\]
is an isomorphism.

$\widehat{A}_{\st}$ is equipped with a natural $A_{\cris}$-linear derivation $N$ satisfying $N(1+X)=(1+X)$. It is easily checked that we have $p\varphi N = N\varphi$ as endomorphisms of $\widehat{A}_{\st}$.

Now, $\widehat{B}^+_{\st}=\widehat{A}_{\st}[p^{-1}]$, and $\widehat{B}_{\st}=\widehat{B}^+_{\st}[t^{-1}]$, where $t=\log[\underline{\varepsilon}]\in A_{\cris}$ is Fontaine's cyclotomic period, associated with a coherent sequence $(\varepsilon_n)_n$ of primitive $n^{\text{th}}$-roots of unity.  There is a natural map $\Bst\to\widehat{B}_{\st}$ compatible with additional structures.

\subsubsection{}\label{semistable:subsubsec:m0Atriv}
Let $\mathcal{S}_{\pi}^{\log}=\mathcal{S}_{\pi}[p^{-1}][\ell_u]$ be the polynomial algebra over $\mathcal{S}_{\pi}[p^{-1}]$ in the variable $\ell_u$. It is equipped with a natural integrable connection $\nabla:\mathcal{S}_{\pi}^{\log}\to\mathcal{S}_{\pi}^{\log}\dlog(u)$ satisfying $\nabla(\ell_u)=\dlog(u)$, as well as a Frobenius lift $\varphi:\mathcal{S}_{\pi}^{\log}\to\mathcal{S}_{\pi}$ extending that on $\mathcal{S}_{\pi}$ and satisfying $\varphi(\ell_u)=p\ell_u$. The natural map $\mathcal{S}_{\pi}\to\Reg{Y(e)}^{\an}$ extends to a map $\mathcal{S}_{\pi}^{\log}\to\Reg{Y(e)}^{\an,\log}$ in the obvious way: $\ell_u$ is carried to $\ell_u$. This map is compatible with the $\varphi$-module structures, as well as the logarithmic connections.

From \cite[6.2.1.1]{breuil:griffiths}, we now obtain:
\begin{prp}\label{semistable:prp:breuil}
The isomorphism (\ref{semistable:eqn:separallel}) restricts to an isomorphism of $\mathcal{S}_{\pi}^{\log}$-modules:
\[
 \mathcal{S}_{\pi}^{\log}\otimes_{K_0}M_0(A)\xrightarrow{\simeq}\mathcal{S}_{\pi}^{\log}\otimes_{\mathcal{S}_{\pi}}\mathcal{M}(A).
\]
\end{prp}
\qed

\subsubsection{}\label{semistable:subsubsec:compproof}
The map $\mathcal{S}_{\pi}\to\widehat{A}_{\st}$ extends to a $\varphi$-equivariant map $\mathcal{S}_{\pi}^{\log}\to\widehat{B}^+_{\st}$ carrying $\ell_u$ to the element $\log\bigl(\frac{[\underline{\pi}]}{\pi}\bigr)-\log(1+X)$, where both logarithms are developed using the usual power series\footnote{Note: $\log\bigl(\frac{[\underline{\pi}]}{\pi}\bigr)=\log\bigl(1+(\frac{[\underline{\pi}]}{\pi}-1)\bigr)$.}. If we equip $\mathcal{S}_{\pi}^{\log}$ with the derivation $N=-\nabla(u\frac{d}{du})$, then this map is also compatible with derivations.

We again fix $(A,\psi)$ in $\mb{DEG}_{\pol}(S,U)$. We can evaluate $\Dieu(A_{\overline{K}})$ along the formal divided power thickening $\Spf\Reg{\overline{K}}\into\Spf\widehat{A}_{\st}$ to obtain an $\widehat{A}_{\st}$-module that is canonically isomorphic to $\widehat{A}_{\st}\otimes_{\mathcal{S}_{\pi}}\mathcal{M}(A)$. Therefore, from \eqref{semistable:prp:torsiondieu}, we obtain canonical maps of $\Rat_p$-vector spaces
\begin{align*}
T_p(A)[p^{-1}]\to\Hom_{\mathcal{S}_{\pi}}\bigl(\mathcal{M}(A),\widehat{B}_{\st}\bigr)\to\Hom_{\mathcal{S}_{\pi}^{\log}}\bigl(\mathcal{S}_{\pi}^{\log}\otimes_{\mathcal{S}_{\pi}}\mathcal{M}(A),\widehat{B}_{\st}\bigr)\to\Hom_{K_0}\bigl(M_0(A),\widehat{B}_{\st}\bigr).
\end{align*}
Here, the final map is obtained via (\ref{semistable:prp:breuil}).

Tensoring the maps with $\widehat{B}^+_{\st}$ and dualizing, we now obtain a map of $\widehat{B}^+_{\st}$-modules:
\[
\widehat{B}_{\st}\otimes_{K_0}M_0(A)\to \widehat{B}_{\st}\otimes_{\Rat_p}H^1_{\et}(A_{\overline{K}},\Rat_p).
\]
By construction, it is compatible with all additional structures. We claim that it is an isomorphism. To see this, we note that the map is compatible with weight filtrations on both sides, and we only have to check that it induces isomorphisms on the associated gradeds for the weight filtration. But this follows from \cite[Theorem 7]{faltings:very_ramified}.

The proof of (\ref{semistable:prp:comparison}) will be completed by:
\begin{prp}\label{semistable:prp:comp_finish}
\mbox{}
\begin{enumerate}
  \item\label{comp_finish:descent}The isomorphism above restricts to an isomorphism
  \[
\beta_{\st,A}:B_{\st}\otimes_{K_0}M_0(A)\to B_{\st}\otimes_{\Rat_p}H^1_{\et}(A_{\overline{K}},\Rat_p).
\]
  \item\label{comp_finish:commute}The diagram (\ref{semistable:diagram}) commutes.
\end{enumerate}
\end{prp}
\begin{proof}
Assertion~\eqref{comp_finish:descent} follows from \cite[Th\'eor\`eme 3.3]{breuil:griffiths}.

For~\eqref{comp_finish:commute}, we observe that the obstruction to is an automorphism $u$ of $\Bdr\otimes_{\Rat_p}H$, where $H\coloneqq H^1_{\et}(A_{\overline{K}},\Rat_p)$, that respects the Hodge filtration (arising from that on $\Bdr$), the weight filtration (arising from that on $H$), as well as the diagonal $\Gamma_K$-action. Moreover, the induced automorphism on the associated graded pieces of the weight filtration is the identity. In weights $0,2$, this is clear, and in weight $1$, this follows from the discussion in~\cite[\S~6]{faltings:very_ramified}; see pp.132--133 of~\emph{loc. cit.} In fact, this discussion shows more: Since our comparison isomorphism agrees with the classical crystalline comparison isomorphism for $p$-divisible groups, the automorphism induced by $u$ on $\Bdr\otimes_{\Rat_p}W_1H$ and $\Bdr\otimes_{\Rat_p}(H/W_0H)$ is the identity.

Therefore, $u-1$ factors through a $\Gamma_K$-equivariant map $\Bdr\otimes_{\Rat_p}\gr^W_2H\to\Bdr\otimes_{\Rat_p}W_0H$,
which respects Hodge filtrations. Giving such a map is equivalent to giving a map
\[
\gr^W_2H^1_{\dR}(A/K)\to W_0H^1_{\dR}(A/K)
\]
that respects Hodge filtrations. But any such map must be identically zero.
\end{proof}

\subsection{Extension lemmas}\label{appendix:subsec:extension}

\emph{Unless otherwise specified, all schemes and algebraic spaces in this sub-section will be normal, noetherian and flat over $\Int_{(p)}$.}

\subsubsection{}
Let $Z$ be a scheme admitting locally closed immersions $Z\into X_i$, for $i=1,2$, where $X_1$ and $X_2$ are algebraic spaces.

Suppose that there exists another scheme $Z'$, which also admits locally closed immersions $Z'\into X'_i$ satisfying the following property: For $i=1,2$, there exists a finite map $X_i\to X'_i$ such that
\[
 Z=(X_i\times_{X'_i}Z')_{\on{red}}.
\]

Assume that we have a commutative diagram of formal algebraic spaces:
\begin{diagram}
\bigl(X_1[p^{-1}]\bigr)^{\wedge}_{Z[p^{-1}]}&\rTo&\bigl(X_1\bigr)^{\wedge}_{Z}&\rTo&\bigl(X'_1\bigr)^{\wedge}_{Z'}\\
\dTo_{\simeq}^{f[p^{-1}]}&&&&\dTo_{\simeq}^{f'}\\
\bigl(X_2[p^{-1}]\bigr)^{\wedge}_{Z[p^{-1}]}&\rTo&\bigl(X_2\bigr)^{\wedge}_{Z}&\rTo&\bigl(X'_2\bigr)^{\wedge}_{Z'},
\end{diagram}
where the two vertical isomorphisms lift the identity on $Z[p^{-1}]$ (resp. $Z'$).

For every point $z\in Z(\overline{\Field}_p)$, with image $z'$ in $Z'$, let $R_i(z')$ (resp. $R_i(z)$) be the complete local ring of $X'_i$ (resp. $X_i$) at $z$ (resp. $z'$).

\begin{lem}\label{appendix:lem:formalext}
Suppose that, for every $z$, the finite map $R_1(z')\xrightarrow{\simeq}R_2(z')\to R_2(z)$ lifts to an isomorphism $R_1(z)\xrightarrow{\simeq} R_2(z)$. Then $f[p^{-1}]$ extends to an isomorphism of formal algebraic spaces
\[
 f:\bigl(X_1\bigr)^{\wedge}_Z\xrightarrow{\simeq}\bigl(X_2\bigr)^{\wedge}_Z
\]
\end{lem}
\begin{proof}
We can work formal affine locally on $\bigl(X'_1\bigr)^{\wedge}_{Z'}$, and so we can assume that we have:
\[
 \bigl(X'_1\bigr)^{\wedge}_{Z'}=\Spf(A,I_A)\xrightarrow[\simeq]{f'}\bigr(X'_2\bigr)^{\wedge}_{Z'};
\]
and
\[
 \bigl(X_i\bigr)^{\wedge}_Z=\Spf(B_i,I_{B_i}),
\]
for $i=1,2$. Furthermore, if $\widehat{B_i[p^{-1}]}$ is the $I_{B_i}$-adic completion of $B_i[p^{-1}]$, $f[p^{-1}]$ induces an isomorphism of adic rings:
\begin{align}\label{appendix:eqn:b1b2pinv}
 (\widehat{B_1[p^{-1}]},I_{B_1}\widehat{B_1[p^{-1}]})&\xrightarrow{\simeq}(\widehat{B_2[p^{-1}]},I_{B_2}\widehat{B_2[p^{-1}]}).
\end{align}

We have to show that the finite map $A\to B_2$ lifts to a map $B_1\to B_2$. By the $\Int_{(p)}$-flatness and noetherianity of $B_i$, and Krull's intersection theorem, the map $B_i\to\widehat{B_i[p^{-1}]}$ is injective.

Choose an element $b\in B_2$; via the isomorphism of~\eqref{appendix:eqn:b1b2pinv}, we can view $b$ as an element of $\widehat{B_1[p^{-1}]}$. We have to show that it actually lies in $B_1$. Since both $B_1$ and $B_2$ are finite over $A$, the sub-algebra $B_1[b]\subset\widehat{B_1[p^{-1}]}$ generated by $b$ is also finite over $B_1$. We have to show that the map $B_1\to B_1[b]$ is an isomorphism. It suffices to check this after completing $B_1$ at any maximal ideal, and here it follows from the hypothesis of the lemma.
\end{proof}

\subsubsection{}
We will consider the category $\mathcal{C}_1$ of pairs $\bigl(S,\{Z_c\}_{c\in\on{C}}\bigr)$, where:
\begin{itemize}
\item $S$ is a normal, noetherian algebraic space, flat and locally of finite type over $\Int_{(p)}$;
\item $(\on{C},\preceq)$ is a partially ordered finite set;
\item $\{Z_c\}$ is a collection of locally closed sub-spaces of $S$ that are also normal and flat over $\Int_{(p)}$, and that form a stratification for $S$. That is, we have a decomposition:
\[
 S=\bigsqcup_{c\in\on{C}}Z_c;
\]
and, for every $c\in\on{C}$, the sub-space $\bigsqcup_{c'\preceq c}Z_{c'}$ is the Zariski closure of $Z_c$ in $S$.
\end{itemize}

A map $(S,\{Z_c\}_{c\in\on{C}})\to (T,\{W_d\}_{d\in\on{D}})$ of such tuples is a pair $(f,\varphi)$, where $f:S\to T$ is a map of algebraic spaces, and $\varphi:\on{C}\to\on{D}$ is a map of partially ordered sets such that, for every $d\in\on{D}$, we have:
\[
 (f^{-1}(W_d))_{\on{red}}=\bigsqcup_{\varphi(c)=d}Z_c.
\]

Let $\mathcal{C}_2$ be the category whose objects are the same as those of $\mathcal{C}_1$, but in which a morphism between $(S,\{Z_c\}_{c\in\on{C}})$ and $(T,\{W_d\}_{d\in\on{D}})$ is instead a pair $(f[p^{-1}],\varphi)$, where:
\begin{itemize}
\item
\[
(f[p^{-1}],\varphi):(S[p^{-1}],\{Z_c[p^{-1}]\}_{c\in\on{C}})\to (T[p^{-1}],\{W_d[p^{-1}]\}_{d\in\on{D}})
\]
is a map in $\mathcal{C}_1$;
\item For every $c\in\on{C}$ with $d=\varphi(c)$, there exists a map of formal algebraic spaces $\widehat{S}_{Z_c}\to \widehat{T}_{W_d}$ extending the map
\[
(S[p^{-1}])^{\wedge}_{Z_c[p^{-1}]}\to(T[p^{-1}])^{\wedge}_{W_d[p^{-1}]}
\]
induced by $f[p^{-1}]$.
\end{itemize}

\begin{lem}\label{appendix:lem:abstract_hecke}
The natural functor $\mathcal{C}_1\to\mathcal{C}_2$ is an equivalence of categories.
\end{lem}
\begin{proof}
Let
\[
(f[p^{-1}],\varphi):(S,\{Z_c\}_{c\in\on{C}})\to (T,\{W_d\}_{d\in\on{D}})
\]
be a map in $\mathcal{C}_2$. We have to show that $f[p^{-1}]$ extends to a map $f:S\to T$. Using a simple induction, this reduces to the situation where $\on{C}$ is a two-element set $\{c_0,c_1\}$ with $c_0\preceq c_1$. Set $d_0=\varphi(c_0)$ and $d_1=\varphi(c_1)$.

Let $S'$ be the normalization of the Zariski closure in $S\times T$ of the graph of $f[p^{-1}]$. Our hypothesis implies that the open immersion $Z_{c_0}[p^{-1}]\into S[p^{-1}]$ extends to an open immersion $Z_{c_0}\into S'$, and, similarly, the closed immersion $Z_{c_1}[p^{-1}]\into S[p^{-1}]$ extends to a closed immersion $Z_{c_1}\into S'$. Therefore, $S'=Z_{c_0}\sqcup Z_{c_1}$, and the natural projection $S'\to S$ is a bijection on points. Moreover, the completion of $S'$ along $Z_{c_1}$ maps isomorphically onto that of $S$. This implies that $S'$ maps isomorphically to $S$, and so $f[p^{-1}]$ extends to all of $S$.
\end{proof}

\begin{lem}\label{appendix:lem:neronogg1motifs}
Let $\Reg{L}$ be a complete discrete valuation ring of mixed characteristic $(0,p)$. Suppose that $Q$ is a $1$-motif over $L$ such that, for some $\ell\neq p$, the $\ell$-adic Tate module $T_{\ell}(Q)$ is unramified as a representation of $\Gal(\overline{L}/L)$. Then $Q$ extends (uniquely) to a $1$-motif over $\Reg{L}$.
\end{lem}
\begin{proof}
We can assume that $\Reg{L}$ has algebraically closed residue field. Suppose that $Q=(B,Y,X,c,\dual{c},\tau)$. By the usual good reduction criterion for abelian varieties, we see that $B$ extends to an abelian scheme over $\Reg{L}$. By the properness of $B$ and $\dual{B}$, the maps $c$ and $\dual{c}$ also extend over $\Reg{L}$. Let $J$ be the extension of $B$ by $T=\SHom(X,\Gm)$ determined by $\dual{c}$. For every $x\in X$, viewed as a character of $T$, we obtain via push-forward an extension $J_x$ of $B$ by $\Gm$. We have to show that, for any $y\in Y$, the section $\tau(y,x)\in J_x(L)$ extends over $\Reg{L}$.

For each $n\in\Int_{\geq 1}$, consider the connecting map $\partial_n:J_x(L)\to H^1(L,J_x[\ell^n])$ in the Kummer long exact sequence. Our hypothesis implies that $\partial_n(\tau(y,x))$ is trivial, for all $n$. Therefore, $\tau(y,x)$ is in the image of $[\ell^n]:J_x(L)\to J_x(L)$. To finish, it is enough to show that the inclusion
\[
 J_x(\Reg{L})\subset\bigcap_{n}\on{im}([\ell^n]:J_x(L)\to J_x(L))
\]
is bijective.

Choose an element $j$ of the right-hand side with image $b\in B(L)=B(\Reg{L})$; choose also $j'\in J(\Reg{L})$ lifting $b$. Then $j=tj'$, for a unique $t\in L^\times$. For any $n\geq 1$, choose $j_n\in J(L)$ such that $[\ell^n]j_n=j$. Let $b_n\in B(\Reg{L})$ be its image. Then we can find $j'_n\in J(\Reg{L})$ with image $b_n\in B(\Reg{L})$ and satisfying $[\ell^n]j'_n=j'$. In particular, for all $n\geq 1$, there exists $t_n\in L^\times$ such that $j_n=t_nj'_n$ and $t_n^{\ell^n}=t$. This implies that $t\in \Reg{L}^\times$, and hence $j\in J(\Reg{L})$.
\end{proof}

\begin{lem}\label{appendix:lem:1motifspurity}
Suppose that $S$ is a regular, formally smooth scheme over $\Int_{(p)}$. Let $U\subset S$ be an open sub-scheme containing $S[p^{-1}]$, whose complement has co-dimension at least $2$ in $S$. Then any $1$-motif over $U$ extends uniquely to one over $S$.
\end{lem}
\begin{proof}
Let $(B,Y,X,c,\dual{c},\tau)$ be a $1$-motif over $U$. By~\cite[Corollary 5]{vasiu_zink}, $B$ extends to an abelian scheme over $S$. The rest of the data also extends by Weil's extension theorem~\cite[\S~4.4, Theorem 1]{blr}: Given a smooth group scheme $H$ over $S$, any section of $H$ over $U$ extends to one over $S$.
\end{proof}

\subsubsection{}
Suppose that $S$ is a flat normal $\Int_{(p)}$-scheme, and that $(A_i,\lambda_i)$ for $i=1,2,3$ are three polarized abelian schemes over $S[p^{-1}]$. Suppose that there exist finite homomorphisms $f_i:A_i\to A_1$, for $i=1,2$ that are compatible with polarizations, and whose kernels are finite flat group schemes over $S[p^{-1}]$ of prime-to-$p$ order. 

Suppose also that the product homomorphism
\[
\beta:A_2\times A_3 \xrightarrow{f_2\times f_3}A_1
\]
is an isogeny, and that $A_1$ extends to an abelian scheme $\mathcal{A}_1$ over $S$.

\begin{lem}
\label{appendix:lem:abelian_scheme_extension}
In the above situation, suppose that $\lambda_1$ has prime-to-$p$ degree. Then both $A_2$ and $A_3$ also extend to abelian schemes $\mathcal{A}_2$ and $\mathcal{A}_3$ over $S$, and the homomorphisms $f_2,f_3$ extend to finite homomorphisms $\mathcal{A}_i\to \mathcal{A}_1$ for $i=2,3$.
\end{lem}
\begin{proof}
We can assume that $S$ is connected. This implies that the degrees of the polarizations $\lambda_i$ are constant. Let $d_i$ be the degree of the polarization $\lambda_i$ on $A_i$, for $i=1,2,3$. 

Our hypothesis on the degree of $\lambda_1$ and its compatibility with $\lambda_2$ and $\lambda_3$ along the finite homomorphisms $f_2$ and $f_3$ imply that the integers $d_i$ are prime-to-$p$ for $i=1,2,3$.

Moreover, since $\beta$ respects polarizations, the $p$-primary part $\ker\beta[p^{\infty}]$ of $\ker\beta$ must map isomorphically onto a subgroup scheme of both $\ker\lambda_1$ and $\ker\lambda_2$ via the natural projections. Write $\widetilde{A}$ for the quotient $(A_2\times A_3)/\ker\beta[p^\infty]$: this is a finite cover of $A_1$ of prime-to-$p$ degree. Let $\tilde{d}$ be the degree of the induced polarization on $\widetilde{A}$. 

Let $g_1$ (resp. $g_2$, $g_3$) be the dimension of the fibers of $A_1$ (resp. $A_2$, $A_3$). Let $\mathsf{A}_{g,\tilde{d}}$ be the moduli stack over $\Int_{(p)}$ of polarized abelian varieties of dimension $g$ and degree $\tilde{d}$.

Let $\mathsf{B}$ be the moduli stack over $\Int_{(p)}$ that, for each $\Int_{(p)}$-scheme $T$, parameterizes tuples $((B_2,\psi_2),(B_3,\psi_3),\varepsilon)$,
where, for $i=2,3$, $(B_i,\psi_i)$ is a polarized abelian scheme over $T$ of dimension $g_i$ and degree $d_i$, and
\[
\varepsilon:\ker\psi_2[p^\infty]\xrightarrow{\simeq}\ker\psi_3[p^\infty]
\]
is an isomorphism of $T$-group schemes such that the product polarization $\psi_2\times\psi_3$ on $B_2\times_TB_3$ descends to a polarization $\psi$ on $B=(B_2\times_TB_3)/L$, where $L=(1\times\varepsilon)(\ker\psi_2[p^\infty])$, which will necessarily have degree $\tilde{d}$. This is again an algebraic stack over $\Int_{(p)}$.

By definition, we now have a map of $\Int_{(p)}$-stacks
\begin{align}\label{appendix:eqn:stacks_finite}
\mathsf{B}&\to \mathsf{A}_{g,\tilde{d}}\\
\bigl((B_2,\psi_2),(B_3,\psi_3),\varepsilon\bigr)&\mapsto(B,\psi).\nonumber
\end{align}
Note that both $B_2$ and $B_3$ are abelian subschemes of $B$.

We claim that this map is finite. Since it is of finite type, it suffices to show that the map is quasi-finite and satisfies the valuative criterion for properness. The first property follows from the argument in~\cite{lenstra_oort_zarhin}, and the second follows from the Ner\'on-Ogg-Shafarevich criterion for good reduction.

The polarized abelian scheme $\widetilde{A}\to S[p^{-1}]$ is associated with a map
\[
f:S[p^{-1}]\to\mathsf{A}_{g,\tilde{d}}. 
\]
The discussion above shows that $f$ lifts to a map $S[p^{-1}]\to\mathsf{B}$. 

The hypothesis that $A_1$ extends to an abelian scheme over $S$, combined with the fact that the isogeny $\tilde{\beta}:\widetilde{A}\to A_1$ has prime-to-$p$ degree implies that $\widetilde{A}$ extends to a polarized abelian scheme over $S$, and hence that $f$ extends to a map $S\to\mathsf{A}_{g,\tilde{d}}$. 

So, to prove the proposition, it is enough to show that this extension also admits a lift $S\to\mathsf{B}$. But this follows from the finiteness of~\eqref{appendix:eqn:stacks_finite} and the normality of $S$.
\end{proof}

\subsection{Rigid analytic lemmas}\label{appendix:subsec:rigid}
The purpose of this part of the appendix is to collect some results about rigid analytic spaces obtained via Berthelot's analytification functor.

\subsubsection{}\label{rigid:subsubsec:berthelot}
Let $R$ be a complete local normal ring, topologically of finite type over $W$. Set $\widehat{\mathcal{U}}=\Spf R$. We have the attached analytic space $\widehat{\mathcal{U}}^{\an}$ over $K_0$ as defined in \cite[\S 7.1]{dejong:formal_rigid}\footnote{This space is denoted by $\widehat{\mathcal{U}}^{\on{rig}}$ in \emph{loc. cit.}}: Let $\mx\subset R$ be the maximal ideal, and let $R[\mx^n/p]$ be the $R$-sub-algebra of $R\bigl[p^{-1}\bigr]$ generated by the elements $a/p$, for $a\in\mx^n$. Let $B_n$ be the $\mx$-adic completion of $R[\mx^n/p]$ and set $C_n=B_n\bigl[p^{-1}\bigr]$. Then $C_n$ is an affinoid algebra over $K_0$. If $X_n=\on{Sp}(C_n)$ is the affinoid space over $K_0$ associated with $C_n$, the natural map $X_n\to X_{n+1}$ induced by the inclusion $R[\mx^{n+1}/p]\to R[\mx^n/p]$ exhibits $X_n$ as an affinoid open in $X_{n+1}$. We now have:
\[
 \widehat{\mathcal{U}}^{\an}=\bigcup_nX_n.
\]

There is a natural map of $W$-algebras: $R\to H^0\bigl(\widehat{\mathcal{U}}^{\an},\Reg{\widehat{\mathcal{U}}^{\an}}\bigr)$. By~\cite[7.3.6]{dejong:formal_rigid}, this map produces an identification:
\begin{align}\label{rigid:eqn:rnormal}
R=\{f\in H^0\bigl(\widehat{\mathcal{U}}^{\an},\Reg{\widehat{\mathcal{U}}^{\an}}\bigr): \vert f(x)\vert_p\leq 1\text{, for all $x\in\widehat{\mathcal{U}}^{\an}(\overline{K}_0)$}\}.
\end{align}

\begin{lem}\label{rigid:lem:units}
Let $Q(R)$ be the fraction field of $R$, and suppose that $f\in Q(R)$ is such that there exists a non-empty Zariski open sub-space $V\subset\widehat{\mathcal{U}}^{\an}$ with $\vert f(x)\vert_p=1$, for all $x\in V(\overline{K}_0)$. Then $f$ belongs to $R^\times$.
\end{lem}
\begin{proof}
Write $f=b_1/b_2$, for non-zero $b_1,b_2\in R$. Let $\mf{p}\subset R$ be a height $1$ prime with $p\notin\mf{p}$, and suppose that $\ord_{\mf{p}}(b_1)\geq\ord_{\mf{p}}(b_2)$. Let $V(\mf{p})\subset\widehat{\mathcal{U}}^{\an}$ be the Zariski closed subspace associated with $\mf{p}$. Then $f=b_1/b_2$ is defined on a non-empty open of $V(\mf{p})$. For any point in this open, $\vert f(x)\vert_p=1$, which implies that $\ord_{\mf{p}}(b_1)=\ord_{\mf{p}}(b_2)$. Arguing similarly with $b_2/b_1$, we see that $\ord_{\mf{p}}(f)=1$, for all such height $1$ primes.

Since $R$ is normal, this implies that $f$ belongs to $R[p^{-1}]^\times$, with $\vert f(x)\vert_p=1$ everywhere. Combining with~\eqref{rigid:eqn:rnormal} shows that $f$ must in fact belong to $R^\times$.
\end{proof}

From the definitions, we obtain:
\begin{lem}\label{rigid:lem:dominant}
Suppose that we have an injective map $R\to R'$ of complete local normal $W$-algebras, topologically of finite type over $W$. Set $\widehat{\mathcal{U}}'=\Spf R'$. Then the induced map
\[
 H^0\bigl(\widehat{\mathcal{U}}^{\an},\Reg{\widehat{\mathcal{U}}^{\an}}\bigr)\to H^0\bigl(\widehat{\mathcal{U}}^{',\an},\Reg{\widehat{\mathcal{U}}^{',\an}}\bigr)
\]
is again injective.
\end{lem}
\qed

\begin{lem}\label{rigid:lem:logarithm}
Let $R\to R'$ be as above, and let $\mx'\subset R'$ be the maximal ideal. 
\begin{enumerate}
\item\label{log:log}Suppose that $f\in 1+\mx'$ is such that the logarithm:
\[
 \log(f)=\sum_{i=1}^\infty(-1)^{i+1}\frac{(f-1)^i}{i}\in H^0\bigl(\widehat{\mathcal{U}}^{',\an},\Reg{\widehat{\mathcal{U}}^{',\an}}\bigr)
\]
is integral over $H^0\bigl(\widehat{\mathcal{U}}^{\an},\Reg{\widehat{\mathcal{U}}^{\an}}\bigr)$. Suppose that there exist a finitely generated $R$-algebra $S$, a maximal ideal $\mx_S\subset S$, an element $g\in S$, and an isomorphism of local $R$-algebras $\widehat{S}_{\mx_S}\xrightarrow{\simeq}R'$ carrying $g$ to $f$. Then $f$ is integral over $R$.
\item\label{log:power}If $\log(f)$ belongs to $H^0\bigl(\widehat{\mathcal{U}}^{\an},\Reg{\widehat{\mathcal{U}}^{\an}}\bigr)$, then there exists $r\in\Int_{>0}$ such that $f^r$ belongs to $1+\mx\subset R$.
\item\label{log:quotient}If $f$ belongs to the quotient field $Q(R)$ of $R$, then it lies in $1+\mx$.
\end{enumerate}
\end{lem}
\begin{proof}
For all these statements, we can, without loss of generality, replace $S$ by the normalization of the image in $R'$ of the map of $R$-algebras:
\begin{align*}
R[T]&\to S\\
T&\mapsto g-1;
\end{align*}
and $R'$ by the complete local ring of this normalization at the pre-image of $\mx_S$.

Now,~\eqref{log:log} amounts to the assertion that $R'$ is finite over $R$. This can be rephrased as follows: Let $R_1$ be the integral closure of $R$ in $S$. Since it is a domain, finite over the complete local ring $R$, it has to be local as well. We have to show that the map of local $R$-algebras $R_1\to S_{\mx_S}$ is an isomorphism; or, equivalently, that $g$ belongs to $R_1$.

By hypothesis, $\widehat{\mathcal{U}}^{',\an}\to\widehat{\mathcal{U}}^{\an}$ factors through a finite map $V\to\widehat{\mathcal{U}}^{\an}$ of rigid spaces such that $\log(f)$ lies in $H^0(V,\Reg{V})$. For any $n\geq 1$, let $X_n\subset\widehat{\mathcal{U}}^{\an}$ be the affinoid open defined in~\eqref{rigid:subsubsec:berthelot}. The pre-image $Y_n\subset V$ of $X_n$ is again affinoid. Since $g=\log(f)$ is an analytic function on $Y_n$, it is necessarily bounded, and so there exists $r\in\Int_{>0}$ such that $\vert p^r g\vert_p< p^{-\frac{1}{p-1}}$ everywhere on $Y_n$. In particular, the power series
\[
 f_r\coloneqq \exp(p^r g) = \sum_{j=0}^\infty\frac{(p^rg)^j}{j!}
\]
converges to a function on $Y_n$ satisfying $\vert f_r -1\vert_p$ everywhere, and $\log(f_r) = p^r\log(f)$. For $s\in\Int_{\geq 1}$ sufficiently large, we will now have $f_r^{p^s} = f^{p^{r+s}}$ over the pre-image in $\widehat{\mathcal{U}}^{',\an}$ of $Y_n$. Therefore, the pre-image in $\widehat{\mathcal{U}}^{',\an}$ of $X_n$ is finite over $X_n$.

Using~\cite[7.1.9]{dejong:formal_rigid} and the noetherianity of $R$, we now find that the localization of $R'[p^{-1}]$ at any maximal ideal $\mf{p}\subset R[p^{-1}]$ is finite over $R[p^{-1}]_{\mf{p}}=R_{\mf{p}}$. Therefore, $(\widehat{S}_{\mx_S})_{\mf{p}}$, and hence its sub-algebra $(S_{\mx_S})_{\mf{p}}$, has to be finite over $R_{\mf{p}}$. This implies that the map of $R$-algebras $R_1[p^{-1}]\to S_{\mx_S}[p^{-1}]$ is an isomorphism, since it is one after localizing at every maximal ideal of $R[p^{-1}]$. So we find that the element $g\in S_{\mx_S}$ lies in $R_1[p^{-1}]\cap S_{\mx_S}$.

Let $\widehat{\mathcal{U}}_1^{\an}$ be the rigid analytic space associated with $R_1$. The map $\widehat{\mathcal{U}}^{',\an}\to\widehat{\mathcal{U}}^{\an}$ factors through $\widehat{\mathcal{U}}_1^{\an}$. Now, $g$ is a bounded analytic function on $\widehat{\mathcal{U}}_1^{\an}$ that restricts to $f$ over $\widehat{\mathcal{U}}^{',\an}$. But, by what we have just seen, the map $\widehat{\mathcal{U}}^{',\an}\to\widehat{\mathcal{U}}_1^{\an}$ is surjective on $\overline{K}_0$-points. Therefore, since $\vert f-1\vert_p<1$ holds everywhere on $\widehat{\mathcal{U}}^{\an}(\overline{K}_0)$, we must have $\vert g-1\vert_p<1$ everywhere on $\widehat{\mathcal{U}}_1^{\an}(\overline{K}_0)$. By~\eqref{rigid:eqn:rnormal}, it now follows that $g$ must belong to $R_1^\times$. 

Assertion~\eqref{log:quotient} is now immediate. It remains to prove~\eqref{log:power}. Let $R_2\subset R'$ be the $R$-sub-algebra generated by $f$. By~\eqref{log:log}, it is finite over $R$. Moreover, the argument above with the $p$-adic exponential shows that, for all maximal ideals $\mf{p}\subset R[p^{-1}]$, there exists $s\in\Int_{>0}$ such that the image of $f^{p^s}$ in $(R_2)_{\mf{p}}$ lies in $R_{\mf{p}}$. This shows that if $s$ is sufficiently divisible, then we will have $f^{p^s}\in R[p^{-1}]$, and hence in $R$.
\end{proof}

\printbibliography

\end{document}